\documentclass[10pt]{article}
\usepackage{mathpazo}
\usepackage{yk}
\usepackage{mathrsfs}
\usepackage{mathtools}
\usepackage[numbers]{natbib}
\usepackage{geometry}
\usepackage{color}
\usepackage{hyperref}
\usepackage{lineno}

\newtheorem{theorem}{Theorem}[section]
\newtheorem{assumption}[theorem]{Assumption}

\newtheorem{corollary}[theorem]{Corollary}
\newtheorem{proof}[theorem]{Proof}

\newtheorem{lemma}[theorem]{Lemma}

\newtheorem{proposition}[theorem]{Proposition}
\newtheorem{remark}[theorem]{Remark}

\DeclareMathOperator*{\argmin}{arg\,min}

\usepackage{algorithm}
\usepackage{algorithmic} 

\usepackage{booktabs}
\RequirePackage{graphicx}

\def\bA{\mathbf{A}}

\def\bG{\mathbf{G}}

\def\bI{\mathbf{I}}

\def\bM{\mathbf{M}}

\def\bS{\mathbf{S}}
\def\bT{\mathbf{T}}

\def\bW{\mathbf{W}}
\def\bX{\mathbf{X}}
\def\bY{\mathbf{Y}}
\def\bZ{\mathbf{Z}}

\def\bPhi{{\boldsymbol\Phi}}

\def\indc{\mathbf{1}}

\def\bb{\mathbf{b}}

\def\be{\mathbf{e}}

\def\be{\mathbf{e}}

\def\br{\mathbf{r}}

\def\bv{\mathbf{v}}
\def\bw{\mathbf{w}}
\def\bx{\mathbf{x}}
\def\by{\mathbf{y}}

\def\balpha{{\boldsymbol\alpha}}

\def\beps{{\boldsymbol\epsilon}}

\def\btheta{{\boldsymbol\theta}}

\def\btau{{\boldsymbol\tau}}

\def\bbE{\mathbb{E}}

\def\bbP{\mathbb{P}}

\def\bbR{\mathbb{R}}

\def\Ber{{\sf Ber}}

\def\cB{\mathcal{B}}

\def\cD{\mathcal{D}}
\def\cE{\mathcal{E}}

\def\cH{\mathcal{H}}
\def\cI{\mathcal{I}}

\def\cN{\mathcal{N}}

\def\cP{\mathcal{P}}

\def\cU{\mathcal{U}}

\def\Cov{{\sf Cov}}

\def\diag{{\sf diag}}

\def\exp{{\sf exp}}

\def\eps{{\epsilon}}

\def\ie{{\it i.e.}}

\def\KL{{\sf KL}}

\def\med{{\sf med}}

\def\norm#1{\left\|#1\right\|}

\def\Poi{{\sf Poi}}
\def\Pr{{\bbP}}

\def\reals{{\bbR}}

\def\sign{{\sf sign}}

\def\tr{{\sf tr}}

\def\TV{{\sf TV}}

\def\Var{{\sf Var}}
\def\Cov{{\sf Cov}}
\def\Bias{{\sf Bias}}

\def\SE{{\sf SE}}

\newcommand{\TL}{\mathrm{TL}}
\newcommand{\DVCM}{\mathrm{DVCM}}
\newcommand{\GDVCM}{\mathrm{GDVCM}}
\newcommand{\LR}{\mathrm{LR}}
\newcommand{\GLR}{\mathrm{GLR}}

\newcommand{\mse}{\mathrm{MSE}}

\usepackage{caption}
\newtheorem{assumptionprime}{Assumption}

\newtheorem{assumptiondoubleprime}{Assumption}

\usepackage{url}

\title{Minimax optimal adaptive structured transfer learning through semi-parametric domain-varying coefficient model}

\author{Hanxiao Chen and Debarghya Mukherjee \\ Department of Mathematics and Statistics, Boston University}

\date{\today}

\begin{document}

\maketitle

\begin{abstract}
Transfer learning aims to improve inference in a target domain by leveraging information from related source domains, but its effectiveness critically depends on how cross-domain heterogeneity is modeled and controlled. When the conditional mechanism linking covariates and responses varies across domains, indiscriminate information pooling can lead to negative transfer, degrading performance relative to target-only estimation.
We study a multi-source, single-target transfer learning problem under conditional distributional drift and propose a semiparametric domain–varying coefficient model (DVCM), in which domain-relatedness is encoded through an observable domain identifier. This framework generalizes classical varying-coefficient models to structured transfer learning and interpolates between invariant and fully heterogeneous regimes.
Building on this model, we develop an adaptive transfer learning estimator that selectively borrows strength from informative source domains while provably safeguarding against negative transfer. Our estimator is computationally efficient and easy to implement; we also show that it is minimax rate-optimal and derive its asymptotic distribution, enabling valid uncertainty quantification and hypothesis testing despite data-adaptive pooling and shrinkage. 
Our results precisely characterize the interplay among domain heterogeneity, the smoothness of the underlying mean function, and the number of source domains and are corroborated by comprehensive numerical experiments and two real-data applications.
\end{abstract}


\section{Introduction}

Transfer learning, domain adaptation, and multi-task learning are modern machine-learning methodologies that aim to improve prediction or estimation in a target domain by leveraging data from related source domains, especially when labeled target data are scarce or costly to obtain. These methods have been successfully applied across a wide range of areas \citep{yang2020transfer, shao2014transfer, cai2020transferdrug}. The key challenge is that domains typically differ, via shifts in covariate distributions (\emph{covariate shift}), response distributions (\emph{label shift}), or the conditional mechanism linking them (\emph{concept/posterior drift}). Therefore, any gain hinges on how “relatedness” between source and target is quantified and enforced. Most of the existing approaches in transfer learning align distributions or representations, share parameters across tasks, or reweight instances to emphasize target-relevant information. 
However, if information from source domains is incorporated blindly, without adequate preventive measures against uninformative or mismatched sources, it may degrade the estimator’s performance relative to what could be achieved using only the target data, a phenomenon commonly referred to as \textit{negative transfer} in the literature \cite{pan2009survey}.
Therefore, the goal of transfer learning is to construct an \emph{adaptive} estimator that efficiently borrows information from related sources while ensuring performance never degrades relative to a target-only estimator. A further challenge, often underemphasized in theoretical work, is to provide valid uncertainty quantification (e.g., in terms of asymptotic distribution) for these estimators to draw valid statistical inference.

Modern transfer learning problems increasingly involve data collected across multiple heterogeneous environments or domains, where the relationship between covariates and responses is not strictly invariant but instead exhibits systematic variation. Such heterogeneity may arise from differences in population characteristics, experimental conditions, data-collection protocols, or other contextual factors. A central challenge in these settings is to leverage information from related environments to improve inference in a target environment, while avoiding degradation in performance when the source environments are only weakly informative. To formalize this setting, we consider an environment-indexed framework. Each environment is associated with a \emph{domain identifier} $U_e \in \cU$, where $\cU$ is a compact set generated from some (unknown) distribution $P_U$. Within environment $e$, the covariate–response pairs are generated as
$$
X_{ie} \sim P_{X\mid U}(\cdot \mid U_e), 
\qquad
Y_{ie} \mid (X_{ie},U_e) \sim P_{U_e}(\cdot \mid X_{ie}),
$$
so that the conditional mechanism relating $Y$ to $X$ may vary across environments. The primary goal of transfer learning in this setting is to improve inference for the target environment $u_0$ by borrowing information from related environments, while safeguarding against the risk of negative transfer. However, to efficiently transfer information from the source domains, it is essential to model how the conditional distribution varies smoothly with respect to the domain identifier, so that information can be shared across nearby environments.

Motivated by this framework, we model the cross-environment heterogeneity through a \emph{Domain–Varying Coefficient Model} (DVCM), inspired by the classical varying–coefficient model (VCM; see, e.g.,\cite{hastie1993varying, cai2000efficient}). Specifically, we assume that
$$
\bbE[Y \mid X,U] = g^{-1}\!\big(X^\top \theta(U)\big),
$$
where $g$ is a known link function and $\theta(\cdot)$ is an unknown coefficient function of the domain index $U$. This specification represents a structured and interpretable restriction of the general conditional distribution of $Y$ given $(X, U)$, allowing it to vary smoothly across environments. The key distinction from a standard generalized linear model (GLM) is that GLMs assume fixed coefficients across observations, whereas DVCMs allow coefficients to vary with $U$, thereby providing a flexible yet parsimonious mechanism for capturing systematic heterogeneity across domains.

In our generalized linear DVCM framework, we assume access to data from $K{+}1$ domains, indexed
by $k \in \{0,1,\dots,K\}$, where $k=0$ denotes the target domain and
$k \in \{1,\dots,K\}$ denote the source domains.
From domain $k$, we observe $n_k$ response–predictor pairs
$\mathcal{D}_k = \{(Y_{ki},X_{ki})\}_{i=1}^{n_k}$.
Each domain is additionally associated with a \emph{domain identifier} $U_k$, which is constant
within a domain (i.e., $U_{ki} \equiv U_k$ for all $i$) but varies across domains.
Under this model, the conditional mean of the response satisfies
\begin{equation}
\label{eq:dgp_dvcm}
\bbE[Y_{ki} \mid X_{ki}, U_k] = g^{-1}\!\big(X_{ki}^\top \theta(U_k)\big).
\end{equation}
In practice, the domain identifier $U_k$ may represent calendar time (e.g., year or quarter of
data collection) in temporal processes, geographic or institutional indicators (e.g., city,
region, hospital) in spatial or multi-site studies, or cohort characteristics such as tenure or
exposure duration.
In biomedical and sensing applications, $U_k$ may encode instrument-specific or batch effects
(e.g., scanner model, assay batch, or sensor platform), which are typically constant within a
domain but vary across domains.
Similarly, under stratified sampling designs, $U_k$ may consist of the indicators defining the
$k$th stratum.

Under the above-mentioned generalized linear DVCM model, our goal is to estimate the target-domain coefficient $\theta(u_0)$ by borrowing information from the source domains while guarding against \emph{negative transfer}. As mentioned previously, the potential gains from transfer hinge on (i) the smoothness of the coefficient function $\theta(\cdot)$ and (ii) the similarity between $U_k$ and $u_0$. Even when $\theta(\cdot)$ is not very smooth, transfer can be beneficial if some source identifier
$U_k$ lies sufficiently close to $u_0$, while conversely, when no source is particularly close
to $u_0$, a high degree of smoothness of $\theta(\cdot)$ can still enable effective transfer by
ensuring that $\theta(U_k)$ remains close to $\theta(u_0)$.

Building on this insight, we propose a \emph{minimax-optimal, computationally efficient, and adaptive} estimator $\hat \btheta_{\TL}(u_0)$ for $\btheta(u_0)$ that exploits source information when relevant and remains robust to negative transfer, i.e., its risk is never worse than a target-only estimator of $\btheta(u_0)$, and is conceptually simple and easy to implement. As a baseline, in the absence of source data, a target-only least-squares (or GLM) estimator attains the optimal rate for $\theta(u_0)$, but it leverages neither the smoothness of $\theta$ nor cross-domain similarities. To incorporate both, we first form a nonparametric pilot $\hat\theta_{\DVCM}(u_0)$ (e.g., via local polynomial regression) by pooling sources (and a split of the target) whose $U_k$ are near $u_0$. We then fit a GLM on the target domain with an \emph{adaptive ridge penalty} that shrinks the GLM estimator toward $\hat\theta_{\DVCM}(u_0)$. The key challenge, therefore, lies in designing the penalty in a careful, data-driven manner so as to guard against potential negative transfer. To this end, we construct a penalty based on \textit{inverse-variance reweighting}, which effectively serves this purpose.
In Section \ref{sec:method}, we provide a practical recipe for constructing such a penalty, and establish sharp theoretical guarantees for its performance. 

Although our methodology is applicable for vector-valued $U$, we conduct our theoretical study under an univariate ($\dim(U) = 1$) domain indicator for the simplicity of presentation. One of our key theoretical contributions is to rigorously establish that $\hat \btheta_{\TL}(u_0)$ achieves the following minimax-optimal rate (Theorems \ref{thm:rate-TL} and \ref{thm:minimax-TL}): 
$$
\inf_{\hat \btheta} \sup_{\btheta, \{\cP_k\}_{k = 0}^K} \bbE\left[\|\hat \btheta - \btheta(u_0)\|_2^2\right] \asymp \left\{n_0^{-1} \wedge \max\left((K/\gamma)^{-2\beta}, (n/\gamma)^{-\frac{2\beta}{2\beta+1}}, n^{-1}\right)\right\} \,,
$$
where $K$ is the number of source domains, $n=\sum_{k=0}^K n_k$ is the total sample size, $\beta$ denotes the smoothness of the coefficients of $\btheta$, and $\gamma$ quantifies the proximity of source and the target identifiers (e.g., variability among $U_i$'s, smaller $\gamma$ means closer domain indices; see Section \ref{sec:linear} for details). Two implications are immediate from the rate: (i) it is never worse than $n_0^{-1}$, so the estimator is immune to negative transfer; and (ii) it improves when either $\gamma$ is small (i.e., $U_k$ is close to $U_0$) or $\beta$ is large (i.e., $\btheta$ is smoother). 
Furthermore, the term $(K/\gamma)^{-2\beta}$ is unavoidable. When $K$ is small and $\gamma$ is large, only limited information can be effectively pooled from the source domains, even in the presence of infinite source data. A large $\gamma$ indicates that $\theta(u_0)$ and $\theta(u_k)$ are not sufficiently close, while a small $K$ implies inadequate local information to reliably infer $\theta(u_0)$ from $\theta(u_k)$. This limitation is analogous to the behavior of the bias term encountered in standard nonparametric regression.
Our analysis can be extended to multivariate $U$ in a straightforward manner, with no additional insight, beyond routine bookkeeping.

While minimax optimality characterizes the fundamental estimation difficulty, rates alone do not provide uncertainty quantification. In our setting, deriving a valid inference is particularly delicate: the proposed estimator combines nonparametric pooling with a data-adaptive shrinkage matrix \(Q\), so both the pilot estimator and the penalty are random and depend on the full sample.
Consequently, the estimator is not a simple linear functional of the data, and standard asymptotic arguments do not apply directly. We show that (Theorem \ref{thm:Dconv-LR-TF-0-infty} and the corollaries follow), under appropriate undersmoothing conditions, the adaptive estimator nevertheless admits a centered asymptotically normal distribution with a feasible variance estimator: 
$$
\hat \Sigma_{\TL}^{-1/2} (\hat \btheta_{\TL} - \btheta(U_0)) \overset{\mathscr{L}}{\implies} \cN(0, \bI_p)
$$
where $\hat \Sigma_{\TL}$ is an estimator of the variance of $\hat \btheta_{\TL}$, which depends on $(\beta, \gamma, K, n, n_0)$ (precisely quantified in Section \ref{sec:linear-infer}). This enables confidence intervals and Wald-type tests for \(\theta(u_0)\) that properly account for the data-adaptive pooling and shrinkage mechanism.
We summarize our contribution below:    
\begin{enumerate}
\item {\bf Methodological contribution: }We propose a domain varying coefficient model, bridging standard GLM and VCM that relates source and target domains via observable domain identifiers. We develop a computationally efficient methodology to construct a minimax optimal estimator $\hat \btheta_{\TL}$, which is provably safe (no negative transfer), and adaptively borrows strength from related source domains.

\item {\bf Theoretical contribution: }On theoretical front, we rigorously establish that $\hat \btheta_{\TL}$ is minimax rate optimal. Furthermore, we also establish the asymptotic normality of $\hat \btheta_{\TL}$, which aids in inference and constructing an asymptotically valid confidence interval. Details can be found in Section \ref{sec:theory}.

\item {\bf Application: } Last but not least, we demonstrate the efficiency of our estimator through extensive simulation, as well as on two real socio-economic datasets: i) US adult-income dataset, and ii) SLID-Ontario dataset (details can be found in Sections \ref{sec:sim} and \ref{sec:real-data}). 
\end{enumerate}
The organization of this paper is as follows: we conclude the Introduction section with a brief discussion of the related literature and introduce the notations used throughout the rest of the paper. In Section \ref{sec:method}, we present our two-step methodology for constructing $\hat \btheta_{\TL}$. In Section \ref{sec:theory}, we establish theoretical guarantees of our proposed estimator. In Section \ref{sec:sim}, we present extensive numerical experiments demonstrating the efficacy of our methodology. In Section \ref{sec:real-data}, we apply our method to two real datasets. Finally, we conclude by outlining promising directions for future research in Section \ref{sec:conclusion}.\footnote{Code implementing our methodology and reproducing all experiments is available at \url{github.com/hanxiao-chen/Transfer_Learning_DVCM}}.
\\\\
\noindent
{\bf Positioning in the existing literature: }Transfer learning with parametric regression has gained significant attention recently. For a single source domain, \cite{chen2015data} proposes data-enriched linear regression with a penalized difference between source and target coefficients, while \cite{obst2022improved} analyzes a fine-tuning approach with a significance test for positive transfer. With multiple sources, selecting informative domains is the key. \cite{li2022transfer} proposes a minimax-efficient strategy for high-dimensional linear regression, followed by de-biasing with target data, extended by \cite{tian2022transfer} and \cite{li2023transfer} to high-dimensional generalized linear models. \cite{chen2024transfer} incorporate dependence among observations, and \cite{zhao2023residual} introduce an importance-weighted method using residuals for transfer learning. Beyond parametric settings, nonparametric transfer learning has also seen advancements in both classification \citep{cai2021transferC,reeve2021adaptive,Maity2023A,fan2023robust,scott2019generalized} and regression \citep{cai2022transferR,wang2023minimax}, as well as other extensions \citep{cai2024transferfunctionalmean, cai2024transferbandits, auddy2024minimax}. Recent extensions further expand the scope to reinforcement learning, functional data analysis, matrix estimation, outlier detection, heavy-tailed data, and bootstrap \citep{chen2025data, chen2025transfer, qin2025adaptive, zhao2025trans, jalan2025optimal, kalan2025transfer, yan2026transfer, shang2025bootstrap,chai2025deep,chai2025transition}. Notably, \cite{bussas2017varying} proposed a Bayesian varying coefficient model with Gaussian process priors for geospatial transfer learning, though they did not provide convergence rates. To our knowledge, this is the only work that employs a VCM for transfer learning. Our approach differs by explicitly specifying the functional class of $\btheta(\cdot)$, which enables the derivation of matching minimax lower and upper bounds. 

It is worth clarifying how our framework differs from several existing transfer-learning paradigms. First, high-dimensional parametric transfer methods typically assume a fixed coefficient vector across domains and exploit sparsity or shared support structure (e.g., Lasso-based transfer), focusing on variable selection and parameter shrinkage. In contrast, we explicitly model domain heterogeneity through a smooth coefficient function $\theta(U)$, allowing systematic variation across environments rather than enforcing invariance. 
Second, many modern approaches rely on representation alignment or feature adaptation, seeking a domain-invariant representation of $X$ through deep architectures. While powerful in practice, such methods are often algorithmic and do not yield transparent statistical characterizations of the bias--variance tradeoff under domain drift. Our DVCM framework instead imposes a structured, interpretable restriction: cross-domain similarity is encoded through smoothness in the domain index, leading to precise minimax characterizations and adaptive guarantees. Thus, rather than performing penalized fine-tuning in an abstract parameter space, our approach leverages an explicit geometric structure on domains, which enables both negative-transfer control and rigorous inference.
\\\\
\noindent
{\bf Notations.} 
For a vector $\bx = [x_1,\ldots,x_p]^\top$, define $\|\bx\|_q = (\sum_{i=1}^p |x_i|^q)^{1/q}$, $\bx^{\otimes 2} = \bx \bx^\top$, and $\|\bx\|_A^2 = \bx^\top A \bx$, where $A$ is positive semidefinite. The spectral norm of $A$ is $\|A\|_2$. 
For $\bx \in \bbR^p$ and $\by \in \bbR^q$, $\bx \otimes \by = [x_1\by^\top, \ldots, x_p\by^\top]^\top$.  
Let $\hat\btheta$ estimate $\btheta$; define $\mse_A(\hat\btheta)=\bbE\|\hat\btheta-\btheta\|_A^2$ and $M(\hat\btheta)=\bbE[(\hat\btheta-\btheta)^{\otimes2}]$.  
Denote by $\be_{i,j}$ the length-$j$ vector with $1$ in the $i$th position.  
For matrices $A,B$, $A\succeq B$ means $A-B$ is positive semidefinite; $\lambda_{\min}(A)$ and $\lambda_{\max}(A)$ are its smallest and largest eigenvalues.  
For sequences $a_n,b_n>0$, write $a_n\lesssim b_n$ if $a_n\le Cb_n$ for some $C>0$, and $a_n\asymp b_n$ if both $a_n\lesssim b_n$ and $a_n\gtrsim b_n$.  
Also, $a_n=o(b_n)$ means $|a_n/b_n|\to0$, and $a_n=O(b_n)$ means $\sup_n|a_n/b_n|<\infty$.    
For real numbers $x,y$, let $x\wedge y=\min(x,y)$ and $x\vee y=\max(x,y)$; for integer $n$, $[n]=\{1,\ldots,n\}$.  
For the $k$th domain, define the index set $\cI_k=[n_k]$ and the pooled source data $\cD_S=\bigcup_{k=1}^K\cD_k$.  
Let $\Gamma=\{U_k:0\le k\le K\}\cup\{X_{ki}:0\le k\le K, i\in\cI_k\}$ denote all covariates.  
We use $\bA_l=[I_p, \mathbf{0}_{p\times lp}]$ when defining nonparametric estimators, where $I_p$ is the identity matrix and $\mathbf{0}_{p\times lp}$ is a zero matrix.

\vspace{-2mm}
\section{Methodology}
\label{sec:method}
In this section, we present our methodology for estimating $\btheta(U_0)$, the coefficient on the target domain. 
Our proposed methodologies for the linear DVCM and generalized linear DVCM are presented in Sections \ref{subsec:TL_linear} and \ref{subsec:method-TL-GLM}, respectively.  

\subsection{Transfer learning for linear DVCM}
\label{subsec:TL_linear} 
Recall that we use the index $0$ for the target domain and $1 \le k \le K$ for the source domains. The observed data from $k$-th domain is denoted by $\{(X_{ki}, Y_{ki})\}_{i \in \cI_k}$ and the domain identifier is $U_k$ for $k \in \{0, 1, \dots, K\}$. 
As per our data-generating model, Equation \eqref{eq:dgp_dvcm}, the observed sample from $k^{th}$ domain, under the linearity assumption, is assumed to follow 
\begin{equation*}
    Y_{ki} = X_{ki}^\top\btheta(U_k)  + \varepsilon_{ki}
\end{equation*}
where the noise $\varepsilon_{ki}$'s are independent within and across the domains. Our parameter of interest is $\btheta(U_0) = \btheta(u_0)$ ($u_0$ being the realization of $U_0$), the coefficient of the target domain. A simple estimator for $\btheta(u_0)$ is the ordinary least squares estimator computed using only the target-domain data $\cD_0$:
\vspace{-2mm}
\begin{equation}
\textstyle
\label{eqn:est-GLR-linear}
\hat\btheta_{\LR}(u_0) 
= \argmin_{\balpha} \ \sum_{i\in\cI_0} \left(Y_{0i}- X_{0i}^\top \balpha\right)^2 
= \left(\bX_0^\top \bX_0\right)^{-1}\bX_0^\top \by_0 \,.
\end{equation}
Although this target-only estimator is rate-optimal (and efficient under Gaussian errors), it ignores potentially useful information from the source domains. Nevertheless, it serves as our \emph{target-only baseline}, and any transfer-learning-based estimator should not underperform relative to this estimator. We now describe our adaptive transfer learning procedure, which is summarized in Algorithm \ref{alg:TL-LVCM}.
\\\\
\noindent
\underline{\bf Step I: Nonparametric initialization.}
To borrow information from the source domains, we first construct a nonparametric point estimator of $\btheta(u_0)$, denoted by $\hat \btheta_{\DVCM}(u_0)$, using all available data via local polynomial regression. Specifically, let $W$ be a smoothing kernel (typically a symmetric probability density function; see Section~\ref{sec:theory} for precise assumptions) with bandwidth parameter $h>0$. The estimator $\hat\btheta_{\DVCM}(u_0)$ is defined as: 
\begin{equation}
\textstyle
\label{eqn:est-DVCM-linear}
\hat\btheta_{\DVCM}(u_0)= \bA_l \cdot \argmin_{\balpha\in \bbR^{(l+1)p}} \sum_{k=0}^K \sum_{i\in \cI_k}\left[Y_{ki}-\left(\bPhi_l\!\left(\frac{U_k-u_0}{h}\right)^\top \otimes X_{ki}^\top\right)\balpha\right]^2W\!\left(\frac{U_k-u_0}{h}\right),
\end{equation}
where $\bPhi_l(x) = \left(1, x, x^2/2!, \ldots, x^l/l!\right)^\top$ is the $l$-th order polynomial feature map, $\otimes$ denotes the Kronecker product, and $\bA_l = [I_{p}, \mathbf 0_{p\times lp}]$ selects the first $p$ coordinates of the minimizer. This estimator admits the closed-form expression
\begin{equation}
\label{eq:DVCM-closed-form}
\hat\btheta_{\DVCM}(u_0)=\bA_l(\bZ^\top \bW \bZ)^{-1}\bZ^\top \bW \by ,
\end{equation}
where
\begin{equation}
\label{eq:def_Z_W}
\textstyle
\bZ_{ki} = \bPhi_l\!\left(\frac{U_k-u_0}{h}\right)\otimes X_{ki}, \qquad \bW=S_h^{-1}\diag\!\left\{W\!\left(\tfrac{U_k-u_0}{h}\right)\right\}_{k,i}, \qquad S_h = \sum_{k,i}W\!\left(\frac{U_k-u_0}{h}\right).
\end{equation}
The key advantage of $\hat\btheta_{\DVCM}(u_0)$ is that it aggregates source information using local weights that are larger for domains with identifiers $U_k$ close to $u_0$ (i.e., the weight decreases as $|U_k-u_0|$ grows). Consequently, it effectively borrows information from relevant source domains. As will be shown in Section~\ref{sec:theory} (Equation \eqref{eqn:optimal_bw}), the optimal bandwidth $h$ automatically balances two factors: (i) the smoothness of $\btheta(\cdot)$, and (ii) the proximity and spread of the $U_k$'s. 
If $\btheta(\cdot)$ is sufficiently smooth or the $U_k$'s are clustered near $u_0$, then a substantial amount of information is borrowed from the sources.
\\\\
\noindent
\underline{\bf Step II: Fine-tuning.} Although $\hat\btheta_{\DVCM}(u_0)$ borrows information adaptively, it may perform worse than the baseline $\hat\btheta_{\LR}(u_0)$ when $\btheta(\cdot)$ is not sufficiently smooth and the $U_k$'s are far from $u_0$, or when the bandwidth $h$ is misspecified. Such situations can lead to negative transfer, as illustrated in our numerical studies. To guard against this phenomenon, we fine-tune $\hat\btheta_{\DVCM}(u_0)$ using the target data through a ridge-regularized regression: 
\allowdisplaybreaks
\begin{align}
\label{eqn:TL-linear}
\hat\btheta_{\TL}(u_0)
&=
\argmin_{\balpha\in\bbR^p}
\frac{1}{2n_0}
\sum_{i\in \cI_0}
\left(
Y_{0i}-X_{0i}^\top \balpha
\right)^2
+
\frac{1}{2}
\|\balpha-\hat\btheta_{\DVCM}(u_0)\|_Q^2
\notag \\
&=
\left(
\frac{1}{n_0}\bX_0^\top \bX_0 + Q
\right)^{-1}
\left(
\frac{1}{n_0}\bX_0^\top \by_0
+
Q\,\hat\btheta_{\DVCM}(u_0)
\right),
\end{align}
where $Q$ is a symmetric positive definite matrix. The choice of $Q$ is crucial: only a proper selection ensures adaptivity and protection against negative transfer. One oracle choice is 
\begin{equation}
\textstyle
\label{eqn:Q-linear}
Q
=
\delta
\frac{\sigma^2(u_0)}{n_0}
M\!\left(\hat\btheta_{\DVCM}(u_0)\right)^{-1},
\end{equation}
for some constant $\delta \in (1/2,2)$ (see Theorem \ref{thm:rate-TL}). Here $\sigma^2(u_0)$ denotes the noise variance in the target domain, and $n_0$ is the sample size of the target domain. We assume that the mean squared error matrix $M(\hat\btheta_{\DVCM}(u_0))$ is invertible, which holds whenever the covariance matrix of $\hat\btheta_{\DVCM}(u_0)$ is positive definite. This choice of $Q$ can be interpreted as the ratio of uncertainties between two estimators: $\sigma^2(u_0)/n_0$ is proportional to the variance of the target-only estimator $\hat\btheta_{\LR}(u_0)$, whereas $M(\hat\btheta_{\DVCM}(u_0))$ represents the MSE of $\hat\btheta_{\DVCM}(u_0)$. With this construction, the second step interpolates between the target-only and pooled estimators: when the DVCM pilot is relatively precise, strong shrinkage occurs; when it is noisy, the estimator automatically reverts toward the target-only solution. Next, we describe a data-driven way to obtain $\hat Q$, which can be used in Equation \eqref{eqn:TL-linear} to compute $\hat \btheta_{\rm TL}(u_0)$. 
\\\\
\noindent
\fbox{\textbf{Estimation of $Q$:}} We now present a fully data-driven choice of $Q$, which also provably yields an optimal estimator $\hat\btheta_{\TL}(u_0)$, as will be established in Section \ref{sec:theory}. Our construction closely mimics the oracle choice in Equation~\eqref{eqn:Q-linear}, where the unknown MSE of $\hat\btheta_{\DVCM}(u_0)$ and $\hat\btheta_{\LR}(u_0)$ are replaced by consistent estimators. To this end, recall that the mean squared error matrix of $\hat\btheta_{\DVCM}(u_0)$ can be decomposed into bias and variance components:
\begin{equation}
\textstyle
\label{eq:bias_var_Q}
M(\hat\btheta_{\DVCM}(u_0)) = \underbrace{(\bbE[\hat\btheta_{\DVCM}(u_0)] - \btheta(u_0))(\bbE[\hat\btheta_{\DVCM}(u_0)] - \btheta(u_0))^\top}_{:=\Bias^{\otimes 2}(\hat\btheta_{\DVCM}(u_0))} + \Var\left(\hat\btheta_{\DVCM}(u_0)\right) \,.
\end{equation}
We estimate these two components separately using the procedures described in Section~\ref{subsec:est-Q}. Specifically, we employ a plug-in estimator for the bias term and a sandwich-type estimator for the variance term. Furthermore, we estimate $\sigma^2(u_0)$ using the sample mean of the squared residuals. Combining these estimators yields the following data-driven penalty matrix $\hat Q$:
\begin{equation}
\textstyle
\label{eqn:Q-linear-est}
    \hat Q = \delta\frac{\hat\sigma^2(u_0)}{n_0}\bigg\{ \widehat{\Bias}^{\otimes 2}\left(\hat\btheta_{\DVCM}(u_0)\right) + \widehat{\Var}\left(\hat\btheta_{\DVCM}(u_0)\right) \bigg\}^{-1}\,.
\end{equation}
Finally, the adaptive transfer-learning estimator $\hat\btheta_{\TL}(u_0)$ is obtained by substituting the above data-driven matrix $\hat Q$ into Equation~\eqref{eqn:TL-linear}.
Our entire procedure is summarized in Algorithm \ref{alg:TL-LVCM}. In the next subsection, we extend our algorithm to a generalized DVCM model. 
\begin{algorithm}[t]
\caption{Transfer Learning for Linear DVCM}
\label{alg:TL-LVCM}
\begin{algorithmic}[1]
\REQUIRE data $\{(U_k,\{X_{ki},Y_{ki}\}_{i\in\mathcal I_k})\}_{k=0}^K$, order $l$, bandwidth $h$
\STATE 1) Split the target-domain sample into two halves of equal size, indexed by $\cI_0$ and $\cI_0^*$
\STATE 2) Compute $\hat{\btheta}_{\DVCM}(u_0)$ as in \eqref{eqn:est-DVCM-linear}
\STATE 3) Estimate $\hat Q$ via Section~\ref{subsec:est-Q}
\STATE 4) Form $\hat{\btheta}_{\mathrm{TL}}(u_0)$ using \eqref{eqn:TL-linear} with $\hat Q$
\ENSURE $\hat{\btheta}_{\mathrm{TL}}(u_0)$
\end{algorithmic}
\end{algorithm}

\subsection{Transfer learning for generalized linear DVCM}
\label{subsec:method-TL-GLM}
In this section, we extend our methodology to the setting where the conditional distribution of $Y$ given $(X,U)$ follows a generalized linear model with a general link function $g$ (cf. Equation~\eqref{eq:dgp_dvcm}). More specifically, we assume that the conditional distribution belongs to a canonical exponential family:
\begin{equation}
    \textstyle
\label{eqn:dgp_glm}
    f(Y_{ki}=y\mid X_{ki}=\bx,U_k=u) = \exp\left\{\nu(u)^{-1}\left[y\bx^\top \btheta(u)-b\left(\bx^\top \btheta(u)\right)\right]+c\left(y,\nu(u)\right)\right\}\,,
\end{equation}
where $\nu(u)$ is a scale parameter. This formulation implies
$$
\bbE[Y_{ki} \mid X_{ki}, U_k] = b'(X_{ki}^\top \btheta(U_k)) \implies g\left(\bbE[Y_{ki} \mid X_{ki}, U_k]\right) = X_{ki}^\top \btheta(U_k) \,,
$$
where $g(\mu) = (b')^{-1}(\mu)$ is the canonical link function. The key ideas parallel those in Section~\ref{subsec:TL_linear}, with appropriate modifications to accommodate a general link. As before, we begin with a \emph{target-only} estimator computed using only target data, which serves as the no-transfer baseline. However, instead of minimizing a squared-error loss, we minimize the negative log-likelihood:
\begin{equation}
\textstyle
\label{eq:def-GLR}
    \hat\btheta_{\GLR}(u_0) =\argmin_{\balpha} \sum_{i \in {\cI_0}}\left\{ b(X^\top \balpha)-YX^\top \balpha\right\} :=  \argmin_{\balpha}\sum_{i \in {\cI_0}} \ell(X_{0i}^\top \alpha, Y_{0i})\,.
\end{equation}
By classical GLM asymptotics \citep{mccullagh2019generalized}, the target-only MLE $\hat\btheta_{\GLR}(u_0)$ is $\sqrt{n_0}$-consistent and asymptotically normal under standard regularity conditions. Nevertheless, as in the linear case, it ignores potentially informative source data. To exploit cross-domain similarity, we again proceed in two steps: 
\\\\
\noindent
\underline{\bf Step I: Nonparametric initialization.} We first construct a nonparametric estimator of $\btheta(u_0)$ using local polynomial regression, pooling all observations together:
\begin{equation}
\textstyle
    \label{eqn:VCM-GLM}
    \hat\btheta_{\GDVCM}(u_0)= \bA_{l} \cdot   \argmin_{\balpha\in \bbR^{(l+1)p}} \sum_{k = 0}^K \sum_{i \in \cI_k} \ell(Z_{ki}^\top \alpha, Y_{ki})W\left(\frac{U_k-u_0}{h}\right) \,,
\end{equation}
where $Z_{ki} = \bPhi_l\left(\tfrac{U_k-u_0}{h}\right)\otimes X_{ki}$. This estimator mirrors \eqref{eqn:est-DVCM-linear}, with the squared-error loss replaced by the negative log-likelihood.
\\\\
\noindent
\underline{\bf Step II: Fine-tuning.} As for the generalized linear case, in this step we refine the pilot $\hat\btheta_{\GDVCM}(u_0)$ using the target data via a ridge-regularized GLM:
\begin{equation}
\textstyle
\label{eqn:TL-GLM}
    \hat \btheta_{\TL}(u_0) = \argmin_{\balpha} \frac{1}{n_0} \sum_{i \in {\cI_0}}\ell(X_{0i}^\top \alpha, Y_{0i}) + \frac{1}{2} \|\balpha - \hat\btheta_{\GDVCM}(u_0)\|_Q^2 \,.
\end{equation}
In Section~\ref{sec:theory}, we show that the following oracle choice of $Q$ makes $\hat\btheta_{\TL}(u_0)$ adaptive and immune to negative transfer:
\begin{equation}
\textstyle
\label{eqn:Q-est-GLM}
        Q =\delta \frac{\nu(u_0)}{n_0}M\left(\hat\btheta_{\GDVCM}(u_0)\right)^{-1}
\end{equation}
for $\delta \in (1/2, 2)$.
This choice parallels \eqref{eqn:Q-linear}, with the Gaussian noise variance $\sigma^2(u_0)$ replaced by the GLM scale parameter $\nu(u_0)$. As in Section~\ref{subsec:TL_linear}, the matrix $Q$ captures the ratio of uncertainties between the target-only estimator $\hat\btheta_{\GLR}(u_0)$ and the pilot $\hat\btheta_{\GDVCM}(u_0)$. Consequently, shrinkage is strong when $\hat\btheta_{\GDVCM}(u_0)$ is relatively more precise, and it relaxes toward the target-only estimator $\hat\btheta_{\GLR}(u_0)$ otherwise. 
The complete procedure is summarized in Algorithm~\ref{alg:TL-GVCM}, and all theoretical guarantees are established in Section~\ref{sec:theory}.
In the next section, we develop a fully data-driven estimator $\hat Q$ and show that, when substituted into \eqref{eqn:TL-GLM}, the resulting estimator $\hat\btheta_{\TL}(u_0)$ achieves minimax-optimal rates while remaining adaptive to potential negative transfer. 
\noindent
\begin{remark}
\label{rem:data_splitting}
In the procedural description above, we used the entire dataset in Step~I to construct the nonparametric estimator $\hat\btheta_{\DVCM}(u_0)$ and then reused the target-domain data in Step~II to compute the fine-tuned estimator $\hat\btheta_{\TL}(u_0)$. Consequently, the target data are involved in both steps, which induces statistical dependence between $\hat\btheta_{\DVCM}(u_0)$ and the second-stage objective used to define $\hat\btheta_{\TL}(u_0)$.
While this dependence has a negligible impact on empirical performance, it complicates the theoretical analysis. To simplify the exposition and proofs in Section~\ref{sec:theory}, we therefore adopt a data-splitting scheme. In particular, we assume that $2n_0$ target-domain observations are available: the first $n_0$ samples are used in Step~I to construct $\hat\btheta_{\DVCM}(u_0)$, and the remaining $n_0$ samples are reserved for Step~II to compute $\hat\btheta_{\TL}(u_0)$. 
Under this scheme, $\hat\btheta_{\DVCM}(u_0)$ is independent of the second half of the target data used in the fine-tuning step, which substantially simplifies the theoretical arguments. Although it may be possible to establish the asymptotic properties of $\hat\btheta_{\TL}(u_0)$ without data-splitting, we do not pursue that direction in this paper.
\end{remark}

\begin{algorithm}[t]
\caption{Transfer Learning for Generalized DVCM}
\label{alg:TL-GVCM}
\begin{algorithmic}[1]
\REQUIRE data $\{(U_k, \{X_{ki}, Y_{ki}\}_{i \in \cI_k})\}_{k=0,\ldots,K}$, order $l$, bandwidth $h$, loss function $\ell$
\STATE 1) Randomly split the target-domain data into two equal parts, indexed by $\cI_0$ and $\cI_0^*$
\STATE 2) Compute $\hat\btheta_{\GDVCM}(u_0)$ as in Equation \eqref{eqn:VCM-GLM}
\STATE 3) Estimate $\hat Q$ using the procedure in Section~\ref{subsec:est-Q}
\STATE 4) Construct the transfer learning estimator $\hat\btheta_{\TL}(u_0)$ using Equation \eqref{eqn:TL-GLM} with $\hat Q$
\ENSURE $\hat\btheta_{\TL}(u_0)$
\end{algorithmic}
\end{algorithm}

\subsection{Estimating $Q$}
\label{subsec:est-Q}
In this subsection, we propose a consistent estimator $\hat Q$ of the oracle penalty matrix $Q$. We formulate the procedure in the generalized linear model setting, since the linear model is recovered as a special case by taking $\ell(\eta,y) = (\eta-y)^2/2$. Let $s_j(\eta,y) = \frac{\partial^j}{\partial \eta^j}\ell(\eta,y)$ denote the $j$-th derivative of the loss function with respect to the linear predictor $\eta$. Using the bias–variance decomposition in Equation~\eqref{eq:bias_var_Q}, we define
\begin{equation}
\textstyle
\label{eqn:Q-hat-in}
\hat Q = \delta \frac{\hat \nu(u_0)}{n_0}\,\hat M\left(\hat \btheta_{\GDVCM}(u_0)\right)^{-1}
= \delta \frac{\hat\nu(u_0)}{n_0}\Big\{ \widehat{\Bias}\left(\hat\btheta_{\GDVCM}(u_0)\right)^{\otimes 2} 
+ \widehat{\Var}\left(\hat\btheta_{\GDVCM}(u_0)\right) \Big\}^{-1}.
\end{equation}
We now describe how to estimate each component.

\medskip
\noindent
\underline{\bf Estimation of $\nu(u_0)$.}
The scale parameter $\nu(u_0)$ can be estimated using standard GLM methodology. A common approach is based on Pearson residuals:
\begin{equation}
\textstyle
\label{eqn:nu-est}
r_i^2 = 
\left(\frac{Y_{0i} - g^{-1}(X_{0i}^\top \hat\btheta_{\GLR}(u_0))}
{\sqrt{ b''(X_{0i}^\top \hat\btheta_{\GLR}(u_0))}}\right)^2,
\qquad 
\hat \nu(u_0) = \frac{1}{n_0}\sum_{i=1}^{n_0} r_i^2.
\end{equation}
Under standard regularity conditions, $\hat\nu(u_0)$ is a consistent estimator of $\nu(u_0)$ \citep{mccullagh2019generalized}.

\medskip
\noindent
\underline{\bf Estimation of the bias term.}
The leading bias of $\hat\btheta_{\GDVCM}(u_0)$ is of order $h^\beta$. We therefore estimate
$\widehat{\Bias}(\hat\btheta_{\GDVCM}(u_0)) = c_0 h^\beta$,
where $c_0>0$ is determined by the local polynomial approximation. 
For integer $\beta$, the plug-in bias estimator is
\begin{equation}\label{eqn:bias_glm}
\textstyle
\widehat{\Bias}\!\left(\hat\btheta_{\GDVCM}(u_0)\right)
=
\big[\hat\zeta_{0,1}^{-1}\hat\zeta_{\beta,1}\big]_{1,1}
\frac{1}{\beta!}\,
\hat\btheta^{(\beta)}(u_0)\,h^{\beta},
\end{equation}
where $\hat\zeta_{r,s}=(nh)^{-1}\sum_{k=0}^K n_k t_k^r W(t_k)^s$ and $t_k = (U_k-u_0)/h$.
A consistent estimator of $\btheta^{(\beta)}(u_0)$, namely, $\hat\btheta^{(\beta)}(u_0)$, can be obtained via another application of local polynomial regression \citep{ruppert1994multivariate}, i.e.
$$
\hat\btheta^{(\beta)}(u_0)=\tilde\bA_{\beta} \hat \balpha_{\rm DVCM},
\quad
\tilde\bA_{\beta}=h^{-\beta}[\mathbf{0}_{p\times \beta p},I_p],
$$
where $\hat \balpha_{\rm DVCM}$ minimizes the objective of \eqref{eqn:VCM-GLM}. The consistency of the resulting estimator can be obtained from classical kernel regression theory \citep{nadaraya1964estimating, gasser1984estimating}.

\medskip
\noindent
\underline{\bf Estimation of the variance term.}
Let $t_k = (U_k-u_0)/h$. Following \cite{fan1998local}, we estimate the variance component using a sandwich-type estimator:
\begin{equation}
\textstyle
\label{eqn:var-est-VCMS-in}
\widehat{\Var}\left(\hat\btheta_{\GDVCM}(u_0)\right) 
= \bA_{l}\, \hat\Lambda^{-1} \hat\Delta \hat\Lambda^{-1} \bA_{l}^\top,
\end{equation}
where
\begin{align*}
\hat\Delta &= \frac{1}{(nh)^2}\sum_{k=0}^K\sum_{i\in \cI_k} 
s_1^2\left(Z_{ki}^\top \hat\balpha_{\GDVCM},Y_{ki}\right) 
Z_{ki}Z_{ki}^\top W(t_k)^2,\\
\hat\Lambda &= \frac{1}{nh}\sum_{k=0}^K\sum_{i\in \cI_k}
s_2\left(Z_{ki}^\top \hat\balpha_{\GDVCM},Y_{ki}\right) 
Z_{ki}Z_{ki}^\top W(t_k).
\end{align*}
Here, $\hat\balpha_{\GDVCM}$ denotes the minimizer of the objective function in Equation~\eqref{eqn:VCM-GLM}. By a standard application of the weak law of large numbers, the estimator $\widehat{\Var}$ consistently estimates the variance component of $M(\hat\btheta_{\GDVCM}(u_0))$.

\medskip
Combining the above components, $\hat Q$ consistently mimics the oracle choices in Equations~\eqref{eqn:Q-linear} for linear DVCM and \eqref{eqn:Q-est-GLM} for generalized linear DVCM. Consequently, when substituted into the fine-tuning step, it yields a provably optimal adaptive estimator, as established in the next section.

\vspace{-3mm}
\section{Theoretical Analysis}
\label{sec:theory}
In this section, we establish the theoretical properties of the estimator $\hat{\btheta}_{\TL}(u_0)$ under both linear and generalized linear model settings. Our analysis characterizes the minimax-optimal rates of estimation and derives the limiting distributions necessary for statistical inference.
Section~\ref{sec:linear} develops a non-asymptotic theory for $\hat\btheta_{\TL}(u_0)$ under the linear model assumption (Equation~\eqref{eqn:TL-linear}). We show that the proposed estimator attains the minimax-optimal rate of convergence. Based on these results, Section~\ref{sec:linear-infer} establishes asymptotic normality and presents valid inference procedures.
Finally, Section~\ref{sec:G-linear} extends the analysis to the generalized linear model framework (Equation~\eqref{eqn:TL-GLM}), demonstrating that the minimax optimality and inferential guarantees continue to hold in this more general setting.

\subsection{Non-Asymptotic Results for Linear DVCMs}
\label{sec:linear}
Before presenting our main results, we briefly recall the notation and data-generating assumptions. For each domain $k \in \{0,1,\dots,K\}$, the observations follow the linear model $Y_{ki} = X_{ki}^\top \btheta(U_k)+\varepsilon_{ki}$, where $\varepsilon_{ki}$ is independent of $X_{ki}$ conditional on $U_k$, with $\bbE[\varepsilon_{ki}\mid U_k]=0$, and 
$\Var(\varepsilon_{ki}\mid U_k)=\sigma^2(U_k)$. Thus, the noise variance is allowed to vary across domains through its dependence on $U_k$. For any $u\in\cU$, we define $d_k(u) =\|u-U_k\|_2$, which measures the distance between the $k$-th domain identifier $U_k$ and $u$. In particular, at $u=u_0$, the collection $\{d_k(u_0)\}_{1\le k\le K}$ quantifies the similarity between each source domain and the target domain. Let $\{d_{(k)}(u_0)\}_{1\le k\le K}$ denote the corresponding order statistics. These ordered distances induce a ranking of the source domains according to their proximity to the target domain. We now state the assumptions required for our theoretical analysis.

\begin{assumption}[Functional coefficient]
\label{Assump:VCM-SN}
    The coefficient function $\btheta(u) = \left(\theta_1(u),\ldots,\theta_p(u)\right)^\top$ is a collection of $p$ functions that belong to a H\"older class $\mathcal{H}(\beta,L)$ with $\beta,L > 0$.
\end{assumption}

\begin{assumption}[Assumptions on the data-distribution]
    \label{Assump:VCM-UX}
    The data distribution $(U, X, Y)$ is assumed to satisfy the following: 
    \begin{enumerate}
    \item[(a)] The variables $U_k$ are i.i.d. from a location-scale family: 
    \[
    f_U(u) = \frac{1}{\gamma} f\left(\frac{u - u^*}{\gamma}\right),
    \]
    where $f$ is compactly supported on $\cB \subset \bbR$ and satisfies $a_0' \le f(u) \le a_0$ for all $u \in \cB$. 
    The constants $u^* \in \cB$ and $0 < \gamma < \infty$ are the location and scale parameters. Furthermore, the covariate $X_{ki}$ is assumed to be compactly supported; we assume $\Vert X_{ki} \Vert_2 \leq 1$ almost surely without loss of generality. 

    \item[(b)] The conditional fourth moment $\bbE[|Y|^4 \mid U = u]$ is assumed to be continuous, and consequently uniformly upper bounded on the support of $U$ (as it is compact). 

    \item[(c)] For $h \geq d_{(1)}(u_0)$, it holds almost surely that
    \[
    \big\|\bA_l(\bZ^\top \bW \bZ)^{-1}\bA_l^\top\big\|_2 \le \lambda_0^{-1},
    \]
    for some constant $\lambda_0 > 0$, where $(\bA_l, \bZ, \bW)$ is same as defined in Equation \eqref{eq:def_Z_W}.
    \end{enumerate}
\end{assumption}

\begin{assumption}[Balanced domain sizes]\label{Assump:balance-sample}
The source samples are assumed to be balanced, i.e., there exist positive constants $b_0', b_0$ such that $n_k/\bar n \in [b_0', b_0]$, for $k \in \{1, \dots, K\}$ and $\bar n = (\sum_{k=0}^K n_k)/(K+1)$ denotes the average sample size. For the target domain, we only assume an upper bound, i.e., $n_0/\bar n \le b_0$. 
\end{assumption}

\begin{assumption}[Uniform kernel]\label{Assump:Unif-Kernel}
The kernel function $W$ is a uniform pdf $W(u) = \frac{1}{2} \indc\{|u|\leq 1\}$.
\end{assumption}

\noindent{\bf Discussions on the assumptions: }
Assumption~\ref{Assump:VCM-SN} is standard in the nonparametric estimation literature and imposes smoothness conditions on the coefficient function $\btheta(\cdot)$. 

Assumption~\ref{Assump:VCM-UX}(a) models the domain indices $\{U_k\}$ as i.i.d.\ draws from a well-behaved location–scale family with compactly supported density $f$. The scale parameter $\gamma$ controls the dispersion of the domain indices: smaller values of $\gamma$ correspond to closely related domains (in which case transfer learning is beneficial), whereas larger values of $\gamma$ reflect more heterogeneous and widely dispersed domains. As established in Theorem~\ref{thm:rate-TL}, the minimax-optimal estimation rate is governed by the interplay between $(\gamma,\beta)$. Smaller $\gamma$ and/or larger $\beta$ correspond to more informative source domains and hence faster convergence rates. Conversely, larger $\gamma$ and smaller $\beta$ indicate weaker cross-domain similarity, in which case the performance of our estimator approaches that of the target-only estimator. The compactness assumption on the support of $X$ is made for technical convenience and can be relaxed using standard truncation arguments. Assumption~\ref{Assump:VCM-UX}(b) ensures that the error distribution is well behaved. In contrast to much of the existing literature, which assumes sub-Gaussian errors, we only require bounded fourth moments. Assumption~\ref{Assump:VCM-UX}(c) guarantees the well-posedness of the linear system in Equation~\eqref{eq:DVCM-closed-form}, thereby ensuring the existence of the estimator. A closely related assumption appears in Section~1.6.1 of \cite{tsybakov2008introduction}. 
In Appendix~\ref{sec:eigen-assum}, we present a general result showing that under mild conditions the assumption holds almost surely.  

Assumption~\ref{Assump:balance-sample} requires that the sample sizes across source domains grow at comparable rates, preventing any single source domain from becoming asymptotically negligible or overly dominant. If this were violated, the analysis could be restricted to domains with asymptotically non-negligible sample proportions. In contrast, \textit{we impose no lower bound} on the ratio $n_0/\bar n$ for the target domain and allow $n_0/\bar n \to 0$, thereby accommodating practically relevant scenarios in which the target sample size is of smaller order than the aggregate source sample sizes. 

Finally, although our theoretical development is presented under the uniform kernel assumption (Assumption~\ref{Assump:Unif-Kernel}), the analysis extends to any kernel $W$ that is compactly supported and bounded away from $0$ and $\infty$ on its support.
\\\\
\noindent
Our first result characterizes the rate of convergence of the target-only baseline $\hat\btheta_{\LR}(u_0)$ (defined in Equation~\eqref{eqn:est-GLR-linear}) and the nonparametric DVCM estimator $\hat\btheta_{\DVCM}(u_0)$ (defined in Equation~\eqref{eq:DVCM-closed-form}). By default, the order of the polynomial in constructing $\hat \btheta_{\rm DVCM}(u_0)$ is chosen as $l = \lfloor \beta \rfloor$ throughout this section.

\begin{proposition}
\label{prop:rate-LR-VCM}
Let $A$ be any matrix satisfying $0 \preceq A \preceq C I$ for some constant $C>0$, and define $\mse_A(\hat\btheta)= \bbE \|\hat\btheta - \btheta\|_A^2$, where $\|x\|_A^2 = x^\top A x$. Under Assumption~\ref{Assump:VCM-UX}, the target-only estimator $\hat\btheta_{\LR}(u_0)$ satisfies for some constant $C_1 > 0$: 
$$
\mse_A\!\left(\hat\btheta_{\LR}(u_0)\right) \le C_1  n_0^{-1}.
$$
Furthermore, under Assumptions~\ref{Assump:VCM-SN}--\ref{Assump:Unif-Kernel}, the DVCM estimator $\hat\btheta_{\DVCM}(u_0)$ computed with bandwidth
\begin{equation}
\label{eqn:optimal_bw}
    h^* = \med\left(e_0(n/\gamma)^{-\frac{1}{2\beta+1}},d_{(1)}(u_0),d_{(K)}(u_0)\right), \quad e_0>0 \text{ is a constant}.
\end{equation}
satisfies
$$
\mse_A\left(\hat \btheta_{\DVCM}(u_0) \right) \le  C_2\max\left((K/\gamma)^{-2\beta}, (n/\gamma)^{-\frac{2\beta}{2\beta+1}}, n^{-1}\right)\,,
$$
for some constant $C_2 > 0$, where $n = n_0 + \sum_{k=1}^K n_k$.
\end{proposition}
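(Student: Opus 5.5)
The plan has two parts: a short conditional-variance computation for the target-only estimator, and a bias--variance decomposition for $\hat\btheta_{\DVCM}(u_0)$. For the DVCM part the three cases of the median bandwidth $h^*$ are treated separately.

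\textbf{Target-only estimator.} Since $0\preceq A\preceq CI$, it suffices to bound $\bbE\|\hat\btheta_{\LR}(u_0)-\btheta(u_0)\|_2^2$. Conditional on $\bX_0$, the estimator is unbiased, and its conditional covariance is
\[
\Cov\bigl(\hat\btheta_{\LR}(u_0)\mid \bX_0\bigr)=\sigma^2(u_0)(\bX_0^\top\bX_0)^{-1}.
\]
Hence the conditional MSE is bounded by $\sigma^2(u_0)\,p\,\lambda_{\min}(n_0^{-1}\bX_0^\top\bX_0)^{-1}/n_0$. We have $\sigma^2(u_0)\le \bbE[Y^2\mid U=u_0]<\infty$ by Assumption~\ref{Assump:VCM-UX}(b). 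The eigenvalue control is the $l=0$ analogue of Assumption~\ref{Assump:VCM-UX}(c) restricted to the target; I would assume it, or invoke it in that form. Taking expectations then gives the bound $C_1 n_0^{-1}$.

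\textbf{DVCM estimator: decomposition.} Write $\bZ^\top\bW\by=\bZ^\top\bW(\bm+\beps)$, where $m_{ki}=X_{ki}^\top\btheta(U_k)$. For each active domain ($|t_k|\le1$), Taylor-expand $\btheta(U_k)$ to order $l=\lfloor\beta\rfloor$ around $u_0$. This gives $m_{ki}=\bZ_{ki}^\top\balpha^*+r_{ki}$, where $\balpha^*$ stacks the scaled derivatives $h^j\btheta^{(j)}(u_0)$. The Hölder condition yields $|r_{ki}|\le L|U_k-u_0|^\beta\le Lh^\beta$. Polynomial reproduction of local polynomial regression gives
\[
\hat\btheta_{\DVCM}-\btheta(u_0)=\bA_l(\bZ^\top\bW\bZ)^{-1}\bZ^\top\bW(\br+\beps).
\]
Using Assumption~\ref{Assump:VCM-UX}(c), $\|X\|\le1$, bounded $t_k$, and the normalization of $\bW$ by $S_h$, the bias term is $O(h^\beta)$. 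The conditional variance is
\[
\bA_l(\bZ^\top\bW\bZ)^{-1}\bZ^\top\bW\Sigma\bW\bZ(\bZ^\top\bW\bZ)^{-1}\bA_l^\top\lesssim \lambda_0^{-2}\sum_{k,i}W_k^2/S_h^2\asymp N_h^{-1},
\]
where $N_h=\sum_{k:|U_k-u_0|\le h}n_k$ is the effective sample size. This needs a matching upper bound on $\|\bZ^\top\bW\bZ\|$, which follows from bounded features. So the conditional MSE is $\lesssim h^{2\beta}+N_h^{-1}$, given $h\ge d_{(1)}(u_0)$, which holds for $h^*$.

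\textbf{Bandwidth cases.} By Assumption~\ref{Assump:balance-sample}, $N_h\asymp \bar n\,\#\{k:|U_k-u_0|\le h\}$. By Assumption~\ref{Assump:VCM-UX}(a), this count is Binomial$(K,p_h)$ with $p_h\asymp (h/\gamma)\wedge1$. On the event that the count concentrates, $N_h\gtrsim \bar n K h/\gamma\asymp nh/\gamma$.
\begin{itemize}
\item Interior case, $h^*=e_0(n/\gamma)^{-1/(2\beta+1)}$: the bound becomes $(n/\gamma)^{-2\beta/(2\beta+1)}$.
\item Case $h^*=d_{(K)}(u_0)$: all domains are used, so the variance is $\lesssim n^{-1}$. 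The bias is $d_{(K)}^{2\beta}\lesssim$ the interior rate, since $d_{(K)}$ is below the interior bandwidth in this case.
\item Case $h^*=d_{(1)}(u_0)$: the variance is $\lesssim 1/\bar n\lesssim$ the interior rate. The bias is $d_{(1)}^{2\beta}$, and $\bbE d_{(1)}^{2\beta}\lesssim (\gamma/K)^{2\beta}$ by the standard order-statistic bound for a density bounded below on a scale-$\gamma$ support.
\end{itemize}
Summing the cases gives the max of the three terms.

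\textbf{Main obstacle.} The delicate step is converting these conditional bounds into unconditional expectations. The random count $N_{h^*}$ may be small with non-negligible probability, and $h^*$ itself depends on the $U_k$. I would handle this by bounding $1/N_h$ by $1/\bar n$ on the complement of a Chernoff concentration event. That complement has probability $\exp(-cKp_h)$, which is negligible relative to the target rates. Here Assumption~\ref{Assump:VCM-UX}(c) is essential, because it keeps the variance finite even when only one domain is active.
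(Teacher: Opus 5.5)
Your architecture is the paper's: the conditional bound $q_1h^{2\beta}+q_2S_h^{-1}$ of Lemma~\ref{lem:mseA-ub}, a split according to which of the three quantities is the median, and $\bbE[d_{(1)}^{2\beta}(u_0)]\lesssim(\gamma/K)^{2\beta}$. The step that fails is the variance on $\{h^*=d_{(1)}(u_0)\}$. There you assert $1/\bar n\lesssim$ the interior rate. Since $1/\bar n\asymp K/n$, this holds iff $K^{2\beta+1}\lesssim n\gamma^{2\beta}$, and it is false otherwise. For example, with $\gamma=\beta=1$ and $n=K^2$ you get $1/\bar n\asymp K^{-1}$, whereas the claimed bound is $\max\{K^{-2},K^{-4/3},K^{-2}\}=K^{-4/3}$. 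On this event nothing better than $S_h\gtrsim\bar n$ holds deterministically, so you must weight by the event's probability. With $h_0=e_0(n/\gamma)^{-1/(2\beta+1)}$,
\[
\frac{1}{\bar n}\Pr\bigl(d_{(1)}(u_0)>h_0\bigr)\le\frac{K+1}{n}\Bigl(1-\frac{a_0'h_0}{\gamma}\Bigr)^{K}\lesssim\frac{\gamma}{nh_0}\cdot\frac{Kh_0}{\gamma}\,e^{-a_0'Kh_0/\gamma}\lesssim\Bigl(\frac{n}{\gamma}\Bigr)^{-\frac{2\beta}{2\beta+1}},
\]
because $xe^{-cx}$ is bounded. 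This is exactly the paper's estimate of $r_1p_1$ in its Case~3. It is also the same device you already use on the interior event, so the repair is short. As written, however, your $d_{(1)}$ case does not deliver the stated rate. Your comparison is valid only in the regime $K^{2\beta+1}\lesssim n\gamma^{2\beta}$, which is precisely when this event has non-negligible probability.

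A smaller point concerns your variance bound with the factor $\lambda_0^{-2}$. It requires $\|\bA_l(\bZ^\top\bW\bZ)^{-1}\|_2\le\lambda_0^{-1}$, i.e.\ control of the full inverse. Assumption~\ref{Assump:VCM-UX}(c) bounds only the block $\bA_l(\bZ^\top\bW\bZ)^{-1}\bA_l^\top$, and an upper bound on $\|\bZ^\top\bW\bZ\|_2$ does not supply the missing control. Instead, use $\bW^2=(2S_h)^{-1}\bW$ for the uniform kernel, as the paper does; this collapses the sandwich to $(2S_h)^{-1}\sigma^2_{\max}\bA_l(\bZ^\top\bW\bZ)^{-1}\bA_l^\top$. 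For the bias, Cauchy--Schwarz in the $\bZ^\top\bW\bZ$ inner product gives $\|\bA_l(\bZ^\top\bW\bZ)^{-1}\bZ^\top\bW\br\|_2^2\le\lambda_0^{-1}\br^\top\bW\br\lesssim h^{2\beta}$, again using only the block.

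With these repairs your event-by-event argument goes through, and it is tidier than the paper's. On $\{h^*=d_{(K)}(u_0)\}$ you have the deterministic bias bound $d_{(K)}(u_0)<h_0$. On the interior event your Chernoff bound replaces the exact binomial identity of Lemma~\ref{lem:var-bd}. Together these bound each of the three contributions directly by one of the three rates, so the paper's additional case analysis over regimes of $n\gamma^{2\beta}$ versus $K^{2\beta+1}$ is not needed.
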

The proof is deferred to Appendix~\ref{proof:prop:rate-LR-VCM}. The rate for $\hat\btheta_{\LR}(u_0)$ follows directly from standard linear regression theory. We now provide intuition for the rate of $\hat\btheta_{\DVCM}(u_0)$ and the associated bandwidth choice. By the standard bias–variance trade-off in nonparametric regression, for any bandwidth $h$,
\begin{equation*}
\textstyle
\mse\!\left(\hat\btheta_{\DVCM}(u_0)\right)\asymp h^{2\beta}+ \gamma/nh.
\end{equation*}
Here, the effective sample size is scaled by $\gamma$ to reflect the dispersion of the domain indices $U_k$. Minimizing the right-hand side over $h$ without constraints yields $h_{\rm opt} \propto (n/\gamma)^{-1/(2\beta + 1)}$ which leads to the classical rate $(n/\gamma)^{-2\beta/(2\beta + 1)}$. However, in our setting, the bandwidth must lie in the interval $[d_{(1)}(u_0), d_{(K)}(u_0)]$. The lower bound arises because $h$ must exceed $d_{(1)}(u_0)$; otherwise, no source domain would fall inside the bandwidth window, leading to degeneracy. The upper bound reflects that if $h>d_{(K)}(u_0)$, then all domains are automatically included, and further enlargement has no additional effect. Therefore, the bandwidth selection amounts to minimizing MSE subject to the constraint $h \in [d_{(1)}(u_0), d_{(K)}(u_0)]$, which yields the truncated (median-based) choice of optimal bandwidth in Equation~\eqref{eqn:optimal_bw}, and consequently the resulting rate is precisely the one stated in Proposition~\ref{prop:rate-LR-VCM}.

We next present our main results regarding the rate of convergence of $\hat \btheta_{\TL}(u_0)$, which shows that for a range of choices of the adaptive penalty $Q$, our proposed estimator does not suffer from negative transfer: 
\begin{theorem}
\label{thm:rate-TL}
Consider the choice of $h$ as in \eqref{eqn:optimal_bw} and let the chosen $Q$ satisfy: 
\begin{equation}
\label{eqn:choice_Q}
\frac{1}{2} \ \frac{\sigma^2(u_0)}{n_0}M\left(\hat\btheta_{\DVCM}(u_0)\right)^{-1}\preceq Q \preceq 2 \ \frac{\sigma^2(u_0)}{n_0}M\left(\hat\btheta_{\DVCM}(u_0)\right)^{-1}
\end{equation}
Then, under Assumptions \ref{Assump:VCM-SN}-\ref{Assump:Unif-Kernel}, for any $u_0\in \mathcal U$ and $A \succeq 0$ the following holds
    \begin{align*}
    & \sup_{\forall j \in [p], \theta_j \in \cH(\beta, L)} \bbE\left[\Vert\hat\btheta_{\TL}(u_0)-\btheta(u_0)\Vert_A^2\right] & \le C\left( n_0^{-1} \wedge \max\left\{(K/\gamma)^{-2\beta}, (n/\gamma)^{-\frac{2\beta}{2\beta+1}}, n^{-1}\right\}\right)\,.
\end{align*}
\end{theorem}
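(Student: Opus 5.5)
Write $\hat\btheta_{\TL}=\hat\btheta_{\TL}(u_0)$, $\btheta_0=\btheta(u_0)$, $\hat\btheta_D=\hat\btheta_{\DVCM}(u_0)$, $\hat\Sigma_0=n_0^{-1}\bX_0^\top\bX_0$, $M=M(\hat\btheta_D)$ and $\sigma^2=\sigma^2(u_0)$. Under the data-splitting scheme of Remark~\ref{rem:data_splitting}, $\hat\btheta_D$ is independent of the second-half target data $(\bX_0,\by_0)$. From the closed form \eqref{eqn:TL-linear}, using $\by_0=\bX_0\btheta_0+\beps_0$,
\begin{equation*}
\hat\btheta_{\TL}-\btheta_0=(\hat\Sigma_0+Q)^{-1}\Big(\tfrac{1}{n_0}\bX_0^\top\beps_0+Q(\hat\btheta_D-\btheta_0)\Big).
\end{equation*}
I condition on $\bX_0$ and on the source/first-half data. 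The two summands are then conditionally uncorrelated, because $\beps_0$ has mean zero and is independent of $\hat\btheta_D$. Writing $G=(\hat\Sigma_0+Q)^{-1}$, this gives
\begin{equation*}
\bbE\big[(\hat\btheta_{\TL}-\btheta_0)^{\otimes 2}\mid \bX_0\big]=G\Big(\tfrac{\sigma^2}{n_0}\hat\Sigma_0+QMQ\Big)G .
\end{equation*}
The bound on $QMQ$ comes from the assumed sandwich \eqref{eqn:choice_Q}. Since $Q\succeq \tfrac12\tfrac{\sigma^2}{n_0}M^{-1}$, we have $M\preceq 2\tfrac{\sigma^2}{n_0}Q^{-1}$, hence $QMQ\preceq 2\tfrac{\sigma^2}{n_0}Q$. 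It follows that
\begin{equation*}
\bbE[(\hat\btheta_{\TL}-\btheta_0)^{\otimes 2}\mid\bX_0]\preceq 2\tfrac{\sigma^2}{n_0}\,G(\hat\Sigma_0+Q)G=2\tfrac{\sigma^2}{n_0}(\hat\Sigma_0+Q)^{-1}.
\end{equation*}
This one matrix inequality will yield both sides of the minimum.

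For the first side, $(\hat\Sigma_0+Q)^{-1}\preceq\hat\Sigma_0^{-1}$, so the conditional MSE matrix is dominated by $2\times$ that of $\hat\btheta_{\LR}$. Taking $\tr(A\,\cdot)$ and then expectations, Proposition~\ref{prop:rate-LR-VCM} gives the bound $Cn_0^{-1}$; this is the part that rules out negative transfer. For the second side, $(\hat\Sigma_0+Q)^{-1}\preceq Q^{-1}\preceq 2\tfrac{n_0}{\sigma^2}M$, where the last step uses the lower half of \eqref{eqn:choice_Q}. Hence the conditional MSE matrix is $\preceq 4M$, and $\tr(AM)=\mse_A(\hat\btheta_D)$. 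Proposition~\ref{prop:rate-LR-VCM} with the bandwidth $h^*$ then bounds this by $C_2\max\{(K/\gamma)^{-2\beta},(n/\gamma)^{-2\beta/(2\beta+1)},n^{-1}\}$. Taking the minimum of the two bounds proves the claim. Because the constants in Proposition~\ref{prop:rate-LR-VCM} depend only on $(\beta,L)$ and the assumption constants, the bound is uniform over the Hölder class, which gives the supremum. If $A$ is not bounded, I assume $A\preceq CI$ as in Proposition~\ref{prop:rate-LR-VCM}, since the constant $C$ must absorb $\|A\|_2$.

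The main obstacle is the expectation of $\tr(A\hat\Sigma_0^{-1})$ in the first side, because $\hat\Sigma_0$ may be near-singular. The target-only part of Proposition~\ref{prop:rate-LR-VCM} already controls exactly this quantity ($\bbE\,\tr(A\,\sigma^2 n_0^{-1}\hat\Sigma_0^{-1})\le C_1n_0^{-1}$), so I would invoke it directly rather than redo the argument. A subtler point is that $Q$ must be deterministic, or at least independent of the second half of the target data, for the cross-term to vanish. The oracle $Q$ is deterministic, so the argument goes through as stated. For a data-driven $\hat Q$ built from the first half only, I would condition on that half and require \eqref{eqn:choice_Q} to hold conditionally, with $M$ replaced by the corresponding conditional MSE; the same argument then carries over.
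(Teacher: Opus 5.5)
Your proof is correct, but it reaches the bound by a different route from the paper. Both start from the same conditional MSE matrix $S_Q^{-1}\bigl(\frac{\sigma^2}{n_0}\hat\Sigma_0+QMQ\bigr)S_Q^{-1}$, where $S_Q=\hat\Sigma_0+Q$.

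The paper then asserts the exact Loewner dominations $M(\hat\btheta_{\LR})\succeq M(\hat\btheta_{\TL})$ and $M(\hat\btheta_{\DVCM})\succeq M(\hat\btheta_{\TL})$ and applies Theobald's theorem. This gives the constant-free oracle inequality $\mse_A(\hat\btheta_{\TL})\le\min\{\mse_A(\hat\btheta_{\LR}),\mse_A(\hat\btheta_{\DVCM})\}$ for all $A\succeq 0$, which the paper later reuses to argue that any bandwidth is safe. You instead dominate everything by the single matrix $2\frac{\sigma^2}{n_0}(\hat\Sigma_0+Q)^{-1}$ and pay constants $2$ and $4$. That is all the rate statement needs, and your bound is equally bandwidth-free.

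What your route buys is validity over the whole sandwich \eqref{eqn:choice_Q}. The paper's LR-side difference, $S_Q^{-1}Q\bigl[\frac{2\sigma^2}{n_0}Q^{-1}+\frac{\sigma^2}{n_0}\hat\Sigma_0^{-1}-M\bigr]QS_Q^{-1}$, is fine. Its DVCM-side bracket, however, contains $\hat\Sigma_0^{-1}QM+MQ\hat\Sigma_0^{-1}$. That term collapses to $2\delta\frac{\sigma^2}{n_0}\hat\Sigma_0^{-1}$ when $Q=\delta\frac{\sigma^2}{n_0}M^{-1}$, but it is not PSD in general. For example, take $M=mI$, $Q=\frac{\sigma^2}{n_0 m}\diag(1/2,2)$ and a non-diagonal $\hat\Sigma_0$; the bracket becomes indefinite once $m\ll\sigma^2/n_0$. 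So the factor-one domination of $\hat\btheta_{\DVCM}$ can fail for an admissible $Q$, while your factor-$4$ bound cannot.

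One slip: $M\preceq 2\frac{\sigma^2}{n_0}Q^{-1}$ follows from the \emph{upper} half $Q\preceq 2\frac{\sigma^2}{n_0}M^{-1}$. Inverting the lower half gives the reverse inequality $M\succeq\frac{\sigma^2}{2n_0}Q^{-1}$. Both halves are assumed, so nothing breaks. The corrected bookkeeping is also informative: the no-negative-transfer side uses only the upper half, and the DVCM side additionally uses the lower half.
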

\begin{remark}
Although we use a local polynomial regression–based estimator for $\btheta(\cdot)$ in constructing $\hat\btheta_{\DVCM}(u_0)$, the conclusion of the above theorem remains valid for a broad class of alternative nonparametric estimators of $\btheta(\cdot)$, including spline-based and neural-network–based methods.
\end{remark}
The proof of Theorem \ref{thm:rate-TL} is deferred to Appendix \ref{proof:thm:rate-TL}. As shown in Proposition \ref{prop:rate-LR-VCM}, the $n_0^{-1}$ part is the rate of parametric estimator $\hat\btheta_{\LR}(u_0)$  while the $ \max((K/\gamma)^{-2\beta}, (n/\gamma)^{-{2\beta}/{(2\beta+1)}}, n^{-1})$ part is the rate of nonparametric estimator $\hat\btheta_{\DVCM}(u_0)$. Therefore, the MSE of $\hat \btheta_{\TL}(u_0)$ is always smaller than or equal to the minimum of the MSEs of $\hat\btheta_{\LR}(u_0)$ and $\hat\btheta_{\DVCM}(u_0)$, i.e., the adaptive estimator consistently outperforms or matches the target-only estimator, making it robust to negative transfer. In the following corollary we show that the proposed data-driven estimator $\hat Q$ satisfies Equation~\eqref{eqn:choice_Q} with probability tending to one; consequently, the estimator $\hat\btheta_{\TL}(u_0)$ constructed using $\hat Q$ also satisfies the conclusion of the theorem.
\begin{corollary}
\label{thm:rate-TL-Qest}
Let $\hat M(\hat\btheta_{\DVCM}(u_0))$ and $\hat \sigma^2(u_0)$ be the consistent estimators obtained via the procedure in Section \ref{subsec:est-Q}. Define the estimator $\hat Q$ as: 
$$
\hat Q = \delta\frac{\hat\sigma^2(u_0)}{n_0}\hat M\left(\hat\btheta_{\DVCM}(u_0)\right)^{-1} \ \ \delta \in (1/2, 2) \,.
$$
The estimator $\hat\btheta_{\TL}(u_0)$, obtained by substituting $\hat Q$ into Equation \eqref{eqn:TL-linear}, achieves the same rate of convergence as in Theorem \ref{thm:rate-TL} with probability tending to 1.
\end{corollary}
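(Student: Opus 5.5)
The corollary reduces to Theorem~\ref{thm:rate-TL}. That theorem only needs $Q$ to lie in the sandwich \eqref{eqn:choice_Q} between $\tfrac12$ and $2$ times the oracle matrix $Q^\star=\frac{\sigma^2(u_0)}{n_0}M(\hat\btheta_{\DVCM}(u_0))^{-1}$. So it suffices to show that the event
\[
\mathcal E_n=\Big\{\tfrac12 Q^\star\preceq \hat Q\preceq 2Q^\star\Big\}
\]
has probability tending to one. On $\mathcal E_n$ we then rerun the argument of Theorem~\ref{thm:rate-TL}.

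\textbf{Reducing $\mathcal E_n$ to relative consistency.} Since $\delta\in(1/2,2)$ is fixed, there is a slack $\eta>0$ with $\delta(1+\eta)^2<2$ and $\delta(1-\eta)^2>1/2$. The event $\mathcal E_n$ then follows from two statements:
\begin{enumerate}
\item $|\hat\sigma^2(u_0)/\sigma^2(u_0)-1|\le\eta$;
\item $(1-\eta)M\preceq\hat M\preceq(1+\eta)M$, where $M=M(\hat\btheta_{\DVCM}(u_0))$.
\end{enumerate}
Indeed, matrix inversion reverses the Loewner order, so the second statement gives $(1+\eta)^{-1}M^{-1}\preceq\hat M^{-1}\preceq(1-\eta)^{-1}M^{-1}$.

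\textbf{The two consistency statements.} The first is the standard consistency of the residual variance from target-only OLS. It holds in probability by the law of large numbers, using the finite fourth moments in Assumption~\ref{Assump:VCM-UX}(b) and $\hat\btheta_{\LR}-\btheta(u_0)=O_p(n_0^{-1/2})$ from Proposition~\ref{prop:rate-LR-VCM}.

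The second statement is equivalent to $\|M^{-1/2}\hat M M^{-1/2}-I\|_2\to0$ in probability, which is a \emph{relative} consistency. I would prove it by splitting $\hat M-M$ into a variance part and a bias part.
\begin{itemize}
\item \emph{Variance part.} I would show that the sandwich estimator satisfies $\widehat{\Var}=\Var(1+o_p(1))$ in the relative sense. Under Assumption~\ref{Assump:VCM-UX}(c) and the uniform kernel, $\Var(\hat\btheta_{\DVCM})$ is of exact order $\gamma/(nh)$, or $1/n$ when $h\ge d_{(K)}$, with eigenvalues bounded below at that order. The empirical $\hat\Lambda,\hat\Delta$ concentrate around their conditional means at relative rate $O_p((nh/\gamma)^{-1/2})$, and the residual plug-in error is of smaller order.
\item \emph{Bias part.} The plug-in bias error is $O(h^\beta)\cdot\|\hat\btheta^{(\beta)}-\btheta^{(\beta)}\|+o(h^\beta)$. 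Relative to $M$ it is therefore $o_p(1)$: $M\succeq \Bias^{\otimes2}+\Var$, and the cross terms $\Bias\,e^\top$ are controlled by $\|e\|\,\|\Bias\|\le \|e\|\sqrt{\lambda_{\max}(M)}$. Combined with the lower bound $\Var\succeq c\,\gamma/(nh)\,I$, this means the rank-one bias error cannot distort the inverse beyond a factor $1\pm\eta$.
\end{itemize}

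\textbf{Main obstacle and conclusion.} The hard step is exactly this relative (rather than absolute) consistency of $\hat M$ when the bias dominates the variance. A small absolute error in the bias direction might not be small relative to the variance in orthogonal directions. My first attempt would use the bound
\[
M^{-1/2}(\hat b\hat b^\top-bb^\top)M^{-1/2}\preceq \|M^{-1/2}(\hat b-b)\|\,\big(2\|M^{-1/2}b\|+\|M^{-1/2}(\hat b-b)\|\big),
\]
together with $\|M^{-1/2}b\|\le1$. This requires $\|\hat b-b\|=o_p(\sqrt{\gamma/(nh)})$, and I would secure it by consistency of the derivative estimator, possibly with a slightly undersmoothed pilot if needed.

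Having $\Pr(\mathcal E_n)\to1$, the data-splitting of Remark~\ref{rem:data_splitting} makes $\hat Q$ independent of the fine-tuning half only up to $\hat\sigma^2$. I would therefore compute $\hat\sigma^2$ from the Step~I half, so that conditional on $\hat Q\in\mathcal E_n$ the proof of Theorem~\ref{thm:rate-TL} applies verbatim. This gives the stated rate on an event of probability tending to one.
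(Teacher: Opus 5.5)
Your outline matches the paper's. You define $\mathcal E_n=\{\tfrac12Q^\star\preceq\hat Q\preceq2Q^\star\}$, get $\Pr(\mathcal E_n)\to1$ from consistency plus the slack in $\delta\in(1/2,2)$, and invoke Theorem~\ref{thm:rate-TL} on $\mathcal E_n$ (the paper does this via Markov's inequality conditional on $\mathcal E_n$).

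The paper does not prove consistency of $\hat\sigma^2(u_0)$ and $\hat M$. It reads ``consistent'' in the hypothesis as $\hat\sigma^2/\sigma^2=1+o_p(1)$ and $M\hat M^{-1}=I+o_p(1)$ and uses that directly. So your relative-consistency subproof is not needed, and as sketched it would not close anyway. Section~\ref{subsec:est-Q} extracts $\hat\btheta^{(\beta)}(u_0)$ from the same bandwidth-$h$ fit, whose stochastic error is of order $\sqrt{\gamma/(nh^{2\beta+1})}\asymp1$ at $h^*$. Hence $\|\hat b-b\|$ is of the same order as $\sqrt{\gamma/(nh)}$, not $o_p$ of it. Fixing this needs a \emph{larger} bandwidth for the derivative; undersmoothing goes the wrong way.

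The genuine gap is your last step. Computing $\hat\sigma^2$ on the Step~I half makes $\hat Q$ independent of the fine-tuning half. But $\hat Q$ is still a function of the Step~I sample (through $\hat\balpha_{\DVCM}$, its residuals and $\hat\btheta^{(\beta)}$), hence dependent on $\btau=\hat\btheta_{\DVCM}(u_0)-\btheta(u_0)$. The proof of Theorem~\ref{thm:rate-TL} rests on $\bbE[Q\btau\btau^\top Q]=QMQ$ for a fixed $Q$. With $\hat Q$ in place of $Q$, and after conditioning on the data-dependent event $\mathcal E_n$ (which also changes the law of $\btau$), this identity and the resulting Loewner comparison are lost. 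So the theorem does not apply verbatim.

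The paper's proof makes exactly the same leap (``by Theorem~\ref{thm:rate-TL}, $r^2\bbE[\,\cdot\mid\cE]\le C$''). You are no worse off than the paper, but neither argument is complete.

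A repair that needs no independence at all: with $S=\hat\Sigma_0+\hat Q$ and $v=n_0^{-1}\bX_0^\top\beps_0$,
\[
\hat\btheta_{\TL}(u_0)-\btheta(u_0)=S^{-1}(v+\hat Q\btau)=\btau+S^{-1}(v-\hat\Sigma_0\btau).
\]
Using $S^{-1}\preceq\hat\Sigma_0^{-1}$ and $S^{-1}\preceq\hat Q^{-1}$ together with the sandwich, on $\mathcal E_n$ you get
\[
\lambda_{\min}(\hat\Sigma_0)\,\|\hat\btheta_{\TL}(u_0)-\btheta(u_0)\|_2^2\le 2v^\top\hat\Sigma_0^{-1}v+\frac{4\sigma^2(u_0)}{n_0}\,\btau^\top M^{-1}\btau,
\]
\[
\lambda_{\min}(\hat\Sigma_0)\,\|\hat\btheta_{\TL}(u_0)-\btheta(u_0)-\btau\|_2^2\le \frac{4n_0}{\sigma^2(u_0)}\,v^\top Mv+2\btau^\top\hat\Sigma_0\btau.
\]
Since $M$ is deterministic, $\bbE[\btau^\top M^{-1}\btau]=p$, $\bbE\|\btau\|_2^2=\tr(M)$ and $\bbE[v^\top Mv\mid\bX_0]\le\sigma^2(u_0)\tr(M)/n_0$. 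Together with $\lambda_{\min}(\hat\Sigma_0)^{-1}=O_p(1)$, this yields $\|\hat\btheta_{\TL}(u_0)-\btheta(u_0)\|_2^2=O_p\big(n_0^{-1}\wedge\tr(M)\big)$. Proposition~\ref{prop:rate-LR-VCM} then bounds $\tr(M)$.
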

The proof of Corollary \ref{thm:rate-TL-Qest} can be found in Appendix \ref{proof:thm:rate-TL-Qest}. We conclude this section with a theorem establishing the minimax lower bound for estimating $\btheta(u_0)$, thereby confirming that our proposed estimator achieves the minimax-optimal rate.
 
\begin{theorem}
\label{thm:minimax-TL}
    Under Assumptions \ref{Assump:VCM-SN}-\ref{Assump:balance-sample}, for any $u_0\in \mathcal U$, it holds that
\begin{align*}
    & \inf_{\hat{\btheta}(u_0)} \sup_{\forall j \in [p], \theta_j \in \cH(\beta, L)} \bbE\left[\|\hat\btheta(u_0)-\btheta(u_0)\|_A^2\right] \ge  C'\left\{n_0^{-1} \wedge \max\left((K/\gamma)^{-2\beta}, (n/\gamma)^{-\frac{2\beta}{2\beta+1}}, n^{-1}\right)\right\}\,.
\end{align*}
\end{theorem}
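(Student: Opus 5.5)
Because the right-hand side is a minimum of $n_0^{-1}$ and a maximum of three terms, I will prove the lower bound in two pieces. First I show that the minimax risk is at least $c\,n_0^{-1}$ whenever $n_0^{-1}$ is the smaller quantity. Then I show it is at least $c\,\max\{(K/\gamma)^{-2\beta},(n/\gamma)^{-2\beta/(2\beta+1)},n^{-1}\}$ whenever that maximum is smaller. In fact I will show the stronger claim that each of the four terms, when it is the active one, is a lower bound up to constants. Throughout I take $A\succeq c I$ and Gaussian noise $\varepsilon_{ki}\sim\cN(0,\sigma^2)$ with fixed covariates, so that every KL divergence is an explicit quadratic form. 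Each piece reduces to a two-point Le Cam argument: exhibit $\btheta^{(0)},\btheta^{(1)}$ in the H\"older class with $\|\btheta^{(0)}(u_0)-\btheta^{(1)}(u_0)\|_2\asymp r$ and $\KL(P_0^{\otimes},P_1^{\otimes})\le c<\infty$, which gives risk $\gtrsim r^2$.

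\textbf{The terms $n^{-1}$ and $n_0^{-1}$.} For the $n^{-1}$ term I take constant functions $\btheta^{(0)}\equiv 0$ and $\btheta^{(1)}\equiv n^{-1/2}v$, with $\|v\|_2=1$. Since $\|X\|_2\le1$, the KL divergence over all $n$ observations is at most $n\cdot n^{-1}/(2\sigma^2)=O(1)$. For the $n_0^{-1}$ term (relevant when $n_0^{-1}$ is below the nonparametric rate) I use $\btheta^{(1)}(u)=r\,v\,\phi((u-u_0)/\tilde h)$, where $\phi$ is a smooth bump with $\phi(0)=1$, compact support, and H\"older norm at most $L$. The radius $r$ and width $\tilde h$ are chosen so that the perturbation is invisible to the sources. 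The target then contributes KL $\lesssim n_0 r^2$, and $r^2=n_0^{-1}$ is admissible provided the source contribution can be kept $O(1)$ with $r\le L\tilde h^\beta$.

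\textbf{The nonparametric terms.} These are the main work, and all use the same bump $\btheta^{(1)}(u)=L\tilde h^\beta v\,\phi((u-u_0)/\tilde h)$. Conditionally on the identifiers, the KL divergence equals $\sum_{k}n_k\,\|\btheta^{(1)}(U_k)\|^2/(2\sigma^2)\lesssim \bar n\,\tilde h^{2\beta}\,\#\{k:|U_k-u_0|\le\tilde h\}$. Under Assumption~\ref{Assump:VCM-UX}(a) the count has mean $\asymp K\tilde h/\gamma$, and Assumption~\ref{Assump:balance-sample} lets me replace each $n_k$ by $\bar n$. I then consider two cases.
\begin{itemize}
\item If $K\tilde h/\gamma\gtrsim1$, the KL divergence is of order $n\tilde h^{2\beta+1}/\gamma$. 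Choosing $\tilde h=(n/\gamma)^{-1/(2\beta+1)}$ makes it $O(1)$ and gives the rate $(n/\gamma)^{-2\beta/(2\beta+1)}$.
\item If $K\tilde h/\gamma\ll1$, i.e.\ $\tilde h\lesssim\gamma/K$, then with probability bounded below no source lies within $\tilde h$ of $u_0$. Only the target contributes, with KL $\lesssim n_0\tilde h^{2\beta}$. Taking $\tilde h=c\gamma/K$ yields the lower bound $(K/\gamma)^{-2\beta}$, valid as long as $n_0(K/\gamma)^{-2\beta}\lesssim1$. Otherwise $n_0^{-1}$ is smaller and the previous construction applies, which is exactly how the $\wedge$ structure arises.
\end{itemize}

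Since the statement takes expectation over $U_k$, I will apply Le Cam conditionally on $\Gamma$, restricted to the event where the counts behave as described. That event has probability bounded below, by Chernoff bounds for the count in the first case and by $(1-c\tilde h/\gamma)^K\ge e^{-c'}$ in the second. Integrating then gives the unconditional bound. I expect the main obstacle to be the careful matching of regimes: showing that, in every configuration of $(n_0,n,K,\gamma)$, one of these constructions attains the stated $\min/\max$ expression, particularly the interplay between the target's own contribution to the KL divergence and the empty-neighborhood event in the $(K/\gamma)^{-2\beta}$ regime.
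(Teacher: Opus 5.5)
Your proposal is correct, and its skeleton matches the paper's. Both use a two-point Le Cam argument with Gaussian noise, test $\btheta\equiv 0$ against a bump of amplitude $L\tilde h^{\beta}$ and width $\tilde h$ in a unit direction $v$, and bound the KL divergence by $\tilde h^{2\beta}$ times the sample mass of domains within $\tilde h$ of $u_0$. The genuine difference is how the randomness of the $U_k$ is handled.

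\emph{The paper's route.} The paper never conditions. It uses that the joint KL equals the expected conditional KL, so the sources contribute $\lesssim \tilde h^{2\beta}(\tilde h/\gamma)\sum_{k\ge 1}n_k$. It then works with three bandwidths, $n_0^{-1/(2\beta)}\wedge(\gamma/K)$, $n^{-1/(2\beta)}$, and $(n/\gamma)^{-1/(2\beta+1)}\wedge n_0^{-1/(2\beta)}$, and takes the maximum of the three resulting bounds. Since $\wedge\, n_0^{-1/(2\beta)}$ is built into each bandwidth and $n^{-1}\le n_0^{-1}$, this maximum is exactly the right-hand side. So the regime matching you flag as the main obstacle disappears, and you can adopt the same packaging.

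\emph{What your conditioning buys.} The price of the paper's route shows up in the first bandwidth. There the unconditional bound is only $\lesssim h^{2\beta}(n_0\vee \bar n)$, and the paper makes it $O(1)$ by fixing $n_k\equiv\bar n$ for all $k$, including $n_0=\bar n$. You instead condition on $\Gamma$ and on the event that no source lies within $c\gamma/K$ of $u_0$, which has probability at least $(1-2a_0c/K)^K\ge e^{-c'}$. On that event only the target's contribution $n_0\tilde h^{2\beta}$ remains. You therefore obtain the bound $(\gamma/K)^{2\beta}\wedge n_0^{-1}$ for every $n_0\le b_0\bar n$, including $n_0\ll \bar n$. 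Assumption~\ref{Assump:balance-sample} permits that regime, and the paper's unconditional computation does not reach it, so this is a real gain in generality. The conditioning step is valid because $\Gamma$ has the same law under both hypotheses, so the maximal risk is at least $\bbE_\Gamma$ of the conditional average risk.

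\emph{Two small remarks.} First, in your other regime no Chernoff bound on the count is needed: the expected KL (or Markov's inequality) suffices. There the target term $n_0\tilde h^{2\beta}\lesssim \bar n\tilde h^{2\beta}$ is dominated by $n\tilde h^{2\beta+1}/\gamma$ once $K\tilde h/\gamma\gtrsim 1$. Second, the restriction $A\succeq cI$ is unnecessary. Taking $v$ to be the top eigenvector of $A$, as the paper does, handles any nonzero $A\succeq 0$, with the constant scaling with $\lambda_{\max}(A)$.
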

The proof of the above theorem is provided in Appendix~\ref{proof:thm:minimax-TL}. Together, Theorem~\ref{thm:rate-TL} and Theorem~\ref{thm:minimax-TL} establish the minimax optimality of our proposed estimator. 

\subsection{Inference with linear DVCM}
\label{sec:linear-infer}
In the previous subsection, we established the rate of convergence of $\hat{\btheta}(u_0)$. However, convergence rates alone are insufficient for inferential tasks, such as testing hypotheses of the form $H_0:\btheta(u_0)=\mathbf{0}$ versus $H_1:\btheta(u_0)\neq\mathbf{0}$. To this end, we now establish the asymptotic normality of the proposed estimator. We begin by introducing modifications to the earlier assumptions required to derive the asymptotic normality result.
\setcounter{assumptionprime}{1}
\begin{assumptionprime}[Modification of Assumption \ref{Assump:VCM-UX}]
\label{Assump:VCM-UX-new}
The distribution of $(U, X, Y)$ is assumed to satisfy the conditions of Assumption \ref{Assump:VCM-UX}. Furthermore, the conditional second moment matrix \(\Psi(u) = \bbE[XX^\top \mid U = u]\) is positive definite and continuous almost everywhere on \(\cU\). Moreover, there exists
constants \(0<c_0'<c_0<\infty\) such that, almost surely for all \(u\in\cU\),
\[
c_0' \le \lambda_{\min}\big(\Psi(u)\big)\le \lambda_{\max}\big(\Psi(u)\big)\le c_0.
\]
\end{assumptionprime}
\noindent 
\noindent
Compared to Assumption~\ref{Assump:VCM-UX}, Assumption~\ref{Assump:VCM-UX-new} imposes an additional uniform lower and upper bound on the conditional variance matrix of $X$ given $U$ to facilitate Lindeberg-type central limit theorem arguments. Towards presenting our main result, let us introduce $r_{\LR}$ and $r_{\DVCM}$, which denote the convergence rates of $\hat\btheta_{\LR}(u_0)$ and $\hat\btheta_{\DVCM}(u_0)$, respectively, i.e. 
$\hat\btheta_{\LR}(u_0)-\btheta(u_0)=O_p(r_{\LR})$ and
$\hat\btheta_{\DVCM}(u_0)-\btheta(u_0)=O_p(r_{\DVCM})$.
We define their relative rate by $\rho_n:= r_{\LR}/r_{\DVCM}$. Note that we allow $r_{\DVCM}$ to be bounded away from $0$, implying the non-informativeness of the sources. 
The following theorem characterizes the asymptotic distribution of the proposed
transfer learning estimator using $\rho_n$.

\begin{theorem}\label{thm:Dconv-LR-TF-0-infty}
Suppose Assumptions~\ref{Assump:VCM-SN}, \ref{Assump:VCM-UX-new},
\ref{Assump:balance-sample}, and \ref{Assump:Unif-Kernel} hold.
Let $\hat\btheta_{\TL}(u_0)$ be constructed as in
\eqref{eqn:est-DVCM-linear}, with the shrinkage matrix $\hat Q$
defined in Section~\ref{subsec:est-Q} and the bandwidth parameter $h$ is chosen to satisfy: 
\begin{equation}
\textstyle
\label{eq:bandwidth_cond_inf}
\gamma/K \ll h \ll \left(\gamma/n\right)^{\frac{1}{2\beta + 1}} \,.
\end{equation}
Then the adaptive transfer learning estimator $\hat\btheta_{\TL}(u_0)$
satisfies the following asymptotic results:
\begin{align*}
&\text{If } \rho_n \to 0,\quad
\sqrt{n_0}\bigl(\hat\btheta_{\TL}(u_0)-\btheta(u_0)\bigr)
\xrightarrow[]{d}
\cN\!\bigl(0,\Omega_{\LR}(u_0)\bigr),\\[0.5em]
&\text{If } \rho_n \to \infty, \quad
\sqrt{\frac{nh}{\gamma}}
\bigl(\hat\btheta_{\TL}(u_0)-\btheta(u_0)\bigr)
\xrightarrow[]{d}
\cN\!\bigl(0,\Omega_{\DVCM}(u_0)\bigr).
\end{align*}
Here $\Omega_{\LR}(u_0)$ and $\Omega_{\DVCM}(u_0)$ denote the asymptotic
covariance matrices of $\hat\btheta_{\LR}(u_0)$ and
$\hat\btheta_{\DVCM}(u_0)$, respectively. 
\end{theorem}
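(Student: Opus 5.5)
The plan is to write $\hat\btheta_{\TL}(u_0)$ as a matrix-weighted average of the target-only estimator and the pilot. With data splitting (Remark~\ref{rem:data_splitting}), write $\hat\Sigma_0 = n_0^{-1}\bX_0^\top\bX_0$. Equation~\eqref{eqn:TL-linear} then gives
\begin{equation*}
\hat\btheta_{\TL}(u_0)-\btheta(u_0) = (\hat\Sigma_0+\hat Q)^{-1}\hat\Sigma_0\bigl(\hat\btheta_{\LR}(u_0)-\btheta(u_0)\bigr) + (\hat\Sigma_0+\hat Q)^{-1}\hat Q\bigl(\hat\btheta_{\DVCM}(u_0)-\btheta(u_0)\bigr).
\end{equation*}
The two estimation errors are built from disjoint samples: the pilot uses the first half of the target data plus the sources, and $\hat\btheta_{\LR}$ uses the second half. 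They are therefore conditionally independent given $\Gamma$. By Assumption~\ref{Assump:VCM-UX-new} and the law of large numbers, $\hat\Sigma_0 \to \Psi(u_0)$ in probability, and its eigenvalues are bounded away from $0$ and $\infty$.

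The first step is to pin down the size of $\hat Q$. The bandwidth condition \eqref{eq:bandwidth_cond_inf} is an undersmoothing condition. The upper bound $h\ll(\gamma/n)^{1/(2\beta+1)}$ makes the squared bias $h^{2\beta}$ negligible relative to the variance $\gamma/(nh)$. The lower bound $h\gg\gamma/K$ guarantees that of order $Kh/\gamma\to\infty$ source domains fall in the window, so the local polynomial CLT applies. Consequently $M(\hat\btheta_{\DVCM}(u_0))\asymp r_{\DVCM}^2 = \gamma/(nh)$, and $\sqrt{nh/\gamma}\,(\hat\btheta_{\DVCM}(u_0)-\btheta(u_0))\to\cN(0,\Omega_{\DVCM}(u_0))$ by a Lindeberg CLT applied to the closed form \eqref{eq:DVCM-closed-form}, conditionally on $\Gamma$. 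Here the fourth moments in Assumption~\ref{Assump:VCM-UX}(b) verify the Lindeberg condition, and Assumption~\ref{Assump:VCM-UX}(c) controls the inverse. Using the consistency of $\hat\sigma^2$, of the sandwich variance estimator, and of the plug-in bias estimator (as in Corollary~\ref{thm:rate-TL-Qest}), I would show $\hat Q = \delta\,\rho_n^2\,\tilde Q_n$, where $\tilde Q_n$ has eigenvalues bounded above and below in probability.

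Case $\rho_n\to0$: here $\hat Q = o_p(1)$, so $(\hat\Sigma_0+\hat Q)^{-1}\hat\Sigma_0 = I+o_p(1)$, and the first term is asymptotically $\sqrt{n_0}$-normal with covariance $\Omega_{\LR}(u_0)=\sigma^2(u_0)\Psi(u_0)^{-1}$ by the standard OLS CLT and Slutsky. For the second term, $\sqrt{n_0}\,\|\hat Q\|\,\|\hat\btheta_{\DVCM}(u_0)-\btheta(u_0)\| = O_p(\sqrt{n_0}\,\rho_n^2 r_{\DVCM}) = O_p(\rho_n)$, since $\sqrt{n_0}\,r_{\DVCM} = \rho_n^{-1}$. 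This is $o_p(1)$.

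Case $\rho_n\to\infty$: now $\hat Q^{-1} = O_p(\rho_n^{-2})$, so $(\hat\Sigma_0+\hat Q)^{-1}\hat Q = I - (\hat\Sigma_0+\hat Q)^{-1}\hat\Sigma_0 = I+O_p(\rho_n^{-2})$. The second term, multiplied by $\sqrt{nh/\gamma} = r_{\DVCM}^{-1}$, converges to $\cN(0,\Omega_{\DVCM}(u_0))$. The first term, multiplied by the same factor, is $O_p(\rho_n^{-2}\cdot r_{\LR}/r_{\DVCM}) = O_p(\rho_n^{-1})$, which is $o_p(1)$. Slutsky then concludes.

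The main obstacle is the first step. There I must show that the randomness in $\hat Q$ (estimated bias, variance and $\hat\sigma^2$) does not destroy the scaling $\hat Q\asymp\rho_n^2$ uniformly, and that the CLT for $\hat\btheta_{\DVCM}$ holds with the bias term genuinely negligible under \eqref{eq:bandwidth_cond_inf}. A related difficulty is that the random design in $U_k$ enters the effective sample size $\sum_k n_k W(t_k)\asymp nh/\gamma$. I would handle this by first conditioning on $\Gamma$ and showing that this sum concentrates, using the fact that its count is binomial with mean of order $Kh/\gamma\to\infty$, together with Assumption~\ref{Assump:balance-sample}. I would then apply the conditional Lindeberg CLT and pass to the unconditional limit by dominated convergence of characteristic functions.
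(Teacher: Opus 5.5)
Your proposal is correct and is essentially the paper's argument. The paper likewise writes $\btau_{\TL}=(\Psi(u_0)+\hat Q)^{-1}\{\hat Q\,\btau_{\DVCM}+\Psi(u_0)\,\btau_{\LR}\}(1+o_p(1))$ and takes $\hat Q\asymp_p\rho_n^2 I$. In each regime it then shows that the non-dominant term is smaller by a factor $O_p(\rho_n)$ or $O_p(\rho_n^{-1})$, using $(\Psi+\hat Q)^{-1}\hat Q=I-(\Psi+\hat Q)^{-1}\Psi$ when $\rho_n\to\infty$. The only real difference is that the paper imports the DVCM limit and the negligibility of $\bb_{\DVCM}(u_0)$ under $nh^{2\beta+1}/\gamma\to0$ from Proposition~\ref{prop:asymp-LR-VCM}, rather than re-deriving them conditionally on $\Gamma$ as you plan.

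Two smaller remarks. Your exact identity with $\hat\Sigma_0$ is cleaner than the paper's $\Psi(u_0)+o_p(1)$ substitution. For $\hat Q\asymp_p\rho_n^2$ you should not lean on consistency of the plug-in bias estimator: under undersmoothing, reading $\hat\btheta^{(\beta)}(u_0)$ off the same fit gives error of order $\sqrt{\gamma/(nh^{2\beta+1})}\to\infty$. This does no harm, because all you need is that $\widehat{\Var}$ has eigenvalues of exact order $\gamma/(nh)$ and that the positive semidefinite term $\widehat{\Bias}^{\otimes2}$ is $O_p(\gamma/(nh))$.
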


The proof of this theorem is deferred to Appendix \ref{proof:thm:Dconv-LR-TF-0-infty}. 
The theorem highlights the adaptivity of the transfer learning estimator. When $\rho_n \to 0$, the target-only estimator converges at a faster rate than the DVCM estimator, and consequently, the transfer learning estimator attains the same convergence rate as the target-only estimator. In contrast, when $\rho_n \to \infty$, the DVCM estimator converges faster than the target-only estimator, and the transfer learning estimator correspondingly achieves a convergence rate comparable to that of the DVCM estimator.

The asymptotic normality results, particularly in the regime where $\rho_n \to \infty$, require additional conditions on the choice of the bandwidth $h$ (see Equation~\eqref{eq:bandwidth_cond_inf}). To motivate the condition, let us briefly recall the classical bandwidth condition required for establishing the asymptotic normality of a pointwise nonparametric regression estimator. Suppose we observe $Y_i = f_*(X_i) + \varepsilon_i$, where $f_*$ belongs to a $\beta$-Hölder class and we wish to estimate $f_*(x_0)$ at a fixed point $x_0$. In this setting, the bandwidth that achieves the minimax-optimal rate is $h_* \propto n^{-1/(2\beta + 1)}$, which yields the optimal convergence rate $n^{-\beta/(2\beta + 1)}$. However, this bandwidth choice does not generally yield a centered asymptotic normal distribution, because the bias term is of the same order as the stochastic fluctuations. To establish asymptotic normality, then one should either correct for the bias or performs undersmoothing \cite{hall1992effect,calonico2018effect}, i.e., choose $h$ such that $nh^{2\beta+1} \to 0$, which would yield: s
\begin{equation*}
    \textstyle
\sqrt{nh}(\hat f(x_0) - f_*(x_0)) \implies \cN(0,\sigma^2),
\end{equation*}
where $\sigma^2$ is the asymptotic variance. The undersmoothing condition ensures that the bias term $h^{2\beta}$ is asymptotically negligible relative to the stochastic error $1/nh$, thereby yielding a centered normal limit. This centering is essential for constructing valid $(1-\alpha)$-level confidence intervals. The trade-off is a slightly slower rate of convergence, since $\sqrt{nh}$ is strictly smaller than the minimax-optimal rate $n^{\beta/(2\beta + 1)}$ whenever $n h^{2\beta+1} \to 0$. In this paper, we take this undersmoothing approach. 

The bandwidth condition in Equation~\eqref{eq:bandwidth_cond_inf} reflects precisely the undersmoothing phenomenon discussed above. In particular, the requirement $n h^{2\beta+1}/\gamma \to 0$ ensures that the squared bias of $\hat\btheta_{\DVCM}(u_0)$, which is of order $h^{2\beta}$, is asymptotically negligible compared to its variance term, which is of order $\gamma/(nh)$. Consequently, the stochastic fluctuations dominate the bias, leading to an asymptotically normal distribution centered at zero. The additional condition $Kh/\gamma \to \infty$ guarantees that the effective number of source domains satisfying $|U_k-u_0|\le h$ diverges (recall that $\gamma/K$ is the order of the minimum distance between the target and the source indicators). In other words, the local polynomial estimator underlying $\hat\btheta_{\DVCM}(u_0)$ is constructed from an increasing amount of source-domain information. Without this condition, the number of contributing domains would remain bounded, precluding the application of central limit theorem arguments and hence preventing asymptotic normality. Together, these conditions ensure that the estimator is both bias-negligible and supported by a sufficiently large effective sample size, thereby yielding a valid Gaussian limit suitable for inference.
It is apparent from Theorem~\ref{thm:Dconv-LR-TF-0-infty} that the convergence rate and limiting variance of $\hat{\btheta}_{\TL}(u_0)$ depend on whether $\rho_n \to 0$ or $\rho_n \to \infty$. However, in practice, the true regime is typically unknown. Therefore, a unified representation of the limiting variance is necessary for drawing valid inferences across all regimes. The following corollary serves this purpose: 
\begin{corollary}
    \label{coro:TL-linear-infer} 
    Under the conditions of Theorem~\ref{thm:Dconv-LR-TF-0-infty}, the estimator $\hat{\btheta}_{\TL}(u_0)$ satisfies: 
    \begin{equation*} 
    \textstyle
    \hat{\Sigma}_{\TL}^{-1/2} \left( \hat{\btheta}_{\TL}(u_0) - \btheta(u_0) \right) \xrightarrow{d} \cN(0, I), 
    \end{equation*} 
    where the unified covariance estimator is given by:
    \begin{equation*} 
    \textstyle
    \hat{\Sigma}_{\TL} = B_Q^{-1} \hat{Q} \, \hat{V}_{\DVCM}(u_0) \, \hat{Q} B_Q^{-1} + B_Q^{-1} \hat{\Psi}(u_0) \, \hat{V}_{\LR}(u_0) \, \hat{\Psi}(u_0) B_Q^{-1}, 
    \end{equation*} 
    with $B_Q = \hat{\Psi}(u_0) + \hat Q$. 
\end{corollary}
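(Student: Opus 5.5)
The starting point is the closed form \eqref{eqn:TL-linear} with $\hat Q$ in place of $Q$. Write $\hat\Psi(u_0)=n_0^{-1}\bX_0^\top\bX_0$ and $B_Q=\hat\Psi(u_0)+\hat Q$. Then
\[
\hat\btheta_{\TL}(u_0)-\btheta(u_0)=B_Q^{-1}\hat\Psi(u_0)\bigl(\hat\btheta_{\LR}(u_0)-\btheta(u_0)\bigr)+B_Q^{-1}\hat Q\bigl(\hat\btheta_{\DVCM}(u_0)-\btheta(u_0)\bigr),
\]
which is an exact identity. Under the data-splitting scheme of Remark~\ref{rem:data_splitting}, the two errors $T_1=\hat\btheta_{\LR}(u_0)-\btheta(u_0)$ (second half of the target) and $T_2=\hat\btheta_{\DVCM}(u_0)-\btheta(u_0)$ (sources plus first half) are independent given $\Gamma$. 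Define the oracle $\Sigma_{\TL}=B_Q^{-1}\hat Q V_{\DVCM}\hat Q B_Q^{-1}+B_Q^{-1}\hat\Psi V_{\LR}\hat\Psi B_Q^{-1}$, with $V_{\LR}=\sigma^2(u_0)\hat\Psi^{-1}/n_0$ and $V_{\DVCM}$ the sandwich variance of $\hat\btheta_{\DVCM}(u_0)$. The plan is to prove the CLT first with $\Sigma_{\TL}$ and then replace it by $\hat\Sigma_{\TL}$.

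\textbf{Step 1: joint normality of the two pieces.} Conditionally on $\Gamma$ and on $\hat Q$, $\Sigma_{\TL}^{-1/2}$ applied to the display is a sum of two independent terms. By Theorem~\ref{thm:Dconv-LR-TF-0-infty}'s ingredients (its proof already establishes these), $V_{\LR}^{-1/2}T_1\to\cN(0,I)$ by Lindeberg under Assumption~\ref{Assump:VCM-UX-new}. Likewise $V_{\DVCM}^{-1/2}T_2\to\cN(0,I)$, where the bandwidth condition \eqref{eq:bandwidth_cond_inf} kills the bias ($nh^{2\beta+1}/\gamma\to0$) and guarantees a diverging number of contributing domains ($Kh/\gamma\to\infty$). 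Write $\Sigma_{\TL}=G_1G_1^\top+G_2G_2^\top$ with $G_1=B_Q^{-1}\hat Q V_{\DVCM}^{1/2}$ and $G_2=B_Q^{-1}\hat\Psi V_{\LR}^{1/2}$. Then $\Sigma_{\TL}^{-1/2}(\cdot)=\Sigma_{\TL}^{-1/2}G_1\xi_1+\Sigma_{\TL}^{-1/2}G_2\xi_2$, where $\xi_1,\xi_2$ are asymptotically standard normal and independent. The matrices $\Sigma_{\TL}^{-1/2}G_j$ satisfy $\sum_j(\Sigma_{\TL}^{-1/2}G_j)(\cdot)^\top=I$, so their norms are uniformly bounded. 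A characteristic-function argument (conditioning on one block, which is legitimate by independence, and using uniform convergence of characteristic functions over bounded coefficient matrices) then gives $\cN(0,I)$ in the limit, regardless of which regime holds. This is why the unified form works: the case split of Theorem~\ref{thm:Dconv-LR-TF-0-infty} is never needed, and intermediate $\rho_n$ is handled too.

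\textbf{Step 2: randomness of $\hat Q$.} $\hat Q$ depends on the DVCM fit through $\widehat{\Bias}$, $\widehat{\Var}$, and on $\hat\sigma^2$. One could avoid the resulting dependence on $T_2$ by showing $\hat Q=Q^\dagger(1+o_p(1))$ in the matrix sense $(Q^\dagger)^{-1/2}\hat Q(Q^\dagger)^{-1/2}\to I$, with $Q^\dagger$ deterministic given $\Gamma$. Corollary~\ref{thm:rate-TL-Qest}'s consistency of $\hat M$ and $\hat\sigma^2$ gives this. Then replacing $\hat Q$ by $Q^\dagger$ inside $B_Q^{-1}\hat Q$ changes the term by $o_p$ of its own scale, which is measured by $\Sigma_{\TL}^{1/2}$.

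\textbf{Step 3: variance estimation.} Show $\Sigma_{\TL}^{-1/2}\hat\Sigma_{\TL}\Sigma_{\TL}^{-1/2}\to I$ in probability, using the relative consistency of $\hat V_{\DVCM}$ (sandwich estimator, weak LLN) and of $\hat V_{\LR}=\hat\sigma^2\hat\Psi^{-1}/n_0$. Because $\hat\Sigma_{\TL}$ is a sum of two PSD pieces, relative consistency of each piece transfers to the sum. Slutsky then finishes the argument.

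\textbf{Main obstacle.} The delicate point is Step 2 combined with the scaling. The two components may live on very different scales ($n_0^{-1}$ versus $\gamma/(nh)$), so all approximations must be \emph{relative, in matrix sense}, not in absolute norm. The bias part of $\hat M$ is also estimated with $h$ undersmoothed, so I must check that $\widehat{\Bias}^{\otimes2}$ is either relatively consistent or negligible compared with $\widehat{\Var}$. It suffices that $\hat Q$ be relatively close to some $\Gamma$-measurable matrix, since the limit statement holds for any such deterministic sequence of weights. I would first try to prove exactly that, using the bandwidth condition to show $\widehat{\Bias}^{\otimes2}=o_p(\widehat{\Var})$.
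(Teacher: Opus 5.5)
\textbf{Verdict: there is a genuine gap, in Step~2.} Your exact identity, the conditioning on $\Gamma$, the characteristic-function argument of Step~1 (for $\Gamma$-measurable weights) and the transfer of relative consistency in Step~3 are all sound. They are more careful than the paper's proof, which uses the same decomposition but applies the continuous mapping theorem to $B_Q^{-1}\hat Q\,\btau_{\DVCM}$ as if $\hat Q$ were fixed. The route you propose for Step~2, however, fails.

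\textbf{Why $\widehat{\Bias}^{\otimes2}=o_p(\widehat{\Var})$ is false.} Undersmoothing makes the \emph{true} bias negligible, but not its estimate. The plug-in $\widehat{\Bias}$ of Section~\ref{subsec:est-Q} is a constant times $h^\beta\hat\btheta^{(\beta)}(u_0)=h^\beta\tilde\bA_\beta\hat\balpha_{\DVCM}$. This is simply the $\beta$-th coefficient block of a local-polynomial fit at the same bandwidth $h$. That block has standard deviation of order $\sqrt{\gamma/(nh)}$, exactly like the level block $\hat\btheta_{\DVCM}(u_0)$. So under \eqref{eq:bandwidth_cond_inf} the derivative estimate is not $O_p(1)$; it is of order $h^{-\beta}\sqrt{\gamma/(nh)}\to\infty$, and
\[
\widehat{\Bias}=O(h^\beta)+O_p\bigl(\sqrt{\gamma/(nh)}\bigr),
\]
with a non-degenerate second term whenever $[\hat\zeta_{0,1}^{-1}\hat\zeta_{\beta,1}]_{1,1}$ stays away from zero. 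Hence $(nh/\gamma)\widehat{\Bias}^{\otimes2}$ converges to a random rank-one matrix. It is of the same order as $\widehat{\Var}$, and it is neither $o_p(\widehat{\Var})$ nor relatively consistent. So no $\Gamma$-measurable $Q^\dagger$ satisfies $(Q^\dagger)^{-1/2}\hat Q(Q^\dagger)^{-1/2}\to I$.

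Citing Corollary~\ref{thm:rate-TL-Qest} does not close this. There the consistency of $\hat M$ is assumed rather than proved, and it is stated for the rate-optimal bandwidth.

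\textbf{Consequence for intermediate $\rho_n$.} Your claim that $\rho_n\asymp1$ is covered is unsupported. In that regime $B_Q^{-1}\hat Q$ is random at leading order and is built from the same data as $T_2$. Your characteristic-function argument would then additionally need the fluctuation of $\widehat{\Bias}$ to be asymptotically independent of $T_2$. Whether that holds depends on how $\hat\btheta^{(\beta)}$ is built. If it is read off the same fit as $\hat\btheta_{\DVCM}(u_0)$, the two blocks are in general correlated, and the limit is a Gaussian mixture that need not be $\cN(0,I)$.

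\textbf{What does go through.} The corollary only inherits the regimes $\rho_n\to0$ and $\rho_n\to\infty$ of Theorem~\ref{thm:Dconv-LR-TF-0-infty}, and for those a weaker fact suffices:
\[
c\rho_n^2\le\lambda_{\min}(\hat Q)\le\lambda_{\max}(\hat Q)\le C\rho_n^2 \quad \text{with probability tending to one}.
\]
This follows from $\widehat{\Var}\preceq\hat M\preceq\widehat{\Var}+\|\widehat{\Bias}\|_2^2\,I$, where both bounds are of exact order $\gamma/(nh)$ in probability. With only this bound:
\begin{itemize}
\item When $\rho_n\to0$, $B_Q^{-1}\hat Q=O_p(\rho_n^2)$.
\item When $\rho_n\to\infty$, $I-B_Q^{-1}\hat Q=B_Q^{-1}\hat\Psi(u_0)=O_p(\rho_n^{-2})$.
\end{itemize}
In both cases the leading-order randomness of $\hat Q$ drops out. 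Then $\hat\btheta_{\TL}(u_0)-\btheta(u_0)$ equals $T_1$ or $T_2$ up to $o_p$ of its own scale. The non-dominant summand of $\hat\Sigma_{\TL}$ is $o_p$ of the dominant one, and your Step~3 plus Slutsky finish the proof. This is exactly the regime split you hoped to avoid.

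\textbf{For a genuinely regime-free statement} you must change $\hat Q$. For example, drop the bias term under undersmoothing, where it is negligible anyway. Alternatively, estimate $\btheta^{(\beta)}$ with a pilot bandwidth $g$ satisfying $ng^{2\beta+1}/\gamma\to\infty$. Then $\widehat{\Bias}^{\otimes2}=o_p(\widehat{\Var})$ really holds, and your Steps~1--3 go through as written.
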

The proof of this corollary can be found in  Appendix~\ref{proof:coro:TL-linear-infer}. 
This corollary gives the practitioner a concrete form of the standard error of the $\hat \btheta_{\rm TL}(u_0)$, which, relies on $\hat\Psi(u_0)$, $\hat V_{\LR}(u_0)$ and $\hat V_{\DVCM}(u_0)$, consistent estimators for $\Psi(u_0)$, $\Var(\hat \btheta_{\LR}(u_0))$ and $\Var(\hat \btheta_{\DVCM}(u_0))$, respectively. One can easily construct such consistent estimators by taking their sample analogues, as prescribed below:  
\begin{enumerate}
    \item  $\hat\Psi(u_0)=\tfrac{1}{n_0}\sum_{i \in \cI_0}X_{0i}X_{0i}^\top$;
    \item $\hat V_{\LR}(u_0) = \hat\Psi(u_0)^{-1}[\tfrac{1}{n_0}\sum_{i\in\cI_0} X_{0i}X_{0i}^\top \, \hat\varepsilon_{0i}(u_0)^2]\hat\Psi(u_0)^{-1}$ with $\hat\varepsilon_{0i}(u_0)=Y_{0i}-X_{0i}^\top \hat\btheta_{\LR}(u_0)$;
    \item $\hat V_{\DVCM}(u_0)$ is defined in the same way as in Equation~\eqref{eqn:var-est-VCMS-in}.
\end{enumerate}
It is possible to readily use the result of the above corollary for various types of inference problems.
For instance, to test the null hypothesis $\btheta(u_0) = \bw$ for a given vector $\bw$, one may consider the test statistic $T_n = \|\hat{\Sigma}_{\TL}^{-1/2}(\hat{\btheta}_{\TL}(u_0) - \bw)\|_2^2$, which, under the null hypothesis, follows a $\chi^2$ distribution with $p$ degrees of freedom, where $p$ denotes the dimension of $X$.
Furthermore, to test a linear contrast of the form $\bv^\top \btheta(u_0) = \zeta$ for a given scalar $\zeta$, one may use the statistic $T_n = (\bv^\top \hat{\btheta}_{\TL}(u_0) - \zeta)/\sqrt{\bv^\top \hat{\Sigma}_{\TL} \bv}$, which converges in distribution to $\cN(0,1)$ under the null hypothesis.

\begin{remark}
A key technical ingredient in the proof of Theorem~\ref{thm:Dconv-LR-TF-0-infty} is the derivation of the limiting distribution of $\hat{\btheta}_{\DVCM}(u_0)$ under the regime $\rho_n \to \infty$. This result is formalized in Proposition~\ref{prop:asymp-LR-VCM}, stated in Appendix~\ref{sec:aux-lem}. Briefly, the proposition establishes that if the bandwidth $h$ is chosen to satisfy condition~\eqref{eq:bandwidth_cond_inf}, then under the stated assumptions,
\begin{equation*}
\textstyle
  \sqrt{nh/\gamma}\left(\hat\btheta_{\DVCM}(u_0) - \btheta(u_0) - \bb_{\DVCM}(u_0)\right) 
    \xrightarrow[]{d} \cN\left( 0, \Omega_{\DVCM}(u_0)\right) \,,
\end{equation*}
where
\begin{equation*}
\textstyle
\Omega_{\DVCM}(u_0) = \sigma^2(u_0)\left[\zeta_{0,1}^{-1}\,\zeta_{0,2}\,\zeta_{0,1}^{-1}\right]_{1,1} \Psi(u_0)^{-1}, \ \ \zeta_{r,s} = \int \bPhi_l(t)^{\otimes 2} t^r W^s(t) f\Big(\tfrac{u_0 - u^* + ht}{\gamma}\Big)\,dt \,,
\end{equation*}
and $\bb_{\DVCM}(u_0)\asymp h^\beta$ denotes the bias term. Here, $u^*$ is same as defined as in Assumption~\ref{Assump:VCM-UX}. This result naturally generalizes the classical asymptotic theory for varying-coefficient models (VCMs), as developed in \cite{fan1999statistical}, by allowing multiple observations per domain value $u$. In particular, when the per-domain sample size satisfies $n_k=1$ for all $k$, Proposition~\ref{prop:asymp-LR-VCM} reduces to the standard asymptotic normality result for the classical VCM estimator of \cite{fan1999statistical}.
\end{remark}

\medskip
\noindent
{\bf Does our choice of $h$ make $\hat\btheta_{\TL}(u_0)$ adaptive?} A natural question is whether the bandwidth choice in Equation~\eqref{eq:bandwidth_cond_inf} renders $\hat\btheta_{\TL}(u_0)$ adaptive, or whether it could lead to negative transfer. However, a closer inspection of our arguments (see Appendix \ref{proof:thm:rate-TL}), shows that the following conclusion holds regardless of the bandwidth choice:
\begin{equation*}
    \textstyle
    \mse_A\left(\hat\btheta_{\TL}(u_0) \right) \leq \min\left\{\mse_A\left(\hat\btheta_{\DVCM}(u_0) \right),\mse_A\left(\hat\btheta_{\LR}(u_0) \right)\right\}\,. 
\end{equation*}
as long as $Q$ satisfies Equation \eqref{eqn:choice_Q} (which $\hat Q$ satisfies with probability going to $1$, as established in Corollary \ref{thm:rate-TL-Qest}). In particular, any bandwidth $h$ satisfying Equation~\eqref{eq:bandwidth_cond_inf} still guarantees that $\hat\btheta_{\TL}(u_0)$ is free from negative transfer, since its risk never exceeds that of the target-only baseline. 

\subsection{Extension to generalized DVCM}
\label{sec:G-linear}
In this section, we establish theoretical properties of $\hat \btheta_{\rm TL}(u_0)$ under the assumption that the data are generated from a generalized linear model (see Equation~\eqref{eqn:dgp_glm}). As discussed in Section~\ref{subsec:method-TL-GLM}, the proposed estimation procedure is closely related to that of the linear model. The key difference lies in replacing the squared-error loss with a more general negative log-likelihood loss, which is appropriate for the GLM framework. The assumptions are similar to that for the linear DVCM model, except we modify Assumption \ref{Assump:VCM-UX-new} as follows: 
\setcounter{assumptiondoubleprime}{1}
\begin{assumptiondoubleprime}[Modification of Assumption \ref{Assump:VCM-UX-new}]
\label{Assump:VCM-UX-Generalized}
    The distribution of $(U, X, Y)$ is assumed to satisfy the conditions of Assumption \ref{Assump:VCM-UX-new}, but with the conditional second moment matrix defined as \(\bbE\left[b''\big(X^\top\btheta(U)\big)\,XX^\top \mid U=u\right]\). Furthermore, it is assumed that $\sup_{u\in\cU}\bbE\left[|b^{(3)}\big(X^\top\theta(u)\big)|^4\,\middle|\,U=u\right]$ is uniformly bounded. \hfill\relax
\end{assumptiondoubleprime}

\noindent{\bf Discussion on the augmented assumptions:}  
In Assumption~\ref{Assump:VCM-UX-Generalized}, we generalize the definition of $\Psi(\cdot)$ (defined in Assumption~\ref{Assump:VCM-UX-new}) by incorporating the second derivative of the mean function $b(\cdot)$. Note that, for the linear model, $b(x)=x^2/2$ and hence $b''(x)=1$, in which case the new definition of $\Psi(\cdot)$ reduces to the form given in Assumption~\ref{Assump:VCM-UX-new} for the linear DVCM model. Moreover,  a mild bounded-moment condition on $b^{(3)}\!\left(X^\top \theta(u)\right)$ imposed, which is required to establish a central limit theorem via higher-order Taylor expansions. Such a regularity condition is standard in the literature and is commonly assumed when establishing weak convergence of the proposed estimator.

We are now ready to present our main theoretical results. As in the linear model setting, we present two main theorems: one characterizing the rate of convergence and the other describing the asymptotic normality of the proposed estimator. Our first result concerns the convergence rate of $\hat{\btheta}_{\TL}(u_0)$ under the generalized DVCM framework and serves as the counterpart to Theorem~\ref{thm:rate-TL} in the linear case:

\begin{theorem}\label{thm:TL-GLM-rate-Op}
Suppose Assumptions~\ref{Assump:VCM-SN}, \ref{Assump:VCM-UX-Generalized},
\ref{Assump:balance-sample}, and \ref{Assump:Unif-Kernel} hold.
Let $\hat\btheta_{\TL}(u_0)$ be constructed as in \eqref{eqn:TL-GLM}, with the shrinkage
matrix $\hat Q$ defined in Section~\ref{subsec:est-Q}.
If the bandwidth is chosen as 
\[
h=\operatorname{med}\!\left(e_0 (n/\gamma)^{-\frac{1}{2\beta+1}},\ d_{(1)}(u_0),\ d_{(K)}(u_0)\right),
\quad\text{for some constant } e_0>0,
\]
then the adaptive transfer learning estimator $\hat\btheta_{\TL}(u_0)$ satisfies
\[
\|\hat\btheta_{\TL}(u_0)-\btheta(u_0)\|_2^2
= O_p\!\left( n_0^{-1}\ \wedge\ \max\Bigl\{(K/\gamma)^{-2\beta},\ (n/\gamma)^{-\frac{2\beta}{2\beta+1}},\ n^{-1}\Bigr\} \right).
\]
\end{theorem}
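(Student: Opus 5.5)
The plan is to reduce the GLM fine-tuning estimator in \eqref{eqn:TL-GLM} to an approximately linear, ridge-type combination of $\hat\btheta_{\GLR}(u_0)$ and $\hat\btheta_{\GDVCM}(u_0)$. Once that is done, the deterministic matrix inequality behind Theorem~\ref{thm:rate-TL} can be reused. The data-splitting scheme of Remark~\ref{rem:data_splitting} makes $\hat\btheta_{\GDVCM}(u_0)$ and $\hat Q$ independent of the fine-tuning half of the target sample, so throughout I condition on them. The argument has three stages.

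\textbf{Step 1: rates for the two ingredient estimators.} First, classical GLM theory gives $\|\hat\btheta_{\GLR}(u_0)-\btheta(u_0)\|_2^2=O_p(n_0^{-1})$, using the positive definite $\Psi(u_0)$ from Assumption~\ref{Assump:VCM-UX-Generalized}. Second, I prove the GLM analogue of Proposition~\ref{prop:rate-LR-VCM}:
\[
\|\hat\btheta_{\GDVCM}(u_0)-\btheta(u_0)\|_2^2=O_p(r^2_{\DVCM}), \qquad r^2_{\DVCM}:=\max\{(K/\gamma)^{-2\beta},(n/\gamma)^{-\frac{2\beta}{2\beta+1}},n^{-1}\}.
\]
This uses the standard convexity argument for local likelihood. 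The kernel-weighted negative log-likelihood in \eqref{eqn:VCM-GLM} is convex. Its gradient at the rescaled true local polynomial coefficients has a bias part of order $h^\beta$ (Hölder smoothness, $l=\lfloor\beta\rfloor$) and a stochastic part of order $(\gamma/(nh))^{1/2}$. Its Hessian is bounded below by a multiple of $\hat\Lambda$, which is well conditioned by Assumption~\ref{Assump:VCM-UX}(c) together with bounded $b''$ on the compact range of $X^\top\btheta$. A ball argument (e.g. Lemma of Hjort--Pollard type) then controls the minimizer. With the median bandwidth, the three regimes $h=d_{(1)}$, interior, and $h=d_{(K)}$ give the three terms exactly as in the linear case. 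The count of domains in $[u_0-h,u_0+h]$ is of order $Kh/\gamma$ with high probability, by Assumption~\ref{Assump:VCM-UX}(a) and Assumption~\ref{Assump:balance-sample}.

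\textbf{Step 2: linearization of the fine-tuning step.} I write the first-order condition of \eqref{eqn:TL-GLM}:
\[
\frac{1}{n_0}\sum_{i\in\cI_0} s_1(X_{0i}^\top\hat\btheta_{\TL},Y_{0i})X_{0i}+\hat Q(\hat\btheta_{\TL}-\hat\btheta_{\GDVCM})=0 .
\]
I then Taylor expand around $\btheta(u_0)$, controlling the third-order remainder via the moment bound on $b^{(3)}$ in Assumption~\ref{Assump:VCM-UX-Generalized}. The objective is strongly convex, since $\hat Q\succeq 0$ and the empirical Hessian is close to $\Psi(u_0)$. Hence $\hat\btheta_{\TL}$ is consistent, and the expansion yields
\[
\hat\btheta_{\TL}-\btheta(u_0)=(\hat\Psi+\hat Q)^{-1}\big[\hat\Psi(\hat\btheta_{\GLR}-\btheta(u_0))+\hat Q(\hat\btheta_{\GDVCM}-\btheta(u_0))\big]+R_n ,
\]
where $\hat\Psi$ is the empirical weighted Gram matrix. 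Here $\hat\btheta_{\GLR}$ enters through its own score linearization. The remainder satisfies $\|R_n\|_2=O_p(\|\hat\btheta_{\TL}-\btheta(u_0)\|_2^2+n_0^{-1})$, which is negligible.

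\textbf{Step 3: bounding the linearized combination.} This is where the rate is obtained, and it is the step I expect to be the main obstacle. As in Corollary~\ref{thm:rate-TL-Qest}, with probability tending to one $\hat Q$ lies between $\tfrac12$ and $2$ times $\frac{\nu(u_0)}{n_0}M^{-1}$, where $M\asymp r^2_{\DVCM}$ in the Loewner sense. Write $\hat\Psi(\hat\btheta_{\GLR}-\btheta)=O_p(n_0^{-1/2})$ and $\hat\btheta_{\GDVCM}-\btheta=O_p(r_{\DVCM})$. I split into two cases.
\begin{itemize}
\item If $r^2_{\DVCM}\gtrsim n_0^{-1}$, then $\|\hat Q\|\lesssim 1$. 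The first term is $O_p(n_0^{-1/2})$, and the second is $O_p(\|\hat Q\|\,r_{\DVCM})=O_p(n_0^{-1}r_{\DVCM}^{-1})\le O_p(n_0^{-1/2})$.
\item If $r^2_{\DVCM}\ll n_0^{-1}$, then $\lambda_{\min}(\hat Q)\gtrsim (n_0 r^2_{\DVCM})^{-1}\to\infty$. Hence $(\hat\Psi+\hat Q)^{-1}\hat Q\to I$ in operator norm, and the target term becomes $O_p(n_0^{-1/2}\cdot n_0 r^2_{\DVCM})=O_p(r_{\DVCM}\sqrt{n_0}r_{\DVCM})\le O_p(r_{\DVCM})$.
\end{itemize}
In both cases the squared error is $O_p(n_0^{-1}\wedge r^2_{\DVCM})$. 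The delicate point is making this uniform, and in particular handling the conditional randomness of $\hat Q$ and the fact that $M$ is only comparable to $r^2_{\DVCM}I$ up to constants. The remedy is the eigenvalue comparison used in the linear proof: rotate into the eigenbasis of $M$ and apply the scalar argument coordinatewise, with constants absorbed using $\delta\in(1/2,2)$.
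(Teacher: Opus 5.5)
Your Step~1 is essentially the paper's argument. The paper takes $\|\hat\btheta_{\GDVCM}(u_0)-\btheta(u_0)\|_2^2=O_p(h^{2\beta}+\gamma/(nh))$ from Proposition~\ref{prop:asymp-GLM-VCM} and runs the same three-regime median-bandwidth analysis via Lemma~\ref{lem:gap}. Your direct local-likelihood argument is if anything better suited to $h=d_{(1)}(u_0)$, where that proposition's condition $\gamma/K\ll h$ fails.

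The fine-tuning step is done differently. The paper does not linearize for the rate. It quotes Part~1 of Lemma~\ref{lem:Dconv-GLM-TF-0-infty}, a convexity/ball argument on $D_n(\delta)=L_n(\btheta(u_0)+r_{\TL}\delta)-L_n(\btheta(u_0))$. That argument uses $\|G_n\|=O_p(r_{\GLR}+\rho^2 r_{\GDVCM})$, $\lambda_{\min}(H_n)\gtrsim 1+\rho^2$ and $r_{\TL}\asymp(r_{\GLR}+\rho^2 r_{\GDVCM})/(1+\rho^2)$. It covers both regimes in one inequality and gives consistency and rate together. Your route needs consistency first (``strongly convex, hence consistent'' is really that same ball argument) and then a case split. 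In exchange it yields the explicit linear representation, which the paper only derives later (Part~2) for inference.

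The step that fails as written is Case~1 of Step~3. You need $\|\hat Q\|\lesssim 1$, i.e.\ $\lambda_{\min}(M)\gtrsim n_0^{-1}$, and you take it from $M\asymp r_{\DVCM}^2 I$. But by \eqref{eq:bias_var_Q} the squared-bias part of $M$ is rank one. In general only $\lambda_{\max}(M)\lesssim r_{\DVCM}^2$ holds, while $\lambda_{\min}(M)$ can sit at the sampling-variance level.

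Consider the bias-dominated regime ($h=d_{(1)}(u_0)$, $(\gamma/K)^{2\beta}\gg K/n$) with $\bar n\gg n_0$, which Assumption~\ref{Assump:balance-sample} allows. There $\|\hat Q\|\asymp\bar n/n_0\to\infty$, and the bound $O_p(\|\hat Q\|\,r_{\DVCM})$ says nothing. The paper carries the same premise in the hypothesis $c\rho^2\le\lambda_{\min}(Q)\le\lambda_{\max}(Q)\le C\rho^2$ of Lemma~\ref{lem:Dconv-GLM-TF-0-infty}, which is asserted rather than checked for $\hat Q$. Your suggested remedy (rotate to the eigenbasis of $M$ and argue coordinatewise) does not repair it, because $\hat\Psi$ does not commute with $M$ and $\hat Q$ is only Loewner-sandwiched.

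A fix needs only the sandwich and $\lambda_{\max}(M)\lesssim r_{\DVCM}^2$. Put $B=\hat\Psi+\hat Q$. Since $\hat QB^{-1}\hat Q\preceq\hat Q$ and $\hat\Psi B^{-1}\hat\Psi\preceq\hat\Psi$, you get $\|B^{-1}\hat Q v\|_2^2\le v^\top\hat Q v/\lambda_{\min}(B)$ and $\|B^{-1}\hat\Psi w\|_2^2\le w^\top\hat\Psi w/\lambda_{\min}(B)$. Let $\btau_{\GDVCM}=\hat\btheta_{\GDVCM}(u_0)-\btheta(u_0)$. Then $\bbE[\btau_{\GDVCM}^\top M^{-1}\btau_{\GDVCM}]=p$, so $\btau_{\GDVCM}^\top\hat Q\btau_{\GDVCM}=O_p(n_0^{-1})$. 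Also $\lambda_{\min}(B)\gtrsim 1+(n_0 r_{\DVCM}^2)^{-1}$. Both terms are therefore $O_p\bigl((n_0+r_{\DVCM}^{-2})^{-1}\bigr)=O_p(n_0^{-1}\wedge r_{\DVCM}^2)$, with no case split and no isotropy.

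Relatedly, keep the $O_p(n_0^{-1})$ piece of $R_n$ (from linearizing $\hat\btheta_{\GLR}$) premultiplied by $B^{-1}$. Alternatively, use the raw score $n_0^{-1}\sum_i s_1(X_{0i}^\top\btheta(u_0),Y_{0i})X_{0i}$ in place of $\hat\Psi(\hat\btheta_{\GLR}-\btheta(u_0))$. As an additive term outside $B^{-1}$, that piece is not negligible when $r_{\DVCM}\ll n_0^{-1}$, e.g.\ $r_{\DVCM}^2\asymp n^{-1}$ with $n\gg n_0^2$.
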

\noindent
The proof of Theorem~\ref{thm:TL-GLM-rate-Op} is deferred to Appendix~\ref{proof:thm:TL-GLM-rate-Op}. 
This result demonstrates that, under the proposed choices of the bandwidth $h$ and the weighting matrix $\hat Q$, the rate of convergence of $\hat \btheta_{\rm TL}(u_0)$ under GLM setting coincides with that obtained in the linear case (Theorem~\ref{thm:rate-TL}). 
The adaptivity of $\hat \btheta_{\rm TL}(u_0)$ is also evident: its convergence rate is never worse than $n_0^{-1/2}$, thereby precluding negative transfer. Moreover, when the number of source $K$, or the smoothness parameter $\beta$, is large, or the heterogeneity parameter $\gamma$ is small, the convergence rate is strictly faster than that of the target-only estimator, reflecting the ability of the method to efficiently leverage information from the relevant source domains.

Having established the rate, we next present a result on the asymptotic normality of the proposed estimator. 
Let $\hat{\Psi}(u_0)$, $\hat V_{\GLR}(u_0)$, and $\hat V_{\GDVCM}(u_0)$ denote consistent estimators of $\Psi(u_0)$, $\Var(\hat{\btheta}_{\GLR}(u_0))$, and $\Var(\hat{\btheta}_{\GDVCM}(u_0))$, respectively. Then, the following asymptotic normality result holds for $\hat \btheta_{\rm TL}(u_0)$:


\begin{theorem}\label{thm:TL-GLM-infer}
Let $\hat Q$ be a pre-specified positive semidefinite matrix.
Suppose Assumptions~\ref{Assump:VCM-SN}, \ref{Assump:VCM-UX-Generalized},
\ref{Assump:balance-sample}, and \ref{Assump:Unif-Kernel} hold and the bandwidth $h$ satisfy Equation \eqref{eq:bandwidth_cond_inf}.
The transfer learning estimator $\hat\btheta_{\TL}(u_0)$ is asymptotically normal:
\[
\hat\Sigma_{\TL}^{-1/2}\bigl(\hat\btheta_{\TL}(u_0)-\btheta(u_0)\bigr)
\xrightarrow[]{d} \cN(0,I).
\]
The asymptotic covariance estimator $\hat\Sigma_{\TL}$ is given by
\[
\hat\Sigma_{\TL}
=
 B_Q^{-1} \hat Q\, \hat V_{\GDVCM}(u_0)\, \hat Q\, B_Q^{-1}
+
B_Q^{-1} \hat\Psi(u_0)\, \hat V_{\GLR}(u_0)\, \hat\Psi(u_0)\, B_Q^{-1},
\]
with $B_Q=\hat\Psi(u_0)+\hat Q$.
\end{theorem}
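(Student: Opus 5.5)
The plan is a one-step linearization of the penalized GLM estimator around $\btheta(u_0)$, reducing it to a weighted combination of the target-only MLE and the pilot, followed by the same variance-combination argument as in Corollary~\ref{coro:TL-linear-infer}. Write $\hat\btheta_{\TL}=\hat\btheta_{\TL}(u_0)$, $\btheta_0=\btheta(u_0)$ and $\hat\btheta_{\rm G}=\hat\btheta_{\GDVCM}(u_0)$. By the data-splitting scheme of Remark~\ref{rem:data_splitting}, $\hat\btheta_{\rm G}$ and $\hat Q$ are independent of the second-half target sample $\cI_0$ used in \eqref{eqn:TL-GLM}. The first-order condition of \eqref{eqn:TL-GLM} reads
\[
\frac{1}{n_0}\sum_{i\in\cI_0} s_1(X_{0i}^\top\hat\btheta_{\TL},Y_{0i})X_{0i}+\hat Q(\hat\btheta_{\TL}-\hat\btheta_{\rm G})=0 .
\]
Taylor-expanding the score around $\btheta_0$ gives
\[
\hat\btheta_{\TL}-\btheta_0=(\hat\Psi_n+\hat Q)^{-1}\Big\{-\tfrac{1}{n_0}\textstyle\sum_{i\in\cI_0} s_1(X_{0i}^\top\btheta_0,Y_{0i})X_{0i}+\hat Q(\hat\btheta_{\rm G}-\btheta_0)\Big\}+R_n ,
\]
where $\hat\Psi_n=n_0^{-1}\sum_{i\in\cI_0} b''(X_{0i}^\top\btheta_0)X_{0i}X_{0i}^\top$.

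\textbf{Bounding the remainder.} First establish consistency of $\hat\btheta_{\TL}$. This follows from Theorem~\ref{thm:TL-GLM-rate-Op} when $\hat Q$ is the data-driven choice; for a general pre-specified $\hat Q\succeq0$ it follows from strong convexity of the objective, since $\lambda_{\min}(\hat\Psi_n)\ge c_0'/2$ w.p.$\to1$ by Assumption~\ref{Assump:VCM-UX-Generalized}. The second-order term is then controlled by $n_0^{-1}\sum_i |b^{(3)}(\tilde\eta_i)|\,\|X_{0i}\|^3\,\|\hat\btheta_{\TL}-\btheta_0\|^2$, which is $O_p(\|\hat\btheta_{\TL}-\btheta_0\|^2)$ by the fourth-moment bound on $b^{(3)}$ and $\|X\|\le1$. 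Finally, note that $-n_0^{-1}\sum s_1X=\hat\Psi_n(\hat\btheta_{\GLR}-\btheta_0)+o_p(n_0^{-1/2})$ by classical GLM asymptotics, and that $\hat\Psi_n-\hat\Psi(u_0)=o_p(1)$. Together these give
\[
\hat\btheta_{\TL}-\btheta_0=B_Q^{-1}\big\{\hat\Psi(u_0)(\hat\btheta_{\GLR}-\btheta_0)+\hat Q(\hat\btheta_{\rm G}-\btheta_0)\big\}+o_p(s_n),
\]
where $s_n$ is the order of $\hat\Sigma_{\TL}^{1/2}$. This must be checked uniformly in whether $\hat Q$ is small or large. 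The clean way is to work with the quantities $B_Q^{-1}\hat\Psi$ and $B_Q^{-1}\hat Q$, which are bounded in norm by one.

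\textbf{Joint normality.} Conditional on the first-half data $\cD_{\rm S}$ (sources plus first target half), the GLR term is a normalized sum of i.i.d.\ scores and obeys a Lindeberg CLT with covariance $\Psi^{-1}(u_0)\nu(u_0)/n_0$. The pilot term uses the GLM analogue of Proposition~\ref{prop:asymp-LR-VCM}: under \eqref{eq:bandwidth_cond_inf}, $\sqrt{nh/\gamma}\,(\hat\btheta_{\rm G}-\btheta_0)\to\cN(0,\Omega_{\GDVCM})$, with bias $O(h^\beta)=o(\sqrt{\gamma/(nh)})$ by undersmoothing. Proving that analogue requires a local-likelihood linearization of \eqref{eqn:VCM-GLM}, with $Kh/\gamma\to\infty$ ensuring that enough domains contribute. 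Because the two pieces are independent, the characteristic function factorizes, so their sum is asymptotically normal with covariance equal to the sum of the two sandwich terms. Replacing population variances by $\hat V_{\GLR}$, $\hat V_{\GDVCM}$ and $\hat\Psi$ uses consistency together with Slutsky's lemma applied to $\hat\Sigma_{\TL}^{-1/2}$.

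\textbf{Main obstacle.} The hard step is the standardization $\hat\Sigma_{\TL}^{-1/2}$ uniformly across regimes, because the two components may have wildly different scales ($n_0^{-1}$ versus $\gamma/(nh)$) and $\hat Q$ may be random. I would handle this as follows:
\begin{itemize}
\item Write $\hat\Sigma_{\TL}^{-1/2}(\hat\btheta_{\TL}-\btheta_0)=\hat\Sigma_{\TL}^{-1/2}(T_1+T_2)+o_p(1)$.
\item Show $\hat\Sigma_{\TL}^{-1/2}\Sigma_{\TL}^{1/2}\to I$ using the relative consistency $\|\Sigma^{-1/2}(\hat\Sigma-\Sigma)\Sigma^{-1/2}\|\to0$, which follows from relative consistency of each variance estimator.
\item Apply the Lindeberg--Feller CLT to the triangular array $\Sigma_{\TL}^{-1/2}(T_1+T_2)$ along arbitrary subsequences, so that no assumption on $\rho_n$ is needed.
\end{itemize}
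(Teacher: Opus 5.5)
Your proposal is correct and follows the paper's proof. Both linearize the first-order condition to $\btau_{\TL}\approx B_Q^{-1}\{\Psi(u_0)\btau_{\GLR}+\hat Q\btau_{\GDVCM}\}$, invoke the undersmoothed CLT for the pilot, use sample-splitting independence to add the two sandwich covariances, and finish with Slutsky for the plug-in $\hat\Sigma_{\TL}$; your relative-consistency and subsequence treatment of the standardization is a more careful version of the paper's one-line appeal to Slutsky. One minor correction: when $\hat\Psi(u_0)$ and $\hat Q$ do not commute, $B_Q^{-1}\hat\Psi(u_0)$ and $B_Q^{-1}\hat Q$ are bounded in norm not by one but by $\{\lambda_{\max}(\hat\Psi(u_0))/\lambda_{\min}(\hat\Psi(u_0))\}^{1/2}$, which is still $O_p(1)$ and is all your argument needs.
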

The proof of the above theorem can be found in Appendix~\ref{proof:thm:TL-GLM-infer}. This result extends Corollary~\ref{coro:TL-linear-infer} to the GDVCM framework. Since the inference procedure relies on $\hat \Sigma_{\rm TL}$, which in turn depends on consistent estimation of $\Psi(u_0), \Var(\hat{\btheta}_{\GLR}(u_0)), \Var(\hat{\btheta}_{\GDVCM}(u_0))$, we next describe consistent estimators for these quantities, obtained as empirical analogues of their population definitions:
\begin{enumerate}
    \item $\hat\Psi(u_0)=\tfrac{1}{n_0}\sum_{i\in\cI_0} b''\!\big(X_{0i}^\top\hat\btheta_{\GLR}(u_0)\big)\,X_{0i}X_{0i}^\top$;
    \item $\hat V_{\GLR}(u_0)
=\hat\Psi(u_0)^{-1}\big[\tfrac{1}{n_0}\sum_{i\in\cI_0} X_{0i}X_{0i}^\top (Y_{0i}-\hat\mu_{0i})^2\big]\hat\Psi(u_0)^{-1}$,
with $\hat\mu_{0i}=b'\!\big(X_{0i}^\top\hat\btheta_{\GLR}(u_0)\big)$;
    \item $\hat V_{\GDVCM}(u_0)$ is defined in the same way as in Equation~\eqref{eqn:var-est-VCMS-in}.
\end{enumerate}
By a standard application of the law of large numbers, the proposed estimators are consistent, which in turn guarantees the validity of the resulting inferential procedures. As illustrated in the linear model setting in Section~\ref{sec:linear-infer}, the above asymptotic normality result can be directly employed to conduct inference in a variety of testing problems. These include, for example, testing pointwise hypotheses of the form $\btheta(u_0) = \bw$, as well as more general linear constraints on $\btheta(u_0)$, such as $R\,\btheta(u_0) = r,$ for a given matrix $R$ and vector $r$ (e.g., testing whether a particular coordinate or linear combination equals zero).

\section{Simulation experiments}
\label{sec:sim}
In this section, we present various numerical experiments to support and illustrate our theoretical results. We investigate three distinct models: linear regression, logistic regression, and Poisson regression. Across these settings, we examine several key properties of our estimator (e.g., its rate of convergence, sensitivity to bandwidth selection, robustness under varying levels of similarity between the source and target domains, asymptotic normality) by varying factors such as the sample size ($n$ and $\bar n$), the number of domains ($K$), and heterogeneity among domain identifiers ($\gamma$).
The (generalized) DVCM estimators considered in this section are local linear estimators, \,  i.e., \ we set $l=1$ in Equation \eqref{eqn:VCM-GLM}. 
\\\\
\noindent
{\bf Data generation: }We use the following data-generating setup for our simulation studies: 
\begin{enumerate}
\setlength\itemsep{0.5em}
    \item We generate $U_1, \dots, U_K \sim \text{Unif}(-\gamma/2,\gamma/2)$, \ie\ a centered uniform distribution of length $\gamma$. We vary the value of $\gamma$ to control the degree of heterogeneity among these domain identifiers. The target $u_0$ is fixed to be $0$. 

    \item We set $X_{ki} \in \reals^p$, where the first coordinate is the intercept and the other coordinates are generated from $\cN(0, \Sigma)$ for all $1 \le k \le K$, $1 \le i \le n_k$, with $\Sigma_{ij}=0.7^{|i-j|}$. (The choice of $p$ will be specified later).

    \item The true parameter vector is specified as $\btheta(u) = (\theta_0(u), \ldots,\theta_{p-1}(u))$, where $\theta_0(u) = -\tanh(16(u - 0.2)) + g(u)$, $\theta_1(u) = \exp(5u+2.5)/100-0.5+g(u)$, and $\theta_j(u) = (-0.5)^{j-1}\exp(2u)$ for $j\ge 2$. The additional term $g(u) = u^3\cdot \sign(u)$ is included to ensure that $\btheta(\cdot)$ possesses a continuous second derivative but a discontinuous third derivative. This construction also guarantees that the linear predictor $X^\top \btheta(u)$ remains in a reasonable range and, in the binary response setting (defined below), that the success probability $\Pr(Y=1 | X, U)$ is not too close to $0$ or $1$.

    \item The response variable $Y$ is generated as: 
    \begin{itemize}
        \item For linear regression: $Y_{ki} = X_{k, i}^\top \btheta(U_k) + 0.5 \times \eps_{ki}, \ \ \eps_{ki} \sim \cN(0, 1)$. 
        \item For logistic regression: $Y_{ki} \sim \Ber\left(\sigma(X_{k, i}^\top \btheta(U_k))\right)$ with $\sigma(x)= (1 + e^{-x})^{-1}$. 
        \item For Poisson regression: $Y_{ki} \sim \Poi\left(\exp{\left(X_{k, i}^\top \btheta(U_k)\right)}\right)$. 
\end{itemize}
\end{enumerate}
{\bf Estimation procedure: }For each of the three response-generating mechanisms, we obtain the maximum likelihood estimator by minimizing the negative log-likelihood, which serves as our loss function.
We compute three different estimators in our simulation studies: i) the target only $\hat \btheta_{\GLR}(u_0)$ defined in \eqref{eq:def-GLR}, ii) the DVCM estimator $\hat \btheta_{\DVCM}(u_0)$ defined in \eqref{eqn:VCM-GLM}, and iii) the transfer learning estimator $\hat \btheta_{\TL}$ in \eqref{eqn:TL-GLM}. We perform data-split on the target domain to make $\hat \btheta_{\DVCM}(u_0)$ and $\hat \btheta_{\GLR}(u_0)$ independent. For $\hat \btheta_{\TL}(u_0)$, we compute $\hat Q$ via method in Section \ref{subsec:est-Q}. 

\subsection{Bandwidth sensitivity analysis}
\label{sim-dev}
\noindent
\underline{\textbf{Performance across $\gamma$.}}
First, we vary $\gamma$ while keeping all other parameters fixed. Recall that $\gamma$ controls the dispersion of the domain identifiers $U_k$ around the target $u_0$. When $\gamma$ is small, the domains are concentrated near $u_0$, so the source domains are informative for the target. When $\gamma$ is large, the domains are more dispersed and become increasingly irrelevant. Consequently, the DVCM estimator is expected to perform better when $\gamma$ is small, whereas the GLR estimator should dominate when $\gamma$ is large. The transfer learning estimator $\hat\btheta_{\TL}(u_0)$ is designed to adapt between these regimes. We consider the setting $p=4$ (dimension of $X$), $n_S = 600$ (total number of source samples), $n_0 = 50$ (target samples), and $K = 5$ (number of source domains). Figure~\ref{fig:MSE_h_VarU} reports the MSE $\mathbb{E}\| \hat\btheta(u_0) - \btheta(u_0) \|_2^2$ of the three estimators $\hat\btheta_{\DVCM}$, $\hat\btheta_{\GLR}$, and $\hat\btheta_{\TL}$ based on 200 simulations for $\gamma \in \{0.5, 1, 1.5\}$ across a range of bandwidths $h$. The left, middle, and right columns correspond to linear, logistic, and Poisson regression models, respectively, while the upper, middle, and lower rows correspond to the cases $\gamma = 0.5, 1$, and $1.5$, respectively. 
Since $\hat{\btheta}_{\GLR}(u_0)$ is a target-only estimator independent of the bandwidth $h$, its MSE remains constant as $h$ varies and therefore appears as a flat line.
For the linear model (left column), when $\gamma = 0.5$, the domains are highly related and $\hat\btheta_{\DVCM}(u_0)$ achieves the smallest MSE. In contrast, the target-only estimator $\hat\btheta_{\GLR}(u_0)$ underperforms in this regime because it ignores the informative source data. The transfer learning estimator $\hat\btheta_{\TL}(u_0)$ closely tracks $\hat\btheta_{\DVCM}(u_0)$ and inherits its advantage. When $\gamma = 1$, the source domains are moderately close to the target. The estimator $\hat\btheta_{\DVCM}(u_0)$ performs well for smaller bandwidths but deteriorates as $h$ increases, while $\hat\btheta_{\GLR}(u_0)$ is stable as it does not depend on the choice of bandwidth. The adaptive estimator $\hat\btheta_{\TL}(u_0)$ adapts between the best of the two and remains near-optimal across bandwidth choices. Finally, when $\gamma = 1.5$, the source domains are less relevant, and $\hat\btheta_{\DVCM}(u_0)$ suffers from substantial bias. In this regime, $\hat\btheta_{\GLR}(u_0)$ outperforms the pooled estimator. The transfer learning estimator aligns with $\hat\btheta_{\GLR}(u_0)$ and again achieves the comparable performance across bandwidths. The logistic and Poisson models (middle and right columns) exhibit the same qualitative pattern: $\hat\btheta_{\DVCM}(u_0)$ dominates when $\gamma$ is small, $\hat\btheta_{\GLR}(u_0)$ dominates when $\gamma$ is large, and $\hat\btheta_{\TL}(u_0)$ adaptively tracks the better estimator in each regime. Overall, the figure demonstrates that $\hat\btheta_{\TL}(u_0)$ consistently achieves the lowest MSE across bandwidths, values of $\gamma$, and model families by adaptively combining the strengths of $\hat\btheta_{\GLR}(u_0)$ and $\hat\btheta_{\DVCM}(u_0)$.

\medskip
\noindent
\underline{\textbf{Performance across $K$.}} 
We next vary $K \in \{5,10,15\}$ and compare the performance of the three estimators as before. The results are summarized in Figure~\ref{fig:bw-K}, where we plot the MSE as a function of the bandwidth $h$ under the Linear, Logistic, and Poisson models. Throughout these simulations, we fix $p=4$, $\bar n = 120$ (i.e., 120 observations per source domain), $n_0 = 50$, and $\gamma = 1$. The qualitative conclusions are similar to those in the previous setup. As before, $\hat{\btheta}_{\GLR}(u_0)$ is independent of the bandwidth and therefore appears as a flat line. In contrast, $\hat{\btheta}_{\DVCM}(u_0)$ relies heavily on the bandwidth choice; selecting $h$ either too small or too large leads to larger/suboptimal MSE. The proposed estimator $\hat{\btheta}_{\TL}(u_0)$ adaptively combines the strengths of these two approaches. When $\hat{\btheta}_{\DVCM}(u_0)$ achieves a smaller MSE than $\hat{\btheta}_{\GLR}(u_0)$, the estimator $\hat{\btheta}_{\TL}(u_0)$ attains an MSE that is very close to, and occasionally even smaller than, that of $\hat{\btheta}_{\DVCM}(u_0)$. Conversely, when the MSE of $\hat{\btheta}_{\DVCM}(u_0)$ exceeds that of $\hat{\btheta}_{\GLR}(u_0)$ due to a suboptimal bandwidth choice, the performance of $\hat{\btheta}_{\TL}(u_0)$ automatically aligns with that of $\hat{\btheta}_{\GLR}(u_0)$. These experiments clearly demonstrate the adaptive nature of the proposed method.
\begin{figure}
\begin{center}
\includegraphics[width=\textwidth]{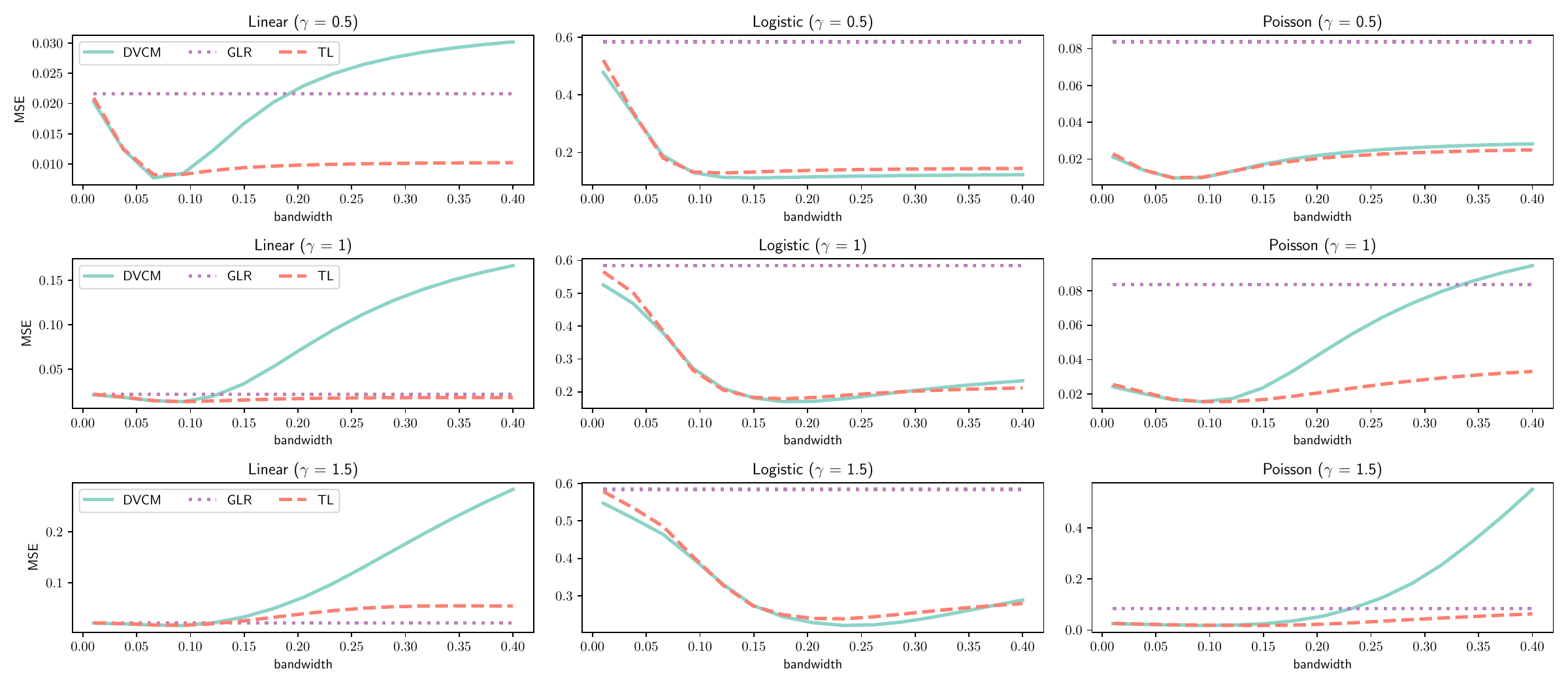}
\end{center}
\caption{MSE of the estimators across different $h$ and $\gamma$, with $(n, n_0,K)$ fixed. The left, middle, and right panels show MSE of linear, logistic, and Poisson-based estimators while upper, middle, and lower panels are cases where $\gamma = 0.5, 1$, and $1.5$.
}
\label{fig:MSE_h_VarU}
\end{figure}
\begin{figure}
    \centering
    \includegraphics[width=\linewidth]{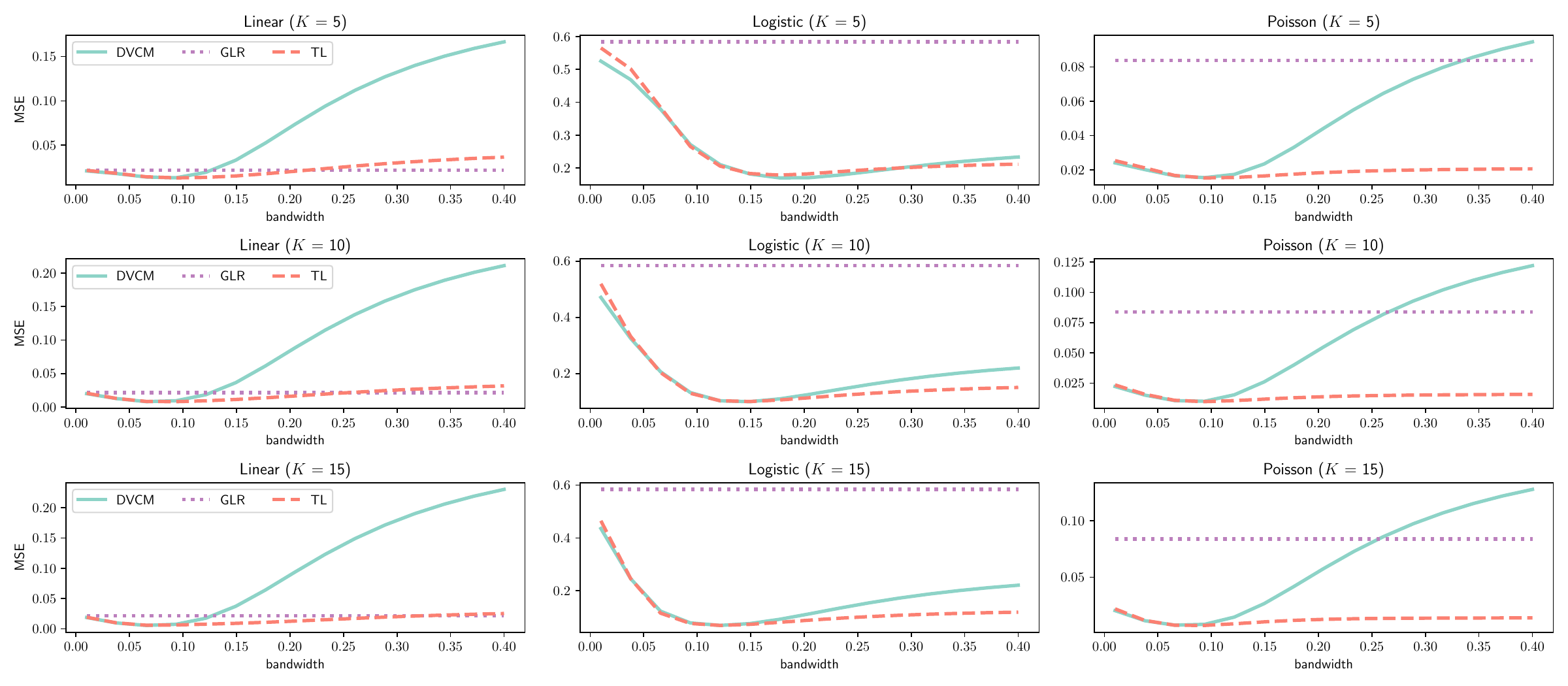}
    \caption{MSE of the estimators across different $h$ and $K$, with $(\bar n, n_0,\gamma)$ fixed. The left, middle, and right panels show MSE of linear, logistic, and Poisson-based estimators while upper, middle, and lower panels are cases where $K = 5, 10$, and $15$.}
    \label{fig:bw-K}
\end{figure}
\subsection{Asymptotic normality}\label{sim:norm}

In this section, we present simulation results illustrating the asymptotic normality of $\hat \btheta_{\TL}(u_0)$, as established in Theorem~\ref{thm:Dconv-LR-TF-0-infty}. For simplicity, we focus exclusively on the linear data-generating model. Recall that $\rho_n$ denotes the relative efficiency ratio between $\hat{\btheta}_{\LR}(u_0)$ and $\hat{\btheta}_{\DVCM}(u_0)$, and that the proposed estimator $\hat{\btheta}_{\TL}(u_0)$ adapts to the better of the two procedures, achieving asymptotic normality in both regimes ($\rho_n \downarrow 0$ or $\rho_n \uparrow \infty$). Here, we numerically demonstrate the asymptotic normality of $\hat{\btheta}_{\TL}(u_0)$ in both of these regimes. To simulate the case $\rho_n \to 0$, we set $(K,\gamma)=(5,5)$ with $\bar n=100$ and $n_0=50$, and for $\rho_n \to \infty$, we set $(K,\gamma)=(30,0.1)$ while keeping $\bar n$ and $n_0$ unchanged. 

As predicted by Theorem~\ref{thm:Dconv-LR-TF-0-infty}, the normalized estimator $\hat\btheta_{\TL}(u_0)$ should converge in distribution to the standard normal in both of these regimes. The standard error $\hat{\SE}(\hat\theta_{\TL,j}(u_0))$ is computed as the square root of the variance estimator proposed in Corollary~\ref{coro:TL-linear-infer}. Each coordinate is standardized as
$$
\check\theta_j(u_0)
=
\frac{\hat\theta_{\TL,j}(u_0)-\theta_j(u_0)}
{\hat{\SE}\!\left(\hat\theta_{\TL,j}(u_0)\right)},
\qquad j=0,1,2,3.
$$
Figure~\ref{fig:TL_distn} displays the histograms of the standardized estimators based on $200$ Monte Carlo replications under the linear model. The four columns correspond to $\check\theta_0$–$\check\theta_3$, while the upper and lower rows represent the regimes $\rho_n\to0$ and $\rho_n\to\infty$, respectively. We further statistically test the normality using the Kolmogorov–Smirnov test, and the corresponding $p$-values are reported in the histogram legends. Across all panels, the empirical distributions closely resemble the standard normal law, providing strong visual support for the theoretical results. Moreover, all reported 
$p$-values exceed $0.05$, offering additional empirical evidence for the asymptotic 
normality of $\check\theta_j(u_0)$.
\begin{figure}
\begin{center}
\includegraphics[width=\textwidth]{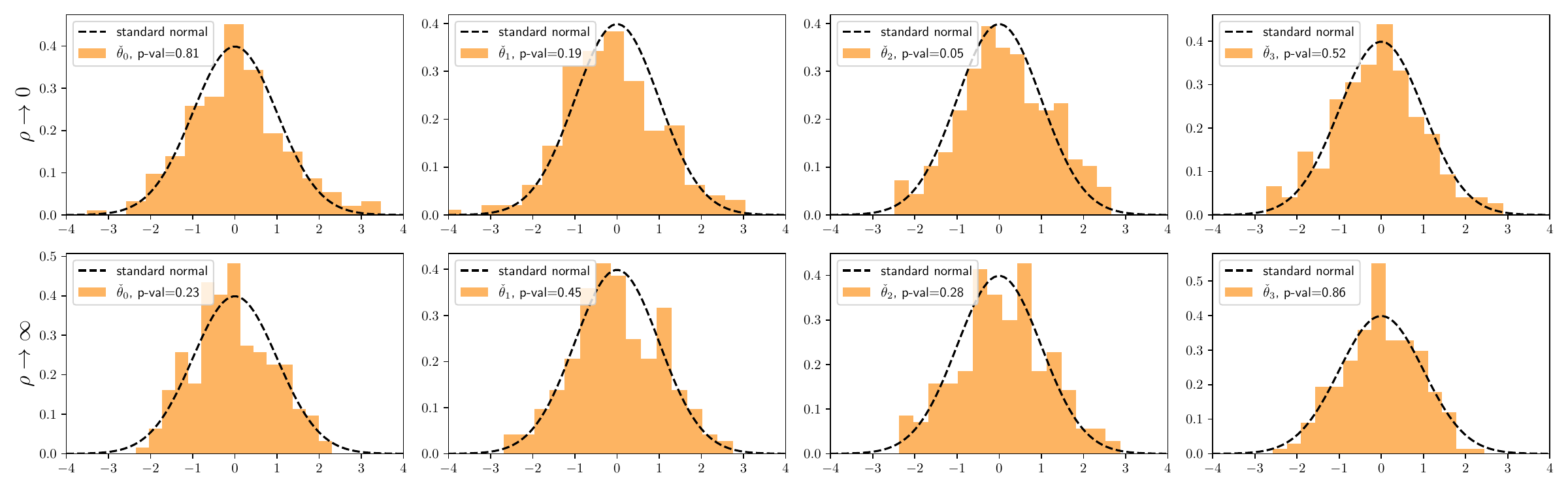}
\end{center}
\caption{The histogram of normalized estimators $\check\theta_j(u_0) = \tfrac{\hat\theta_{\TL,j}(u_0)-\theta_{j}(u_0)}{\hat{\SE}\left(\hat\theta_{\TL,j}(u_0)\right)}$ when  $\rho_n \to 0$ and $\infty$. 
 }\label{fig:TL_distn}
\end{figure}

\subsection{Phase transition in the rate of estimation}
\label{subsec:log-log}

In this subsection, we demonstrate the phase transition in the rate of convergence of $\hat \btheta_{\TL}(u_0)$. Recall that we established in Theorem~\ref{thm:rate-TL} that
\begin{equation}
    \label{eq:mse-sim}
    \mse(\hat\btheta_{\TL}(u_0)) 
    \lesssim 
    n_0^{-1} 
    \wedge 
    \max\left\{(K/\gamma)^{-2\beta}, \, (n/\gamma)^{-\frac{2\beta}{2\beta+1}}, \, n^{-1}\right\}.
\end{equation}
It follows immediately that, depending on the choice of $(K, n, n_0, \gamma, \beta)$, the rate of $\hat \btheta_{\TL}(u_0)$ transitions between different regimes. The goal of this subsection is to numerically illustrate this phase transition behavior by varying these parameters. We divide our presentation into three parts, depending on whether we vary $K$, $\gamma$, or $n$. To visualize the convergence rates, we present log–log plots with the logarithm of the varying parameter (i.e., $K$, $\gamma$, or $n$) on the $X$-axis and the logarithm of the MSE on the $Y$-axis. The slope of each segment in these plots can be interpreted as the convergence rate. Throughout the simulations, the shrinkage matrix $Q$ is set to its oracle value, and we fix the number of covariates to be $p=2$.

\medskip
\noindent
\textbf{Phase transition by varying $K$.} 
In this part, we vary $K$ to highlight its effect on the phase transition in the convergence rate of $\hat \btheta_{\TL}(u_0)$. We assume $\gamma \gg n^{-1/(2\beta)}$ in this setting. Under this regime, the MSE of $\hat \btheta_{\TL}(u_0)$ satisfies
$$
\mse(\hat\btheta_{\TL}(u_0)) \;\lesssim\;
\begin{cases}
n_0^{-1}, & K \lesssim \gamma\,n_0^{1/(2\beta)}, \\
\big(\tfrac{\gamma}{K}\big)^{2\beta}, & \gamma\,n_0^{1/(2\beta)} \lesssim K \lesssim \gamma^{\frac{2\beta}{2\beta+1}}\,n^{1/(2\beta+1)}, \\
\big(\tfrac{\gamma}{n}\big)^{\frac{2\beta}{2\beta+1}}, & K \gtrsim \gamma^{\frac{2\beta}{2\beta+1}}\,n^{1/(2\beta+1)}.
\end{cases}
$$
Since $\gamma \gg n^{-1/(2\beta)}$ implies $(\gamma/n)^{2\beta/(2\beta+1)} \gg n^{-1}$, the rate exhibits three distinct phases on a log–log scale with respect to $K$: 
(i) a flat region at level $n_0^{-1}$ for small $K$ (as the rate does not depend on $K$), 
(ii) a linear region with slope $-2\beta$, and 
(iii) another linear region with slope $-2\beta/(2\beta + 1)$ for large $K$.
As a numerical validation, we conduct simulations under the same data-generating process across linear, logistic, and Poisson models, this time varying $K$ while fixing $\bar n_S = 1500$, $\gamma=0.1$, and $n_0=30$. For computing $\hat \btheta_{\DVCM}(u_0)$, the bandwidth is chosen as described in Equation~\eqref{eqn:optimal_bw}. Figure~\ref{fig:MSE_K} presents the resulting log–log plot of $\log \mse$ against $\log K$. As expected, we observe three distinct linear phases in the plot, along with their corresponding empirical slopes, which align closely with the theoretical predictions of $-2\beta = -4$ and $-2\beta/(2\beta + 1) = -0.8$.
\begin{figure}
\begin{center}
\includegraphics[width=\textwidth]{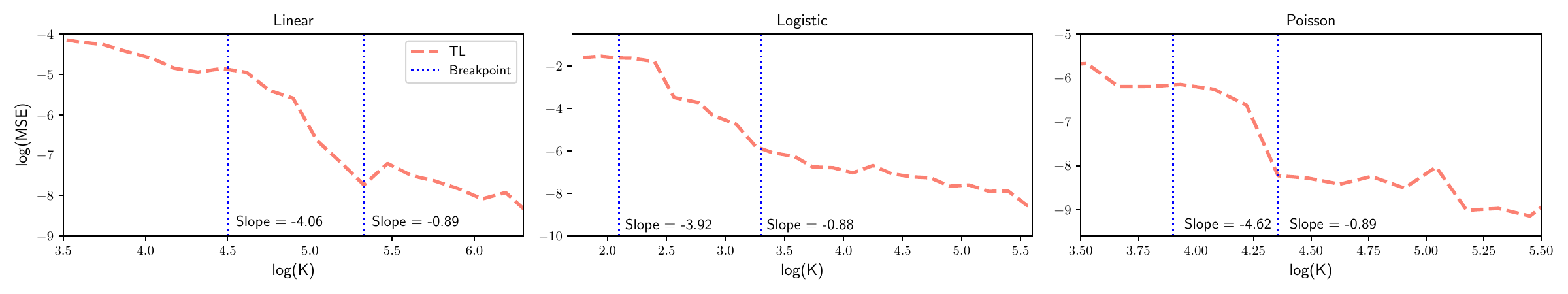}
\end{center}
\caption{
Log–log plot of MSE of $\hat \btheta_{\rm TL}$ as a function of $K$, while keeping $(\bar n_S, \gamma)$ fixed. Vertical dotted lines indicate empirical breakpoints that mark phase transitions in the convergence behavior. The left, middle, and right panels correspond to the linear, logistic, and Poisson models, respectively.
}
\label{fig:MSE_K}
\end{figure}

\medskip
\noindent
\textbf{Phase transition by varying $\gamma$.} We now illustrate the phase transition in the convergence rate of $\hat \btheta_{\TL}(u_0)$ by varying $\gamma$. Based on Theorem~\ref{thm:rate-TL}, the rate can be decomposed as follows:
\begin{equation}
\textstyle
\label{eq:mse_decomp_gamma}
\mse(\hat\btheta_{\TL}(u_0)) \;\lesssim\;
\begin{cases}
n^{-1}, & \gamma \lesssim n^{-1/(2\beta)}, \\[4pt]
n^{-\tfrac{2\beta}{2\beta+1}}\gamma^{\tfrac{2\beta}{2\beta+1}}, 
& n^{-1/(2\beta)} \lesssim \gamma \lesssim 
\min\!\Big\{K^{\frac{2\beta+1}{2\beta}}\,n^{-1/(2\beta)},\;
n\,n_0^{-\frac{2\beta+1}{2\beta}}\Big\}, \\[6pt]
n_0^{-1}, & 
\min\!\Big\{K^{\frac{2\beta+1}{2\beta}}\,n^{-1/(2\beta)},\;
n\,n_0^{-\frac{2\beta+1}{2\beta}}\Big\}\;\lesssim\; \gamma \lesssim K^{\frac{2\beta+1}{2\beta}}\,n^{-1/(2\beta)}, \\[6pt]
K^{-2\beta}\gamma^{2\beta}, & K^{\frac{2\beta+1}{2\beta}}\,n^{-1/(2\beta)} \lesssim \gamma \lesssim K\,n_0^{-1/(2\beta)}, \\[6pt]
n_0^{-1}, & \gamma \gtrsim K\,n_0^{-1/(2\beta)}.
\end{cases}
\end{equation}
In this simulation study, we assume $\bar n \gg n_0$, i.e., the target sample size is much smaller than the average source sample size. Under this regime, we have $K^{(2\beta+1)/2\beta}n^{-1/2\beta} \ll nn_0^{-(2\beta + 1)/2\beta}$, which eliminates the third phase in Equation~\eqref{eq:mse_decomp_gamma}. Consequently, the log–log plot (with $\log \gamma$ on the $X$-axis and $\log \mse(\hat \btheta_{\TL}(u_0))$ on the $Y$-axis) exhibits four distinct phases:(i) a flat region at level $n^{-1}$ for small $\gamma$;  
(ii) a linear growth with slope $2\beta/(2\beta + 1)$;  
(iii) a second linear growth with slope $2\beta$; and  
(iv) a flat region at level $n_0^{-1}$ for large $\gamma$.  
Figure~\ref{fig:MSE_gamma} illustrates this behavior. We set the average source sample size to $\bar n_S=600$, the target sample size to $n_0=30$, and use the oracle choice of $Q$. The number of domains is set to $K=12$ for the linear model and $K=10$ for the other settings. The functional coefficients are specified as $\theta_0(u) = \theta_1(u) = \tanh\big(8(u-0.2)\big)$. As predicted by the theory, the figure displays clear transitions across the four regimes, with the empirical slopes in the two linear phases closely matching the theoretical values $2\beta/(2\beta + 1)=0.8$ and $2\beta=4$ (for $\beta=2$).
\begin{figure}
\begin{center}
\includegraphics[width=\textwidth]{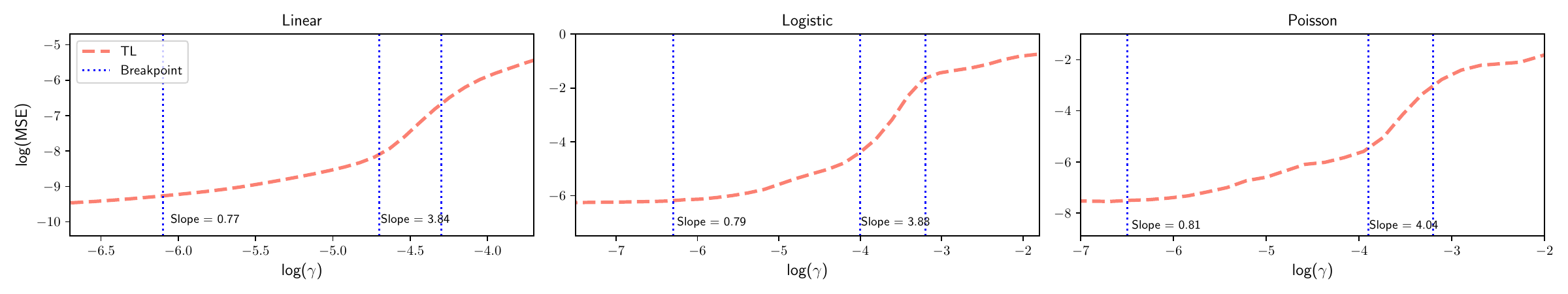}
\end{center}
\caption{
Log–log plot of MSE of $\hat \btheta_{\rm TL}$ as a function of $\gamma$, while keeping $(n, K)$ fixed. Vertical dotted lines indicate empirical breakpoints that mark phase transitions in the convergence behavior. The left, middle, and right panels correspond to the linear, logistic, and Poisson models, respectively.
}
\label{fig:MSE_gamma}
\end{figure}

\medskip
\noindent
\textbf{Phase transition by varying $\bar n_S$.} We now fix $(n_0, K, \gamma)$ and vary the average source sample size $\bar n_S$, so that the total source sample size is $n_S = K\bar n_S$ and the overall sample size is $n = n_0 + n_S$. Under the same condition $\gamma \gg n^{-1/(2\beta)}$, the MSE of $\hat \btheta_{\TL}(u_0)$ exhibits the following phase transition in its convergence rate:
\begin{equation*}
\textstyle
\mse(\hat\btheta_{\TL}(u_0))
\;\lesssim\;
\begin{cases}
n_0^{-1}, 
& n \,\lesssim\,  \gamma n_0^{\frac{2\beta+1}{2\beta}},\\[8pt]
\left(\dfrac{\gamma}{n}\right)^{\frac{2\beta}{2\beta+1}}, 
& \gamma n_0^{\frac{2\beta+1}{2\beta}}\,\lesssim\, n \,\lesssim\, \gamma^{-2\beta}K^{2\beta+1},\\[12pt]
n_0^{-1}\wedge \left(\dfrac{\gamma}{K}\right)^{2\beta}, 
& n \,\gtrsim\, \gamma^{-2\beta}K^{2\beta+1}.
\end{cases}
\end{equation*}
Consequently, on a log--log scale (with $\log n$ on the $X$-axis and $\log \mse(\hat\btheta_{\TL}(u_0))$ on the $Y$-axis), the curve exhibits three distinct phases:
(i) a flat region at level $n_0^{-1}$ for small $n$;
(ii) a linear regime with slope $-2\beta/(2\beta + 1)$; and
(iii) a second plateau at level $n_0^{-1}\wedge(\gamma/K)^{2\beta}$ for sufficiently large $n$.
Figure~\ref{fig:MSE_n} illustrates this behavior with $\gamma=0.1$ fixed while varying $\bar n_S$. We observe a short flat region followed by a linear regime whose empirical slope is close to $-2\beta/(2\beta + 1)=-0.8$ (for $\beta=2$). The third plateau emerges only when $n$ becomes extremely large, which is consistent with the transition scale $\gamma^{-2\beta}K^{2\beta+1}$. Across the three models, we use $K=10$ (linear), $K=3$ (logistic), and $K=2$ (Poisson), reflecting differing saturation behaviors in the third phase.
\begin{figure}
\centering
\includegraphics[width=\textwidth]{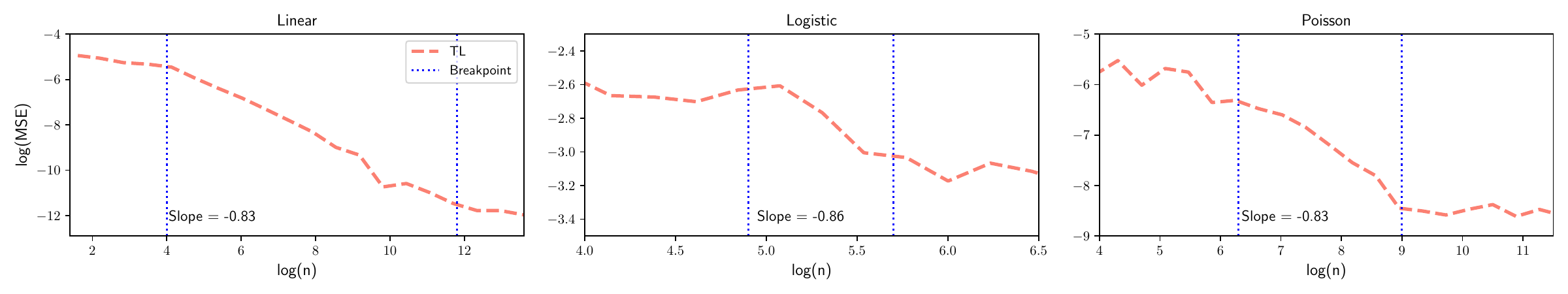}
\caption{
Log–log plot of MSE as a function of the sample size $n$. The vertical dotted lines represent the breakpoints for phase transition.
The left, middle, and right panels correspond to the linear, logistic, and Poisson models, respectively. 
}
\label{fig:MSE_n}
\end{figure}

\section{Real data analysis}
\label{sec:real-data}
In this section, we apply our methods to two real datasets to illustrate the performance of our proposed estimator. The first dataset  is \textbf{SLID-Ontario dataset} (Subsection \ref{subsec:SLID}); it is an economic dataset, where we want to predict a person's composite hourly wage using their demographic attributes. The second dataset is \textbf{US Adult Income} dataset (Subsection \ref{sec:adult}), where the goal is to predict whether a person's yearly wage is greater than 50,000 based on their various attributes. 
In both of these studies, we take $U$ (domain identifier) to be the \textit{years of employment}, as this typically determines an individual's base salary. 
For the simplicity of the implementation, we include two covariates in studies: i) $X_1$ is gender, which is a binary variable (1 if female, 0 if male), and ii) $X_2$ is years of education, as these two variables are known to affect the income quite significantly. Mathematically speaking, we fit the following (generalized) linear model: 
\begin{equation}
    g(\bbE (Y|\bX)) = \theta_0(U) + \theta_1(U)X_1 + \theta_2(U)X_2,
\end{equation}
where $g$ is a link function, $Y$ is the response variable (composite hourly wage in SLID-Ontario dataset, and indicator whether annual income is $> 50,000$ in adult income dataset).  Noticing that some values of $U$ are unrealistic outliers such as negative \textit{years of employment}, we remove all data points where $U$ falls outside the ``$3\sigma$'' region. The retention rates are 99.97\% (3996/3997) for SLID-Ontario and 99.53\% (48615/48842) for US Adult Income.


\subsection{Application 1: Survey of Labour and Income Dynamics in Ontario}
\label{subsec:SLID}
The first dataset we analyze is a public-use sample from the 1994 Survey of Labour and Income Dynamics in Ontario (SLID-Ontario) \cite{fox2015applied}. The dataset contains information on four attributes for 3997 individuals: \textit{age}, \textit{gender}, \textit{years of education}, and \textit{composite hourly wage}. In this application, we study the predictive relationship between the logarithm of the \textit{composite hourly wage} (the response variable $Y$) and other covariates. Following \cite{fox2015applied}, we use the log-transformed wage to mitigate non-normality. We employ the following domain-varying-coefficient model
\begin{equation}
\textstyle
\label{eqn:SLID-model}
    \bbE[Y \mid X, U] = \theta_0(U) + \theta_1(U)X_1 + \theta_2(U)X_2,
\end{equation}
where $X_1$ is a gender indicator and $X_2$ denotes years of education. We approximate the \textit{years of employment} $U$ by $U = \textit{age} - \textit{years of education} - 6$, assuming individuals begin schooling at age six and enter the workforce immediately after graduation. To ensure scale invariance and comparability across domains, we normalize $U$ via the min–max transformation
\begin{equation}
\textstyle
\label{eq:scale_transform}
U \leftarrow \frac{U - \min_{1 \le i \le n} U_i}
{\max_{1 \le i \le n} U_i - \min_{1 \le i \le n} U_i},
\end{equation}
so that the domain identifiers lie in $[0,1]$.

To construct source and target domains, we discretize $U$ into ten bins $[0,0.1], (0.1,0.2], \ldots, (0.9,1]$, and map each $U_i$ to the midpoint of its bin. Let $\cU^* = \{0.05, 0.15, \ldots, 0.95\}$ denote the set of bin midpoints. Each midpoint defines a domain identifier. Specifically, domain $j$ consists of all observations satisfying $U_i \in ((j-1)/10,\, j/10]$ and its associated identifier is $U_j^* = (j-0.5)/10$. For each $u_0 \in \cU^*$, we designate the corresponding domain as the target domain $\cD_0$ and treat the remaining observations as the source domain $\cD_S$. Our objective is to evaluate predictive performance on the target domain and assess whether borrowing information from nearby domains improves accuracy without inducing negative transfer. Towards that goal, we randomly split the target domain (of size $m$) into three equal parts $\{\cD_0^j\}_{j=1}^3$, where $\cD_0^1 \cup \cD_0^2$ is used for training and $\cD_0^3$ is reserved for testing. In our experiment, we compare performance of three estimators:
\begin{enumerate}
\item \emph{Target-only baseline} $\hat\btheta_{\LR}(u_0)$, fitted using $\cD_0^1 \cup \cD_0^2$.
\item \emph{Nonparametric DVCM} $\hat\btheta_{\DVCM}(u_0)$, computed using pooled data $\cD_S \cup \cD_0^1 \cup \cD_0^2$ via local polynomial regression.
\item \emph{Adaptive transfer-learning estimator} $\hat\btheta_{\TL}(u_0)$.
\end{enumerate}
To construct $\hat\btheta_{\TL}(u_0)$, we first compute a pilot estimator $\tilde\btheta_{\DVCM}(u_0)$ using $\cD_S \cup \cD_0^1$. We then fine-tune this estimate using $\cD_0^2$ by solving
\begin{equation*}
\textstyle
\hat\btheta_{\TL}(u_0)
=
\argmin_{\balpha}
\frac{1}{m/3}
\sum_{(X_{ki},Y_{ki}) \in \cD_0^2}
\left(Y_{ki} - X_{ki}^\top \balpha \right)^2
+
\|\balpha - \tilde\btheta_{\DVCM}(u_0)\|_{\hat Q}^2.
\end{equation*}
The data-splitting ensures independence between the pilot estimator and the refinement step, as discussed in Section~\ref{sec:theory}. The penalty matrix $\hat Q$ is estimated as in \eqref{eqn:Q-hat-in}. We then evaluate the predictive performance on the test set $\cD_0^3$ using
\begin{equation*}
\textstyle
\mse(u_0,\hat\btheta)
=
\frac{1}{m/3}
\sum_{(X_i,Y_i)\in \cD_0^3}
(X_i^\top \hat\btheta(u_0) - Y_i)^2.
\end{equation*}
To reduce variability due to random splitting, we repeat the procedure ten times and report the average MSE.

Each value in $\cU^*$ is treated in turn as the target domain, yielding a trajectory $\mse(u_0,\hat\btheta)$ across $u_0$. The three estimators $\{\hat\btheta_{\LR}, \hat\btheta_{\DVCM}, \hat\btheta_{\TL}\}$ produce the trajectories shown in Figure~\ref{fig:linear_real_data} (left panel), while the right panel displays the distribution of the unbinned $U_i$ values. Exact numerical MSE can be found in Appendix~\ref{sec:add-sim}.

Several patterns emerge from Figure~\ref{fig:linear_real_data}. First, the target-only estimator $\hat\btheta_{\LR}(u_0)$ exhibits large MSE near the right boundary ($u_0 \approx 1$), reflecting data scarcity in that region, as seen in the histogram. Second, $\hat\btheta_{\DVCM}(u_0)$ performs worse near the left boundary, where the target domain itself contains abundant data and therefore the target-only baseline itself is a strong predictor. Last but not least, the adaptive estimator $\hat\btheta_{\TL}(u_0)$ automatically tracks the better of the two estimators; when $u_0$ is small, it behaves similarly to $\hat\btheta_{\LR}(u_0)$; when $u_0$ is large, it aligns more closely with $\hat\btheta_{\DVCM}(u_0)$. Across all target domains, $\hat\btheta_{\TL}(u_0)$ achieves the most stable and favorable performance, corroborating both our theoretical results and simulation findings.

\subsection{Application 2: US Adult Income}
\label{sec:adult}
The US Adult Income dataset (also known as the ``Census Income’’ or ``Adult’’ dataset) contains demographic attributes and income levels for $48,842$ individuals from the 1994 US Census. It is widely used in the machine learning literature, particularly in studies of classification performance and algorithmic fairness (e.g., see \cite{yurochkin2020training,mukherjee2020two,yurochkin2021sensei}), where the goal is to predict whether an individual earns more than \$50,000 per year. To maintain consistency with the previous subsection, we select three covariates, age, gender, and years of education—to predict the binary response variable $Y$. We model the response using the generalized linear DVCM
\begin{equation}
\textstyle
\label{eqn:Adult-model}
    \Pr(Y=1 \mid X_1, X_2, U)
    =
    \frac{1}{1+\exp\left(-\{\theta_0(U) + \theta_1(U)X_1 + \theta_2(U)X_2\}\right)}.
\end{equation}
Here, $Y=1$ indicates annual income $\geq 50{,}000$, and $Y=0$ otherwise. As in Section~\ref{subsec:SLID}, we approximate years of employment $U$ and construct domain identifiers using the same binning and scaling procedure, resulting in $10$ domains indexed by $\cU^* = \{0.05, 0.15, \ldots, 0.95\}$. For a given $u_0 \in \cU^*$, the observations with $U_k^* = u_0$ form the target domain $\cD_0$ (of size $m$), while the remaining observations constitute the source domain $\cD_S$. The target domain is randomly split into training subsets $\cD_0^1$ and $\cD_0^2$ and a test subset $\cD_0^3$, each of size $m/3$. Here also, we compare three estimators: i) \textit{the target-only baseline} $\hat\btheta_{\GLR}(u_0)$ (constructed using $\cD_0^1\cup \cD_0^2$), ii) \textit{the non-parametric generalized DVCM} $\hat\btheta_{\DVCM}(u_0)$ (constructed using $\cD_S \cup \cD_0^1 \cup \cD_0^2$), and iii) \textit{our proposed transfer-learning estimator} $\hat\btheta_{\TL}(u_0)$, which, as in the previous subsection, constructed in two steps: first, we compute a pilot non-parametric DVCM estimator $\tilde\btheta_{\DVCM}(u_0)$ using $\cD_S \cup \cD_0^1$. We then refine this estimate on $\cD_0^2$ by solving
\begin{equation*}
\textstyle
\hat\btheta_{\TL}(u_0)
=
\argmin_{\balpha}
\frac{1}{m/3}
\sum_{(X_{ki},Y_{ki}) \in \cD_0^2}
\ell(X_{ki}^\top \balpha,Y_{ki})
+
\frac{1}{2}
\|\balpha - \tilde\btheta_{\DVCM}(u_0)\|_{\hat Q}^2,
\end{equation*}
where $\ell(\cdot,\cdot)$ denotes the cross-entropy loss and the penalty matrix $\hat Q$ is chosen as in Equation~\eqref{eqn:Q-hat-in}. To reduce variability due to random splitting, we repeat the procedure ten times and report the average cross-entropy loss on the test set $\cD_0^3$. Each value in $\cU^*$ is treated in turn as the target domain, yielding trajectories of predictive error across $u_0$. The three estimators $\{\hat\btheta_{\GLR}, \hat\btheta_{\DVCM}, \hat\btheta_{\TL}\}$ produce the curves shown in Figure~\ref{fig:logistic_real_data} (left panel), while the right panel displays the distribution of the unbinned $U_i$ values. Exact numerical results are provided in Appendix~\ref{sec:add-sim}.

The qualitative behavior mirrors that observed in Section~\ref{subsec:SLID}. The logistic regression estimator $\hat\btheta_{\GLR}(u_0)$ performs particularly well near the left endpoint ($u_0 \approx 0$), whereas in other regions the performance of the nonparametric estimator $\hat\btheta_{\DVCM}(u_0)$ is at par. The adaptive estimator $\hat\btheta_{\TL}(u_0)$ consistently aligns with the better-performing method across domains, highlighting its ability to automatically balance between pooling and target-only learning.

\begin{figure}
\begin{center}
\includegraphics[width=\textwidth]{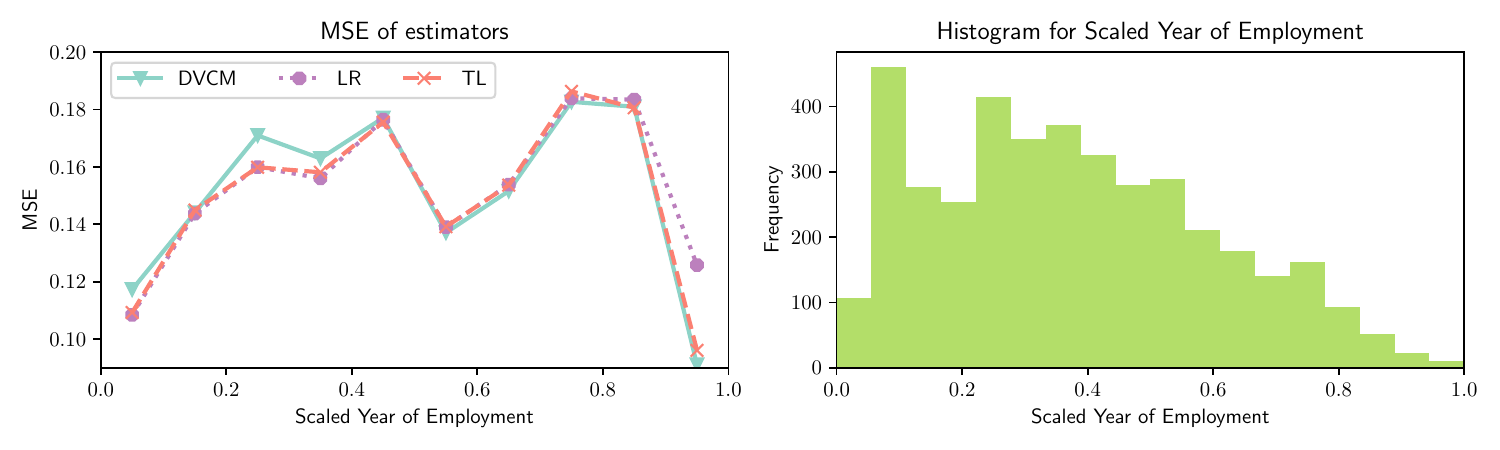}
\end{center}
\caption{Model evaluation on SLID-Ontario dataset. The MSEs of the linear regression, varying-coefficient model, and transfer learning estimator at different values of $u_0$ based on bandwidth of 0.2 are shown on the left. The histogram of the scaled year of employment, $U$, is shown on the right.
 }\label{fig:linear_real_data}
\end{figure}

\begin{figure}
\begin{center}
\includegraphics[width=\textwidth]{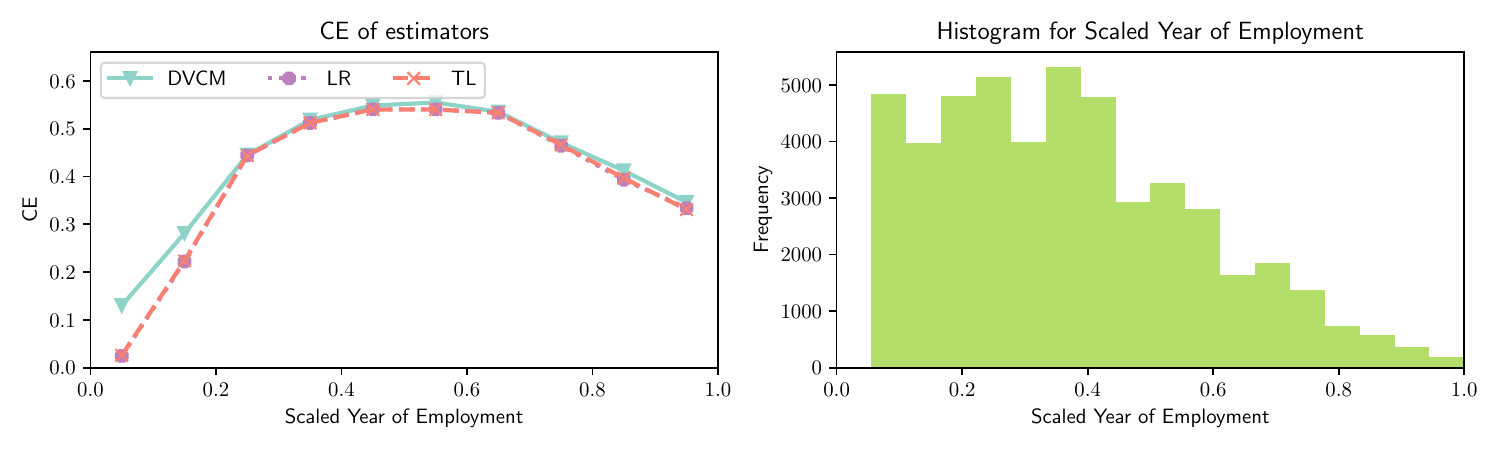}
\end{center}
\caption{Model evaluation on the US Adult Income dataset. The cross-entropy of the logistic regression, logistic-based varying-coefficient, and transfer-learning estimators at different values of $u_0$ based on bandwidth of 0.2 is shown on the left. The histogram of the scaled year of employment, $U$, is shown on the right.
 }\label{fig:logistic_real_data}
\end{figure}

\section{Conclusion and future work}\label{sec:conclusion}
We study multi-source transfer learning under \emph{posterior drift}, where the conditional relationship between response and covariates varies across environments indexed by a domain identifier \(U\). To capture this structured heterogeneity, we introduce a \emph{domain–varying coefficient model} (DVCM) and propose a two-step estimator that combines nonparametric pooling across source domains with a ridge-type fine-tuning step on the target domain. Our main contribution is a data-adaptive choice of the shrinkage matrix \(Q\) that provably prevents negative transfer: the resulting estimator automatically interpolates between target-only and pooled estimators and never incurs higher risk than the target-only baseline. We establish matching minimax upper and lower bounds for estimating \(\theta(u_0)\), revealing a phase transition governed by smoothness, domain dispersion, and the number of source environments. We further derive asymptotic normality with feasible variance estimation, enabling valid confidence intervals and hypothesis tests. Simulations and real-data experiments confirm that the procedure adaptively tracks the better of target-only and pooled learning across regimes. However, several interesting directions remain open for future investigation: 
\begin{enumerate}
    \item \textbf{Beyond linear models in $X$.} In this paper, we focused on models that are linear in $X$. A natural extension is to consider more flexible non-linear structures, such as single-index models $\bbE[Y \mid X, U] = g(X^\top \theta(U))$ for unknown link $g$, or additive models of the form $\bbE[Y \mid X, U] = \sum_{j = 1}^d \theta_j(U)f_j(X)$, where both $\theta_j(\cdot)$ and $f_j(\cdot)$ are unknown. An important theoretical question is whether the negative-transfer robustness and adaptive minimax optimality established here continue to hold under suitable reformulations of the estimation procedure for such broader nonparametric classes.

    \item \textbf{Modern machine learning estimators for the $U$-varying component.} We estimated the nonparametric components via local polynomial regression. A promising direction is to investigate neural network or transformer-based estimators for learning $\theta(U)$, especially when $U$ is multi-dimensional. As observed in recent work (e.g., see \cite{schmidt2020nonparametric,kohler2021rate}), neural network estimators can adapt to compositional structures and mitigate the curse of dimensionality. Understanding whether similar adaptivity and phase-transition phenomena persist under modern deep-learning architectures remains an important open problem.

    \item \textbf{High-dimensional covariates and structured sparsity.}
Our analysis assumes that the dimension of $X$ is fixed. In growing/high-dimensional settings, variable selection becomes essential, particularly when only a subset of covariates is informative, and the sparsity
pattern may vary across domains. One natural extension is to incorporate sparsity-inducing penalties
(e.g., $\ell_1$-regularization or structured group penalties) into the domain-adaptive framework. Alternatively, domain heterogeneity may be captured through a low-rank latent factor structure. Developing adaptive procedures that combine transfer learning with sparsity or low-rank structure on the covariates would substantially broaden the scope of the model.

\item \textbf{Bayesian formulations and adaptive borrowing.} A Bayesian perspective offers another appealing direction. One may model the domain identifiers $U_i$ as draws from a prior distribution (where the prior variance encodes cross-domain similarity), and the coefficient function $\theta(\cdot)$ is generated from a nonparametric prior (e.g., Gaussian process, spline-based prior, or Bayesian neural network prior) that models the smoothness. An important theoretical question is how to modify the likelihood equation appropriately so that the resulting posterior achieves adaptive contraction rates that match the minimax frequentist rates derived here. If so, one may use this approach for uncertainty quantification from a Bayesian perspective. 
\end{enumerate}

\begin{appendix}
\begin{center}
    {\large \textbf{Appendix}}
\end{center}
Throughout the theoretical analysis in the Appendix, we assume a target sample-splitting procedure. Specifically, we observe a total of $2n_0$ target samples, which are partitioned into two equal subsets satisfying $|\cI_0| = |\cI_0^*| = n_0$. The subset $\cI_0$ is used to construct the initial nonparametric estimator, while $\cI_0^*$ is reserved for the subsequent fine-tuning step. Under this setup, we first analyze the estimator in the linear response setting. The nonparametric estimator $\hat\btheta_{\DVCM}(u_0)$ is defined as
\begin{equation}
\textstyle
\hat\btheta_{\DVCM}(u_0)
=
\bA_l \cdot
\argmin_{\balpha\in \bbR^{(l+1)p}}
\sum_{k=0}^K
\sum_{i\in \cI_k}
\left[
Y_{ki}
-
\left(\bPhi_l\!\left(\frac{U_k-u_0}{h}\right)^\top
\otimes
X_{ki}^\top\right)\balpha
\right]^2
W\!\left(\frac{U_k-u_0}{h}\right),
\end{equation}
where $\bPhi_l(x) = \big(1, x, x^2/2!, \ldots, x^l/l!\big)^\top$ denotes the $l$-th order polynomial feature map, $\otimes$ is the Kronecker product, and $\bA_l = [I_{p}, \mathbf 0_{p\times lp}]$ extracts the first $p$ coordinates of the minimizer.

The fine-tuning estimator constructed from $\cI_0^*$ is then given by
\begin{align}
\hat\btheta_{\TL}(u_0)
&=
\argmin_{\balpha\in\bbR^p}
\frac{1}{2n_0}
\sum_{i\in \cI_0^*}
\big(
Y_{0i}-X_{0i}^\top \balpha
\big)^2
+
\frac{1}{2}
\|\balpha-\hat\btheta_{\DVCM}(u_0)\|_Q^2
\notag \\
&=
\left(
\frac{1}{n_0}\bX_0^\top \bX_0 + Q
\right)^{-1}
\left(
\frac{1}{n_0}\bX_0^\top \by_0
+
Q\,\hat\btheta_{\DVCM}(u_0)
\right).
\end{align}

Similarly, for the generalized linear response model, the Step~I estimator is defined as
\begin{equation}
\hat\btheta_{\GDVCM}(u_0)
=
\bA_{l}\cdot
\argmin_{\balpha\in \bbR^{(l+1)p}}
\sum_{k = 0}^K
\sum_{i \in \cI_k}
\ell\!\big(Z_{ki}^\top \alpha, Y_{ki}\big)
\,W\!\left(\frac{U_k-u_0}{h}\right),
\end{equation}
while the Step~II fine-tuning estimator based on $\cI_0^*$ is
\begin{equation}
\hat \btheta_{\TL}(u_0)
=
\argmin_{\balpha}
\frac{1}{n_0}
\sum_{i \in \cI_0^*}
\ell\!\big(X_{0i}^\top \alpha, Y_{0i}\big)
+
\frac{1}{2}
\|\balpha - \hat\btheta_{\GDVCM}(u_0)\|_Q^2 \, .
\end{equation}

This appendix, centered on the estimators defined above, is organized as follows.
Section~\ref{sec:aux-lem} collects auxiliary lemmas used throughout the paper. Section~\ref{sec:proof-main} contains proofs of the main theorems, propositions, and lemmas from the main body. Section~\ref{sec:proof-aux-lem} provides the proofs of the auxiliary lemmas. 

\section{Auxiliary lemmas}\label{sec:aux-lem}
In this section we collect auxiliary lemmas that support the main results. Section~\ref{sec:aux1} gathers tools for the nonasymptotic analysis: Lemmas~\ref{lem:gap}–\ref{lem:mseA-ub} serve as building blocks for Proposition~\ref{prop:rate-LR-VCM}. Section~\ref{sec:aux2} contains asymptotic tools: Lemma~\ref{lem:cond-normality-GDVCM} underpins Theorem~\ref{thm:TL-GLM-rate-Op} and Proposition~\ref{prop:asymp-GLM-VCM};  Lemma~\ref{lem:cond-normality-DVCM} is utilized to prove Proposition~\ref{prop:asymp-LR-VCM}; Lemma~\ref{lem:Dconv-GLM-TF-0-infty} is used in the proof of Theorem~\ref{thm:TL-GLM-rate-Op}; and Lemma~\ref{lem:TL-GLM-normal} aids Theorem~\ref{thm:TL-GLM-infer}.

\subsection{Auxiliary lemmas for nonasymptotic analysis}
\label{sec:aux1}
\noindent
Recall the linear DVCM estimator is
\begin{equation}
\textstyle
    \hat\btheta_{\DVCM}(u_0) =  \bA_{l}(\bZ^\top\bW\bZ)^{-1}\bZ^\top\bW\by \in \bbR^{p}\,.
\end{equation}
Let us first recall some basic notations: 
\begin{align*}
    & \bZ^\top = \left[Z_{01}, Z_{02}, \ldots, Z_{0n_0},\ldots, Z_{K1}, Z_{K2}, \ldots, Z_{Kn_K}\right],\\
    & \bW = S_h^{-1} \diag\bigg\{\underbrace{W\left(\frac{U_0-u_0}{h}\right),
    \ldots, W\left(\frac{U_0-u_0}{h}\right)}_{n_0 \text{ identical terms}},\ldots,\underbrace{W\left(\frac{U_K-u_0}{h}\right),
    \ldots, W\left(\frac{U_K-u_0}{h}\right)}_{n_K \text{ identical terms}}\bigg\},\\
    & \by^\top = \left[\by_0^\top,\ldots,\by_K^\top\right],
\end{align*}
where the normalizing constant $S_h=\sum_{k=0}^K\sum_{i\in\cI_k}W\left(\frac{U_k-u_0}{h}\right)$, and $W$ is a uniform kernel (Assumption \ref{Assump:Unif-Kernel}). 
The following lemma provides a finite sample concentration inequality on the distance of $u_0$ from its nearest and furthest neighbors, i.e., $d_{(1)}(u_0)$ and  $d_{(K)}(u_0)$. In particular, it shows that $d_{(1)}(u_0)$ and  $d_{(K)}(u_0)$ are of the order $\gamma/K$ and $\gamma$.
\begin{lemma} \label{lem:gap}
Under Assumption \ref{Assump:VCM-UX} (a), the following bounds hold:
\begin{itemize}
    \item[(1)]
    \[\left[ 1- \frac{2a_0t}{K} \right]^K\indc\left\{t \leq K/(2a_0)\right\} \leq \Pr\left(K d_{(1)}(u_0) > \gamma t\right)\leq\left[ 1- \frac{2a_0't}{K} \right]^K \indc\left\{t \leq K/(2a_0')\right\},\]
    \[C_1 \left(K/\gamma\right)^{-2\beta} \leq \bbE \left[d_{(1)}^{2\beta}(u_0)\right] \leq C_2 \left(K/\gamma\right)^{-2\beta}.\]
    \item[(2)]
    \[\bigg\{1 - \left[\frac{2a_0t}{K}\right]^K\bigg\}\indc\left\{t \leq K/(2a_0)\right\} \leq \Pr\left(Kd_{(K)}(u_0) > \gamma t\right) \leq \bigg\{1 - \left[\frac{2a_0't}{K}\right]^K\bigg\}\indc\left\{t \leq K/(2a_0')\right\},\] 
    \[C_3 \gamma^{2\beta} \leq \bbE \left[d_{(K)}^{2\beta}(u_0)\right] \leq C_4 \gamma^{2\beta}.\]
\end{itemize}
\end{lemma}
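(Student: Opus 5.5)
The plan is to reduce everything to the distribution of a single distance $|U_k-u_0|$ and then use independence. Under Assumption~\ref{Assump:VCM-UX}(a), the density of $U_k$ is $\gamma^{-1}f((u-u^*)/\gamma)$, which lies between $a_0'/\gamma$ and $a_0/\gamma$ on its support. Hence for $s\ge 0$,
\[
p(s):=\Pr\bigl(|U_k-u_0|\le s\bigr)\in\Bigl[\tfrac{2a_0' s}{\gamma},\ \tfrac{2a_0 s}{\gamma}\Bigr],
\]
whenever the window $[u_0-s,u_0+s]$ lies in the support. The lower bound uses the lower density bound on the part of the window inside the support. When $u_0$ sits at the boundary, only a one-sided window is available; this changes $a_0'$ by a factor $1/2$, which I would absorb into the constant, or else assume $u_0$ is interior. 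Setting $s=\gamma t/K$ gives $p\in[2a_0't/K,\,2a_0t/K]$. The indicators in the statement simply record the range $t\le K/(2a)$ on which these expressions are valid probabilities; beyond it the relevant bound is $0$ or $1$ trivially.

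For part (1), $d_{(1)}(u_0)>s$ holds iff all $K$ i.i.d.\ distances exceed $s$. Therefore $\Pr(Kd_{(1)}>\gamma t)=(1-p(\gamma t/K))^K$, and monotonicity of $x\mapsto(1-x)^K$ gives both tail bounds at once. For the moment I would use the layer-cake formula:
\[
\bbE\bigl[d_{(1)}^{2\beta}\bigr]=(\gamma/K)^{2\beta}\int_0^\infty 2\beta t^{2\beta-1}\Pr\bigl(Kd_{(1)}>\gamma t\bigr)\,dt .
\]
For the upper bound, $(1-2a_0't/K)^K\le e^{-2a_0't}$, so the integral is at most $\Gamma(2\beta+1)(2a_0')^{-2\beta}$. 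For the lower bound, restrict to $t\le 1/(4a_0)$ and use Bernoulli's inequality $(1-x)^K\ge 1-Kx\ge 1/2$. This makes the integral at least $\tfrac12(4a_0)^{-2\beta}$.

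For part (2), $d_{(K)}(u_0)\le s$ holds iff all distances are at most $s$. Hence $\Pr(Kd_{(K)}>\gamma t)=1-p(\gamma t/K)^K$, and the tail bounds follow as before. For the moment upper bound, note that $|U_k-u_0|\le \gamma\,\mathrm{diam}(\cB)+|u_0-u^*|$. Since $u_0$ is itself a domain index in the support, this is $\lesssim\gamma$, so $\bbE[d_{(K)}^{2\beta}]\le C_4\gamma^{2\beta}$. For the lower bound, take $c=1/(4a_0)$. Then $\Pr(d_{(K)}>c\gamma)\ge\Pr(|U_1-u_0|>c\gamma)\ge 1-2a_0c=1/2$, so by Markov-type reasoning $\bbE[d_{(K)}^{2\beta}]\ge \tfrac12 (c\gamma)^{2\beta}$.

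The main obstacle is not the computation but the geometry of the support. The two-sided bound $p(s)\ge 2a_0's/\gamma$ requires the window to stay inside $u^*+\gamma\cB$, which can fail for $s$ comparable to $\gamma$ or for boundary $u_0$. I would handle this by proving the lower tail estimate only for $t$ in a range $t\le c_0K$ with $c_0$ small, where the window is at least one-sided inside the support. I would then check that the moment bounds only use this range, which they do, since the lower bounds only integrate over $t\le 1/(4a_0)$.
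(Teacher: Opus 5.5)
Your route is the paper's. Independence reduces both order statistics to $p(s)=\Pr(|U_k-u_0|\le s)$, the density bounds sandwich $p$, and the moments come from layer-cake integration. Your Bernoulli step and your one-coordinate bound $\Pr(d_{(K)}(u_0)>c\gamma)\ge 1-2a_0c$ are slightly simpler than the paper's. The paper instead uses $(1+x/n)^n\ge e^x(1-x^2/n)$ and a layer-cake lower bound for $d_{(K)}(u_0)$. You are also right to worry about the support geometry, which the paper ignores by asserting $F(x+t/K)-F(x-t/K)\ge 2t/(a_0K)$ for every $t$.

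However, your geometric patch is aimed at the wrong bounds. Both moment lower bounds use only $p(s)\le 2a_0s/\gamma$, and that holds for every window. The one moment bound that needs $p(s)\gtrsim s/\gamma$ is the upper bound on $\bbE[d_{(1)}^{2\beta}(u_0)]$. There you integrate $(1-2a_0't/K)^K$ over all $t$ up to $Kt_0$, where $\gamma t_0$ is the largest possible distance, not just over $t\le 1/(4a_0)$. So proving the lower estimate on $p$ only for $t\le c_0K$ leaves $t\in(c_0K,Kt_0]$ uncovered, and your check ``which they do'' is false.

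The repair is short. On that range, monotonicity and your one-sided estimate give $\Pr(Kd_{(1)}(u_0)>\gamma t)\le\{1-p(c_0\gamma)\}^K\le e^{-a_0'c_0K}$. Since the range has length $O(K)$, it contributes $O(K^{2\beta}e^{-a_0'c_0K})=O(1)$ to the integral. Alternatively, suppose $u_0$ lies in the support interval of length $L$. Then a one-sided window of length $\min(s,L/2)$ always fits inside, so $p(s)\ge a_0'\min(s,L/2)/\gamma$ for all $s$. This gives $\Pr(Kd_{(1)}(u_0)>\gamma t)\le e^{-a_0't/2}$ for every $t$.
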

The proof on part (1) is in Appendix \ref{sec:proof:order-stat-d1} and the proof on part (2) is in Appendix \ref{sec:proof:order-stat-dK}. The following Lemma shows that under our assumptions, the random variables $Z_{ki}$ are upper bounded almost surely under certain constraints.
\begin{lemma}
\label{lem:bdd-Z}
Under Assumptions \ref{Assump:VCM-UX} and \ref{Assump:Unif-Kernel}, it holds that
$$
\indc\{|U_k-u_0|\le h\}\left\Vert Z_{ki}\right\Vert_2 \leq 2\,.
$$
\end{lemma}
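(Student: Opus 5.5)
The bound is elementary, so we compute $\|Z_{ki}\|_2$ directly from the Kronecker structure. By definition $Z_{ki} = \bPhi_l(t_k)\otimes X_{ki}$ with $t_k = (U_k-u_0)/h$. For any two vectors $a,b$ the Kronecker product satisfies $\|a\otimes b\|_2 = \|a\|_2\|b\|_2$, since the squared entries of $a\otimes b$ are exactly the products $a_j^2 b_m^2$. Hence
\begin{equation*}
\|Z_{ki}\|_2 = \|\bPhi_l(t_k)\|_2\,\|X_{ki}\|_2 \le \|\bPhi_l(t_k)\|_2,
\end{equation*}
where we used $\|X_{ki}\|_2\le 1$ almost surely from Assumption~\ref{Assump:VCM-UX}(a).

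It remains to bound the polynomial feature vector on the event $\{|U_k-u_0|\le h\}$, which is precisely $\{|t_k|\le 1\}$. This is also the support of the uniform kernel in Assumption~\ref{Assump:Unif-Kernel}, which is why the indicator appears. On this event,
\begin{equation*}
\|\bPhi_l(t_k)\|_2^2 = \sum_{j=0}^l \frac{t_k^{2j}}{(j!)^2} \le \sum_{j=0}^\infty \frac{1}{(j!)^2}.
\end{equation*}
To bound the series, note that $(j!)^2 \ge j!$ for every $j\ge 0$. Therefore
\begin{equation*}
\sum_{j=0}^\infty \frac{1}{(j!)^2} \le \sum_{j=0}^\infty \frac{1}{j!} = e < 4.
\end{equation*}
This gives $\|\bPhi_l(t_k)\|_2 < 2$, uniformly in $l$. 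In fact the series equals $I_0(2)\approx 2.28$, which gives the slightly sharper constant $\sqrt{2.28}$.

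Combining the two displays, $\indc\{|U_k-u_0|\le h\}\|Z_{ki}\|_2 \le 2$ almost surely. On the complementary event the left-hand side is zero, so the bound holds trivially there.

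There is no real obstacle in this argument. The only points needing care are the Kronecker norm identity and the requirement that the constant not depend on the polynomial order $l$. The factorial normalization in $\bPhi_l$ provides this uniformity automatically.
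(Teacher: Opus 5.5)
Your proof is correct and is essentially the paper's argument: both use $\|X_{ki}\|_2\le 1$ and, on $\{|t_k|\le 1\}$, the bound $\sum_{m}(1/m!)^2\le\sum_m 1/m! = e<4$, giving $\|Z_{ki}\|_2\le\sqrt{e}<2$. The only difference is cosmetic: the paper expands $\|Z_{ki}\|_2^2$ coordinatewise as $\sum_{j,m}X_{kij}^2(t_k^m/m!)^2$, whereas you factor it first via the identity $\|a\otimes b\|_2=\|a\|_2\|b\|_2$.
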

The proof is in Appendix \ref{proof:lem:bdd-Z}. 

\begin{lemma} \label{lem:var-bd}
     Let Assumptions \ref{Assump:VCM-UX}--\ref{Assump:Unif-Kernel} hold and $h$ be such that $h \le |\cU|$. Then there exists a constant $C>0$ such that
     \[\bbE\left[S_h^{-1}\mid d_{(1)}(u_0) \le h\right] \leq C\frac{\gamma}{nh}\,. \]
\end{lemma}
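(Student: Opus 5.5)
Under the uniform kernel, $S_h = \tfrac12 \sum_{k=0}^K n_k \indc\{|U_k-u_0|\le h\}$. The plan is to bound $S_h^{-1}$ from above using only the source domains lying in the window, and to use Assumption~\ref{Assump:balance-sample} to convert counts of domains into sample sizes. With $N_h := \#\{1\le k\le K: |U_k-u_0|\le h\}$, the balance condition $n_k\ge b_0'\bar n$ for $k\ge1$ gives
\[
S_h \;\ge\; \tfrac12 b_0' \bar n\, N_h .
\]
On the event $\{d_{(1)}(u_0)\le h\}$ we have $N_h\ge 1$, so
\[
\bbE[S_h^{-1}\mid d_{(1)}(u_0)\le h] \le \frac{2}{b_0'\bar n}\,\bbE[N_h^{-1}\mid N_h\ge 1].
\]
The task therefore reduces to showing $\bbE[N_h^{-1}\mid N_h\ge1]\lesssim \gamma/(Kh)$. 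Since $n=(K+1)\bar n\asymp K\bar n$, this yields the claimed bound $C\gamma/(nh)$.

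Next I determine the law of $N_h$. Because the $U_k$ are i.i.d. with density $\gamma^{-1}f((u-u^*)/\gamma)$, we have $N_h\sim \mathrm{Bin}(K,p_h)$ with $p_h=\Pr(|U_1-u_0|\le h)$. By Assumption~\ref{Assump:VCM-UX}(a), $p_h\le 2a_0h/\gamma$. For the lower bound, the window $[u_0-h,u_0+h]$ intersected with the support (an interval of length $\asymp\gamma$ containing $u_0$) has length at least $c\min(h,\gamma)$. The hypothesis $h\le|\cU|$ is used exactly here: it guarantees $p_h\ge c\,a_0' h/\gamma$, so that $p_h\asymp h/\gamma$.

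Finally, I apply the standard inverse-moment bound for a binomial conditioned to be positive, which is the crux of the argument:
\[
\bbE\Big[\frac{1}{N}\,\Big|\,N\ge1\Big]\le \frac{\bbE[2/(N+1)]}{\Pr(N\ge1)},\qquad \bbE\Big[\frac1{N+1}\Big]=\frac{1-(1-p)^{K+1}}{(K+1)p}\le\frac{1}{(K+1)p}.
\]
Two regimes need to be handled.
\begin{itemize}
\item If $Kp_h\gtrsim1$, then $\Pr(N_h\ge1)$ is bounded below, and the bound is $\lesssim 1/(Kp_h)\asymp\gamma/(Kh)$.
\item If $Kp_h\ll1$, then $\Pr(N_h\ge1)\asymp Kp_h$. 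The ratio then equals $\frac{1-(1-p)^{K+1}}{(K+1)p}\big/(1-(1-p)^K)$, which is $O(1)$. Here I use the trivial bound $\bbE[N^{-1}\mid N\ge1]\le1$, but this must be compared with $\gamma/(Kh)$, which is $\gtrsim1$ in this regime. So the bound $\gamma/(Kh)$ still holds.
\end{itemize}
Thus in both cases $\bbE[N_h^{-1}\mid N_h\ge1]\le C\min\{1,\gamma/(Kh)\}\le C\gamma/(Kh)$.

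The main obstacle I expect is the lower bound $p_h\gtrsim h/\gamma$ near the boundary of the support, i.e., when $u_0$ is close to an edge of $u^*+\gamma\cB$. This is where I would carefully invoke the compactness of the support and $h\le|\cU|$, so that at least a constant fraction of the window lies in the support. A secondary point is that the target domain is included in $S_h$, but it only increases $S_h$, so dropping it is harmless.
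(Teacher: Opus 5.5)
Your proof is correct and follows essentially the same route as the paper: use the balance assumption to bound $S_h$ below by $\tfrac12 b_0'\bar n$ times the number of in-window source domains, then bound the conditional inverse moment by $\bbE[\cdot]/\Pr(\cdot)$ using the closed-form binomial identity and $p_h\ge a_0'h/\gamma$. Working directly with $N_h\sim\mathrm{Bin}(K,p_h)$ and $1/N\le 2/(N+1)$ is more careful than the paper, which removes the data-dependent nearest index and treats the remaining count as $\mathrm{Bin}(K-1,p)$ even though it equals $(N_h-1)_+$; moreover, your case split is unnecessary, since $1-(1-p)^K\ge p$ gives $\frac{1-(1-p)^{K+1}}{1-(1-p)^K}\le 2$ and hence $\bbE[N_h^{-1}\mid N_h\ge1]\le 4/((K+1)p_h)$ uniformly.
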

The proof is in Appendix \ref{proof:lem:var-bd}. Recall that $\Gamma = \{U_k: 0 \leq k \leq K\}\cup \{X_{ki}: 0 \leq k \leq K, i \in \cI_k\}$ is the set of all covariates. The following lemma shows that the $\mse_A$ (conditioning on $\Gamma$) of the DVCM estimator is of order $O(h^{2\beta} + S_h^{-1})$.

\begin{lemma}\label{lem:mseA-ub} 
Under Assumptions \ref{Assump:VCM-SN}-- \ref{Assump:Unif-Kernel}, the following upper bound holds:
\begin{equation}
    \begin{split}
        & \bbE \left[\Vert\hat\btheta_{\DVCM}(u_0)-\btheta(u_0)\Vert_A^2\middle | \Gamma\right] \leq q_1^2 h^{2\beta} + q_2 S_h^{-1},
    \end{split}
\end{equation}
where $q_1,q_2>0$ are constants, and $S_h = (1/2)\sum_{k=0}^K n_k\indc\left\{|U_k-u_0|\leq  h\right\}$.
\end{lemma}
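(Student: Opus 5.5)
The plan is the usual conditional bias--variance decomposition of the local polynomial estimator, done given $\Gamma$. Write $\by = \bm{\mu} + \beps$, where $\bm{\mu}_{ki} = X_{ki}^\top\btheta(U_k)$ and $\beps$ collects the noise terms. Then the closed form \eqref{eq:DVCM-closed-form} gives
\[
\hat\btheta_{\DVCM}(u_0)-\btheta(u_0) = \underbrace{\bA_l(\bZ^\top\bW\bZ)^{-1}\bZ^\top\bW\bm{\mu}-\btheta(u_0)}_{\text{bias } b(\Gamma)} + \underbrace{\bA_l(\bZ^\top\bW\bZ)^{-1}\bZ^\top\bW\beps}_{\text{noise}} .
\]
The noise term has conditional mean zero given $\Gamma$. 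The conditional $A$-norm MSE is therefore $\|b(\Gamma)\|_A^2$ plus a trace term. Since $A\preceq CI$, it suffices to bound $\|b\|_2^2$ and $\tr\Cov(\text{noise}\mid\Gamma)$ separately.

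\textbf{Bias.} Taylor-expand each coordinate $\theta_j(U_k)$ around $u_0$ to order $l=\lfloor\beta\rfloor$. This gives $\btheta(U_k) = \sum_{r\le l}\btheta^{(r)}(u_0)(U_k-u_0)^r/r! + R_k$. Because $\btheta\in\cH(\beta,L)$, the remainder satisfies $\|R_k\|_2\le \sqrt p\,L|U_k-u_0|^\beta/l!$.

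The polynomial part is exactly $Z_{ki}^\top\balpha^*$ with $\balpha^* = (\btheta(u_0)^\top, h\btheta'(u_0)^\top,\dots,h^l\btheta^{(l)}(u_0)^\top)^\top$. Local polynomial regression reproduces polynomials, so $\bA_l(\bZ^\top\bW\bZ)^{-1}\bZ^\top\bW\bZ\balpha^* = \btheta(u_0)$, and the bias reduces to $\bA_l(\bZ^\top\bW\bZ)^{-1}\bZ^\top\bW\br$ with $\br_{ki}=X_{ki}^\top R_k$.

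On the kernel support, $|U_k-u_0|\le h$ and $\|X_{ki}\|_2\le 1$, so $|\br_{ki}|\lesssim h^\beta$. To bound the operator applied to $\br$, I would write it as $\bA_l(\bZ^\top\bW\bZ)^{-1}\sum_{k,i} S_h^{-1}W(t_k)Z_{ki}\br_{ki}$. Lemma~\ref{lem:bdd-Z} gives $\|Z_{ki}\|_2\le 2$, and the weights $S_h^{-1}W(t_k)$ sum to one, so $\|\bZ^\top\bW\br\|_2\le 2\max|\br_{ki}|\lesssim h^\beta$.

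\textbf{Main obstacle.} The remaining issue is controlling $\|\bA_l(\bZ^\top\bW\bZ)^{-1}\|_2$. Assumption~\ref{Assump:VCM-UX}(c) only controls $\bA_l(\bZ^\top\bW\bZ)^{-1}\bA_l^\top$, not the full inverse. I would handle this with a Cauchy--Schwarz step in the $\bW$-inner product. Setting $v=\bA_l^\top w$ for a unit vector $w$,
\[
|w^\top \bA_l(\bZ^\top\bW\bZ)^{-1}\bZ^\top\bW\br|\le \big(v^\top(\bZ^\top\bW\bZ)^{-1}v\big)^{1/2}\big(\br^\top\bW\br\big)^{1/2}.
\]
This uses that $\bW^{1/2}\bZ(\bZ^\top\bW\bZ)^{-1}\bZ^\top\bW^{1/2}$ is a projection. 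The first factor is at most $\lambda_0^{-1/2}$ by Assumption~\ref{Assump:VCM-UX}(c), and $\br^\top\bW\br\le\max|\br_{ki}|^2\lesssim h^{2\beta}$. This yields $\|b\|_2^2\le q_1^2h^{2\beta}$.

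If $h<d_{(1)}(u_0)$, only the target domain receives weight. In that case the fit is target-only and the bias term vanishes up to the same bound. Alternatively, I would restrict to $h\ge d_{(1)}$ as in (c).

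\textbf{Variance.} Given $\Gamma$, the errors are independent with variances $\sigma^2(U_k)$, which are uniformly bounded by Assumption~\ref{Assump:VCM-UX}(b); call the bound $\bar\sigma^2$. Hence
\[
\Cov(\text{noise}\mid\Gamma)\preceq \bar\sigma^2\,\bA_l(\bZ^\top\bW\bZ)^{-1}\bZ^\top\bW^2\bZ(\bZ^\top\bW\bZ)^{-1}\bA_l^\top .
\]
The uniform kernel makes every nonzero diagonal entry of $\bW$ equal to $S_h^{-1}W(0)$, so $\bW^2\preceq (1/2)S_h^{-1}\bW$ in the paper's normalization. The right-hand side is therefore at most $\bar\sigma^2 S_h^{-1}\bA_l(\bZ^\top\bW\bZ)^{-1}\bA_l^\top\preceq \bar\sigma^2\lambda_0^{-1}S_h^{-1}I$. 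Taking the trace against $A$ gives the term $q_2S_h^{-1}$, with the unnormalized $S_h=(1/2)\sum_k n_k\indc\{|U_k-u_0|\le h\}$ absorbed into the constants. Adding the bias and variance bounds completes the lemma.
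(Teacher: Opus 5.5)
Your proposal is correct and follows the paper's outline: condition on $\Gamma$, split into conditional bias and mean-zero noise, and use polynomial reproduction to reduce the bias to $\bA_l(\bZ^\top\bW\bZ)^{-1}\bZ^\top\bW\br$. For the variance you use $\bW^2=(2S_h)^{-1}\bW$ (uniform kernel) together with Assumption~\ref{Assump:VCM-UX}(c), which is essentially the paper's argument.

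The real difference is the bias bound. The paper writes $\|\bA_l(\bZ^\top\bW\bZ)^{-1}\bZ^\top\bW\br\|_2\le\|\bA_l(\bZ^\top\bW\bZ)^{-1}\bA_l^\top\|_2\,\|\bA_l\bZ^\top\bW\br\|_2$. This tacitly replaces $\bZ^\top\bW\br$ by $\bA_l^\top\bA_l\bZ^\top\bW\br$. Since $\bA_l^\top\bA_l$ only projects onto the first $p$ coordinates, the step drops the higher-order blocks $\sum_{k,i}S_h^{-1}W(t_k)\,(t_k^m/m!)\,X_{ki}r_{ki}$, $m\ge1$, so it is not justified for $l\ge1$.

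Your Cauchy--Schwarz step in the $\bW$-weighted inner product is the right fix: $|w^\top b|\le\bigl(v^\top(\bZ^\top\bW\bZ)^{-1}v\bigr)^{1/2}\bigl(\br^\top\bW\br\bigr)^{1/2}$ with $v=\bA_l^\top w$. It touches the inverse only through the block $\bA_l(\bZ^\top\bW\bZ)^{-1}\bA_l^\top$, which is exactly what (c) controls. Because the weights sum to one, $\br^\top\bW\br\le\max_{W(t_k)>0}r_{ki}^2\lesssim h^{2\beta}$, so you get $\|b\|_2\lesssim\lambda_0^{-1/2}h^\beta$ rigorously. Your preliminary bound on $\|\bZ^\top\bW\br\|_2$ via Lemma~\ref{lem:bdd-Z} then becomes unnecessary. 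So your route gives a correct proof of the bias part where the paper's factorization has a gap, at no extra cost.

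One small correction to your aside on $h<d_{(1)}(u_0)$. In that case only target observations, all with $t=0$, carry weight, so $Z_{0i}=(X_{0i}^\top,0,\dots,0)^\top$ and $\bZ^\top\bW\bZ$ is singular for $l\ge1$. The closed form is then undefined and (c) gives no variance control. The clean choice is the one you mention second, restricting to $h\ge d_{(1)}(u_0)$. That is also all the paper needs, since the median bandwidth \eqref{eqn:optimal_bw} always lies in $[d_{(1)}(u_0),d_{(K)}(u_0)]$.
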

The proof of this lemma is in Appendix \ref{proof:lem:mseA-ub}.

\subsection{Auxiliary lemmas for asymptotic analysis}\label{sec:aux2}
The next lemma provides asymptotic expressions for the conditional mean and variance of the quantities \(\Delta_k\) and \(\Lambda_k\) in the GDVCM setting. These expansions serve as essential building blocks for the subsequent asymptotic analysis.
\begin{lemma}\label{lem:cond-normality-GDVCM}
Under Assumptions~\ref{Assump:VCM-SN} \ref{Assump:VCM-UX-Generalized}, \ref{Assump:balance-sample}, and \ref{Assump:Unif-Kernel}, define for $t_k = (u_k-u_0)/h$, 
\[
\bar\btheta(u_0)^\top = \big[\btheta(u_0)^\top,\, h\btheta'(u_0)^\top,\, \ldots,\, h^l\btheta^{(l)}(u_0)^\top\big],
\]
\begin{align*}
    \Delta_k &= n_k^{-1/2}\sum_{i\in\cI_k} s_1\big(Z_{ki}^\top \bar\btheta(u_0),Y_{ki}\big)\, Z_{ki} \, W(t_k), \\
    \Lambda_k &= n_k^{-1}\sum_{i\in\cI_k} s_2\big(Z_{ki}^\top \bar\btheta(u_0),Y_{ki}\big)\, Z_{ki}Z_{ki}^\top \, W(t_k),
\end{align*}
where $Z_{ki} = \bPhi_l(t_k) \otimes X_{ki}$ and $s_j(\eta,y) = \partial^j \ell(\eta,y) / \partial\eta^j$. Then, for any $k\in\{0\}\cup[K]$ and sufficiently small $h>0$, and some $\tilde u_k$ satisfying $|\tilde u_k - u_0| \le h$
\begin{align*}
    n_k^{-1/2}\,\bbE[\Delta_k \mid U_k = u_k] &=
   \bPhi_l(t_k)^{\otimes 2}\otimes \Psi(u_k)\,
  \bA_l^\top \frac{\btheta^{(l)}(u_0)-\btheta^{(l)}(\tilde u_k)}{l!}\,(t_k h)^{l}\,W(t_k)\{1+o(1)\} \lesssim h^\beta\\
    \Var[\Delta_k \mid U_k = u_k] &= \nu(u_k) \, \bPhi_l(t_k)^{\otimes 2} \otimes \Psi(u_k) \, W(t_k)^2 + O(h^{\beta}), \\
    \mathbb{E}[\Lambda_k \mid U_k = u_k] &= \bPhi_l(t_k)^{\otimes 2} \otimes \Psi(u_k) \, W(t_k) + O(h^{\beta}).
\end{align*}
\end{lemma}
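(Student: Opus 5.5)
The plan is to condition on $U_k=u_k$ and use that, within domain $k$, the summands are i.i.d. The GLM structure gives exact identities for the conditional moments of the score, and a Taylor expansion of $\btheta$ around $u_0$ controls the approximation error.

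\textbf{Mean of $\Delta_k$.} Write $\eta_{ki}^0 = X_{ki}^\top\btheta(u_k)$ for the true linear predictor and $\bar\eta_{ki}=Z_{ki}^\top\bar\btheta(u_0)=X_{ki}^\top\sum_{j=0}^l \btheta^{(j)}(u_0)(t_kh)^j/j!$ for its local polynomial approximation. Since $s_1(\eta,y)=b'(\eta)-y$ and $\bbE[Y\mid X,U]=b'(\eta^0)$, I get
\[
\bbE[s_1(\bar\eta_{ki},Y_{ki})\mid X_{ki},U_k]=b'(\bar\eta_{ki})-b'(\eta^0_{ki})=b''(\check\eta_{ki})(\bar\eta_{ki}-\eta^0_{ki}).
\]
A Taylor expansion of $\btheta(u_k)$ to order $l$ with Lagrange remainder gives
\[
\eta^0_{ki}-\bar\eta_{ki}=X_{ki}^\top\{\btheta^{(l)}(\tilde u_k)-\btheta^{(l)}(u_0)\}(t_kh)^l/l!,
\]
with $|\tilde u_k-u_0|\le|u_k-u_0|\le h$ on the support of $W$. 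I then rewrite $X_{ki}^\top v=Z_{ki}^\top\bA_l^\top v$ and use $\bPhi_l(t_k)^{\otimes2}\otimes X_{ki}X_{ki}^\top$ for $Z_{ki}Z_{ki}^\top$ (mixed-product rule). Replacing $b''(\check\eta)$ by $b''(\eta^0)$ costs only a relative $o(1)$, because $\check\eta-\eta^0=O(h^\beta)$ and $b'''$ has bounded moments. Taking expectation over $X\mid U=u_k$ then produces $\Psi(u_k)$ and the displayed expression. Because $\btheta\in\cH(\beta,L)$ and $l=\lfloor\beta\rfloor$, $\|\btheta^{(l)}(\tilde u_k)-\btheta^{(l)}(u_0)\|\le L h^{\beta-l}$, and with $|t_k|\le1$ and Lemma~\ref{lem:bdd-Z} the whole term is $\lesssim h^\beta$. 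The factor $n_k^{-1/2}$ exactly cancels the $n_k^{1/2}$ coming from summing $n_k$ identical means.

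\textbf{Variance of $\Delta_k$ and mean of $\Lambda_k$.} By independence within the domain,
\[
\Var[\Delta_k\mid U_k]=\Var[s_1 Z W\mid U_k]=\bbE[s_1^2 ZZ^\top W^2]-\bbE[s_1 ZW]\bbE[s_1ZW]^\top .
\]
The second term is $O(h^{2\beta})$ by the first step. For the first term, I write $s_1(\bar\eta,Y)=(b'(\eta^0)-Y)+(b'(\bar\eta)-b'(\eta^0))$. The conditional variance of $Y$ in the exponential family is $\nu(u_k)b''(\eta^0)$, so the leading part equals $\nu(u_k)\bPhi_l^{\otimes2}\otimes\bbE[b''(\eta^0)XX^\top\mid U]W^2=\nu(u_k)\bPhi_l^{\otimes2}\otimes\Psi(u_k)W^2$, using the generalized $\Psi$ of Assumption~\ref{Assump:VCM-UX-Generalized}. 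The cross term vanishes in conditional expectation. The squared perturbation is $O(h^{2\beta})$, using the boundedness of $b''$ on the compact range (via $\|X\|\le1$) and Lemma~\ref{lem:bdd-Z}. For $\Lambda_k$, $s_2(\bar\eta,y)=b''(\bar\eta)$ does not depend on $y$, so
\[
\bbE[\Lambda_k\mid U_k]=\bbE[b''(\bar\eta)ZZ^\top]W .
\]
The mean value theorem gives $b''(\bar\eta)=b''(\eta^0)+b'''(\cdot)O(h^\beta)$, and the fourth-moment bound on $b^{(3)}$ combined with Cauchy--Schwarz makes the remainder $O(h^\beta)$. This yields $\bPhi_l^{\otimes2}\otimes\Psi(u_k)W+O(h^\beta)$.

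\textbf{Main obstacle.} The delicate step is justifying the $\{1+o(1)\}$ factor in the mean expansion, namely replacing $b''(\check\eta_{ki})$ by $b''(\eta^0_{ki})$ uniformly over $k$ and $u_k$. My first attempt is to bound $|b''(\check\eta)-b''(\eta^0)|\le \sup|b'''|\cdot|\bar\eta-\eta^0|$ along the segment, which is legitimate because $\|X\|\le 1$ and the compactness of $\cU$ keep all linear predictors in a compact set on which $b'''$ is continuous. If a bounded supremum of $b'''$ is not available, I would fall back on Hölder's inequality with the fourth-moment assumption on $b^{(3)}$. In either case the correction is $O(h^{2\beta})$, which is $o(h^\beta)$ relative to the leading term, as required.
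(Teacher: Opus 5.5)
Your proposal is correct and follows essentially the same route as the paper: condition on $U_k$, use the canonical exponential-family identities ($s_1=b'(\eta)-y$, $\Var(Y\mid X,U=u)=\nu(u)b''(\eta)$, and $s_2=b''$ free of $y$), and bound the local-polynomial error $Z_{ki}^\top\bar\btheta(u_0)-X_{ki}^\top\btheta(u_k)$ by $O(h^\beta)$ through the H\"older condition. Control of $b'''$ then handles the passage from intermediate points to $b''(X_{ki}^\top\btheta(u_k))$.

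Your departures from the paper are minor refinements:
\begin{itemize}
\item Splitting $s_1$ into the centered noise plus a deterministic perturbation kills the cross term. This gives an $O(h^{2\beta})$ rather than $O(h^\beta)$ error in the variance.
\item Bounding $\sup|b'''|$ on the compact range of linear predictors is more careful than the paper's appeal to moments of $b^{(3)}$ at intermediate points.
\item For the $\{1+o(1)\}$ factor, the precise reason is that the correction is quadratic in the bias vector $v$, while the leading term has norm at least $c_0'\|v\|W(t_k)$. This holds because $\Psi(u_k)\succeq c_0'I$ and the first entry of $\bPhi_l$ is $1$, so the correction is relatively negligible. An absolute $O(h^{2\beta})$ bound alone would not give this.
\end{itemize}
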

The proof of this lemma is found in Appendix \ref{proof:lem:cond-normality-GDVCM}. The next lemma (on linear DVCM) is an immediate specialization of Lemma~\ref{lem:cond-normality-GDVCM} (on generalized DVCM) obtained by choosing the quadratic loss \(\ell(\eta,y)=\tfrac12(\eta-y)^2\). In this case \(s_2\equiv1\) and \(\Psi(u)=\bbE[XX^\top\mid U=u]\); moreover the GLR scale function \(\nu(u)\) is replaced by the noise variance \(\sigma^2(u)\). Hence all conclusions of Lemma~\ref{lem:cond-normality-GDVCM} remain the same, with the only substitution \(\sigma^2(\cdot)=\nu(\cdot)\).
\begin{lemma}\label{lem:cond-normality-DVCM}
Under Assumptions~\ref{Assump:VCM-SN}, \ref{Assump:VCM-UX-new}, \ref{Assump:balance-sample}, and \ref{Assump:Unif-Kernel}, define for $t_k = (u_k-u_0)/h$, 
\[
\bar\btheta(u_0)^\top = \big[\btheta(u_0)^\top,\, h\btheta'(u_0)^\top,\, \ldots,\, h^l\btheta^{(l)}(u_0)^\top\big],
\]
\begin{align*}
    \Delta_k &= n_k^{-1/2}\sum_{i\in\cI_k} s_1\big(Z_{ki}^\top \bar\btheta(u_0),Y_{ki}\big)\, Z_{ki} \, W(t_k), \\
    \Lambda_k &= n_k^{-1}\sum_{i\in\cI_k} s_2\big(Z_{ki}^\top \bar\btheta(u_0),Y_{ki}\big)\, Z_{ki}Z_{ki}^\top \, W(t_k),
\end{align*}
where $Z_{ki} = \bPhi_l(t_k) \otimes X_{ki}$ and $s_j(\eta,y) = \partial^j \ell(\eta,y) / \partial\eta^j$. Then, for any $k\in\{0\}\cup[K]$ and sufficiently small $h>0$, and some $\tilde u_k$ satisfying $|\tilde u_k - u_0| \le h$
\begin{align*}
    n_k^{-1/2}\,\bbE[\Delta_k \mid U_k = u_k] &=
   \bPhi_l(t_k)^{\otimes 2}\otimes \Psi(u_k)\,
  \bA_l^\top \frac{\btheta^{(l)}(u_0)-\btheta^{(l)}(\tilde u_k)}{l!}\,(t_k h)^{l}\,W(t_k)\{1+o(1)\} \lesssim h^\beta\\
    \Var[\Delta_k \mid U_k = u_k] &= \sigma^2(u_k) \, \bPhi_l(t_k)^{\otimes 2} \otimes \Psi(u_k) \, W(t_k)^2 + O(h^{\beta}), \\
    \mathbb{E}[\Lambda_k \mid U_k = u_k] &= \bPhi_l(t_k)^{\otimes 2} \otimes \Psi(u_k) \, W(t_k) + O(h^{\beta}).
\end{align*}
\end{lemma}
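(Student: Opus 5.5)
The plan is to derive Lemma~\ref{lem:cond-normality-DVCM} from Lemma~\ref{lem:cond-normality-GDVCM} by specialization, and to say in the same place which computations this specialization rests on, so the linear case can also be checked directly. Take the quadratic loss $\ell(\eta,y)=\tfrac12(\eta-y)^2$. Then $s_1(\eta,y)=\eta-y$ and $s_2\equiv 1$. Condition on $U_k=u_k$ throughout and write $Y_{ki}=X_{ki}^\top\btheta(u_k)+\varepsilon_{ki}$. This gives
\[
s_1\big(Z_{ki}^\top\bar\btheta(u_0),Y_{ki}\big)=X_{ki}^\top\big\{\bPhi_l(t_k)^\top\otimes I_p\big\}\bar\btheta(u_0)-X_{ki}^\top\btheta(u_k)-\varepsilon_{ki},
\]
because $Z_{ki}^\top\bar\btheta(u_0)=\sum_{j=0}^l X_{ki}^\top\btheta^{(j)}(u_0)(t_kh)^j/j!$. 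This is the Taylor polynomial of $X_{ki}^\top\btheta(\cdot)$ at $u_0$, evaluated at $u_k=u_0+t_kh$.

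\textbf{Mean of $\Delta_k$.} Taylor-expand $\btheta(u_k)$ to order $l=\lfloor\beta\rfloor$ with Lagrange remainder at some $\tilde u_k$ between $u_0$ and $u_k$. The deterministic part of $s_1$ then equals $X_{ki}^\top\{\btheta^{(l)}(u_0)-\btheta^{(l)}(\tilde u_k)\}(t_kh)^l/l!$. Multiplying by $Z_{ki}W(t_k)$ and taking the expectation uses $\bbE[\varepsilon_{ki}\mid X,U]=0$ and $\bbE[XX^\top\mid U=u_k]=\Psi(u_k)$. Rewriting $Z_{ki}X_{ki}^\top=(\bPhi_l(t_k)\otimes X_{ki})X_{ki}^\top$ in Kronecker form gives the stated expression, with $n_k^{-1/2}\cdot n_k^{-1/2}\sum_i$ averaging to a single term. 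The H\"older condition gives $\|\btheta^{(l)}(u_0)-\btheta^{(l)}(\tilde u_k)\|\le L|t_kh|^{\beta-l}$. Since $|t_k|\le 1$ on the support of $W$, and $\|\bPhi_l(t_k)\|$ and $\Psi$ are bounded there (Lemma~\ref{lem:bdd-Z}, Assumption~\ref{Assump:VCM-UX-new}), the bound $\lesssim h^\beta$ follows.

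\textbf{Variance of $\Delta_k$ and mean of $\Lambda_k$.} By independence across $i$, $\Var[\Delta_k\mid U_k]$ equals the single-observation variance of $s_1Z_{ki}W(t_k)$. Split $s_1=r_{ki}-\varepsilon_{ki}$, where $r_{ki}=O(h^\beta)$ uniformly because $\|X\|\le1$. The pure noise part gives $\bbE[\varepsilon^2 Z Z^\top\mid U]W^2=\sigma^2(u_k)\,\bPhi_l^{\otimes2}\otimes\Psi(u_k)\,W^2$, using conditional independence of $\varepsilon$ and $X$. The cross terms vanish in expectation. The remaining $r^2$ term and the squared-mean correction are $O(h^{2\beta})\subset O(h^\beta)$. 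For $\Lambda_k$, the identity $s_2\equiv1$ gives $\bbE[\Lambda_k\mid U_k]=\bbE[ZZ^\top\mid U]W=\bPhi_l^{\otimes2}\otimes\Psi(u_k)W$ exactly, so the error term is $0$.

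\textbf{Main obstacle.} The only delicate step is the Kronecker bookkeeping that matches $\bbE[Z_{ki}X_{ki}^\top]$ times the remainder vector with the form $\bPhi_l(t_k)^{\otimes2}\otimes\Psi(u_k)\,\bA_l^\top(\cdot)$. The plan is to write the remainder as $(\bPhi_l(t_k)^\top\otimes I_p)\bA_l^\top v$ with $v$ suitably rescaled, which absorbs the $(t_kh)^l/l!$ factor into the polynomial basis, and then apply the mixed-product rule. Conditions such as the fourth-moment bound in Assumption~\ref{Assump:VCM-UX}(b) are needed only to ensure these expectations are finite.
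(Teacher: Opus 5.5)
Your proposal is correct and is essentially the paper's proof. The paper proves this lemma in one line by specializing Lemma~\ref{lem:cond-normality-GDVCM} to $\ell(\eta,y)=\tfrac12(\eta-y)^2$ with $\nu(\cdot)$ replaced by $\sigma^2(\cdot)$, and your explicit computation is that lemma's argument written out with $s_1=\eta-y$, $s_2\equiv1$, $s_3\equiv0$. Writing it out has two small advantages: it uses only $\bbE[\varepsilon_{ki}\mid X_{ki},U_k]=0$ and $\bbE[\varepsilon_{ki}^2\mid X_{ki},U_k]=\sigma^2(U_k)$ rather than the exponential-family structure formally assumed there, and it shows the two mean formulas hold exactly while the variance error is $O(h^{2\beta})$.

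The Kronecker step you flag as the main obstacle needs no rescaling of $v$. Because the first entry of $\bPhi_l$ is $1$, you have $(\bPhi_l(t_k)^\top\otimes I_p)\bA_l^\top=I_p$, equivalently $X_{ki}^\top=Z_{ki}^\top\bA_l^\top$. It follows that $\bigl(\bPhi_l(t_k)^{\otimes 2}\otimes\Psi(u_k)\bigr)\bA_l^\top=\bPhi_l(t_k)\otimes\Psi(u_k)=\bbE[Z_{ki}X_{ki}^\top\mid U_k=u_k]$, which is exactly the matrix your computation produces.
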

\begin{proof}
    This corollary on linear DVCM is a special case of Lemma \ref{lem:cond-normality-GDVCM} (on GDVCM). Therefore, Lemma \ref{lem:cond-normality-GDVCM} may be directly applied, with the only difference being variance function $\nu(\cdot)$ replaced by $\sigma^2(\cdot)$.
\end{proof}

The next proposition establishes the asymptotic distribution of the two base estimators  $\hat\btheta_{\LR}(u_0)$ and $\hat\btheta_{\DVCM}(u_0)$.

\begin{proposition}
\label{prop:asymp-LR-VCM}
Under Assumptions~\ref{Assump:VCM-SN}, \ref{Assump:VCM-UX-new}, \ref{Assump:balance-sample}, and \ref{Assump:Unif-Kernel}, the target-only estimator $\hat\btheta_{\LR}(u_0)$ satisfies: 
$$
\sqrt{n_0}\left(\hat\btheta_{\LR}(u_0) - \btheta(u_0) \right) 
    \xrightarrow[]{d} \cN\left( 0, \Omega_{\LR}(u_0)\right), 
    \quad \Omega_{\LR}(u_0) = \sigma^2(u_0)\,\Psi(u_0)^{-1} \,.
$$ 
Furthermore, if $h$ is such that $d_{(1)}(u_0) \le h \le d_{(K)}(u_0)$ and 
\(\frac{\gamma}{K} \ll h \lesssim \Big(\frac{\gamma}{n}\Big)^{\frac{1}{2\beta+1}}\), then $\hat\btheta_{\DVCM}(u_0)$ satisfies: 
$$
  \sqrt{\frac{nh}{\gamma}}\left(\hat\btheta_{\DVCM}(u_0) - \btheta(u_0) - \bb_{\DVCM}(u_0)\right) 
    \xrightarrow[]{d} \cN\left( 0, \Omega_{\DVCM}(u_0)\right) \,,
$$
 where
    \begin{align*}
    \bb_{\DVCM}(u_0) & \lesssim h^\beta, \\
    \Omega_{\DVCM}(u_0) &= \sigma^2(u_0)
    \left[\zeta_{0,1}^{-1}\,\zeta_{0,2}\,\zeta_{0,1}^{-1}\right]_{1,1} \Psi(u_0)^{-1}, \\
    \zeta_{r,s} & = \int \bPhi_l(t)^{\otimes 2} t^r W^s(t) f\Big(\tfrac{u_0 - u^* + ht}{\gamma}\Big)\,dt \,,
    \end{align*}
$u^*$ is same as defined in Assumption \ref{Assump:VCM-UX}. 
\end{proposition}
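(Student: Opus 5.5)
The target-only part is classical, so most of the work goes into the DVCM estimator. For $\hat\btheta_{\LR}(u_0)$ I write
\[
\sqrt{n_0}\bigl(\hat\btheta_{\LR}(u_0)-\btheta(u_0)\bigr)=\Bigl(\tfrac1{n_0}\bX_0^\top\bX_0\Bigr)^{-1}\tfrac{1}{\sqrt{n_0}}\textstyle\sum_{i}X_{0i}\varepsilon_{0i}.
\]
The law of large numbers gives $n_0^{-1}\bX_0^\top\bX_0\to\Psi(u_0)$, which is positive definite by Assumption~\ref{Assump:VCM-UX-new}. The multivariate Lindeberg CLT, using the bounded fourth moments and $\|X\|\le 1$, gives $\cN(0,\sigma^2(u_0)\Psi(u_0))$ for the score. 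Slutsky then yields $\Omega_{\LR}(u_0)=\sigma^2(u_0)\Psi(u_0)^{-1}$.

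For $\hat\btheta_{\DVCM}(u_0)$ I use the closed form \eqref{eq:DVCM-closed-form}. I write $\by=\bZ\bar\btheta(u_0)+\br+\beps$, where $\br_{ki}=X_{ki}^\top(\btheta(U_k)-\sum_{j\le l}\btheta^{(j)}(u_0)(U_k-u_0)^j/j!)$ is the Taylor remainder. This gives
\[
\hat\btheta_{\DVCM}(u_0)-\btheta(u_0)=\bA_l(\bZ^\top\bW\bZ)^{-1}\bZ^\top\bW\br+\bA_l(\bZ^\top\bW\bZ)^{-1}\bZ^\top\bW\beps.
\]
The first term is $\bb_{\DVCM}(u_0)$. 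Since $|\br_{ki}|\lesssim h^\beta$ on the kernel support by the Hölder property and Lemma~\ref{lem:bdd-Z}, and Assumption~\ref{Assump:VCM-UX}(c) bounds the inverse, we get $\bb_{\DVCM}\lesssim h^\beta$. Its refined form follows from the mean expansion in Lemma~\ref{lem:cond-normality-DVCM}. In the notation of that lemma, the stochastic term equals $\bA_l\bigl(\sum_k n_k\Lambda_k\bigr)^{-1}\sum_k\sqrt{n_k}\,\Delta_k^{c}$, where $\Delta_k^c$ is the centered version of $\Delta_k$.

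\textbf{Denominator.} I normalize by $nh/\gamma$. Conditionally on the $U_k$, the expectation of $\Lambda_k$ is given by Lemma~\ref{lem:cond-normality-DVCM}. The sum over $k$ is an i.i.d.\ sum over domains with weights $n_k/\bar n\in[b_0',b_0]$. The number of domains with $|U_k-u_0|\le h$ is of order $Kh/\gamma\to\infty$, by the lower bandwidth condition and Lemma~\ref{lem:gap}. A law of large numbers over domains therefore gives
\[
\frac{\gamma}{nh}\sum_k n_k\Lambda_k\to\zeta_{0,1}\otimes\Psi(u_0)\ \text{(up to the constant from } f\text{)}.
\]
Here the density of $U$ is $\gamma^{-1}f(\cdot)$, so $\bbE[W(t_k)\bPhi_l(t_k)^{\otimes2}]=(h/\gamma)\zeta_{0,1}$. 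Continuity of $\Psi$ at $u_0$ replaces $\Psi(U_k)$ by $\Psi(u_0)$. Because the balanced weights $n_k/\bar n$ may vary, I treat them as bounded fixed constants and absorb them via a weighted LLN. Lemma~\ref{lem:var-bd} controls $S_h^{-1}$ in the same way.

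\textbf{Numerator.} $\sqrt{\gamma/(nh)}\sum_k\sqrt{n_k}\Delta_k^c$ is a sum of independent terms across all $(k,i)$. Its conditional variance is $\frac{\gamma}{nh}\sum_k n_k\sigma^2(U_k)W(t_k)^2\bPhi_l(t_k)^{\otimes2}\otimes\Psi(U_k)$, which converges to $\sigma^2(u_0)\zeta_{0,2}\otimes\Psi(u_0)$ by the same domain-level LLN. I then verify the Lindeberg condition at the observation level. Each summand is bounded by $\sqrt{\gamma/(nh)}\,|\varepsilon_{ki}|\cdot 2$, so a fourth-moment Lyapunov bound of order $(\gamma/(nh))^2\cdot nh/\gamma\to0$ suffices, since $nh/\gamma\to\infty$. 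Combining with Slutsky and using the Kronecker structure, $\bA_l[(\zeta_{0,1}^{-1}\zeta_{0,2}\zeta_{0,1}^{-1})\otimes\Psi^{-1}]\bA_l^\top=[\zeta_{0,1}^{-1}\zeta_{0,2}\zeta_{0,1}^{-1}]_{1,1}\Psi(u_0)^{-1}$, which gives $\Omega_{\DVCM}$.

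\textbf{Main obstacle.} The hardest step is the two-level randomness. Samples within a domain share $U_k$, so the effective number of independent design points is the number of domains in the window, not $nh/\gamma$. I would handle this by conditioning on $\{U_k\}$ and applying a conditional CLT to the errors. The convergence of the conditional covariance to its deterministic limit is then proved separately, via a variance computation over domains. That variance is of order $\bar n^2(Kh/\gamma)/(\bar n Kh/\gamma)^2=\gamma/(Kh)\to0$, and this is exactly where $\gamma/K\ll h$ is needed.
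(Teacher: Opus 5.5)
Your proposal is correct. For the DVCM part it follows a different route from the paper.

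\textbf{The paper's route.} The paper proves the linear case by specializing its GLM M-estimation argument: with $s_2\equiv 1$ and $s_3\equiv 0$ one gets $\hat\delta=\bar\Lambda^{-1}\bar\Delta$ exactly. It then applies an \emph{unconditional} Lyapunov CLT across domains, treating each aggregated score $\Delta_k-\bbE\Delta_k$ as one independent summand.
\begin{itemize}
\item The variance comes from the law of total variance. The design term $\Var(\bbE[\Delta_k\mid U_k])\lesssim n_kh^{2\beta+1}/\gamma$ is shown negligible using $\gamma/K\ll h\lesssim(\gamma/n)^{1/(2\beta+1)}$.
\item The Lyapunov ratio is of order $\sqrt{\gamma/(Kh)}$.
\item The centering is the deterministic quantity $\bb_{\DVCM}=\bA_l(\bbE\bar\Lambda)^{-1}\bbE\bar\Delta$.
\end{itemize}

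\textbf{Your route.} You condition on $\{U_k\}$, so the noise term becomes a sum of independent terms over all $(k,i)$.
\begin{itemize}
\item An observation-level Lyapunov bound of order $\gamma/(nh)$ then suffices for the CLT.
\item The condition $Kh/\gamma\to\infty$ enters only through concentration, over domains, of the conditional covariance and of the Gram matrix; their variance is of order $\gamma/(Kh)$.
\item Your centering is the random conditional bias $\bA_l(\bZ^\top\bW\bZ)^{-1}\bZ^\top\bW\br=O_p(h^\beta)$.
\end{itemize}

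\textbf{Comparison.} Your route handles the within-domain dependence more transparently. The paper only sketches the domain-level third-moment bound, and it asserts $\gamma\bar\Lambda\to\zeta_{0,1}\otimes\Psi(u_0)$ in probability from a computation of the mean alone. Your argument also yields a CLT conditional on the domain identifiers. The paper's route buys a deterministic bias. The two centerings differ by $o_p(\sqrt{\gamma/(nh)})$. This follows from a short computation of the same type as the paper's bound on $\Var(\bbE[\Delta_k\mid U_k])$, together with the $\sqrt{\gamma/(Kh)}$ relative error of $\gamma\bar\Lambda$. So nothing downstream changes.

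\textbf{One fix.} Assumption~\ref{Assump:VCM-UX}(c) controls only the block $\bA_l(\bZ^\top\bW\bZ)^{-1}\bA_l^\top$, not the row block $\bA_l(\bZ^\top\bW\bZ)^{-1}$. It therefore does not by itself bound your bias term. Use your denominator law of large numbers instead: it gives $\lambda_{\min}(\bZ^\top\bW\bZ)$ bounded away from zero with probability tending to one.
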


\begin{proposition}
\label{prop:asymp-GLM-VCM}
Under Assumptions~\ref{Assump:VCM-SN}, \ref{Assump:VCM-UX-Generalized}, \ref{Assump:balance-sample}, and \ref{Assump:Unif-Kernel}, the target-only estimator $\hat\btheta_{\GLR}(u_0)$ satisfies: 
$$
\sqrt{n_0}\left(\hat\btheta_{\GLR}(u_0) - \btheta(u_0) \right) 
    \xrightarrow[]{d} \cN\left( 0, \Omega_{\GLR}(u_0)\right), 
    \quad \Omega_{\GLR}(u_0) = \nu(u_0)\,\Psi(u_0)^{-1} \,.
$$
Furthermore, if $h$ is such that $d_{(1)}(u_0) \le h \le d_{(K)}(u_0)$ and 
\(\frac{\gamma}{K} \ll h \lesssim \Big(\frac{\gamma}{n}\Big)^{\frac{1}{2\beta+1}}\),
then $\hat\btheta_{\GDVCM}(u_0)$ satisfies: 
$$
  \sqrt{\frac{nh}{\gamma}}\left(\hat\btheta_{\GDVCM}(u_0) - \btheta(u_0) - \bb_{\GDVCM}(u_0)\right) 
    \xrightarrow[]{d} \cN\left( 0, \Omega_{\GDVCM}(u_0)\right) \,,
$$
 where
    \begin{align*}
    \bb_{\GDVCM}(u_0) &\lesssim h^\beta, \\
    \Omega_{\GDVCM}(u_0) &= \nu(u_0)
    \left[\zeta_{0,1}^{-1}\,\zeta_{0,2}\,\zeta_{0,1}^{-1}\right]_{1,1} \Psi(u_0)^{-1}, \\
    \zeta_{r,s} & = \int \bPhi_l(t)^{\otimes 2} t^r W^s(t)f\Big(\tfrac{u_0 - u^* + ht}{\gamma}\Big)\,dt \,,
    \end{align*}
$u^*$ is same as defined in Assumption \ref{Assump:VCM-UX}. 
\end{proposition}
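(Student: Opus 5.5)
The target-only part is classical GLM M-estimation, and the GDVCM part is a local-likelihood expansion that reuses the moment calculations of Lemma~\ref{lem:cond-normality-GDVCM}. I would prove them in that order.

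\textbf{Target-only estimator.} For $\hat\btheta_{\GLR}(u_0)$ I use a one-step Taylor expansion of the score $\sum_{i\in\cI_0} s_1(X_{0i}^\top\balpha,Y_{0i})X_{0i}=0$ around $\btheta(u_0)$.

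\begin{itemize}
\item Consistency follows from convexity of $b$ (the canonical-link loss is convex) and the uniform law of large numbers on the compact support of $X$.
\item The expansion gives $\sqrt{n_0}(\hat\btheta_{\GLR}-\btheta)=\hat H^{-1}n_0^{-1/2}\sum_i(Y_{0i}-b'(X_{0i}^\top\btheta))X_{0i}$. The Hessian $\hat H$ converges in probability to $\Psi(u_0)$ by the law of large numbers, and Assumption~\ref{Assump:VCM-UX-Generalized} makes it invertible.
\item The third-order remainder is controlled by the bounded moment of $b^{(3)}$.
\item The score has conditional variance $\nu(u_0)\bbE[b''XX^\top]=\nu(u_0)\Psi(u_0)$, using $\Var(Y\mid X)=\nu b''$. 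The multivariate Lindeberg CLT (fourth moments of $Y$ are bounded) and Slutsky then give $\Omega_{\GLR}=\nu(u_0)\Psi(u_0)^{-1}$.
\end{itemize}

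\textbf{GDVCM estimator.} Let $\hat\balpha$ be the local likelihood minimizer and write $\bar\btheta(u_0)$ as in Lemma~\ref{lem:cond-normality-GDVCM}. Expanding the kernel-weighted score around $\bar\btheta(u_0)$ gives
\[
\hat\balpha-\bar\btheta(u_0) = -\Bigl(\sum_k n_k\Lambda_k\Bigr)^{-1}\sum_k \sqrt{n_k}\,\Delta_k\,\{1+o_p(1)\}.
\]
I would then proceed in four steps.

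\begin{enumerate}
\item \textbf{Denominator.} Normalize by $nh/\gamma$. By Lemma~\ref{lem:cond-normality-GDVCM}, $\bbE[\Lambda_k\mid U_k]=\bPhi_l(t_k)^{\otimes2}\otimes\Psi(u_k)W(t_k)+O(h^\beta)$. Averaging over the i.i.d.\ $U_k$ with density $\gamma^{-1}f((u-u^*)/\gamma)$ and substituting $u=u_0+ht$ gives $(\gamma/nh)\sum_k n_k\Lambda_k \to \zeta_{0,1}\otimes\Psi(u_0)$. This uses balance (Assumption~\ref{Assump:balance-sample}), continuity of $\Psi$, and a law of large numbers over the domains. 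The LLN needs the number of domains in the window, which is of order $Kh/\gamma$, to diverge; that is exactly $h\gg\gamma/K$.
\item \textbf{Bias.} The conditional mean of the score contributes $\bb_{\GDVCM}\lesssim h^\beta$ after applying $\bA_l$, by the first display of Lemma~\ref{lem:cond-normality-GDVCM}.
\item \textbf{Centered score.} The centered part $\sqrt{\gamma/(nh)}\sum_k\sqrt{n_k}(\Delta_k-\bbE[\Delta_k\mid U_k])$ is a sum of independent terms conditional on $\{U_k\}$. Its conditional covariance is $(\gamma/nh)\sum_k n_k\nu(u_k)\bPhi_l^{\otimes2}\otimes\Psi(u_k)W^2$, which tends to $\nu(u_0)\zeta_{0,2}\otimes\Psi(u_0)$.
\item \textbf{Assembling.} Combining the denominator and the score covariance in a sandwich and applying $\bA_l$ (Kronecker structure) yields $\nu(u_0)[\zeta_{0,1}^{-1}\zeta_{0,2}\zeta_{0,1}^{-1}]_{1,1}\Psi(u_0)^{-1}$.
\end{enumerate}

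\textbf{Main obstacle.} The hardest step is the Lindeberg condition for the centered score together with control of the Taylor remainder. There are two levels of randomness: the observations within a domain are independent, but the domains are random and few. Each single domain carries weight of order $n_k\gamma/(nh)\asymp \gamma/(Kh)\to0$, and this vanishing weight is what makes Lindeberg hold. I would verify it with a Lyapunov fourth-moment bound, using bounded $\bbE|Y|^4$ and $\|Z_{ki}\|\le2$ (Lemma~\ref{lem:bdd-Z}). For the remainder, I would first establish the rate $\|\hat\balpha-\bar\btheta\|=O_p(\sqrt{\gamma/nh}+h^\beta)$ by a convexity argument. I would then bound the cubic term by the $b^{(3)}$ moment times the squared rate, which is $o_p(\sqrt{\gamma/nh})$ under $h\lesssim(\gamma/n)^{1/(2\beta+1)}$. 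Finally, the conditional CLT is deconditioned over $\{U_k\}$ by dominated convergence of characteristic functions.
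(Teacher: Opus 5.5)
Your proposal is correct and follows the paper's overall architecture: local-likelihood expansion around $\bar\btheta(u_0)$, conditional moments from Lemma~\ref{lem:cond-normality-GDVCM} integrated via $t=(u-u_0)/h$, a convexity argument for the rate and the cubic remainder, a Lyapunov CLT, and the Kronecker sandwich. The real difference is how you handle the two layers of randomness.

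The paper works unconditionally. It treats each domain block as one independent summand and centers $\Delta_k$ at $\bbE\Delta_k$. It then uses the law of total variance to show that the between-domain piece $\Var(\bbE[\Delta_k\mid U_k])$ contributes only $O(h^{2\beta-1}/(\gamma K))$. This is $o((\gamma nh)^{-1})$ because $nh^{2\beta+1}/\gamma\lesssim 1$ and $Kh/\gamma\to\infty$. What this buys is a deterministic bias $\bb_{\GDVCM}(u_0)=\bA_l(\bbE\bar\Lambda)^{-1}\bbE\bar\Delta$.

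You condition on $\{U_k\}$, center at $\bbE[\Delta_k\mid U_k]$, and decondition through characteristic functions. This skips the total-variance step, but your centering $\bA_l(\sum_k n_k\Lambda_k)^{-1}\sum_k\sqrt{n_k}\,\bbE[\Delta_k\mid U_k]$ is random. It is still $O_p(h^\beta)$, which is all the statement asserts and all the later undersmoothing arguments use. If you want the deterministic bias, you must add exactly the paper's bound on $\Var(\bbE[\Delta_k\mid U_k])$.

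In return, your route makes explicit something the paper glosses over. Both $\gamma(nh)^{-1}\sum_k n_k\Lambda_k$ and the conditional score covariance must concentrate, via an LLN over the roughly $Kh/\gamma$ domains in the window. The paper instead infers $\gamma\bar\Lambda\xrightarrow{p}\zeta_{0,1}\otimes\Psi(u_0)$ from $\bbE\bar\Lambda$ alone.

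Also note that, conditionally on $\{U_k\}$, all observations are independent, so Lindeberg can be checked at the observation level; the fourth-moment Lyapunov ratio is then of order $\gamma/(nh)$. In your framework, $h\gg\gamma/K$ is therefore needed for these concentration steps (without it the limit is a normal mixture), not for Lindeberg itself.
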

Note that this result extends Proposition~\ref{prop:asymp-LR-VCM}, with
\(\sigma^2(\cdot)\) replaced by \(\nu(\cdot)\) and with \(\Psi(\cdot)\) redefined accordingly. The proof is given in Appendix~\ref{proof:prop:asymp-GLM-VCM}.

The next lemma establishes, within the GDVCM framework, that if the shrinkage matrix \(Q\) is chosen to be of the same order as \(\rho^2\) (i.e., \(Q\asymp_p \rho^2 I\)) where \(\rho:=r_{\GLR}/r_{\GDVCM}\) is the ratio of the convergence rates of the target-only GLR estimator and the GDVCM estimator, then the TL estimator \(\hat\btheta_{\TL}(u_0)\) adapts to the faster procedure: specifically,
\[
\|\hat\btheta_{\TL}(u_0)-\btheta(u_0)\|=O_p\big(r_{\TL}\big),\qquad
r_{\TL}:=r_{\GLR}\wedge r_{\GDVCM}.
\]
Moreover, if the bandwidth \(h\) satisfies the standard localization conditions, then \(\hat\btheta_{\TL}(u_0)\) attains the same asymptotic distribution as the faster estimator in Proposition~\ref{prop:asymp-GLM-VCM}: if \(\rho\to0\), 
\(\sqrt{n_0}\big(\hat\btheta_{\TL}(u_0)-\btheta(u_0)\big)\to_d\cN\big(0,\Omega_{\GLR}(u_0)\big)\);
if \(\rho\to\infty\), 
\(\sqrt{nh/\gamma}\big(\hat\btheta_{\TL}(u_0)-\btheta(u_0)-\bb_{\GDVCM}(u_0)\big)\to_d\cN\big(0,\Omega_{\GDVCM}(u_0)\big)\).
\begin{lemma}\label{lem:Dconv-GLM-TF-0-infty}
 Let Assumptions~\ref{Assump:VCM-SN} \ref{Assump:VCM-UX-Generalized}, \ref{Assump:balance-sample}, and \ref{Assump:Unif-Kernel} hold. Let the rates $r_{\GLR}$ and $r_{\GDVCM}$ be such that
\[\hat\btheta_{\GLR}(u_0) - \btheta(u_0) =O_p(r_{\GLR}), \quad \hat\btheta_{\GDVCM}(u_0) - \btheta(u_0)=O_p(r_{\GDVCM}).\]
Moreover, let \(Q\) be a positive-definite matrix satisfying for $\rho = r_{\GLR}/r_{\GDVCM}$, 
\[
c \rho^2 \le \lambda_{\min}(Q) \le \lambda_{\max}(Q) \le C \rho^2 \quad\text{for constants } C > c > 0,\quad \text{ w.p.} \to 1.
\]
Then
\[
\big\|\hat\btheta_{\TL}(u_0)-\btheta(u_0)\big\|_2
= O_p(r_{\GLR}\wedge r_{\GDVCM})\,.
\]
Furthermore, suppose \(h\) satisfies the same conditions in Proposition~\ref{prop:asymp-GLM-VCM}, then:
\begin{align*}
&\text{If } \rho \to 0,\quad  \sqrt{n_0}\left(\hat\btheta_{\TL}(u_0) - \btheta(u_0) \right) \xrightarrow[]{d} \mathcal{N}\left( 0, \Omega_{\GLR}(u_0)\right),\\
&\text{If } \rho \to \infty,\quad   \sqrt{\frac{nh}{\gamma}}\left(\hat\btheta_{\TL}(u_0) - \btheta(u_0) - \bb_{\GDVCM}(u_0)\right) \xrightarrow[]{d} \mathcal{N}\left( 0, \Omega_{\GDVCM}(u_0)\right),
\end{align*}
where \(\Omega_{\GLR}(u_0)\), \(\bb_{\GDVCM}(u_0)\), and \(\Omega_{\GDVCM}(u_0)\) are defined in Proposition~\ref{prop:asymp-GLM-VCM}.
\end{lemma}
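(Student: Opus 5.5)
The plan is to linearize the ridge-penalized GLM estimator around $\btheta(u_0)$, using the independence from data splitting (Remark~\ref{rem:data_splitting}): $\hat\btheta_{\GDVCM}(u_0)$ is built from $\cD_S\cup\cI_0$, while the fine-tuning loss uses only $\cI_0^*$. Write $L_n(\balpha)=n_0^{-1}\sum_{i\in\cI_0^*}\ell(X_{0i}^\top\balpha,Y_{0i})$, with gradient $G_n=\nabla L_n(\btheta(u_0))$ and Hessian $H_n(\balpha)$. The first-order condition $\nabla L_n(\hat\btheta_{\TL})+Q(\hat\btheta_{\TL}-\hat\btheta_{\GDVCM})=0$ and a mean-value expansion give
\[
\hat\btheta_{\TL}(u_0)-\btheta(u_0)=\bigl(\bar H_n+Q\bigr)^{-1}\Bigl\{-G_n+Q\bigl(\hat\btheta_{\GDVCM}(u_0)-\btheta(u_0)\bigr)\Bigr\},
\]
where $\bar H_n$ is the Hessian at an intermediate point.

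\textbf{Step 1: existence and consistency of $\hat\btheta_{\TL}$.} The objective is strictly convex, since $b''>0$ and $Q\succ0$. I would show consistency by a localization argument. Uniformly on a compact ball around $\btheta(u_0)$, $H_n(\balpha)\to\Psi$-type limits, with $\lambda_{\min}$ bounded below by Assumption~\ref{Assump:VCM-UX-Generalized}. The $b^{(3)}$ moment bound then gives $\sup_{\|\balpha-\btheta\|\le\eta}\|H_n(\balpha)-\Psi(u_0)\|=o_p(1)+O_p(\eta)$. Hence $\bar H_n=\Psi(u_0)+o_p(1)$, with eigenvalues in $[c,C]$ w.p.\ $\to1$. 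Standard GLM theory gives $G_n=O_p(n_0^{-1/2})=O_p(r_{\GLR})$.

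\textbf{Step 2: rate.} Let $B=\bar H_n+Q$. Then $\|B^{-1}\|\lesssim(1+\rho^2)^{-1}$ and $\|B^{-1}Q\|\lesssim\rho^2/(1+\rho^2)$. Combining these,
\[
\|\hat\btheta_{\TL}-\btheta\|\lesssim_p \frac{r_{\GLR}+\rho^2 r_{\GDVCM}}{1+\rho^2}
= r_{\GLR}\frac{1+\rho}{1+\rho^2}\lesssim r_{\GLR}\wedge r_{\GDVCM},
\]
using $\rho^2r_{\GDVCM}=\rho r_{\GLR}$. The last inequality follows by splitting into $\rho\le1$ (bound $\lesssim r_{\GLR}$) and $\rho>1$ (bound $\lesssim r_{\GLR}/\rho=r_{\GDVCM}$). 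The one subtlety is that the mean-value point must lie in a shrinking ball, and Step 1 delivers this before the rate is refined.

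\textbf{Step 3: limiting distributions.} If $\rho\to0$, then $B^{-1}Q\to0$ in the sense that $\|B^{-1}Q\|\,r_{\GDVCM}\sqrt{n_0}\lesssim\rho^2\cdot\rho^{-1}=\rho\to0$. Thus $\sqrt{n_0}(\hat\btheta_{\TL}-\btheta)=-\Psi(u_0)^{-1}\sqrt{n_0}G_n+o_p(1)$, which converges to $\cN(0,\nu\Psi^{-1})$ by the Lindeberg CLT, as in Proposition~\ref{prop:asymp-GLM-VCM}. If $\rho\to\infty$, then $B^{-1}Q=I-B^{-1}\bar H_n=I+O_p(\rho^{-2})$ and $\|B^{-1}G_n\|\lesssim\rho^{-2}r_{\GLR}=r_{\GDVCM}/\rho$. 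Hence
\[
\hat\btheta_{\TL}-\btheta-\bb_{\GDVCM}=\bigl(\hat\btheta_{\GDVCM}-\btheta-\bb_{\GDVCM}\bigr)+o_p(r_{\GDVCM})+O_p(\rho^{-2})\|\bb_{\GDVCM}\|.
\]
Here $r_{\GDVCM}\asymp\sqrt{\gamma/(nh)}$ under the undersmoothing bandwidth, so after multiplying by $\sqrt{nh/\gamma}$ the remainders vanish. Proposition~\ref{prop:asymp-GLM-VCM} then yields the normal limit $\cN(0,\Omega_{\GDVCM})$.

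\textbf{Main obstacle.} The main obstacle is Step 1's uniform Hessian control combined with $Q$ possibly diverging, in which case the minimizer is pulled toward $\hat\btheta_{\GDVCM}$. I would handle this by defining the localization ball around $\btheta(u_0)$ of radius $\eta\to0$, which contains both pilots w.p.\ $\to1$. I would then show that the penalized objective on the boundary of this ball exceeds its value at the center, using convexity together with the lower bound $\lambda_{\min}(H_n+Q)\ge c$.
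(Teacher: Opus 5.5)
Your argument is correct and arrives at the same linear representation as the paper, but you obtain the rate differently. The paper has no separate consistency step. It expands $D_n(\delta)=L_n(\btheta(u_0)+r_{\TL}\delta)-L_n(\btheta(u_0))$ at the truth, with gradient $O_p(r_{\GLR}+\rho^2 r_{\GDVCM})$, curvature of order $1+\rho^2$, and cubic remainder $O_p(r_{\TL}^3M^3)$. It then uses $r_{\TL}\asymp(r_{\GLR}+\rho^2r_{\GDVCM})/(1+\rho^2)$ to get $\inf_{\|\delta\|=M}D_n(\delta)>0$, so convexity traps the minimizer at scale $r_{\TL}$ in one step. For the limit laws it re-expands the first-order condition with an $O_p(\|\btau_{\TL}\|^2)$ remainder, and converts the score into $\Psi(u_0)\btau_{\GLR}+o_p(r_{\GLR})$ via the GLR fit on $\cI_0^*$. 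Your exact identity, with $\bar H_n=n_0^{-1}\sum_i b''(\bar\eta_{0i})X_{0i}X_{0i}^\top$ built from observation-wise intermediate points (a single mean-value point does not exist for a vector map), does both jobs at once. The rate falls out of the two operator-norm bounds with no remainder to track. The $\rho\to0$ limit comes straight from the score CLT for $\sqrt{n_0}G_n$, without routing through $\hat\btheta_{\GLR}$. The cost is a preliminary consistency step and uniform Hessian control near $\btheta(u_0)$. That control is available here since $\|X\|_2\le1$ and $b'''$ is bounded on compacts; the paper needs the same kind of third-derivative bound for its remainder.

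One point in your consistency step needs care. The penalty gradient at the centre is $Q(\btheta(u_0)-\hat\btheta_{\GDVCM}(u_0))=O_p(\rho^2 r_{\GDVCM})=O_p(\rho\, r_{\GLR})$, which need not vanish when $\rho$ is large. So a boundary comparison that uses only $\lambda_{\min}(H_n+Q)\ge c$ does not close. Relying instead on $\hat\btheta_{\GDVCM}(u_0)$ lying inside a shrinking ball presumes $r_{\GDVCM}\to0$, which the lemma does not assume; the paper explicitly allows uninformative sources. Use the full curvature $\lambda_{\min}(H_n+Q)\ge c'(1+\rho^2)$ on the ball instead. Then the gradient-to-curvature ratio is $O_p(r_{\TL})\to0$, which is exactly the balance the paper exploits.

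Also, the lemma assumes only the bandwidth condition of Proposition~\ref{prop:asymp-GLM-VCM}, not undersmoothing. Your conclusion $r_{\GDVCM}\asymp\sqrt{\gamma/(nh)}$ still holds, since $h\lesssim(\gamma/n)^{1/(2\beta+1)}$ gives $h^{\beta}\lesssim\sqrt{\gamma/(nh)}$. This is also why $\bb_{\GDVCM}(u_0)$ must stay in the centering.
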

The proof is given in Appendix~\ref{proof:lem:Dconv-GLM-TF-0-infty}. The next lemma, which is under the GDVCM framework, states that if we choose $h$ to be small enough, then the TL estimator achieves asymptotical normality with mean 0.
\begin{lemma}\label{lem:TL-GLM-normal}
Under the same conditions of Lemma~\ref{lem:Dconv-GLM-TF-0-infty}, suppose that the bandwidth additionally satisfies
\[
\frac{nh^{2\beta+1}}{\gamma} \to 0\,.
\]
Then
\begin{align*}
    &\text{If } \rho \to 0,\quad  \sqrt{n_0}\left(\hat\btheta_{\TL}(u_0) - \btheta(u_0) \right) \xrightarrow[]{d} \cN\left( 0, \Omega_{\GLR}(u_0)\right),\\
    &\text{If } \rho \to \infty,\quad  \sqrt{\frac{nh}{\gamma}}\left(\hat\btheta_{\TL}(u_0) - \btheta(u_0)\right) \xrightarrow[]{d} \cN\left( 0, \Omega_{\GDVCM}(u_0)\right),
\end{align*}
where $\Omega_{\GLR}(u_0)$, $\bb_{\GDVCM}(u_0)$, and $\Omega_{\GDVCM}(u_0)$ are defined in Proposition~\ref{prop:asymp-GLM-VCM}.
\end{lemma}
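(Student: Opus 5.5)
The plan is to reduce Lemma~\ref{lem:TL-GLM-normal} to Lemma~\ref{lem:Dconv-GLM-TF-0-infty} and show that, under undersmoothing, the bias term $\bb_{\GDVCM}(u_0)$ is asymptotically negligible. For the regime $\rho\to0$ nothing new is needed. Lemma~\ref{lem:Dconv-GLM-TF-0-infty} already gives $\sqrt{n_0}(\hat\btheta_{\TL}(u_0)-\btheta(u_0))\xrightarrow{d}\cN(0,\Omega_{\GLR}(u_0))$ with no bias term, and the extra bandwidth condition only restricts $h$ further. This restriction is compatible with the standing requirement $\gamma/K\ll h\lesssim(\gamma/n)^{1/(2\beta+1)}$ of Proposition~\ref{prop:asymp-GLM-VCM}, since $nh^{2\beta+1}/\gamma\to0$ is exactly $h\ll(\gamma/n)^{1/(2\beta+1)}$. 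So the hypotheses of Lemma~\ref{lem:Dconv-GLM-TF-0-infty} remain in force.

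For $\rho\to\infty$, I start from the conclusion of Lemma~\ref{lem:Dconv-GLM-TF-0-infty}:
\[
\sqrt{nh/\gamma}\,\bigl(\hat\btheta_{\TL}(u_0)-\btheta(u_0)-\bb_{\GDVCM}(u_0)\bigr)\xrightarrow{d}\cN(0,\Omega_{\GDVCM}(u_0)).
\]
I then write
\[
\sqrt{nh/\gamma}\,(\hat\btheta_{\TL}(u_0)-\btheta(u_0)) = \sqrt{nh/\gamma}\,(\hat\btheta_{\TL}(u_0)-\btheta(u_0)-\bb_{\GDVCM}(u_0)) + \sqrt{nh/\gamma}\,\bb_{\GDVCM}(u_0).
\]
Proposition~\ref{prop:asymp-GLM-VCM} gives $\|\bb_{\GDVCM}(u_0)\|_2\lesssim h^\beta$, so the deterministic remainder is bounded by $C\sqrt{nh^{2\beta+1}/\gamma}\to0$. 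If $\bb_{\GDVCM}(u_0)$ depends on the random $U_k$'s, the bound $\lesssim h^\beta$ holds uniformly, since it comes from the H\"older bound $|\btheta^{(l)}(u_0)-\btheta^{(l)}(\tilde u_k)|\le L h^{\beta-l}$ in Lemma~\ref{lem:cond-normality-GDVCM}. In that case the remainder is $o_p(1)$. Slutsky's lemma then yields $\sqrt{nh/\gamma}(\hat\btheta_{\TL}(u_0)-\btheta(u_0))\xrightarrow{d}\cN(0,\Omega_{\GDVCM}(u_0))$.

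The main point to check is that the bias appearing in the TL limit really is $\bb_{\GDVCM}(u_0)$ itself, with no additional shrinkage-induced bias. Such a bias would be of order $Q^{-1}$ times the target score, which is $O_p(\rho^{-2}n_0^{-1/2})$. After multiplying by $\sqrt{nh/\gamma}\asymp r_{\GDVCM}^{-1}$, this becomes $O_p(\rho^{-1})=o_p(1)$. This term is already absorbed in Lemma~\ref{lem:Dconv-GLM-TF-0-infty}, so I would only remark on it rather than redo it. A second minor point is that $\rho$ is defined through the rates $r_{\GDVCM}$, and with undersmoothing $r_{\GDVCM}\asymp\sqrt{\gamma/(nh)}$, because the bias is now of smaller order. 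So the regime dichotomy is unchanged.
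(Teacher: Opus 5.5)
Your proposal is correct and matches the paper's argument. The $\rho\to0$ case is inherited unchanged from Lemma~\ref{lem:Dconv-GLM-TF-0-infty}; in the $\rho\to\infty$ case, the undersmoothing condition makes the centering term $\sqrt{nh/\gamma}\,\bb_{\GDVCM}(u_0)=O\bigl(\sqrt{nh^{2\beta+1}/\gamma}\bigr)$ vanish, and Slutsky's lemma removes it, exactly as in the paper's one-line proof.
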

\begin{proof}
    This lemma follows directly from Lemma~\ref{lem:Dconv-GLM-TF-0-infty} by choosing $h$ such that $\sqrt{nh/\gamma}\,\bb_{\GDVCM} \to 0$. Note that by Proposition \ref{prop:asymp-GLM-VCM} the bias $\bb_{\GDVCM}$ is of order $O(h^{\beta})$, so this is equivalent to requiring $nh^{2\beta+1}/\gamma \to 0$. 
\end{proof}

\section{Proofs of main theorems}
\label{sec:proof-main}
\allowdisplaybreaks


\subsection{Proof of Proposition \ref{prop:rate-LR-VCM}} \label{proof:prop:rate-LR-VCM}



\begin{proof}
Recall that, we choose $h$ according to Equation \eqref{eqn:optimal_bw}: 
$$
h = \med\left\{e_0 (n/\gamma)^{-\frac{1}{2\beta+1}}, d_{(1)}(u_0), d_{(K)}(u_0)\right\} \,.
$$
Depending on the distribution of $\{u_0,\dots, u_K\}$, any one of the three elements can be chosen as $h$. Based on this, we divide our analysis into three disjoint events: 
\begin{align*}
      \cE_1 & = \left\{e_0 (n/\gamma)^{-\frac{1}{2\beta+1}} < d_{(1)}(u_0)\right\} \,, \\
      \cE_2 & = \left\{e_0 (n/\gamma)^{-\frac{1}{2\beta+1}} > d_{(K)}(u_0)\right\} \,,\\
      \cE_3 & = \left\{d_{(1)}(u_0)\leq e_0 (n/\gamma)^{-\frac{1}{2\beta+1}} \leq d_{(K)}(u_0)\right\} \,.
\end{align*}
As $d_{(1)}(u_0) \le d_{(K)}(u_0)$, it is immediate that we choose $h = d_{(1)}(u_0)$ under $\cE_1$, $h = d_{(K)}(u_0)$ under $\cE_2$ and $h = e_0 (n/\gamma)^{-1/(2\beta + 1)}$ under $\cE_3$. Now, from Lemma  \ref{lem:mseA-ub}, we know that for any choice of $h$ we have: 
\begin{equation}
    \begin{split}
        &\bbE \left[\Vert\hat\btheta_{\DVCM}(u_0)-\btheta(u_0)\Vert_A^2\middle | \Gamma\right] \leq q_1^2 h^{2\beta} + q_2S_h^{-1}.
    \end{split}
\end{equation}
Therefore, by a simple law of total expectation, we have: 
\begin{align*}
     &\bbE \left[\Vert\hat\btheta_{\DVCM}(u_0)-\btheta(u_0)\Vert_A^2\right] \leq \sum_{j=1}^3 \bbE\left[q_1^2 {h}^{2\beta} + q_2S_{h}^{-1} \mid \cE_j\right]\Pr(\cE_j) \coloneqq  \sum_{j = 1}^3 r_jp_j \,,
\end{align*}
where, for notational simplicity, define $r_j = \bbE[q_1^2 {h}^{2\beta} + q_2S_{h}^{-1} \mid \cE_j]$ and $p_j = \Pr(\cE_j)$. We next provide a bound on each $r_j$ and $p_j$ on a case-by-case basis. 
\\\\
\noindent 
\underline{First, consider $\cE_1$}: We choose $h = d_{(1)}(u_0)$, so only the nearest source (in terms of $u$) is selected. As a consequence, by Assumption \ref{Assump:balance-sample} a deterministic bound holds 
$$
S_h = \frac12\sum_{k=0}^K n_k\indc\left\{|U_k-u_0|\leq  h\right\} = \frac12(n_0 + n_{(1)}) \lesssim \bar n\,.
$$ 
where $n_{(1)}$ is the number of samples in the nearest source domain. 
Using this bandwidth, the bounds in Lemma \ref{lem:gap} yield 
\begin{align*}
    & \bbE\left[d_{(1)}(u_0)^{2\beta} \mid \cE_1 \right] \lesssim \left(\frac{K}{\gamma}\right)^{-2\beta}, & p_1 \asymp \left(1-(n\gamma^{2\beta})^{-\frac{1}{2\beta + 1}}\right)_+^K\,,
\end{align*}
which implies
\[
r_1 = \bbE\left[q_1^2 {h}^{2\beta} + q_2S_{h}^{-1} \mid \cE_1\right] \lesssim \left(\frac{K}{\gamma}\right)^{-2\beta} + \frac{1}{\bar n}\,.
\]  
\underline{Next, consider $\cE_2$}: We choose $h = d_{(K)}(u_0)$ in this case. This means all the domains are selected, and consequently, we have the following deterministic bound:  
$$
S_h = \frac12 \sum_{k=0}^K n_k\indc\left\{|U_k-u_0|\leq  h\right\} = \frac{n}{2}.
$$ 
Another application of Lemma \ref{lem:gap} yields: 
\begin{align*}
    & \bbE\left[d_{(K)}(u_0)^{2\beta} \mid \cE_2 \right] \lesssim \gamma^{2\beta}, &p_2 \coloneqq \Pr(\cE_2) \asymp 1 - \left(1-(n\gamma^{2\beta})^{-\frac{K}{2\beta + 1}}\right)_+\,.
\end{align*}
Thus, it follows that
\[
r_2 = \bbE\left[q_1^2 {h}^{2\beta} + q_2S_{h}^{-1} \mid \cE_2\right]\lesssim  \gamma^{2\beta} + \frac{1}{n}\,.
\]
\underline{Finally, consider $\cE_3$}: In this case, $h = e_0 (n/\gamma)^{-1/(2\beta + 1)}$. Hence, the bias part is deterministically bounded by 
$$
\bbE\left[h^{2\beta} \mid \cE_3\right] \lesssim \left(\frac{n}{\gamma}\right)^{-\frac{2\beta}{2\beta + 1}}.$$
By Lemma \ref{lem:var-bd}, the variance part is bounded by 
$$
\bbE\left[S_h^{-1} \mid \cE_3\right] \lesssim \frac{\gamma}{nh} \asymp \left(\frac{n}{\gamma}\right)^{-\frac{2\beta}{2\beta + 1}}\,.
$$
As a consequence: 
$$
r_3 = \bbE\left[q_1^2 {h}^{2\beta} + q_2S_{h}^{-1} \mid \cE_j\right] \lesssim \left(\frac{n}{\gamma}\right)^{-\frac{2\beta}{2\beta + 1}}, \qquad p_3 = 1 - p_1 - p_2\,.
$$
Now that we have established bounds on $\{r_j\}$ and $\{p_j\}$, we will bound the MSE using them. However, it is apparent from the definition of the events that one of these three events will dominate the others depending on the growth of $(n, \gamma, K)$. 
We discuss the bounds for MSE under three circumstances:  
$$
i) \  n\gamma^{2\beta} \gg K^{2\beta + 1}, \qquad ii) \ n\gamma^{2\beta} \ll 1, \qquad iii) \ 1 \lesssim n\gamma^{2\beta} \lesssim K^{2\beta + 1} \,.
$$
\underline{\bf Case 1: }At first we consider the case $n\gamma^{2\beta} \gg K^{2\beta + 1}$. Note that this implies: 
$$
n\gamma^{2\beta} \gg K^{2\beta + 1} \begin{cases}
    \implies \bar n^{-1} \asymp (n/K)^{-1} \ll (\gamma/K)^{2\beta} \,, \\
    \implies (n/\gamma)^{-2\beta/(2\beta+1)} \ll (\gamma/K)^{2\beta} \,.
\end{cases}
$$
Therefore, we have the upper bound on $r_1$: 
$$
r_1 \lesssim \left(\frac{K}{\gamma}\right)^{-2\beta} + \frac{1}{\bar n} \lesssim \left(\frac{K}{\gamma}\right)^{-2\beta} \,,
$$
and on $r_3$:
$$
r_3  \lesssim \left(\frac{n}{\gamma}\right)^{-\frac{2\beta}{2\beta + 1}} \ll \left(\frac{K}{\gamma}\right)^{-2\beta} \,.
$$
Furthermore, as $K \ge 1$, the assumption $n\gamma^{2\beta} \gg K^{2\beta + 1}$ immediately implies $n\gamma^{2\beta} \gg 1$, which, in turn, implies $n^{-1} \ll \gamma^{2\beta}$. Therefore, we have:  
\[
r_2 \lesssim \gamma^{2\beta} +\frac1n \lesssim \gamma^{2\beta}.\]
Moreover, the same condition $n\gamma^{2\beta} \gg 1$ also implies
\begin{align*}
    p_2 &\asymp 1- \big\{1-(n\gamma^{2\beta})^{-K/(2\beta+1)}\big\}_+ \asymp (n\gamma^{2\beta})^{-K/(2\beta+1)}\,.
\end{align*}
Combining all the bounds yields the following upper bound on the MSE of $\hat \theta_{\rm DVCM}$: 
\begin{align*}
   \bbE \left[\Vert\hat\btheta_{\DVCM}(u_0)-\btheta(u_0)\Vert_A^2\right] & \le r_1p_1 + r_2p_2 + r_3 (1-p_1-p_2)\\
    & \le r_1 \vee r_3 + r_2 p_2\\
    & \lesssim \left(\frac{K}{\gamma}\right)^{-2\beta} + \gamma^{2\beta}(n\gamma^{2\beta})^{-\frac{K}{2\beta + 1}} \lesssim \left(\frac{K}{\gamma}\right)^{-2\beta} \,.
\end{align*}
Here, the last inequality follows from the fact that $(K/\gamma)^{-2\beta} \gg \gamma^{2\beta}(n\gamma^{2\beta})^{-K/(2\beta+1)}$ as $n\gamma^{2\beta} \gg K^{2\beta + 1}$. This completes the bound under Case 1. 
\\\\
\noindent 
\underline{\bf Case 2: } In this case, we assume that $n\gamma^{2\beta} \ll 1 \lesssim K^{2\beta+1}$, i.e. $\gamma^{2\beta} \ll n^{-1}$. This immediately implies: 
$$
r_2 \lesssim \gamma^{2\beta}+ \frac1n \lesssim \frac{1}{n} \,,
$$
and 
$$
p_2 \asymp 1- \big\{1-(n\gamma^{2\beta})^{-K/(2\beta+1)}\big\}_+  \to 1 \qquad \text{since }n\gamma^{2\beta} \downarrow 0 \,, 
$$  
and consequently $p_1 = p_3 = 0$. Therefore,  
\begin{align*}
    & \bbE \left[\Vert\hat\btheta_{\DVCM}(u_0)-\btheta(u_0)\Vert_A^2\right] \lesssim r_2p_2 \lesssim \frac1n.
\end{align*}
\underline{\bf Case 3:} Finally, in this case, we consider the last case, $1 \lesssim n\gamma^{2\beta} \lesssim K^{2\beta + 1}$.  
We argue that in this case, MSE is 
upper bounded by the rate $(n/\gamma)^{-2\beta/(2\beta+1)}$. To establish this, it is enough to show $r_1p_1 \vee r_2p_2 \lesssim (n/\gamma)^{-2\beta/(2\beta +1)}$, as $r_3p_3 \lesssim (n/\gamma)^{-2\beta/(2\beta+1)}$ by the bound on $r_3$. 
With the definitions of $p_1$ and $p_2$, it follows that
\begin{align*}
    & p_1 \asymp \big(1-(n\gamma^{2\beta})^{-1/(2\beta+1)}\big)_+^K \lesssim \exp\left\{-K(n\gamma^{2\beta})^{-1/(2\beta+1)}\right\}\,, \qquad [\because (1-x)^K \lesssim e^{-Kx} \text{ for } x\in (0,1)]\\
    & p_2 \asymp 1- \big\{1-(n\gamma^{2\beta})^{-K/(2\beta+1)}\big\}_+ \lesssim (n\gamma^{2\beta})^{-K/(2\beta+1)}\,. \qquad [\because 1 \lesssim n\gamma^{2\beta} ]
\end{align*}
As we are consider the scenario $1 \lesssim n\gamma^{2\beta} \lesssim K^{2\beta + 1}$, we have: 
\begin{align*}
    n\gamma^{2\beta} \lesssim K^{2\beta + 1} & \implies \left(\frac{K}{\gamma}\right)^{-2\beta} \lesssim \frac{1}{\bar n} \,, \\
    1 \lesssim n\gamma^{2\beta} & \implies \frac1n \lesssim \gamma^{2\beta} \,.
\end{align*}
Thus we have the upper bounds for $r_1$ and $r_2$
\begin{align*}
    & r_1 \lesssim  \left(\frac{K}{\gamma}\right)^{-2\beta} + \frac{1}{\bar n} \lesssim \frac{1}{\bar n},\\
    & r_2 \lesssim \gamma^{2\beta} + \frac{1}{n} \lesssim \gamma^{2\beta}\,.
\end{align*}
These bounds, along with the upper bound on $(p_1, p_2)$, yield: 
\begin{align*}
    \frac{r_1p_1}{(n/\gamma)^{-\frac{2\beta}{2\beta+1}}} & = \frac{\bar n^{-1} \exp\left\{-K(n\gamma^{2\beta})^{-1/(2\beta+1)}\right\}}{(n/\gamma)^{-\frac{2\beta}{2\beta+1}}} \\
    & \asymp \frac{(K/n) \exp\left\{-K(n\gamma^{2\beta})^{-1/(2\beta+1)}\right\}}{(n/\gamma)^{-\frac{2\beta}{2\beta+1}}}\\
    & = K (n\gamma^{2\beta})^{-1/(2\beta+1)} \exp\left\{-K(n\gamma^{2\beta})^{-1/(2\beta+1)}\right\}\\
    & \lesssim 1\,, \qquad [\because xe^{-x} \text{ is always upper bounded by a constant for } x>0]
\end{align*}
and,
\begin{align*}
    \frac{r_2p_2}{(n/\gamma)^{-\frac{2\beta}{2\beta+1}}} = \frac{\gamma^{2\beta}(n\gamma^{2\beta})^{-K/(2\beta+1)}}{(n/\gamma)^{-\frac{2\beta}{2\beta+1}}} & = (n\gamma^{2\beta})^{\frac{2\beta-K}{2\beta+1}}\\
    & \lesssim 1 \qquad [\because n\gamma^{2\beta} \gtrsim 1, \text{ and assume }K \geq 2\beta \text{ w.l.o.g.}]
\end{align*}
Hence, we have shown in this case that
\begin{equation}\label{eqn:MSEA-DVCM-bound-c}
   \bbE \left[\Vert\hat\btheta_{\DVCM}(u_0)-\btheta(u_0)\Vert_A^2\right] \lesssim \left(\frac{n}{\gamma}\right)^{-\frac{2\beta}{2\beta+1}}\,.\
\end{equation}
\underline{\bf Justification of maximal upper bounds}

As a short summary, we have established the following regime-specific upper bound on the mean squared error:
\begin{equation*}
    \begin{split}
        & \bbE \left[\Vert\hat\btheta_{\DVCM}(u_0)-\btheta(u_0)\Vert_A^2\right] \lesssim \begin{cases} 
            (K/\gamma)^{-2\beta} & \text{if}\quad n\gamma^{2\beta} \gg K^{2\beta + 1} \gtrsim 1,\\
            (n/\gamma)^{-\frac{2\beta}{2\beta+1}} & \text{if}\quad 1 \lesssim n\gamma^{2\beta} \lesssim K^{2\beta + 1} ,\\
            n^{-1} & \text{if}\quad n\gamma^{2\beta} \ll 1 \lesssim K^{2\beta+1} \,.
        \end{cases}
    \end{split}
\end{equation*}
We next argue that, within each region, the overall upper bound is simply the maximum among the three individual upper bounds corresponding to that region, which follows from simple algebra. First observe that when $n\gamma^{2\beta} \gg K^{2\beta + 1}$, then: 
$$
n\gamma^{2\beta} \gg K^{2\beta + 1} \implies \frac{\gamma}{n} \ll \left(\frac{\gamma}{K}\right)^{2\beta + 1} \implies \left(\frac{n}{\gamma}\right)^{-\frac{2\beta}{2\beta + 1}} \ll \left(\frac{K}{\gamma}\right)^{-2\beta} \,,
$$
and multiplying $K/\gamma$ to middle inequality yields: 
$$
 \frac{\gamma}{n} \ll \left(\frac{\gamma}{K}\right)^{2\beta + 1} \implies \frac{1}{\bar n} \ll \left(\frac{K}{\gamma}\right)^{-2\beta} \implies \frac{1}{n} \ll \left(\frac{K}{\gamma}\right)^{-2\beta} \,.
$$
Hence, we conclude: 
$$
n\gamma^{2\beta} \gg K^{2\beta + 1} \implies \left(\frac{K}{\gamma}\right)^{-2\beta} \asymp \max\left\{\left(\frac{K}{\gamma}\right)^{-2\beta}, \left(\frac{n}{\gamma}\right)^{-\frac{2\beta}{2\beta + 1}}, \frac{1}{n}\right\}.
$$
Secondly, consider the case when $n\gamma^{2\beta} \ll 1$. In this case, we have: 
\begin{align*}
    \gamma^{2\beta} \ll n^{-1} \implies (\gamma/n)^{2\beta} \ll n^{-(2\beta+1)}  \implies (n/\gamma)^{-\frac{2\beta}{2\beta+1}} \ll n^{-1}
\end{align*}
and 
$$
\gamma^{2\beta} \ll n^{-1} \implies  (\gamma/K)^{2\beta} \ll n^{-1} \,.
$$
Hence, 
$$
n\gamma^{2\beta} \ll 1 \implies \frac1n \asymp \max\left\{\left(\frac{K}{\gamma}\right)^{-2\beta}, \left(\frac{n}{\gamma}\right)^{-\frac{2\beta}{2\beta + 1}}, \frac{1}{n}\right\} \,.
$$
Finally, let us consider the third case $1 \lesssim n\gamma^{2\beta} \lesssim K^{2\beta + 1}$. In this case, we have: 
$$
n\gamma^{2\beta} \lesssim K^{2\beta + 1} \implies (\gamma/K)^{2\beta+1} \lesssim \gamma/n  \implies (K/\gamma)^{-2\beta} \lesssim (n/\gamma)^{-\frac{2\beta}{2\beta+1}} \,,
$$
and
$$
 1 \lesssim n\gamma^{2\beta}\implies n^{-1} \lesssim \gamma^{2\beta} \implies  n^{-(2\beta+1)} \lesssim  (\gamma/n)^{2\beta} \implies n^{-1} \lesssim (n/\gamma)^{-\frac{2\beta}{2\beta+1}} \,.
$$
Therefore, in this case, we have: 
$$
1 \lesssim n\gamma^{2\beta} \lesssim K^{2\beta + 1} \implies \left(\frac{n}{\gamma}\right)^{-\frac{2\beta}{2\beta+1}} \asymp \max\left\{\left(\frac{K}{\gamma}\right)^{-2\beta}, \left(\frac{n}{\gamma}\right)^{-\frac{2\beta}{2\beta + 1}}, \frac{1}{n}\right\} \,.
$$
This completes the proof. 
\end{proof}

\subsection{Proof of Theorem \ref{thm:rate-TL}} \label{proof:thm:rate-TL}

\begin{proof}
We apply Theorem 1 of \cite{theobald1974generalizations}, which roughly says that for any two estimators $\hat\btheta_1$ and $\hat\btheta_2$, $M(\hat\btheta_1)\succeq M(\hat\btheta_2) \implies \mse_A(\hat\btheta_1)\geq\mse_A(\hat\btheta_2)$ for any $A\succeq 0$. 
Our goal is to show that there exists a matrix $Q$ that makes the difference between squared error matrices positive semidefinite, i.e. 
$$
M\left(\hat \btheta_{\DVCM}(u_0)\right) - M\left(\hat\btheta_{\TL}(u_0)\right) \succeq 0 \,,
$$
and 
$$
M\left(\hat \btheta_{\LR}(u_0)\right) - M\left(\hat\btheta_{\TL}(u_0)\right) \succeq 0 \,.
$$
Define $S_Q = \hat\Sigma_0+Q$ with $\hat\Sigma_0 = (\bX_0^\top \bX_0)/n_0$ and $\bX_0$ is a matrix concatenating all the $\{X_{0i}\}_{i \in \cI_0^*}$. From the first order condition, we have: 
\begin{align*}
    \hat\btheta_{\TL}(u_0) & = \left(\frac{1}{n_0}(\bX_0^\top \bX_0) +Q\right)^{-1}\left(\frac{1}{n_0}(\bX_0^\top \by_0)+Q\hat\btheta_{\DVCM}(u_0)\right) \\
    & = (\hat \Sigma_0 + Q)^{-1}\left(\frac{1}{n_0}(\bX_0^\top \by_0)+Q\hat\btheta_{\DVCM}(u_0)\right) \\
    & = S_Q^{-1}\left(\frac{1}{n_0}(\bX_0^\top \by_0)+Q\hat\btheta_{\DVCM}(u_0)\right) \\
    & = S_Q^{-1}\left(\hat \Sigma_0 \btheta(u_0) + \frac{1}{n_0} \bX_0^\top\beps_0 + Q\hat\btheta_{\DVCM}(u_0)\right) \qquad [\because \by_0 = \bX_0 \btheta(u_0) + \beps_0]\\
    & = S_Q^{-1}\left((S_Q - Q) \btheta(u_0) + \frac{1}{n_0} \bX_0^\top\beps_0 + Q\hat\btheta_{\DVCM}(u_0)\right) \\
    & = \btheta(u_0) + S_Q^{-1}\left(\frac{1}{n_0} \bX_0^\top\beps_0\right) + S_Q^{-1}Q(\hat\btheta_{\DVCM}(u_0) - \btheta(u_0)) \,.
\end{align*}
Therefore, the conditional bias of $\hat\btheta_{\TL}(u_0)$ is
\begin{equation*}
\begin{split}
\Bias\left(\hat\btheta_{\TL}(u_0) \middle | \bX_0\right) = S_Q^{-1} Q\Bias\left(\hat\btheta_{\DVCM}(u_0)\middle | \bX_0\right) = S_Q^{-1} Q\Bias\left(\hat\btheta_{\DVCM}(u_0)\right)
\end{split}
\end{equation*}
because $\hat\btheta_{\DVCM}(u_0)$ is independent of $\bX_0$ due to sample splitting and $\bbE[\eps_0 \mid \bX_0] = 0$ on the target domain. Now for the conditional variance, we utilize this independence again and have
\begin{equation*}
\begin{split}
&\Var\left(\hat\btheta_{\TL}(u_0) \middle | \bX_0\right)\\
& = \Var\left[S_Q^{-1}\left(\frac{1}{n_0}\bX_0^\top\beps_0+Q(\hat\btheta_{\DVCM}(u_0) - \btheta(u_0))\right) \middle |\bX_0\right]\\
& = S_Q^{-1} \left(  \frac{\sigma^2(u_0)}{n_0}\hat\Sigma_0 + Q \Var\left(\hat\btheta_{\DVCM}(u_0)\right)Q  \right) S_Q^{-1}.
\end{split}
\end{equation*}
Hence,
\begin{equation*}
    \begin{split}
        M\left(\hat\btheta_{\TL}(u_0) \middle | \bX_0\right)& = \Bias\left(\hat\btheta_{\TL}(u_0) \middle | \bX_0\right)^{\otimes 2} + \Var\left(\hat\btheta_{\TL}(u_0) \middle |\bX_0\right) \\
        & = S_Q^{-1}\left(  \frac{\sigma^2(u_0)}{n_0}\hat\Sigma_0 + QM\left(\hat\btheta_{\DVCM}(u_0)\right)Q  \right) S_Q^{-1}\,.
    \end{split}
\end{equation*}
Using the fact that
\begin{align*}
    M\left(\hat \btheta_{\LR}(u_0) \middle |\bX_0\right)= \frac{\sigma^2(u_0)}{n_0} \hat\Sigma^{-1}_0 & = \frac{\sigma^2(u_0)}{n_0} S_Q^{-1}S_Q\hat\Sigma^{-1}_0S_QS_Q^{-1} \\
    & = \frac{\sigma^2(u_0)}{n_0} S_Q^{-1}\left[\hat\Sigma_0+2Q+Q\hat\Sigma^{-1}_0Q\right]S_Q^{-1} \,,\qquad [\because S_Q = \hat\Sigma_0+Q]
\end{align*}
we obtain the difference between the second-order moments

\begin{equation*}
\begin{split}
& M\left(\hat \btheta_{\LR}(u_0)\right) - M\left(\hat\btheta_{\TL}(u_0) \right) \\
= & \bbE\left[ M\left(\hat \btheta_{\LR}(u_0) \middle | \bX_0\right) - M\left(\hat\btheta_{\TL}(u_0) \middle | \bX_0\right)\right]\\
= & \bbE\left[ S_Q^{-1}Q \left[  \frac{\sigma^2(u_0)}{n_0}\hat\Sigma^{-1}_0 - M\left(\hat \btheta_{\DVCM}(u_0)\right) + \frac{2\sigma^2(u_0)}{n_0}Q^{-1} \right] QS_Q^{-1}\right]
\end{split}
\end{equation*}
and
\begin{equation*}
\begin{split}
& M\left(\hat \btheta_{\DVCM}(u_0)\right) - M\left(\hat \btheta_{\TL}(u_0)\right) \\
= & \bbE\left[ M\left(\hat \btheta_{\DVCM}(u_0) \middle | \bX_0\right) - M\left(\hat \btheta_{\TL}(u_0) \middle | \bX_0\right)\right]\\
= & \bbE\left[ M\left(\hat \btheta_{\DVCM}(u_0)\right) - S_Q^{-1}\left(  \frac{\sigma^2(u_0)}{n_0}\hat\Sigma_0 + QM\left(\hat\btheta_{\DVCM}(u_0)\right)Q  \right) S_Q^{-1}\right]\\
= &  \bbE\left\{  S_Q^{-1}\left( S_QM\left(\hat \btheta_{\DVCM}(u_0)\right)S_Q -  \frac{\sigma^2(u_0)}{n_0}\hat\Sigma_0 - QM\left(\hat\btheta_{\DVCM}(u_0)\right)Q  \right) S_Q^{-1}\right\}.
\end{split}
\end{equation*}
Replacing $S_Q$ with $\hat\Sigma_0 + Q$, we have
\begin{equation*}
\begin{split}
    & M\left(\hat \btheta_{\DVCM}(u_0)\right) - M\left(\hat \btheta_{\TL}(u_0)\right)\\
    = & \bbE \bigg\{ S_Q^{-1}\hat\Sigma_0 \left[ M\left(\hat \btheta_{\DVCM}(u_0)\right) - \frac{\sigma^2(u_0)}{n_0}\hat\Sigma^{-1}_0 + \hat\Sigma^{-1}_0QM\left(\hat \btheta_{\DVCM}(u_0)\right) \right. \\ 
&\left. \hspace{20em}+ M\left(\hat \btheta_{\DVCM}(u_0)\right) Q \hat\Sigma^{-1}_0\right] \hat\Sigma_0 S_Q^{-1}\bigg\}\,.
\end{split}
\end{equation*}
Notice that if we set $Q$ such that $$\frac{1}{2}\frac{\sigma^2(u_0)}{n_0}M\left(\hat\btheta_{\DVCM}(u_0)\right)^{-1}\preceq Q \preceq 2\frac{\sigma^2(u_0)}{n_0}M\left(\hat\btheta_{\DVCM}(u_0)\right)^{-1}$$ then due to the positive semidefiniteness of $M\left(\hat \btheta_{\DVCM}(u_0)\right)$, $\hat\Sigma_0$, and $S_Q^{-1}$, the following holds
\begin{equation*}
    \begin{split}
        &M\left(\hat \btheta_{\LR}(u_0)\right) - M\left(\hat\btheta_{\TL}(u_0)\right) \succeq 0,\\
        & M\left(\hat \btheta_{\DVCM}(u_0)\right) - M\left(\hat\btheta_{\TL}(u_0)\right) \succeq 0.
    \end{split}
\end{equation*}
\noindent
By Theorem 1 of \cite{theobald1974generalizations}, this implies that
\begin{equation*}
    \begin{split}
        \mse_A\left(\hat\btheta_{\TL}(u_0) \right) & \leq \min\left\{\mse_A\left(\hat\btheta_{\DVCM}(u_0) \right),\mse_A\left(\hat\btheta_{\LR}(u_0) \right)\right\}\,,
    \end{split}
\end{equation*}
for any positive semidefinite $A$.

Then proof of the theorem follows from Proposition \ref{prop:rate-LR-VCM}, and
we have already argued that $\mse_A\left(\hat\btheta_{\LR}(u_0)\right) \le C n_0^{-1}$, which follows from basic properties of linear regression. Furthermore, in Proposition \ref{prop:rate-LR-VCM} we have established that: 
$$
\mse_A\left(\hat\btheta_{\DVCM}(u_0) \right)  \le C \max\left\{(K/\gamma)^{-2\beta}, (n/\gamma)^{-\frac{2\beta}{2\beta+1}}, n^{-1}\right\} \,.
$$
Hence, combining these, we obtain: 
\begin{align*}
    \mse_A\left(\hat\btheta_{\TL}(u_0) \right)  & \leq \min\left\{\mse_A\left(\hat\btheta_{\LR}(u_0) \right), \ \mse_A\left(\hat\btheta_{\DVCM}(u_0) \right) \right\} \\
    &  \le C\left[n_0^{-1} \wedge \max\left((K/\gamma)^{-2\beta}, (n/\gamma)^{-\frac{2\beta}{2\beta+1}}, n^{-1}\right)\right]\,.
\end{align*}
This completes the proof. 
\end{proof}

\subsection{Proof of Corollary \ref{thm:rate-TL-Qest}}\label{proof:thm:rate-TL-Qest}
\begin{proof}

Define the event
    \[\cE = \left\{\frac{1}{2}\frac{\sigma^2(u_0)}{n_0}M\left(\hat\btheta_{\DVCM}(u_0)\right)^{-1}\preceq \hat Q\preceq 2\frac{\sigma^2(u_0)}{n_0}M\left(\hat\btheta_{\DVCM}(u_0)\right)^{-1}\right\}\,,\]
and the desired rate
$$
r_{n,K,\gamma} = \sqrt{n_0 \vee \min\left\{(K/\gamma)^{2\beta}, \ (n/\gamma)^{2\beta/(2\beta  +1)}, n\right\}} \,.
$$
By Markov's inequality, it follows that
\begin{equation*}
\begin{split}
     \Pr\left(r_{n,K,\gamma}\left(\hat\btheta_{\TL,\hat Q}(u_0) - \btheta(u_0)\right) \ge t\right) 
     & = \Pr\left(r_{n,K,\gamma}\left(\hat\btheta_{\TL,\hat Q}(u_0) - \btheta(u_0)\right) \ge t, \cE\right)  + \Pr\left(r_{n,K,\gamma}\left(\hat\btheta_{\TL,\hat Q}(u_0) - \btheta(u_0)\right) \ge t, \cE^c\right)\\
     & \leq \Pr\left(r_{n,K,\gamma}\left(\hat\btheta_{\TL,\hat Q}(u_0) - \btheta(u_0)\right) \ge t\middle | \cE\right)  + \Pr\left(\cE^c\right)\\
     & \leq \frac{r_{n,K,\gamma}^2 }{t^2}\bbE\left[\left(\hat\btheta_{\TL,\hat Q}(u_0) - \btheta(u_0)\right)^2\middle | \cE\right] + \Pr\left(\cE^c\right)
\end{split}
\end{equation*}
Consider the expectation term. By Theorem \ref{thm:rate-TL}, it holds that
\[r_{n,K,\gamma}^2 \bbE\left[\left(\hat\btheta_{\TL,\hat Q}(u_0) - \btheta(u_0)\right)^2\middle | \cE\right]\leq C\]
for some constant $C>0$, and thus we have
\[\Pr\left(r_{n,K,\gamma}\left(\hat\btheta_{\TL,\hat Q}(u_0) - \btheta(u_0)\right) \ge t\right) \leq \frac{C}{t^2} + \Pr\left(\cE^c\right)\,.\]
Taking $\limsup_{n\to\infty}$ on both sides, it follows that
\[\limsup_{n\to\infty}\Pr\left(r_{n,K,\gamma}\left(\hat\btheta_{\TL,\hat Q}(u_0) - \btheta(u_0)\right) \ge t\right) \leq \frac{C}{t^2} + \limsup_{n\to\infty} \Pr\left(\cE^c\right)\,.\]

Now let's deal with term $\Pr\left(\cE^c\right)$. By definition of $\cE$, we obtain
\begin{equation*}
    \begin{split}
        & \Pr\left( \cE\right)\\
        & = \Pr\bigg( \frac{1}{2}\frac{\sigma^2(u_0)}{n_0}M\left(\hat\btheta_{\DVCM}(u_0)\right)^{-1}\preceq  \delta\frac{\hat\sigma^2(u_0)}{n_0}\hat M\left(\hat\btheta_{\DVCM}(u_0)\right)^{-1} \preceq 2\frac{\sigma^2(u_0)}{n_0}M\left(\hat\btheta_{\DVCM}(u_0)\right)^{-1}\bigg)\\
        & = \Pr\bigg( \frac{1}{2} I \preceq  \delta\frac{\hat\sigma^2(u_0)}{\sigma^2(u_0)}M\left(\hat\btheta_{\DVCM}(u_0)\right)\hat M\left(\hat\btheta_{\DVCM}(u_0)\right)^{-1} \preceq 2I \bigg)\,.
    \end{split}
\end{equation*}
By their definitions,
\begin{align*}
    & \frac{\hat\sigma^2(u_0)}{\sigma^2(u_0)} = 1 + o_p(1)\\
    & M\left(\hat\btheta_{\DVCM}(u_0)\right)\hat M\left(\hat\btheta_{\DVCM}(u_0)\right)^{-1} = I + o_p(1)
\end{align*}
and thus the middle term
\[\delta\frac{\hat\sigma^2(u_0)}{\sigma^2(u_0)}M\left(\hat\btheta_{\DVCM}(u_0)\right)\hat M\left(\hat\btheta_{\DVCM}(u_0)\right)^{-1} \xrightarrow[p]{n\to \infty} \delta I\,.\]
The coefficient $\delta\in(\frac{1}{2},2)$ by its definition, and this implies that  as $n \to \infty$,
$$\Pr(\cE^c)\rightarrow 0.$$
Hence, we get
\[\limsup_{n\to\infty}\Pr\left(r_{n,K,\gamma}\left(\hat\btheta_{\TL,\hat Q}(u_0) - \btheta(u_0)\right) \ge t\right) \leq \frac{C}{t^2} \,,\]
which means that
$$r_{n,K,\gamma}\left(\hat\btheta_{\TL,\hat Q}(u_0) - \btheta(u_0)\right) = O_p(1) \qquad \text{as } n \to \infty.$$

\end{proof}

\subsection{Proof of Theorem \ref{thm:minimax-TL}} \label{proof:thm:minimax-TL}
\begin{proof}

We want to establish a lower bound for
$$
\inf_{\hat{\btheta}(u_0)} \sup_{\substack{\forall j \in [p],\\ \theta_j \in \cH(\beta, L)}}\bbE \left(\Vert\hat\btheta(u_0)-\btheta(u_0)\Vert_A^2 \right).
$$ 

We will use Le Cam's approach \citep{tsybakov2008introduction} to find the minimax lower bound. We consider the data generating process 
$$Y_{ki}|U_k, X_{ki} \sim \cN(X_{ki}^\top \btheta_0(U_k),1), \quad \btheta_0 = (\theta_{01},\ldots,\theta_{0p})^\top;$$ 
$$Y_{ki}|U_k, X_{ki} \sim \cN(X_{ki}^\top \btheta_1(U_k),1), \quad \btheta_1 = (\theta_{11},\ldots,\theta_{1p})^\top.$$ 
Recall that $A$ is a positive semidefinite matrix whose $j$-th largest eigenvalue is $\lambda_{A,j}$ (with $\lambda_{A,1} > 0$) and corresponding eigenvector is $\bv_{A,j} = [v_{A,j1}, \ldots, v_{A,jp}]^\top$, with which each coordinate of functional coefficients is defined as
\begin{align*}
    \theta_{0j}(u) &\equiv 0&\text{ for }j = 1, \ldots,p\\
    \theta_{1j}(u) &= v_{A,1j}Lh^\beta W\left(\frac{u-u_0}{h}\right) &\text{ for }j = 1, \ldots,p,
\end{align*}
where $W(u) = c_0 \exp\left( -\frac{1}{1 - u^2} \right) \indc(|u| \leq 1)$, and $c_0 > 0$ is a small constant. There exists a threshold $C_0 > 0$ such that if $c_0 \leq C_0$, then $W$ belongs to $\mathcal{H}(\beta, 1/2)$.
 Using the definition $l = \lfloor \beta \rfloor$ and the fact that $W \in \mathcal H(\beta,1/2)$, it is shown that
\[\left|\theta_{1j}^{(l)}(u) - \theta_{1j}^{(l)}(u')\right| = |v_{A,1j}|Lh^{\beta-l}\left|W^{(l)}\left(\frac{u-u_0}{h} \right) - W^{(l)}\left(\frac{u'-u_0}{h} \right)\right| \leq \frac{|v_{A,1j}|L}{2}|u-u'|^{\beta-l}, \]
thus the constructed hypotheses lie in the parameter space. That is, $\forall j \in [p], \theta_{1j} \in \mathcal H(\beta,L)$. Moreover, we select domain sizes to be equal:
\begin{equation}\label{eq:minimax-lb-cond-n}
    n_k \equiv \bar n \qquad \text{for }k = 0,1,\ldots,K,
\end{equation}
let $U_k$'s be iid from pdf 
\begin{equation}\label{eq:minimax-lb-cond-U}
    \frac{1}{\gamma}f\left(\frac{u-u^*}{\gamma}\right), \qquad \text{with } f(\cdot) \leq a_0,
\end{equation}
and make $X_{ki}$ bounded:
\begin{equation}\label{eq:minimax-lb-cond-X}
    \|X_{ki}\|_2 \leq 1\,.
\end{equation}
Such choices of $n_k, U_k$ and $X_{ki}$ also lie in the presumed DGP, see Assumptions \ref{Assump:VCM-UX} and \ref{Assump:balance-sample}. Our metric of interest is lower bounded by
\begin{align*}
    \norm{ \btheta_1(u_0)- \btheta_0(u_0)}_A = \sqrt{\btheta_1(u_0)^\top A\btheta_1(u_0)} & = \sqrt{\sum_{j=1}^p\lambda_{A,j} \left(\bv_{A,j}^\top\btheta_1(u_0)\right)^2 }\\
    & \geq \sqrt{\lambda_{A,1}  \left(\bv_{A,1}^\top\btheta_1(u_0)\right)^2} = \sqrt{\lambda_{A,1}} Lh^{\beta} W(0)\,.
\end{align*}
We set $$r_{n,K,\gamma} = \sqrt{\lambda_{A,1}} Lh^{\beta} W(0)/3\,,$$ with $h$ to be chosen later.
Let
$\Pr_0$ and $\Pr_1$ are the joint distributions of $\{(U_k,X_{ki},Y_{ki}): k \in \{0\}\cup [K], i\in [n_k]\}$ under hypotheses $\btheta_0$ and $\btheta_1$. Then the following holds: 
\begin{equation}\label{eqn:minimax-reduc-1}
    \begin{split}
        \inf_{\hat{\btheta}(u_0)} \sup_{\substack{\forall j \in [p],\\ \theta_j \in \cH(\beta, L)}} \bbE \left(\Vert\hat\btheta(u_0)-\btheta(u_0)\Vert_A^2 \right)
&\ge \inf_{\hat{\btheta}(u_0)} \max_{\btheta \in \{\btheta_{0}, \btheta_{1}\}}  \bbE \left(\Vert\hat\btheta(u_0)-\btheta(u_0)\Vert_A^2 \right) \\
&\ge r_{n,K,\gamma}^2 \inf_{\hat{\btheta}(u_0)} \max_{\btheta \in \{\btheta_{0}, \btheta_{1}\}} \Pr \left(\Vert\hat\btheta(u_0)-\btheta(u_0)\Vert_A \geq r_{n,K,\gamma}\right) \\
&\ge r_{n,K,\gamma}^2 \inf_{\psi} \max_{j \in \{0,1\}} \Pr_j\left(\psi \ne j\right),
    \end{split}
\end{equation}
Now we justify the last inequality in \eqref{eqn:minimax-reduc-1}. By definition of $r_{n,K,\gamma}$, it holds that
$$
\|\btheta_1(u_0) - \btheta_0(u_0)\|_A > 2r_{n,K,\gamma}\,.
$$
Then, for any estimator $\hat\btheta(u_0)$, it is impossible for both
$$
\|\hat\btheta(u_0) - \btheta_0(u_0)\|_A < r_{n,K,\gamma} \quad \text{and} \quad \|\hat\btheta(u_0) - \btheta_1(u_0)\|_A < r_{n,K,\gamma}
$$
to simultaneously hold. Indeed, by the triangle inequality, this would imply
$$
\|\btheta_1(u_0) - \btheta_0(u_0)\|_A \le \|\btheta_1(u_0) - \hat\btheta(u_0)\|_A + \|\hat\btheta(u_0) - \btheta_0(u_0)\|_A < 2r_{n,K,\gamma},
$$
which contradicts the assumption. Therefore, for any estimator $ \hat\btheta(u_0)$, at least one of the following events must occur:
$$
\|\hat\btheta(u_0) - \btheta_0(u_0)\|_A \ge r_{n,K,\gamma} \quad \text{or} \quad \|\hat\btheta(u_0) - \btheta_1(u_0)\|_A \ge r_{n,K,\gamma}.
$$
Now define a test function $\psi \in \{0,1\}$ based on the estimator by
$$
\psi = \argmin_{j \in \{0,1\}} \|\hat\btheta(u_0) - \btheta_j(u_0)\|_A\,,
$$
which selects the hypothesis closest to the estimator. Then,
$$
\psi \ne j \quad \implies \quad \|\hat\btheta(u_0) - \btheta_j(u_0)\|_A \ge r_{n,K,\gamma}\,,
$$
so that
$$
\max_{j \in \{0,1\}} \Pr_j\left( \|\hat\btheta(u_0) - \btheta_j(u_0)\|_A \ge r_{n,K,\gamma} \right) \ge \max_{j \in \{0,1\}} \Pr_j(\psi \ne j),
$$
which proves last line of \eqref{eqn:minimax-reduc-1}. Moreover, by the definition of total variation distance and Pinsker’s inequality, we have 

\begin{equation}
    \begin{split}
        \Pr_0(\psi \ne 0) + \Pr_1(\psi \ne 1) 
        &= 1 - \left( \Pr_0(\psi \ne 1) - \Pr_1(\psi \ne 1) \right) \\
        &\ge 1 - \TV(\Pr_0, \Pr_1) \\
        &\ge 1 - \sqrt{ \frac{1}{2} \KL\left(\Pr_0\ \|\ \Pr_1\right) }, 
    \end{split}
\end{equation}
and thus
\begin{equation}\label{eqn:minimax-reduc-2}
    \inf_{\hat{\btheta}(u_0)} \sup_{\substack{\forall j \in [p],\\ \theta_j \in \cH(\beta, L)}} \bbE \left(\Vert\hat\btheta(u_0)-\btheta(u_0)\Vert_A^2 \right) \geq \frac{r_{n,K,\gamma}^2}{2}\left(1 - \sqrt{ \frac{1}{2} \KL\left(\Pr_0\ \|\ \Pr_1\right) }\right)\,.
\end{equation}
This completes the reduction part in Le Cam's approach. Now let $p_0,p_1$ be the density functions associated with $\Pr_0,\Pr_1$, then 
\begin{equation*}
    \begin{split}
        &\KL(\Pr_0\ \|\ \Pr_1)\\
        & = \bbE_{\Pr_0}\left[\log\frac{p_0\left(U_k,X_{ki}: k \in \{0\}\cup [K], i\in \cI_k\right)\prod_{k=0}^K \prod_{i\in\cI_k}p_0\left(Y_{ki}\mid U_k,X_{ki}\right)}{p_1\left(U_k,X_{ki}: k \in \{0\}\cup [K], i\in \cI_k \right)\prod_{k=0}^K \prod_{i\in\cI_k}p_1\left(Y_{ki}\mid U_k,X_{ki}\right)}\right],
    \end{split}
\end{equation*}
where $p_j\left(U_k,X_{ki}: k \in \{0\}\cup [K], i\in \cI_k\right)$ is the joint density function of all the $U_k$ and $X_{ki}$, and $p_j\left(Y_{ki}\mid U_k,X_{ki}\right)$ is the conditional density of $Y_{ki}$ given $(U_k,X_{ki})$. The marginal distribution of $U_k, X_{ki}$ are the same under the two hypotheses, which implies 
$$
p_0\left(U_k,X_{ki}: k \in \{0\}\cup [K], i\in \cI_k\right) = p_1\left(U_k,X_{ki}: k \in \{0\}\cup [K], i\in \cI_k\right),
$$ 
so it suffices to consider the conditional distribution $Y_{ki} | U_k,X_i$ when calculating $\KL(\Pr_0\ \|\ \Pr_1)$. Utilizing the normality of $Y_{ki} | U_k,X_{ki}$ and letting $\varphi$ be the pdf of standard normal distribution, we obtain:
\begin{equation}\label{eqn:KL-ub}
    \begin{split}
        \KL(\Pr_0\ \|\ \Pr_1) & = \bbE_{\Pr_0}\left[\log\frac{\prod_{k=0}^K \prod_{i\in\cI_k}p_0\left(Y_{ki}\mid U_k,X_{ki}\right)}{\prod_{k=0}^K \prod_{i\in\cI_k}p_1\left(Y_{ki}\mid U_k,X_{ki}\right)}\right]\\
        & = \sum_{k=0}^K\sum_{i\in\cI_k}\bbE_{U_k,X_{ki}} \int \log \frac{\varphi(t)}{\varphi\left(t- \btheta_1(U_k)\cdot X_{ki}\right)}\varphi(t)dt\\
        &=  \sum_{k=0}^K\sum_{i\in\cI_k} \bbE_{U_k,X_{ki}} \left(\btheta_1(U_k)\cdot X_{ki}\right)^2\\
        & \leq \sum_{k=0}^K\sum_{i\in\cI_k} \bbE_{U_k}  \Vert \btheta_1(U_k) \Vert^2_2. \quad [\because\text{Equation \eqref{eq:minimax-lb-cond-X}}]
    \end{split}
\end{equation}
It holds by definition of $\btheta_1$ that $\Vert \btheta_1(U_k) \Vert^2_2 = L^2 h^{2\beta} W^2\left(\frac{u_0-U_k}{h}\right)\leq L^2 h^{2\beta} W^2\left(0\right)\indc\left( \left|u_0 - U_k\right| \leq h \right)$, so we get an upper bound:
\begin{equation}\label{eqn:KL-ub-1}
    \begin{split}
        \KL(\Pr_0\ \|\ \Pr_1) & \leq \sum_{k=0}^K\sum_{i\in\cI_k} \bbE_{U_k} \left[ \Vert \btheta_1(U_k) \Vert^2_2 \right]\\
        &\leq L^2 W^2\left(0\right) h^{2\beta} \sum_{k=0}^K\sum_{i\in\cI_k} \bbE_{U_k} \left[ \indc\left( \left|u_0 - U_k\right| \leq h \right)\right]\\
        & \leq L^2c_0^2 h^{2\beta}\sum_{k=0}^Kn_k \bbE_{U_k} \left[ \indc\left( \left|u_0 - U_k\right| \leq h \right)\right]. \quad [\because W(0) \leq c_0 \text{ by its definition}]
    \end{split}
\end{equation}
Due to Equation \eqref{eq:minimax-lb-cond-U} on the upper bound of density of $U_k$, for any $k \in [K]$, the expectation part is upper bounded by
\begin{align*}
    & \bbE_{U_k} \left[ \indc\left( \left|u_0 - U_k\right| \leq h \right)\right] \leq \frac{2a_0h}{\gamma}\,,
\end{align*}
while for $k=0$, 
$$\bbE_{U_k} \left[ \indc\left( \left|u_0 - U_k\right| \leq h \right)\right]=1\,,$$
and thus
\begin{align*}
     &\KL(\Pr_0\ \|\ \Pr_1) \leq L^2c_0^2 h^{2\beta} \left(n_0 + \frac{2a_0 h}{\gamma}\sum_{k=1}^Kn_k\right) \leq (1\vee 2a_0)L^2c_0^2 h^{2\beta} \left(n_0 + \frac{h}{\gamma}\sum_{k=1}^Kn_k\right) \,.
\end{align*}
Because of the inequality $x + y \leq 2(x\vee y)$ for $x,y > 0$, we have
\begin{equation}\label{eqn:KL-ub-2}
    \KL(\Pr_0\ \|\ \Pr_1) \leq (2\vee 4a_0)L^2c_0^2 h^{2\beta}\left(n_0 \vee \frac{h}{\gamma}\sum_{k=1}^Kn_k\right)\,.
\end{equation}
We are going to discuss 3 possible choices of $h\in \{h_1, h_2, h_3\}$. All the choices of $h$ lead to the same upper bound of KL-divergence, that is $\KL(\Pr_0\ \|\ \Pr_1) \leq 2\alpha$ for some constant $\alpha \in (0,1/2)$. Then we apply Equation \eqref{eqn:minimax-reduc-2} and use the definition $r_{n,K,\gamma} = \sqrt{\lambda_{A,1}} Lh^{\beta} W(0)/3$ to obtain
\begin{equation}\label{eq:minimax-lb-max-3h}
\begin{split}
    \inf_{\hat{\btheta}(u_0)} \sup_{\substack{\forall j \in [p],\\ \theta_j \in \cH(\beta, L)}} \bbE \left(\Vert\hat\btheta(u_0)-\btheta(u_0)\Vert_A^2 \right) & \geq \frac{r_{n,K,\gamma}^2}{2}\left(1 - \sqrt{ \frac{1}{2} \KL\left(\Pr_0\ \|\ \Pr_1\right) }\right)\\ 
    &\geq \frac{\Big\{\sqrt{\lambda_{A,1}} Lh_j^{\beta} W(0)/3\Big\}^2}{2}\left(1 - \sqrt{\alpha}\right) \quad \text{for } j = 1,2,3\\
    & = \frac{\lambda_{A,1} L^2  W^2(0)h_j^{2\beta}}{18}\left(1 - \sqrt{\alpha}\right) \quad \text{for } j = 1,2,3.
\end{split}
\end{equation}
Hence, we are able to take maximum across the 3 $h_j$'s to find a tighter bound, and this yields the lower bound
  \[\inf_{\hat{\btheta}(u_0)} \sup_{\substack{\forall j \in [p],\\ \theta_j \in \cH(\beta, L)}} \bbE \left(\Vert\hat\btheta(u_0)-\btheta(u_0)\Vert_A^2 \right) \geq \max_{j \in \{1,2,3\}}\frac{\lambda_{A,1} L^2  W^2(0)h_j^{2\beta}}{18}\left(1 - \sqrt{\alpha}\right)\,. \]
Now, we discuss the 3 choices of $h \in \{h_1, h_2, h_3\}$, and their corresponding minimax lower bound. 
\\\\
\noindent 
\underline{\bf Case 1: } In this case, we find the first choice of $h$: 
$$h = h_1 = n_0^{-\frac{1}{2\beta}}\wedge \left(\gamma/K\right),$$
and use it to find corresponding $\alpha$, and upper bound of KL-divergence. Equation \eqref{eq:minimax-lb-cond-n} says $n_k \equiv \bar n$ for $k=0,1,\ldots,K$. Following the intermediate result in Equation \eqref{eqn:KL-ub-2} we then obtain an upper bound
\[\KL(\Pr_0\ \|\ \Pr_1) \leq  (2\vee 4a_0)L^2c_0^2 h^{2\beta} \bar n \left(1 \vee \frac{hK}{\gamma}\right)\,.\]
With the definition of $h = h_1$, it follows that
\begin{align*}
    1 \vee \frac{hK}{\gamma} \leq 1 \vee \frac{K}{\gamma}\left( n_0^{-\frac{1}{2\beta}}\wedge \left(\gamma/K\right)\right) \leq 1\,.
\end{align*}
Also, by Equation \eqref{eq:minimax-lb-cond-n}, $n_0 = \bar n$, and we obtain
\[  h^{2\beta} = \left( n_0^{-1}\wedge \left(K/\gamma\right)^{-2\beta}\right) \leq n_0^{-1} = \bar n^{-1}\,.\]
Hence, we can plug the upper bound for $1 \vee hK/\gamma$ and $h^{2\beta}$ into $\KL(\Pr_0\ \|\ \Pr_1)$'s upper bound. This yields
\[\KL(\Pr_0\ \|\ \Pr_1) \leq  (2\vee 4a_0)L^2c_0^2 \,.\]
 Note that the rate of $r_{n,K,\gamma} = \sqrt{\lambda_{A,1}} Lh^{\beta} W(0)/3$ is immediately obtained from the selected $h$ in each case. Therefore, by Equation \eqref{eqn:minimax-reduc-2} we have
\begin{equation}\label{eqn:minimax-lb-case1}
    \begin{split}
        \inf_{\hat{\btheta}(u_0)} \sup_{\substack{\forall j \in [p],\\ \theta_j \in \cH(\beta, L)}} \bbE \left(\Vert\hat\btheta(u_0)-\btheta(u_0)\Vert_A^2 \right) 
        &\geq \frac{r_{n,K,\gamma}^{2}}{2} \left(1 - \sqrt{ (1\vee 2a_0)L^2c_0^2 }\right)\\
        & =   \frac{\lambda_{A,1} L^2  W^2(0)\left( n_0^{-1}\wedge \left(K/\gamma\right)^{-2\beta}\right)}{18} \left(1 - \sqrt{ (1\vee 2a_0)L^2c_0^2 }\right)\\
        & =   \frac{\lambda_{A,1} L^2  c_0^2\left( n_0^{-1}\wedge \left(K/\gamma\right)^{-2\beta}\right)}{18e^2} \left(1 - \sqrt{ (1\vee 2a_0)L^2c_0^2 }\right)
    \end{split}
\end{equation}
where the last line is by definition
$$W(u) = c_0 \exp\left( -\frac{1}{1 - u^2} \right) \indc(|u| \leq 1) \implies W(0) = \frac{c_0}{e}.$$
We want to find $c_0$ such that
\[1 - \sqrt{ (1\vee 2a_0)L^2c_0^2 } \geq \frac{1}{2} \implies c_0 \leq \frac{1}{2\sqrt{ (1\vee 2a_0)L^2}}\,.\]
Recall that $W \in \mathcal{H}(\beta, 1/2)$ only if $c_0 \leq C_0$, so we can set $$c_0 = C_0 \wedge \frac{1}{2\sqrt{ (1\vee 2a_0)L^2}}\,.$$
This yields a lower bound
\[ \inf_{\hat{\btheta}(u_0)} \sup_{\substack{\forall j \in [p],\\ \theta_j \in \cH(\beta, L)}} \bbE \left(\Vert\hat\btheta(u_0)-\btheta(u_0)\Vert_A^2 \right)  \geq \frac{\lambda_{A,1} L^2  c_0^2}{36e^2}\left( n_0^{-1}\wedge \left(K/\gamma\right)^{-2\beta}\right).\]
\\\\
\noindent 
\underline{\bf Case 2: } In this case, we define
$$
h = h_2 = n_0^{-\frac{1}{2\beta}}\wedge n^{-\frac{1}{2\beta}} \implies h^{2\beta} = n_0^{-1} \wedge n^{-1}\,.
$$ 
Plugging this bandwidth into Equation \eqref{eqn:KL-ub-1} yields
\begin{equation*}
    \begin{split}
        \KL(\Pr_0\ \|\ \Pr_1)
        & \leq L^2c_0^2 h^{2\beta} \bbE_{U_k} \left[\sum_{k=0}^Kn_k \indc\left( \left|u_0 - U_k\right| \leq h \right)\right].
    \end{split}
\end{equation*}
Notice that the following upper bound always holds:
\[\sum_{k=0}^Kn_k \indc\left( \left|u_0 - U_k\right| \leq h \right) \leq n.\]
Hence,
\begin{equation*}
    \begin{split}
     \KL(\Pr_0\ \|\ \Pr_1)
        & \leq  L^2 c_0^2 h^{2\beta} n\\
        & = L^2 c_0^2\left(n_0^{-\frac{1}{2\beta}}\wedge n^{-\frac{1}{2\beta}}\right)^{2\beta}n\\
        & \leq  L^2 c_0^2\\
        & \leq  (2\vee 4a_0)L^2c_0^2\,.
    \end{split}
\end{equation*}
Again, by Equation \eqref{eqn:minimax-reduc-2} we have
\begin{equation}\label{eqn:minimax-lb-case2}
    \begin{split}
        \inf_{\hat{\btheta}(u_0)} \sup_{\substack{\forall j \in [p],\\ \theta_j \in \cH(\beta, L)}} \bbE \left(\Vert\hat\btheta(u_0)-\btheta(u_0)\Vert_A^2 \right) 
        &\geq \frac{r_{n,K,\gamma}^{2}}{2} \left(1 - \sqrt{ (1\vee 2a_0)L^2c_0^2 }\right)\\
        & =   \frac{\lambda_{A,1} L^2  c_0^2\left( n_0^{-1} \wedge n^{-1}\right)}{18e^2} \left(1 - \sqrt{ (1\vee 2a_0)L^2c_0^2 }\right)
    \end{split}
\end{equation}
Same as Case 1, we can set $c_0 = C_0 \wedge \frac{1}{2\sqrt{ (1\vee 2a_0)L^2}}$ to find lower bound
\[\geq \frac{\lambda_{A,1} L^2  c_0^2}{36e^2}\left( n_0^{-1} \wedge n^{-1}\right)\,.\]
\\\\
\noindent 
\underline{\bf Case 3: } In this case, we utilize the third choice of $h$: 
$$h = h_3 = (n/\gamma)^{-\frac{1}{2\beta+1}}\wedge n_0^{-\frac{1}{2\beta}}.$$
Firstly, it follows from Equation \eqref{eqn:KL-ub-2} that
\begin{equation*}
    \begin{split}
        \KL(\Pr_0\ \|\ \Pr_1) 
        & \leq (2\vee 4a_0)L^2c_0^2 h^{2\beta}\left(n_0 \vee \frac{h}{\gamma}\sum_{k=1}^Kn_k\right)\,.
    \end{split}
\end{equation*}
By the construction of $h=h_3$ we have
\begin{align*}
    & n_0 \vee \frac{h}{\gamma}\sum_{k=1}^Kn_k = n_0 \vee   \frac{ (n/\gamma)^{-\frac{1}{2\beta+1}}\wedge n_0^{-\frac{1}{2\beta}}}{\gamma}\sum_{k=1}^Kn_k \leq  n_0 \vee   \frac{ (n/\gamma)^{-\frac{1}{2\beta+1}}}{\gamma}\sum_{k=0}^Kn_k\,.
\end{align*}
Recall that $n= \sum_{k=0}^Kn_k$ and thus
\[ n_0 \vee \frac{h}{\gamma}\sum_{k=1}^Kn_k \leq n_0 \vee \left(n/\gamma\right)^{\frac{2\beta}{2\beta+1}}\]
Moreover, the definition of $h$ also implies
\begin{align*}
    h^{2\beta} = n_0^{-1} \wedge (n/\gamma)^{-\frac{2\beta}{2\beta+1}}   = \left\{n_0 \vee (n/\gamma)^{\frac{2\beta}{2\beta+1}} \right\}^{-1}\,.
\end{align*}
Now we can plug the values of $n_0 \vee \frac{h}{\gamma}\sum_{k=1}^Kn_k$ and $h^{2\beta}$ into $\KL(\Pr_0\ \|\ \Pr_1) $ and get
\begin{equation*}
    \begin{split}
        \KL(\Pr_0\ \|\ \Pr_1) 
        & \leq (2\vee 4a_0)L^2c_0^2 h^{2\beta}\left(n_0 \vee \frac{h}{\gamma}\sum_{k=1}^Kn_k\right) \\
        & \leq  (2\vee 4a_0)L^2c_0^2 \left\{n_0 \vee (n/\gamma)^{\frac{2\beta}{2\beta+1}} \right\}^{-1} \left\{n_0 \vee \left(n/\gamma\right)^{\frac{2\beta}{2\beta+1}}\right\}\\
        & = (2\vee 4a_0)L^2c_0^2\,.
    \end{split}
\end{equation*}
Again, using the lower bound in Equation \eqref{eqn:minimax-reduc-2} we have
\begin{equation}\label{eqn:minimax-lb-case3}
    \begin{split}
        \inf_{\hat{\btheta}(u_0)} \sup_{\substack{\forall j \in [p],\\ \theta_j \in \cH(\beta, L)}} \bbE \left(\Vert\hat\btheta(u_0)-\btheta(u_0)\Vert_A^2 \right) 
        &\geq \frac{r_{n,K,\gamma}^{2}}{2} \left(1 - \sqrt{ (1\vee 2a_0)L^2c_0^2 }\right)\\
        & =   \frac{\lambda_{A,1} L^2  c_0^2\left(n_0^{-1} \wedge (n/\gamma)^{-\frac{2\beta}{2\beta+1}}\right)}{18e^2} \left(1 - \sqrt{ (1\vee 2a_0)L^2c_0^2 }\right)
    \end{split}
\end{equation}
Same as Case 1, we can set $c_0 = C_0 \wedge \frac{1}{2\sqrt{ (1\vee 2a_0)L^2}}$ to find lower bound
\[\geq \frac{\lambda_{A,1} L^2  c_0^2}{36e^2}\left(n_0^{-1} \wedge (n/\gamma)^{-\frac{2\beta}{2\beta+1}}\right)\,.\]
\\\\
\noindent 
\underline{\bf Summarizing Case 1 - Case 3: }
By Equation \eqref{eq:minimax-lb-max-3h}, we find the maximum of the lower bounds on in Equations \eqref{eqn:minimax-lb-case1} - \eqref{eqn:minimax-lb-case3}, it follows that
\begin{equation*}
    \begin{split}
        \inf_{\hat{\btheta}(u_0)} \sup_{\substack{\forall j \in [p],\\ \theta_j \in \cH(\beta, L)}} \bbE \left(\Vert\hat\btheta(u_0)-\btheta(u_0)\Vert_A^2 \right) \geq C\cdot \frac{1}{n_0} \wedge \max\left\{\left(\frac{K}{\gamma}\right)^{-2\beta}, \left(\frac{n}{\gamma}\right)^{-\frac{2\beta}{2\beta + 1}}, \frac{1}{n}\right\}\,,
    \end{split}
\end{equation*}
where 
\[C =  \frac{\lambda_{A,1} L^2  c_0^2}{36e^2} \qquad \text{for } c_0 = C_0 \wedge \frac{1}{2\sqrt{ (1\vee 2a_0)L^2}}. \]
\end{proof}

\subsection{Proof of Theorem \ref{thm:Dconv-LR-TF-0-infty}}\label{proof:thm:Dconv-LR-TF-0-infty}

\begin{proof}
Linear regression is a special GLM with squared loss $\ell(\eta,y)=\frac12(y-\eta)^2$.
Accordingly we focus on the major changes relative to proof of Lemma \ref{lem:Dconv-GLM-TF-0-infty}. Readers may refer to part 2 of Appendix \ref{proof:lem:Dconv-GLM-TF-0-infty} for the proof under a more general setup. The norm $\|\cdot\|$ used in the proof denotes the $\ell_2$-norm when applied to a vector, and the spectral norm when applied to a matrix.

Write \(\Psi(u_0)=\bbE[XX^\top\mid U=u_0]\).
Let \(\btau_{\TL}\coloneqq \hat\btheta_{\TL}(u_0)-\btheta(u_0)\),
\(\btau_{\DVCM}\coloneqq \hat\btheta_{\DVCM}(u_0)-\btheta(u_0)\),
and \(\btau_{\LR}\coloneqq \hat\btheta_{\LR}(u_0)-\btheta(u_0)\). Recall the rates of $\hat\btheta_{\LR}(u_0)$, $\hat\btheta_{\DVCM}(u_0)$, and their ratio
\[
\btau_{\LR} =O_p(r_{\LR}), \quad \btau_{\DVCM}=O_p(r_{\DVCM}), \qquad \rho\coloneqq \frac{r_{\LR}}{r_{\DVCM}},
\]
and by construction $Q = \hat Q\asymp_p\rho^2I$, i.e.\ $c\rho^2\le\lambda_{\min}(Q)\le\lambda_{\max}(Q)\le C\rho^2$ w.p. $\to1$.

Consider the TL objective with linear DVCM:
\[
L_n(\alpha)
=\frac{1}{2n_0}\sum_{i\in\cI_0^*}\big(Y_{0i}-X_{0i}^\top\alpha\big)^2
+\frac12\|\alpha-\hat\btheta_{\DVCM}(u_0)\|_Q^2.
\]
Taking the gradient and setting it to zero at $\alpha=\hat\btheta_{\TL}(u_0)$ gives
\[
\frac{1}{n_0}\bX_0^\top\big(\bX_0\hat\btheta_{\TL}(u_0)-\bY_0\big)+Q\big(\hat\btheta_{\TL}(u_0)-\hat\btheta_{\DVCM}(u_0)\big)=0,
\]
where $\bX_0 \in \reals^{n_0 \times p}$ with $X_{0i}$ being its $i^{th}$ row, $\bY_0 \in \reals^{n_0}$ with entries being $Y_{0i}$. Let $\beps_0 \in \reals^{n_0}$ be a vector with $\varepsilon_{0i}$ being its $i^{th}$ entry. Substitute
$\bY_0=\bX_0\btheta(u_0)+\beps_0$ and $\hat\btheta_{\TL}(u_0)=\btheta(u_0)+\btau_{\TL}$:
\[
\frac{1}{n_0}\bX_0^\top\big(\bX_0\btau_{\TL}-\beps_0\big)+Q\big(\btau_{\TL}-\btau_{\DVCM}\big)=0
\ \ \Longleftrightarrow\ \
\btau_{\TL}
=\Big(\tfrac{1}{n_0}\bX_0^\top \bX_0+Q\Big)^{-1}\left(Q\,\btau_{\DVCM}+\tfrac{1}{n_0}\bX_0^\top\beps_0\right).
\]
Moreover, the involved components have the following limits:
\begin{align*}
    & \tfrac{1}{n_0}\bX_0^\top\bX_0=\Psi(u_0)+o_p(1),\quad \\
    & \tfrac{1}{n_0}\bX_0^\top\beps_0
\ =\ \Big(\tfrac{1}{n_0}\bX_0^\top\bX_0\Big)\btau_{\LR}
\ \implies\ 
\tfrac{1}{n_0}\bX_0^\top\beps_0=(\Psi(u_0)+o_p(1))\,\btau_{\LR}, \qquad (\because\text{OLS solution}).
\end{align*}
so the linear representation is written as
\begin{equation}\label{eq:TL-lin-rep-linear}
\begin{split}
    \btau_{\TL}
& =\big(\Psi(u_0)+o_p(1)+Q\big)^{-1}\Big\{\,Q\,\btau_{\DVCM}+(\Psi(u_0)+o_p(1))\btau_{\LR}\Big\}\\
& =\big(\Psi(u_0)+Q\big)^{-1}\Big\{\,Q\,\btau_{\DVCM}+\Psi(u_0)\btau_{\LR}\Big\}(1+o_p(1)).
\end{split}
\end{equation}
By sample splitting, \(\btau_{\LR}\) is independent of \(\btau_{\DVCM}\). Now we discuss the two cases  \(\rho\to0\) and  \(\rho\to \infty\).

\noindent\emph{Regime \(\rho\to0\) (LR–dominated).}
The properties \(\rho\coloneqq r_{\LR}/r_{\DVCM}\to0\) and \(Q=O_p(\rho^2)\) jointly imply the denominator in \eqref{eq:TL-lin-rep-linear} is
\(\big(\Psi(u_0)+Q\big)^{-1}=\Psi(u_0)^{-1}+o_p(1)\). Moreover, 
\(\|\Psi^{-1}(u_0)Q\btau_{\DVCM}\|=O_p(\rho^2r_{\DVCM})=o_p(r_{\LR})\).
Thus, from the representation in \eqref{eq:TL-lin-rep-linear},
\[
\btau_{\TL}=\btau_{\LR}+o_p(r_{\LR}).
\]
By Slutsky’s theorem and part 1 of Proposition \ref{prop:asymp-LR-VCM}, it follows that
\[r_{\LR}^{-1}\big(\hat\btheta_{\TL}(u_0)-\btheta(u_0)\big)
\xrightarrow[]{d}\cN\big(0,\ \Omega_{\LR}(u_0)\big).\]

\noindent\emph{Regime $\rho\to\infty$ (DVCM–dominated).}
Start from the linearization
\[
\btau_{\TL}=(\Psi(u_0)+Q)^{-1}\left\{\,Q\,\btau_{\DVCM}+\Psi(u_0)\,\btau_{\LR}\right\}+o_p(r_{\TL})(1+o_p(1)).
\]
Use the identity
\[
(\Psi(u_0)+Q)^{-1}Q=(\Psi(u_0)+Q)^{-1}(\Psi(u_0)+Q-\Psi(u_0))=I-(\Psi(u_0)+Q)^{-1}\Psi(u_0),
\]
to rewrite
\begin{equation}\label{eq:tau-TL-decomp}
    \btau_{\TL}
=\left\{\btau_{\DVCM}-(\Psi(u_0)+Q)^{-1}\Psi(u_0)\,\btau_{\DVCM}+(\Psi(u_0)+Q)^{-1}\Psi(u_0)\,\btau_{\LR}\right\}(1+o_p(1)).
\end{equation}
Since $\lambda_{\min}(Q)\asymp_p\rho^2$ and $\Psi(u_0)\succeq c_0'I$, we have
\[
\|(\Psi(u_0)+Q)^{-1}\|\le\|Q^{-1}\|=O_p(\rho^{-2}),\qquad \|\Psi\|=O(1).
\]
Hence it follows that
\[
\|(\Psi(u_0)+Q)^{-1}\Psi(u_0)\,\btau_{\DVCM}\|
\le \|(\Psi(u_0)+Q)^{-1}\|\,\|\Psi(u_0)\|\,\|\btau_{\DVCM}\|
=O_p(\rho^{-2} r_{\DVCM})=o_p(r_{\DVCM}),
\]
and, using $r_{\LR}=\rho\,r_{\DVCM}$,
\begin{align*}
    & \|(\Psi(u_0)+Q)^{-1}\Psi(u_0)\btau_{\LR}\|
\le \|(\Psi(u_0)+Q)^{-1}\|\,\|\Psi(u_0)\|\|\btau_{\LR}\|\\
    & =O_p(\rho^{-2} r_{\LR})
=O_p(r_{\DVCM}/\rho)=o_p(r_{\DVCM}).
\end{align*}
Hence, going back to Equation \eqref{eq:tau-TL-decomp}, this yields
\[
\btau_{\TL}=\btau_{\DVCM}+o_p(r_{\DVCM}).
\]
Therefore, by Slutsky’s theorem and part 2 of Proposition \ref{prop:asymp-LR-VCM}
\[
r_{\DVCM}^{-1}\Big(\hat\btheta_{\TL}(u_0)-\btheta(u_0)-\bb_{\DVCM}(u_0)\Big)
\ \xrightarrow[]{d}\ \cN\big(0,\ \Omega_{\DVCM}(u_0)\big).
\]
Proposition \ref{prop:asymp-LR-VCM} also implies that
\[r_{\DVCM}^2 \asymp h^{2\beta} + \gamma/nh, \qquad \bb_{\DVCM}(u_0) \lesssim h^{\beta}\]
and thus the condition $nh^{2\beta+1}/\gamma \to 0$ implies $r_{\DVCM}^{-1}\bb_{\DVCM}(u_0) = o_p(1)$, and this proves the theorem.
\end{proof}

\subsection{Proof of Theorem \ref{thm:TL-GLM-rate-Op}}\label{proof:thm:TL-GLM-rate-Op}
\begin{proof}

It is shown in Proposition \ref{prop:asymp-GLM-VCM} that
\[
\big\|\hat\btheta_{\GDVCM}(u_0)-\btheta(u_0)\big\|_2^2
=O_p\big(h^{2\beta}\big)+O_p\Big(\frac{\gamma}{nh}\Big).
\]
and $h$ is set to be
    $$
\med\left(e_0(n/\gamma)^{-\frac{1}{2\beta+1}},d_{(1)}(u_0),d_{(K)}(u_0)\right) = \med\left(O\left((n/\gamma)^{-\frac{1}{2\beta+1}}\right), O_p(\gamma/K), O_p(\gamma)\right),
$$ 
where the order of $d_{(1)}(u_0)$  and $d_{(K)}(u_0)$ are discussed in Lemma \ref{lem:gap}.
Then it follows that the bias-squared and variance of DVCM are of order
    \begin{align*}
        & O_p\left(h^{2\beta}\right) = \med\left(O_p\left((n/\gamma)^{-\frac{2\beta}{2\beta+1}}\right), O_p\left((\gamma/K)^{2\beta}\right), O_p\left(\gamma^{2\beta}\right)\right)\,,\\
        & O_p\left(\gamma/nh\right) =  \med\left(O_p\left((n/\gamma)^{-\frac{2\beta}{2\beta+1}}\right), O_p\left(K/n\right), O_p\left(n^{-1}\right)\right)\,.
    \end{align*}
We discuss the rates of $O_p\left(h^{2\beta}\right)$ and $O_p\left(\gamma/nh\right)$ under three circumstances:  
$$
i) \  n\gamma^{2\beta} \gg K^{2\beta + 1}, \qquad ii) \ n\gamma^{2\beta} \ll 1, \qquad iii) \ 1 \lesssim n\gamma^{2\beta} \lesssim K^{2\beta + 1} \,.
$$
\noindent\underline{\bf Case 1: }At first we consider the case $n\gamma^{2\beta} \gg K^{2\beta + 1}$. This implies: 
$$
n\gamma^{2\beta} \gg K^{2\beta + 1} \implies (n/\gamma)^{-2\beta/(2\beta+1)} \ll (\gamma/K)^{2\beta} \,.
$$
Also, $K \geq 1$ immediately imply $(\gamma/K)^{2\beta} \lesssim \gamma^{2\beta}$. Thus, it follows that
\[O_p\left(h^{2\beta}\right) =  O_p\left((\gamma/K)^{2\beta}\right)\,\]
by its definition. Moreover, the same condition $n\gamma^{2\beta} \gg K^{2\beta + 1}$ also implies that
$$
n\gamma^{2\beta} \gg K^{2\beta + 1} \implies  K/n \ll  (\gamma/K)^{2\beta}
$$
where the $K/n$ term has lower bound
$$
 n^{-1} \lesssim K/n\,.
$$
Thus, we have shown that
\[ (\gamma/K)^{2\beta} \gtrsim  K/n \vee  n^{-1} \vee (n/\gamma)^{-2\beta/(2\beta+1)},\]
 and this means 
 $$O_p\left(\gamma/nh\right) \lesssim O_p\left(h^{2\beta}\right) = O_p\left((\gamma/K)^{2\beta}\right),$$
 so we conclude in this case
\[\left\|\hat\btheta_{\GDVCM}(u_0)-\btheta(u_0)\right\|_2^2 = O_p\left((\gamma/K)^{2\beta}\right)\,.\]
\\\
\noindent 
\underline{\bf Case 2: } In this case, we assume that $n\gamma^{2\beta} \ll 1 \lesssim K^{2\beta+1}$. The first part of inequality implies that
$$
n\gamma^{2\beta} \ll 1 \implies (n/\gamma)^{-\frac{2\beta}{2\beta+1}} \ll n^{-1}
$$
while $K \geq 1$ immediately yields 
$$
K/n \gtrsim n^{-1}\,.
$$
Therefore, by its definition, 
\[O_p\left(\gamma/nh\right) = O_p\left(n^{-1}\right)\,.\]
The condition $n\gamma^{2\beta} \ll 1$ also implies that
\begin{align*}
    & n\gamma^{2\beta} \ll 1 \implies n\gamma^{2\beta} \ll K^{2\beta} \implies (\gamma/K)^{2\beta} \ll n^{-1}\,,\\
    & n\gamma^{2\beta} \ll 1 \implies \gamma^{2\beta} \ll n^{-1}\,.
\end{align*}
Hence, we have shown that
\[O_p\left(\gamma/nh\right) = O_p\left(n^{-1}\right) \gg O_p\left(h^{2\beta}\right) = \med\left(O_p\left((n/\gamma)^{-\frac{2\beta}{2\beta+1}}\right), O_p\left((\gamma/K)^{2\beta}\right), O_p\left(\gamma^{2\beta}\right)\right)\,.\]
and therefore
\[\left\|\hat\btheta_{\GDVCM}(u_0)-\btheta(u_0)\right\|_2^2 = O_p\left(n^{-1}\right)\,.\]
\noindent 
\underline{\bf Case 3:} In this case, we consider the last case, $1 \lesssim n\gamma^{2\beta} \lesssim K^{2\beta + 1}$. This condition implies
\begin{align*}
    & n\gamma^{2\beta} \lesssim K^{2\beta + 1} \implies (n/\gamma)^{-\frac{2\beta}{2\beta+1}} \gtrsim (\gamma/K)^{2\beta} \,, \\
    & 1 \lesssim n\gamma^{2\beta} \implies \gamma^{2\beta} \gtrsim (n/\gamma)^{-\frac{2\beta}{2\beta+1}}\,.
\end{align*}
Thus by definition
\[O_p\left(h^{2\beta}\right) = O_p\left((n/\gamma)^{-\frac{2\beta}{2\beta+1}}\right)\,.\]
Moreover, the same condition also implies
\begin{align*}
    & n\gamma^{2\beta} \lesssim K^{2\beta + 1} \implies (n/\gamma)^{-\frac{2\beta}{2\beta+1}} \lesssim K/n\,, \\
    & 1 \lesssim n\gamma^{2\beta} \implies n^{-1} \lesssim (n/\gamma)^{-\frac{2\beta}{2\beta+1}}\,.
\end{align*}
Thus by definition
\[O_p\left(\gamma/nh\right) = O_p\left((n/\gamma)^{-\frac{2\beta}{2\beta+1}}\right)\,.\]
Therefore, in this case
\[\left\|\hat\btheta_{\GDVCM}(u_0)-\btheta(u_0)\right\|_2^2 =  O_p\left((n/\gamma)^{-\frac{2\beta}{2\beta+1}}\right)\,.\]
\noindent 
\underline{\bf Collecting Case 1 to Case 3.}
Collecting the the results from \underline{\bf Case 1} to \underline{\bf Case 3}, we have established that
\begin{equation*}
    \begin{split}
        & \left\|\hat\btheta_{\GDVCM}(u_0)-\btheta(u_0)\right\|_2^2 = 
        \begin{cases} 
            O_p\left((K/\gamma)^{-2\beta}\right) & \text{if}\quad n\gamma^{2\beta} \gg K^{2\beta + 1} \gtrsim 1,\\
            O_p\left((n/\gamma)^{-\frac{2\beta}{2\beta+1}}\right) & \text{if}\quad 1 \lesssim n\gamma^{2\beta} \lesssim K^{2\beta + 1} ,\\
            O_p\left(n^{-1}\right) & \text{if}\quad n\gamma^{2\beta} \ll 1 \lesssim K^{2\beta+1} \,.
        \end{cases}
    \end{split}
\end{equation*}
Now we can apply the same argument in the last part of Appendix \ref{proof:prop:rate-LR-VCM} (``Justification of maximal upper bounds'') to conclude that
\[\left\|\hat\btheta_{\GDVCM}(u_0)-\btheta(u_0)\right\|_2^2 = O_p\left(\max\left\{(K/\gamma)^{-2\beta}, (n/\gamma)^{-\frac{2\beta}{2\beta+1}}, n^{-1}\right\}\right)\,.\]
Now we have established the rate of $\hat\btheta_{\GDVCM}(u_0)$ and we will move forward to $\hat\btheta_{\TL}(u_0)$. It has been shown in Lemma \ref{lem:Dconv-GLM-TF-0-infty} that
    \begin{align*}
        \left\|\hat\btheta_{\TL}(u_0)-\btheta(u_0)\right\|_2^2 = O_p\left(r_{\GLR}^2 \wedge r_{\GDVCM}^2\right)\,.
    \end{align*}
    The rate $r_{\GLR}^2 = n_0^{-1}$ so we conclude that
    \begin{align*}
        \left\|\hat\btheta_{\TL}(u_0)-\btheta(u_0)\right\|_2^2 = O_p\left(n_0^{-1} \wedge \max\left\{(K/\gamma)^{-2\beta}, (n/\gamma)^{-\frac{2\beta}{2\beta+1}}, n^{-1}\right\}\right)\,.
    \end{align*}
\end{proof}

\subsection{Proof of Theorem \ref{thm:TL-GLM-infer}}\label{proof:thm:TL-GLM-infer}
\begin{proof}
Let \(\btau_{\TL}\coloneqq \hat\btheta_{\TL}(u_0)-\btheta(u_0)\),
\(\btau_{\GDVCM}\coloneqq \hat\btheta_{\GDVCM}(u_0)-\btheta(u_0)\),
and \(\btau_{\GLR}\coloneqq \hat\btheta_{\GLR}(u_0)-\btheta(u_0)\).
Recall the expansion in Equation \eqref{eq:TL-lin-rep-linear}
\begin{equation}\label{eq:tau-TL-expand}
\btau_{\TL}
=\big(\Psi(u_0)+\hat Q\big)^{-1}\Big\{\hat Q\,\btau_{\GDVCM}+\Psi(u_0)\,\btau_{\GLR}\Big\}\,\{1+o_p(1)\},
\end{equation}
and set $B_Q\coloneqq \Psi(u_0)+\hat Q$. It is shown in Proposition~\ref{prop:asymp-GLM-VCM} that the asymptotic bias and variance for DVCM are of orders $O(h^\beta)$ and $O(\gamma/(nh))$. Therefore the condition $nh^{2\beta+1}/\gamma \to 0$ leads to the asymptotic unbiasedness of DVCM 
\begin{equation*}
      \sqrt{\frac{nh}{\gamma}}\left(\hat\btheta_{\GDVCM}(u_0) - \btheta(u_0)\right) 
    \xrightarrow[]{d} \cN\left( 0, \Omega_{\GDVCM}(u_0)\right) \,.
\end{equation*}
Therefore, by Proposition~\ref{prop:asymp-GLM-VCM}, and with definition on the variance for $\GLR$ and $\GDVCM$, 
\[V_{\GLR}(u_0) := \frac{1}{n_0} \Omega_{\GLR}(u_0), \quad V_{\GDVCM}(u_0) := \frac{\gamma}{nh} \Omega_{\GDVCM}(u_0)\]
it follows that
\[
V_{\GLR}^{-1/2}(u_0)\,\btau_{\GLR}\ \xrightarrow{d}\ \cN\big(0,I\big),
\qquad
V_{\GDVCM}^{-1/2}(u_0)\,\btau_{\GDVCM}\ \xrightarrow{d}\ \cN\big(0,I\big).
\]
Under sample splitting, the two terms $\btau_{\GLR}, \btau_{\GDVCM}$ are independent of each other. Since $\Psi(u_0)$ is positive definite
and $\hat Q$ is positive semidefinite, $B_Q$ is invertible w.p.~$\to1$.
By the continuous mapping theorem,
\[
\Sigma_1^{-1/2} \left[B_Q^{-1}\Psi(u_0)\,\btau_{\GLR} \right]
\ \xrightarrow{d}\ 
\cN\Big(0,I\Big),\quad \Sigma_1 = B_Q^{-1}\Psi(u_0)\,V_{\GLR}(u_0)\,\Psi(u_0)\,B_Q^{-1}
\]
and
\[
\Sigma_2^{-1/2} \left[B_Q^{-1}\hat Q\,\btau_{\GDVCM} \right]
\ \xrightarrow{d}\ 
\cN\Big(0,I\Big), \quad \Sigma_2 = B_Q^{-1}\hat Q\,V_{\GDVCM}(u_0)\,\hat Q\,B_Q^{-1}
\]
Independence implies their covariances are additive. Using
\eqref{eq:tau-TL-expand} and by Slutsky’s theorem,
\[
\Sigma_{\TL}^{-1/2}\,\btau_{\TL}\ \xrightarrow{d}\ \cN(0,I),
\]
where
\begin{align*}
    \Sigma_{\TL}
& = \Sigma_1 + \Sigma_2 \\
& =  B_Q^{-1}\Psi(u_0)\,V_{\GLR}(u_0)\,\Psi(u_0)\,B_Q^{-1} +  B_Q^{-1}\hat Q\, V_{\GDVCM}(u_0)\,\hat Q\,B_Q^{-1}\,.
\end{align*}
For feasible inference, let $\hat\Psi(u_0)$, $\hat V_{\GLR}(u_0)$, and $\hat V_{\GDVCM}(u_0)$ be consistent estimators. 
Define $\hat\Sigma_{\TL}$ by replacing the population quantities in $\Sigma_{\TL}$ with their estimators.
Then $\hat\Sigma_{\TL}=\Sigma_{\TL}\{1+o_p(1)\}$, and another application of Slutsky’s theorem yields
\[
\hat\Sigma_{\TL}^{-1/2}\,\btau_{\TL}\ \xrightarrow{d}\ \cN(0,I).
\]
\end{proof}

\subsection{Proof of Corollary \ref{coro:TL-linear-infer}}\label{proof:coro:TL-linear-infer}
\begin{proof}
This proof is the linear DVCM specialization of of the generalized DVCM derivation in Appendix \ref{proof:thm:TL-GLM-infer}. it follows the same steps, but with \(\Psi(u) = \mathbb{E}[XX^\top \mid U = u]\) and the variance component $\sigma(\cdot)$ specialized to the linear model.

Let \(\btau_{\TL}\coloneqq \hat\btheta_{\TL}(u_0)-\btheta(u_0)\),
\(\btau_{\DVCM}\coloneqq \hat\btheta_{\DVCM}(u_0)-\btheta(u_0)\),
and \(\btau_{\LR}\coloneqq \hat\btheta_{\LR}(u_0)-\btheta(u_0)\).
In the linear case (quadratic loss), the same linearization as in \eqref{eq:TL-lin-rep-linear} holds:
\begin{equation}\label{eq:tau-TL-expand-LR}
\btau_{\TL}
=\big(\Psi(u_0)+\hat Q\big)^{-1}\Big\{\hat Q\,\btau_{\DVCM}+\Psi(u_0)\,\btau_{\LR}\Big\}\,\{1+o_p(1)\},
\end{equation}
and we write \(B_Q\coloneqq \Psi(u_0)+\hat Q\).
By Proposition~\ref{prop:asymp-LR-VCM}, the DVCM bias and variance are of orders
\(O(h^\beta)\) and \(O(\gamma/(nh))\). Hence, if \(nh^{2\beta+1}/\gamma\to 0\),
the bias is negligible and
\[
\sqrt{\frac{nh}{\gamma}}\big(\hat\btheta_{\DVCM}(u_0)-\btheta(u_0)\big)
\ \xrightarrow[]{d}\ \cN\big(0,\ \Omega_{\DVCM}(u_0)\big).
\]
Moreover, by the same proposition, and with definition on the variance for $\LR$ and $\DVCM$, 
\[V_{\LR}(u_0) := \frac{1}{n_0} \Omega_{\LR}(u_0), \quad V_{\DVCM}(u_0) := \frac{\gamma}{nh} \Omega_{\DVCM}(u_0)\]
it follows that
\[
V_{\LR}^{-1/2}(u_0)\,\btau_{\LR}\ \xrightarrow{d}\ \cN\big(0,I\big),
\qquad
V_{\DVCM}^{-1/2}(u_0)\,\btau_{\DVCM}\ \xrightarrow{d}\ \cN\big(0,I\big).
\]
and (by sample splitting) these limits are independent.
Since \(\Psi(u_0)\succ 0\) and \(\hat Q\succeq 0\), \(B_Q\) is invertible w.p.\(\to1\).
By the continuous mapping theorem,
\[
\Sigma_1^{-1/2}\left[B_Q^{-1}\Psi(u_0)\,\btau_{\LR}\right]\ \xrightarrow{d}\ \cN(0,I),
\quad
\Sigma_1:=B_Q^{-1}\Psi(u_0)\,V_{\LR}(u_0)\,\Psi(u_0)\,B_Q^{-1},
\]
and
\[
\Sigma_2^{-1/2}\left[B_Q^{-1}\hat Q\,\btau_{\DVCM}\right]\ \xrightarrow{d}\ \cN(0,I),
\quad
\Sigma_2:=B_Q^{-1}\hat Q\,V_{\DVCM}(u_0)\,\hat Q\,B_Q^{-1}.
\]
Independence implies additivity of covariances. From \eqref{eq:tau-TL-expand-LR},
\[
\btau_{\TL}=B_Q^{-1}\Psi(u_0)\,\btau_{\LR}+B_Q^{-1}\hat Q\,\btau_{\DVCM}+o_p(\btau_{\TL}),
\]
so by Slutsky’s theorem,
\[
\Sigma_{\TL}^{-1/2}\,\btau_{\TL}\ \xrightarrow{d}\ \cN(0,I),
\qquad
\Sigma_{\TL}
=\Sigma_1+\Sigma_2
=B_Q^{-1}\Psi\,V_{\LR}\,\Psi B_Q^{-1}
+B_Q^{-1}\hat Q\,V_{\DVCM}\,\hat Q B_Q^{-1},
\]
For feasible inference, take consistent estimators
\(\hat\Psi(u_0),\ \hat V_{\LR}(u_0),\ \hat V_{\DVCM}(u_0)\),
form \(\hat\Sigma_{\TL}\) by plug-in, and conclude
\(\hat\Sigma_{\TL}^{-1/2}\,\btau_{\TL}\rightarrow_d\cN(0,I)\).
\end{proof}

\section{Proof of auxiliary lemmas}\label{sec:proof-aux-lem}
\subsection{Proof of part (1) of Lemma \ref{lem:gap}}\label{sec:proof:order-stat-d1}
Before going into the details, let's lay down some notations. Assume that $f$ is supported on $[a, b]$ with $f \in [1/a_0,a_0]$ for some large $a_0 > 0$. We further assume without loss of generality that $(b - a) \le a_0/2$. As the density of $U$ is $f((u - u_*)/\gamma)/\gamma$, it is immediate that: 
$$
F_U(u) := \Pr(U \le u) = F\left(\frac{u - u_*}{\gamma}\right) \,.
$$
Hence $U$ is supported on $[a\gamma + u_*, b\gamma + u_*]$. From the definition of nearest neighbour, we have: 
\begin{align*}
    \Pr(d_{(1)}(u_0) \ge \gamma t/K) = \Pr\left(|U - u_0| \ge \gamma t/K\right)^K & = \left(1 - \Pr(|U - u_0| \le \gamma t/K)\right)^K \\
    & = \left[1 - \left\{F\left(\frac{u_0 - u_*}{\gamma} + \frac{t}{K}\right) - F\left(\frac{u_0 - u_* }{\gamma} - \frac{t}{K}  \right) \right\}\right]^K
\end{align*}
Observe that the probability is not non-zero for all $t$. In fact $t$ must have the following upper bound: 
 $$
 \gamma t/K \le \max\left\{b\gamma+ u_* - u_0, \ u_0 - u_* - a\gamma\right\} \coloneqq \gamma t_0 \implies t \leq K t_0\,.
 $$
Now let us concentrate on the upper bound. As $f \ge 1/a_0$, we have: 
 $$
F\left(\frac{u_0 - u_*}{\gamma} + \frac{t}{K}\right) - F\left(\frac{u_0 - u_* }{\gamma} - \frac{t}{K}  \right) \ge \frac{2t}{a_0K} \,,
$$
and consequently: 
$$
1 - \left\{F\left(\frac{u_0 - u_*}{\gamma} + \frac{t}{K}\right) - F\left(\frac{u_0 - u_* }{\gamma} - \frac{t}{K}  \right)\right\} \le 1 - \frac{2t}{a_0K} \,.
$$
This bound only makes sense only if $t \le (a_0K)/2$. We now prove the lower bound on the probability,  where we use the upper bound on $f$, i.e., $f \le a_0$. This implies: 
$$
1 - \left\{F\left(\frac{u_0 - u_*}{\gamma} + \frac{t}{K}\right) - F\left(\frac{u_0 - u_* }{\gamma} - \frac{t}{K}  \right)\right\} \ge 1 - \frac{2a_0t}{K} \,.
$$
As before, this bound only makes sense when $t \le K/(2a_0)$. Therefore, it is concluded that
\begin{equation}\label{eqn:d1-lb-ub}
    \begin{split}
       \left\{1 - \frac{2a_0t}{K}\right\}^K \indc(t \le K/(2a_0)) \leq \Pr(d_{(1)}(u_0) \ge \gamma t/K) \leq  \left\{1 - \frac{2t}{a_0K} \right\}^K  \indc(t \le (a_0K)/2)\,.
    \end{split}
\end{equation}
Now consider the expectation:
 \begin{align*}
     \bbE\left[(K/\gamma)^{\beta}d_{(1)}(u_0)^{\beta}\right] & = \int_0^\infty \Pr\left((K/\gamma)^{\beta}d_{(1)}(u_0)^{\beta} \ge t\right) \ dt \\
     & = \int_0^\infty \Pr\left(d_{(1)}(u_0) \ge \frac{\gamma}{K}t^{\frac{1}{\beta}}\right) \ dt \,.
 \end{align*}
By the upper bound in Equation \eqref{eqn:d1-lb-ub}, 
\[\Pr\left(d_{(1)}(u_0) \ge \frac{\gamma}{K}t^{\frac{1}{\beta}}\right) \leq \left\{1 - \frac{2t^{\frac{1}{\beta}}}{a_0K} \right\}^K  \indc(t^{\frac{1}{\beta}} \le (a_0K)/2)\,\]
we obtain:
\begin{align*}
    \bbE\left[(K/\gamma)^{\beta}d_{(1)}(u_0)^{\beta}\right]  & \le \int_0^{\infty} \left(1 - \frac{2t^{1/\beta}}{a_0K} \right)^K \ dt \\
    & \le \int_0^{\infty}\exp\left(-\frac{2t^{1/\beta}}{a_0}\right) \ dt \\
    & \le \int_0^\infty \exp\left(-\frac{2t^{1/\beta}}{a_0}\right) \ dt = C_1 \,.
\end{align*}
We now prove the lower bound on the expectation,  where we use the lower bound in Equation \eqref{eqn:d1-lb-ub},
\[\Pr\left(d_{(1)}(u_0) \ge \frac{\gamma}{K}t^{\frac{1}{\beta}}\right) \geq   \left\{1 - \frac{2a_0t^{\frac{1}{\beta}}}{K}\right\}^K \indc(t^{\frac{1}{\beta}} \le K/(2a_0))\,\]
Define $t_1 = \min\{t_0, 1/(2a_0)\}$. We now have: 
\begin{align*}
    \bbE\left[(K/\gamma)^{\beta}d_{(1)}(u_0)^{\beta}\right]  & = \int_0^{(Kt_0)^{\beta}} \Pr\left(d_{(1)}(u_0) \ge \frac{\gamma}{K}t^{\frac{1}{\beta}}\right)  \ dt \\
    & \ge \int_0^{(Kt_1)^{\beta}} \Pr\left(d_{(1)}(u_0) \ge \frac{\gamma}{K}t^{\frac{1}{\beta}}\right)  \ dt  \\
    & \ge \int_0^{(Kt_1)^{\beta}}  \left(1 - \frac{2a_0t^{1/\beta}}{K}\right)^K \ dt \\
    & \ge \int_0^{(Kt_1)^{\beta}} \ \exp\left(-2a_0t^{1/\beta}\right)\left(1 - \frac{(2a_0t^{1/\beta})^2}{K}\right) \ dt \hspace{.3in} [\because (1 + x/n)^n \ge e^x(1 - x^2/n)] \\
    & \ge \int_0^{\frac{1}{C^\beta}K^{\beta/2}t_0^{\beta}} \ \exp\left(-2a_0t^{1/\beta}\right)\left(1 - \frac{(2a_0t^{1/\beta})^2}{K}\right)  \ dt  \hspace{.3in} [\because K \ge 1, C > 1 \text{ to be chosen later}] \\
    & \ge \left(1 - \left(\frac{2a_0 t_0}{C}\right)^2\right)\int_0^{\frac{1}{C^\beta}K^{\beta/2}t_1^{\beta}} \ \exp\left(-2a_0t^{1/\beta}\right) \ dt \\
    & \ge \left(1 - \left(\frac{2a_0 t_1}{C}\right)^2\right)\int_0^{\frac{1}{C^\beta}t_1^{\beta}} \ \exp\left(-2a_0t^{1/\beta}\right) \ dt \hspace{.3in} [\because K \ge 1] \\
    & \ge C_2 \,.
\end{align*}
Now it is immediate that any $C > \max\{1, 2a_0 t_0\}$ is a valid choice.

\subsection{Proof of part (2) of Lemma \ref{lem:gap}}\label{sec:proof:order-stat-dK}
We use the same notation as before. Assume that $f$ is supported on $[a, b]$ with $f \in [1/a_0,a_0]$ for some large $a_0 > 0$. We further assume without loss of generality that $(b - a) \le a_0/2$. As the density of $U$ is $f((u - u_*)/\gamma)/\gamma$, it is immediate that: 
$$
F_U(u) = \Pr(U \le u) = F\left(\frac{u - u_*}{\gamma}\right) \,.
$$
Hence $U$ is supported on $[a\gamma + u_*, b\gamma + u_*]$. Recall that $d_{(K)}(u_0) = \max_{k \in [K]} |U_k - u_0|$, it follows that
\begin{align*}
    \Pr(d_{(K)}(u_0) \ge \gamma t/K) = 1 -  \Pr\left(|U - u_0| \leq \gamma t/K\right)^K = 1 - \left\{F\left(\frac{u_0 - u_*}{\gamma} + \frac{t}{K}\right) - F\left(\frac{u_0 - u_* }{\gamma} - \frac{t}{K}  \right) \right\}^K
\end{align*}
Observe that the probability is not non-zero for all $t$. In fact $t$ must have the following upper bound: 
 $$
 \gamma t/K \le \max\left\{b\gamma+ u_* - u_0, \ u_0 - u_* - a\gamma\right\} \coloneqq \gamma t_0 \implies t \leq K t_0\,.
 $$
Now let us concentrate on the upper bound. As $f \ge 1/a_0$, we have: 
 $$
F\left(\frac{u_0 - u_*}{\gamma} + \frac{t}{K}\right) - F\left(\frac{u_0 - u_* }{\gamma} - \frac{t}{K}  \right) \ge \frac{2t}{a_0K} \,,
$$
and consequently: 
$$
1 - \left\{F\left(\frac{u_0 - u_*}{\gamma} + \frac{t}{K}\right) - F\left(\frac{u_0 - u_* }{\gamma} - \frac{t}{K}  \right)\right\} \le 1 - \frac{2t}{a_0K} \,.
$$
This bound only makes sense only if $t \le (a_0K)/2$. We now prove the lower bound on the probability,  where we use the upper bound on $f$, i.e., $f \le a_0$. This implies: 
$$
1 - \left\{F\left(\frac{u_0 - u_*}{\gamma} + \frac{t}{K}\right) - F\left(\frac{u_0 - u_* }{\gamma} - \frac{t}{K}  \right)\right\} \ge 1 - \frac{2a_0t}{K} \,.
$$
As before, this bound only makes sense when $t \le K/(2a_0)$. Therefore, it is concluded that
\begin{equation}\label{eqn:dK-lb-ub}
    \begin{split}
      \left[ 1 - \left\{\frac{2a_0t}{K}\right\}^K \right] \indc(t \le K/(2a_0)) \leq \Pr(d_{(K)}(u_0) \ge \gamma t/K) \leq \left[ 1 - \left\{\frac{2t}{a_0K} \right\}^K \right]  \indc(t \le (a_0K)/2)\,.
    \end{split}
\end{equation}
Now consider the expectation $\bbE\left[d_{(K)}(u_0)^{\beta}\right]$. Since $U$ is supported on $[a\gamma + u_*, b\gamma + u_*]$, whose Lebesgue measure $= (b-a)\gamma$, it is immediate that $\bbE\left[d_{(K)}(u_0)^{\beta}\right] \leq (b-a)\gamma^\beta $. 
Now consider the expectation:
 \begin{align*}
     \bbE\left[(K/\gamma)^{\beta}d_{(K)}(u_0)^{\beta}\right] & = \int_0^\infty \Pr\left((K/\gamma)^{\beta}d_{(K)}(u_0)^{\beta} \ge t\right) \ dt \\
     & = \int_0^\infty \Pr\left(d_{(K)}(u_0) \ge \frac{\gamma}{K}t^{\frac{1}{\beta}}\right) \ dt \,.
 \end{align*}

By the lower bound in Equation \eqref{eqn:dK-lb-ub}, 
\[\Pr\left(d_{(K)}(u_0) \ge \frac{\gamma}{K}t^{\frac{1}{\beta}}\right) \geq \left[1 - \left\{\frac{2a_0t^{\frac{1}{\beta}}}{K} \right\}^K \right] \indc(t^{\frac{1}{\beta}} \le K/(2a_0))\,.\]

Define $t_1 = \min\{t_0, 1/(4a_0)\}$. We now have: 
\begin{align*}
    \bbE\left[(K/\gamma)^{\beta}d_{(K)}(u_0)^{\beta}\right]  & = \int_0^{(Kt_0)^{\beta}} \Pr\left(d_{(K)}(u_0) \ge \frac{\gamma}{K}t^{\frac{1}{\beta}}\right)  \ dt \\
    & \ge \int_0^{(Kt_1)^{\beta}} \Pr\left(d_{(K)}(u_0) \ge \frac{\gamma}{K}t^{\frac{1}{\beta}}\right)  \ dt  \\
    & \ge \int_0^{(Kt_1)^{\beta}}  1 - \left(\frac{2a_0t^{1/\beta}}{K}\right)^K \ dt \\
    & = \int_0^{Kt_1} \left[1 - \left(\frac{2a_0s}{K}\right)^K\right] \beta s^{\beta-1} \ ds \hspace{.3in} [\because \text{substitution } s = t^{\frac{1}{\beta}}] \\
    & \ge \left[1 - \left(\frac{2a_0s}{K}\right)^K\right]\Bigg|_{s=K/(4a_0)} \int_0^{Kt_1}  \beta s^{\beta-1} \ ds\\
    &  \ge \frac{1}{2} \int_0^{Kt_1}  \beta s^{\beta-1} \ ds\\
    & = C_2 K^{\beta}\,.
\end{align*}
This means that $\bbE\left[d_{(K)}(u_0)^{\beta}\right] \geq C_2\gamma^\beta$ for some constant $C_2$.

\subsection{Proof of Lemma \ref{lem:bdd-Z}}\label{proof:lem:bdd-Z}

Let $\times$ denote Cartesian product of two sets. Recall the definition of $Z_{ki}$: 
$$
Z_{ki} = \bPhi_l\left(\frac{U_k - u_0}{h}\right) \otimes X_{ki} = \left\{X_{kij} \frac{1}{m!}\left(\frac{U_k - u_0}{h}\right)^m \right\}_{(j,m) \in   \{1,\ldots,p\} \times \{0,1,\ldots,l\}} \in \bbR^{p(l+1)} \,.
$$
Therefore, 
\begin{align*}
    \|Z_{ki}\|_2^2\indc\{|U_k-u_0|\le h\}& = \sum_{j = 1}^p \sum_{m = 0}^l X_{kij}^2\left( \frac{1}{m!}\left(\frac{U_k - u_0}{h}\right)^m\right)^2\indc\{|U_k-u_0|\le h\} \\
    & \le \sum_{m = 0}^l\left( \frac{1}{m!}\left(\frac{U_k - u_0}{h}\right)^m\right)^2\indc\{|U_k-u_0|\le h\} \hspace{.3in} [\because \|X_{ki}\|_2 \le 1] \\
    & \le \sum_{m = 0}^l\left( \frac{1}{m!}\right)^2 \\
    & \le \sum_{m = 0}^l\left( \frac{1}{m!}\right)\\
    & \le e\,.
\end{align*} 
Hence, 
    \[\|Z_{ki}\|_2^2\indc\{|U_k-u_0|\le h\} \leq e \implies \|Z_{ki}\|_2\indc\{|U_k-u_0|\le h\}\leq \sqrt{e} < 2\]

\subsection{Proof of Lemma \ref{lem:var-bd}}\label{proof:lem:var-bd}

By Assumption~\ref{Assump:balance-sample}, $n_k \ge b_0'\bar n$ for all $k=1,\ldots,K$. Hence
\[
S_h=\frac12\sum_{k=0}^K n_k\indc\{|U_k-u_0|\le h\}
\ge \frac{b_0'\bar n}{2}\sum_{k=1}^K \indc\{|U_k-u_0|\le h\}.
\]
Let $\cE:=\{d_{(1)}(u_0)\le h\}$ and pick an index $k_0$ such that $|U_{k_0}-u_0|=d_{(1)}(u_0)$. Then on $\cE$,
\[
S_h \ge \frac{b_0'\bar n}{2}\Big(1+\sum_{k\in[K]\setminus\{k_0\}}\indc\{|U_k-u_0|\le h\}\Big)
= \frac{b_0'\bar n}{2}\,(S+1),
\]
where $S:=\sum_{k\in[K]\setminus\{k_0\}}\indc\{|U_k-u_0|\le h\}$. Therefore,
\[
\bbE\left[S_h^{-1}\mid \cE\right] \le \frac{2}{b_0'\bar n}\,\bbE\left[\frac{1}{S+1}\,\middle|\,\cE\right].
\]
We now upper bound $\bbE[(S+1)^{-1}\mid \cE]$, using the inequality
\[
\bbE[(S+1)^{-1}\mid \cE]\le\frac{\bbE[(S+1)^{-1}]}{\Pr(\cE)}.
\]
Unconditionally, $S\sim\mathrm{Bin}(K-1,p)$ with
\[
p=\Pr(|U-u_0|\le h)\ \ge\ \frac{a_0'h}{\gamma}
\qquad\text{[by Assumption~\ref{Assump:VCM-UX} (a) and condition }h\le |\cU|].
\]
Since $S\sim\mathrm{Bin}(K-1,p)$,
\[
\bbE\left[\frac{1}{S+1}\right]
= \sum_{t=0}^{K-1}\frac{1}{t+1}\binom{K-1}{t}p^t(1-p)^{K-1-t}.
\]
Use the identity
\[
\frac{1}{t+1}\binom{K-1}{t}=\frac{1}{K}\binom{K}{t+1},
\]
to obtain
\[
\bbE\left[\frac{1}{S+1}\right]
= \frac{1}{K}\sum_{t=0}^{K-1}\binom{K}{t+1}p^t(1-p)^{K-1-t}.
\]
Reindex with $s=t+1$ (so $s=1,\dots,K$) and factor one $p$:
\[
\bbE\left[\frac{1}{S+1}\right]
= \frac{1}{pK}\sum_{s=1}^{K}\binom{K}{s}p^{\,s}(1-p)^{K-s}
= \frac{1}{pK}\Bigg(\sum_{s=0}^{K}\binom{K}{s}p^{\,s}(1-p)^{K-s}-(1-p)^K\Bigg).
\]
Notice that the bracket equals $1-(1-p)^K$, hence
\[
\bbE\left[\frac{1}{S+1}\right]
= \frac{1-(1-p)^K}{pK}.
\]
Moreover, $\Pr(\cE)=1-(1-p)^K$. Hence
\[
\bbE\left[\frac{1}{S+1}\,\middle|\,\cE\right]
\le \frac{\bbE[1/(S+1)]}{\Pr(\cE)}
= \frac{1}{pK}
\le \frac{\gamma}{a_0' K h}.
\]
Combining the bounds and using $n=(K+1)\bar n$,
\[
\bbE\left[S_h^{-1}\mid \cE\right]
\le \frac{2}{b_0'\bar n}\cdot \frac{\gamma}{a_0' K h}
\le C\,\frac{\gamma}{n h},
\]
for a constant $C>0$ depending only on $a_0',b_0'$.

\subsection{Proof of Lemma \ref{lem:mseA-ub}}\label{proof:lem:mseA-ub}

\textbf{Bias.}
Recall
\[
\hat\btheta_{\DVCM}(u_0)=\bA_l(\bZ^\top\bW\bZ)^{-1}\bZ^\top\bW\by .
\]
Conditioning on $\Gamma:=\{(U_k,X_{ki})\}$, write
\[
\by=\bZ\balpha(u_0)+\br+\beps,
\qquad \bbE[\beps\mid\Gamma]=0,
\]
where $\balpha(u_0)\in\mathbb{R}^{(l+1)p}$ collects the local polynomial coefficients at $u_0$
(with $\bA_l\balpha(u_0)=\btheta(u_0)$), and $\br\in\mathbb{R}^{n}$ is the stacked approximation error
with entries $r_{ki}=X_{ki}^\top\{\btheta(U_k)-\sum_{v=0}^l \btheta^{(v)}(u_0)(U_k-u_0)^v/v!\}$.
Then
\begin{align}
\bbE\!\left[\hat\btheta_{\DVCM}(u_0)-\btheta(u_0)\mid\Gamma\right]
&=\bA_l(\bZ^\top\bW\bZ)^{-1}\bZ^\top\bW\br .
\label{eq:bias-matrix-form}
\end{align}

We now bound the right-hand side in operator norm.
Using
Assumption \ref{Assump:VCM-UX}
\[
\big\|\bA_l(\bZ^\top\bW\bZ)^{-1}\bA_l^\top\big\|_2 \le \lambda_0^{-1},
\]
we obtain
\begin{align}
\Big\|\bbE\!\left[\hat\btheta_{\DVCM}(u_0)-\btheta(u_0)\mid\Gamma\right]\Big\|_2
&\le \big\|\bA_l(\bZ^\top\bW\bZ)^{-1}\bA_l^\top\big\|_2\;\big\|\bA_l\bZ^\top\bW\br\big\|_2 \notag\\
&\le \lambda_0^{-1}\;\big\|\bA_l\bZ^\top\bW\br\big\|_2 .
\label{eq:bias-step1}
\end{align}

Next, note that $\bA_l\bZ^\top$ extracts the first $p$ rows of $\bZ^\top$, which correspond to the part containing $X_{ki}$ only.
Equivalently,
\[
\bA_l\bZ^\top\bW\br=\sum_{k=0}^K\sum_{i\in\cI_k} w_k\,X_{ki}\,r_{ki} = \sum_{k=0}^K\sum_{i\in\cI_k} w_k\,X_{ki}\,X_{ki}^\top\Big\{\btheta(U_k)-\sum_{v=0}^l \btheta^{(v)}(u_0)(U_k-u_0)^v/v!\Big\},
\]
with $ w_k:=W\!\Big(\frac{U_k-u_0}{h}\Big)/S_h$. Using $\|X_{ki}\|_2\le 1$,
\[
\big\|\bA_l\bZ^\top\bW\br\big\|_2
\le \sum_{k=0}^K\sum_{i\in\cI_k} w_k\Big\|\btheta(U_k)-\sum_{v=0}^l \btheta^{(v)}(u_0)(U_k-u_0)^v/v!\Big\|_2.
\]

Finally, by Assumption~\ref{Assump:VCM-SN}(a) ($\btheta\in\cH(\beta,L)$) and the standard local-polynomial remainder,
on the support of $w_k$ we have $|U_k-u_0|\le h$ and hence
\[
\Big\|\btheta(U_k)-\sum_{v=0}^l \btheta^{(v)}(u_0)(U_k-u_0)^v/v!\Big\|_2\le C h^\beta .
\]
Since $\sum_{k,i} w_k = 1$ (because they are normalized by $S_h$), it follows that
\[
\big\|\bA_l\bZ^\top\bW\br\big\|_2\le C h^\beta.
\]
Plugging into \eqref{eq:bias-step1} yields
\[
\Big\|\bbE\!\left[\hat\btheta_{\DVCM}(u_0)-\btheta(u_0)\mid\Gamma\right]\Big\|_2
\le q_1' h^\beta,
\]
for some constant $q_1'>0$, and therefore
\[
\bbE^\top\!\left[\hat\btheta_{\DVCM}(u_0)-\btheta(u_0)\mid\Gamma\right]
A\,
\bbE\!\left[\hat\btheta_{\DVCM}(u_0)-\btheta(u_0)\mid\Gamma\right]
\le \lambda_{A,1}{q_1'}^2 h^{2\beta}
=: q_1 h^{2\beta},
\]
where $\lambda_{A,1}$ is the largest eigenvalue of $A$.

\textbf{Variance.}
Let $\beps$ be the stacked noise vector with $\bbE[\beps\mid\Gamma]=0$ and
$\Var(\beps\mid\Gamma)=\Sigma_\eps:=\diag(\sigma^2(U_k))\preceq \sigma_{\max}^2 I$.
Then
\[
\hat\btheta_{\DVCM}(u_0)-\bbE[\hat\btheta_{\DVCM}(u_0)\mid\Gamma]
=
\bA_l(\bZ^\top\bW\bZ)^{-1}\bZ^\top\bW\beps,
\]
so
\[
\Cov(\hat\btheta_{\DVCM}(u_0)|\Gamma)
=
\bA_l(\bZ^\top\bW\bZ)^{-1}\bZ^\top\bW\Sigma_\eps\bW\bZ(\bZ^\top\bW\bZ)^{-1}\bA_l^\top
\preceq
\sigma_{\max}^2\,\bA_l(\bZ^\top\bW\bZ)^{-1}\bZ^\top\bW^2\bZ(\bZ^\top\bW\bZ)^{-1}\bA_l^\top.
\]
Because $W(u)=\frac12\indc(|u|\le 1/2)$ and $\bW$ is normalized by $S_h$, we have $\bW^2=(2S_h)^{-1}\bW$, hence
\[
(\bZ^\top\bW\bZ)^{-1}\bZ^\top\bW^2\bZ(\bZ^\top\bW\bZ)^{-1}=\frac{1}{2S_h}(\bZ^\top\bW\bZ)^{-1}.
\]
Therefore,
\[
\Cov(\hat\btheta_{\DVCM}(u_0)\mid\Gamma)\preceq
\frac{\sigma_{\max}^2}{2S_h}\,\bA_l(\bZ^\top\bW\bZ)^{-1}\bA_l^\top,
\]
and
\begin{align*}
\bbE\!\left[\big\|\hat\btheta_{\DVCM}(u_0)-\bbE[\hat\btheta_{\DVCM}(u_0)\mid\Gamma]\big\|_A^2\mid\Gamma\right]
&=\tr\!\big(A\,\Cov(\hat\btheta_{\DVCM}(u_0)\mid\Gamma)\big) \\
&\le \frac{\sigma_{\max}^2}{2S_h}\tr\!\Big(A\,\bA_l(\bZ^\top\bW\bZ)^{-1}\bA_l^\top\Big) \\
&\le \frac{\sigma_{\max}^2\,\lambda_{A,1}}{2S_h}\tr\!\Big(\bA_l(\bZ^\top\bW\bZ)^{-1}\bA_l^\top\Big) \\
&\le \frac{\sigma_{\max}^2\,\lambda_{A,1}\,p}{2S_h}\big\|\bA_l(\bZ^\top\bW\bZ)^{-1}\bA_l^\top\big\|_2 \\
&\le \frac{\sigma_{\max}^2\,\lambda_{A,1}\,p}{2\lambda_0}\,S_h^{-1}
=:q_2 S_h^{-1}.
\end{align*}

\textbf{Conclusion.}
Combining the conditional squared bias and conditional variance bounds,
\[
\bbE\!\left[\|\hat\btheta_{\DVCM}(u_0)-\btheta(u_0)\|_A^2 \,\middle|\,\Gamma\right]
\le q_1 h^{2\beta} + q_2 S_h^{-1}.
\]

\subsection{Proof of Lemma \ref{lem:cond-normality-GDVCM}}\label{proof:lem:cond-normality-GDVCM}

Fix $k\in\{0\}\cup[K]$ and condition on $U_k=u_k$.  Let
\[
t_k\coloneqq \frac{u_k-u_0}{h},\qquad 
Z_{ki}\coloneqq \bPhi_l(t_k)\otimes X_{ki},\qquad
\eta_{ki}^*\coloneqq X_{ki}^\top\btheta(u_k),
\]
and define
\[
\delta_{ki}\coloneqq s_1\big(Z_{ki}^\top\bar\btheta(u_0),Y_{ki}\big)\,Z_{ki}\,W(t_k),
\qquad
\Delta_k\coloneqq n_k^{-1/2}\sum_{i\in\cI_k}\delta_{ki}.
\]
Throughout, $s_j(\eta,y)=\partial^j\ell(\eta,y)/\partial\eta^j$, and Assumptions
\ref{Assump:VCM-SN}–\ref{Assump:Unif-Kernel} are assumed. We work with a natural exponential family and canonical link so that
\begin{equation}\label{eq:exp-family-identity}
    \begin{split}
        &\bbE[s_1(\eta_{ki}^*,Y_{ki})\mid X_{ki},U_k]=0\\
        &\bbE[s_1^2(\eta_{ki}^*,Y_{ki})\mid X_{ki},U_k]=\nu(u_k)\,b''(\eta_{ki}^*)\\
        &s_j(\eta,Y_{ki})=b^{(j)}(\eta) \quad \text{for } j\geq 2, \text{ i.e. higher derivatives are independent of }Y,
    \end{split}
\end{equation}
with variance function $\nu(\cdot)$ bounded and continuous.

\noindent\textbf{Conditional mean.}
By Taylor expansion of $s_1(\cdot,Y_{ki})$ in $\eta$ around $\eta_{ki}^*$, write
\begin{equation}\label{eq:s1-taylor}
s_1\big(Z_{ki}^\top\bar\btheta(u_0),Y_{ki}\big)
= s_1(\eta_{ki}^*,Y_{ki})
  + s_2(\eta_{ki}^*,Y_{ki})\big(Z_{ki}^\top\bar\btheta(u_0)-\eta_{ki}^*\big)
  + \frac12\,s_3(\tilde\eta_{ki},Y_{ki})\big(Z_{ki}^\top\bar\btheta(u_0)-\eta_{ki}^*\big)^2,
\end{equation}
where $\eta_{ki}^*=X_{ki}^\top\btheta(u_k)$ and $\tilde\eta_{ki}$ lies between $\eta_{ki}^*$ and $Z_{ki}^\top\bar\btheta(u_0)$.
Using the identity $X_{ki}^\top=Z_{ki}^\top\bA_l^\top$, the local–polynomial bias satisfies
\begin{align}
Z_{ki}^\top\bar\btheta(u_0)-\eta_{ki}^*
&= X_{ki}^\top \frac{\btheta^{(l)}(u_0)-\btheta^{(l)}(\tilde u_k)}{l!}\,(u_k-u_0)^{l}\notag\\
&= Z_{ki}^\top\bA_l^\top \frac{\btheta^{(l)}(u_0)-\btheta^{(l)}(\tilde u_k)}{l!}\,(t_k h)^{l},
\label{eq:lp-bias}
\end{align}
uniformly for $|t_k|$ in the support of $W$ (hence $|u_0{-}u_k|\le h$), and $\tilde u_k$ is between $u_k$ and $u_0$.  Moreover, when $|u_0{-}u_k|\le h$, by H\"older continuity $\cH(L, \beta)$ and boundedness of $\|X_{ki}\|_2$, its absolute value is such that
\begin{align}\label{eq:bias-beta-rate}
    |Z_{ki}^\top\bar\btheta(u_0)-\eta_{ki}^*| \le \|X_{ki}\|_2 \frac{|\btheta^{(l)}(u_0)-\btheta^{(l)}(\tilde u_k)|}{l!}\,|u_k-u_0|^{l} =  O(h^\beta)\,.
\end{align}
Taking $\bbE[\cdot\mid X_{ki},U_k{=}u_k]$ in \eqref{eq:s1-taylor} and using the  identity 
$s_2(\eta_{ki}^*,Y_{ki})=b''(\eta_{ki}^*)$ gives
\begin{equation}\label{eq:s1-cond-mean}
\bbE\Big[s_1\big(Z_{ki}^\top\bar\btheta(u_0),Y_{ki}\big)\,\Big|\,X_{ki},U_k{=}u_k\Big]
= b''(\eta_{ki}^*)\big(Z_{ki}^\top\bar\btheta(u_0)-\eta_{ki}^*\big)
  + O\big(|Z_{ki}^\top\bar\btheta(u_0)-\eta_{ki}^*|^2\big).
\end{equation}
Multiplying \eqref{eq:s1-cond-mean} by $Z_{ki}W(t_k)$ and taking $\bbE[\cdot\mid U_k{=}u_k]$, we use
$\bbE[b''(\eta_{ki}^*)X_{ki}X_{ki}^\top\mid U_k=u_k]=\Psi(u_k)$ and \eqref{eq:lp-bias} to obtain
\begin{align*}
    \bbE[\delta_{ki}\mid U_k=u_k]
& =  \bbE\left[b''(\eta_{ki}^*)Z_{ki}\big(Z_{ki}^\top\bar\btheta(u_0)-\eta_{ki}^*\big)W(t_k)\mid U_k=u_k\right] +  R_k\\
& =  \bbE\left[b''(\eta_{ki}^*) Z_{ki}Z_{ki}^\top\bA_l^\top \frac{\btheta^{(l)}(u_0)-\btheta^{(l)}(\tilde u_k)}{l!}\,(t_k h)^{l}W(t_k)\mid U_k=u_k\right] + R_k
\\
& = \bPhi_l(t_k)^{\otimes 2}\otimes \Psi(u_k)
  \bA_l^\top \frac{\btheta^{(l)}(u_0)-\btheta^{(l)}(\tilde u_k)}{l!}\,(t_k h)^{l}\,W(t_k)
  + R_{k},
\end{align*}
where the remainder satisfies
\[
R_{k}
= \bbE\Big[\,O\big(|Z_{ki}^\top\bar\btheta(u_0)-\eta_{ki}^*|^2\big)\,\|Z_{ki}\|\,\Big|\,U_k{=}u_k\Big]\,W(t_k)
= O(h^{2\beta})=o(h^{\beta}),
\]
by boundedness of $\|X_{ki}\|_2$, result in \eqref{eq:bias-beta-rate}, and the bounded–moment assumption on $b^{(3)}$ (so that $\bbE[|s_3(\tilde\eta_{ki},Y_{ki})|^4\mid U_k]$ is $O(1)$).  
Therefore, it follows that
\[n_k^{-1/2}\,\bbE[\Delta_k\mid U_k{=}u_k] = \bPhi_l(t_k)^{\otimes 2}\otimes \Psi(u_k)\,
  \bA_l^\top \frac{\btheta^{(l)}(u_0)-\btheta^{(l)}(\tilde u_k)}{l!}\,(t_k h)^{l}\,W(t_k)\{1+o(1)\}\]
Moreover, by bounded spectral norm of $\bPhi_l(t_k)^{\otimes 2}, \Psi(u_k)$, $\bA_l$, and the same argument as in \eqref{eq:bias-beta-rate}, its absolute value is such that
\begin{align*}
    \Big |n_k^{-1/2}\,\bbE[\Delta_k\mid U_k{=}u_k]\Big |
& \le \Big\|\bPhi_l(t_k)^{\otimes 2}\otimes \Psi(u_k)\,
  \bA_l^\top \frac{\btheta^{(l)}(u_0)-\btheta^{(l)}(\tilde u_k)}{l!}\,(t_k h)^{l}\,W(t_k)\Big\|_2\,\{1+o(1)\}\\
  &= O(h^\beta)\,.
\end{align*}

\noindent\textbf{Conditional variance.}
By the same expansion as in Equation \eqref{eq:s1-taylor} and we square both sides to obtain
\[s_1^2\big(Z_{ki}^\top\bar\btheta(u_0),Y_{ki}\big) = s_1^2(\eta_{ki}^*,Y_{ki})+ O(h^{\beta})\]
The second identity in \eqref{eq:exp-family-identity} further implies
\[
\bbE\Big[s_1^2\big(Z_{ki}^\top\bar\btheta(u_0),Y_{ki}\big)\,\Big|\,X_{ki},U_k=u_k\Big]
= \nu(u_k)\,b''(\eta_{ki}^*) + O(h^{\beta}),
\]
where the $O(h^{\beta})$ term collects all the higher-order terms. Hence
\begin{align*}
\bbE\big[\delta_{ki}\delta_{ki}^\top\mid U_k=u_k\big]
&= \bbE\big[s_1^2\big(Z_{ki}^\top\bar\btheta(u_0),Y_{ki}\big)Z_{ki}Z_{ki}^\top\mid U_k=u_k\big]\,W(t_k)^2\\
&= \nu(u_k)\,\Big(\bPhi_l(t_k)^{\otimes 2}\otimes \Psi(u_k)\Big)\,W(t_k)^2
   + O(h^{\beta}).
\end{align*}
Since $\bbE[\delta_{ki}\mid U_k=u_k]=O(h^{\beta})$, the subtraction of
 $\bbE[\delta_{ki}\mid U_k=u_k]\,\bbE[\delta_{ki}\mid U_k=u_k]^\top$ affects the
variance only at order $O(h^{2\beta})$, so
\[
\Var[\delta_{ki}\mid U_k=u_k]
= \nu(u_k)\,\Big(\bPhi_l(t_k)^{\otimes 2}\otimes \Psi(u_k)\Big)\,W(t_k)^2
  + O(h^{\beta}).
\]
Because $\{\delta_{ki}\}_{i\in\cI_k}$ are conditionally i.i.d.,
\[
\Var[\Delta_k\mid U_k=u_k]
= \Var[\delta_{ki}\mid U_k=u_k]
= \nu(u_k)\,\Big(\bPhi_l(t_k)^{\otimes 2}\otimes \Psi(u_k)\Big)\,W(t_k)^2
  + O(h^{\beta}).
\]

\noindent\textbf{The matrix term $\Lambda_k$.}
Recall
\[
\Lambda_k
= n_k^{-1}\sum_{i\in\cI_k} s_2\big(Z_{ki}^\top\bar\btheta(u_0),Y_{ki}\big)\,Z_{ki}Z_{ki}^\top\,W(t_k).
\]
By a first–order Taylor expansion of $s_2(\cdot,Y_{ki})$ in $\eta$ around $\eta_{ki}^*$,
\[
s_2\big(Z_{ki}^\top\bar\btheta(u_0),Y_{ki}\big)
= s_2(\eta_{ki}^*,Y_{ki})
  + s_3(\tilde\eta_{ki},Y_{ki})\,
    \big(Z_{ki}^\top\bar\btheta(u_0)-\eta_{ki}^*\big),
\]
where $\tilde\eta_{ki}$ lies between $\eta_{ki}^*$ and $Z_{ki}^\top\bar\btheta(u_0)$.
Taking $\bbE[\cdot\mid X_{ki},U_k=u_k]$ and using the third identity in \eqref{eq:exp-family-identity} gives
\[
\bbE\big[s_2\big(Z_{ki}^\top\bar\btheta(u_0),Y_{ki}\big)\mid X_{ki},U_k=u_k\big]
= b''(\eta_{ki}^*) + \bbE\big[s_3(\tilde\eta_{ki},Y_{ki})\mid X_{ki},U_k=u_k\big]\,
    \big(Z_{ki}^\top\bar\btheta(u_0)-\eta_{ki}^*\big).
\]
By the same local polynomial approximation in \eqref{eq:lp-bias},
$Z_{ki}^\top\bar\btheta(u_0)-\eta_{ki}^*=O(h^{\,\beta})$ uniformly for
$|t_k|$ in the support of $W$. Now we marginalize $X_{ki}$. By the moment condition in Assumption~\ref{Assump:VCM-UX-Generalized},
$$
\bbE[\,|s_3(\tilde\eta_{ki},Y_{ki})|\mid U_k=u_k]= \bbE[|b^{(3)}(\tilde\eta_{ki})|\mid U_k=u_k] = O(1)\,.
$$
Hence
\begin{align*}
\bbE[\Lambda_k\mid U_k=u_k]
&= \bbE\Big[\big(b''(\eta_{ki}^*)+O(h^{\,\beta})\big)\,Z_{ki}Z_{ki}^\top \,\Big|\, U_k=u_k\Big]\,W(t_k) \\
&= \Big(\bPhi_l(t_k)^{\otimes 2}\otimes \bbE[\,b''(\eta_{ki}^*)\,X_{ki}X_{ki}^\top\mid U_k=u_k]\Big)\,W(t_k) + O(h^{\,\beta}) \\
&= \Big(\bPhi_l(t_k)^{\otimes 2}\otimes \Psi(u_k)\Big)\,W(t_k) + O(h^{\,\beta}).
\end{align*}

\subsection{Proof of Proposition \ref{prop:asymp-GLM-VCM}}\label{proof:prop:asymp-GLM-VCM}

\begin{proof}
We first note that the condition
\(\frac{\gamma}{K} \ll h \lesssim \Big(\frac{\gamma}{n}\Big)^{\frac{1}{2\beta+1}}\) implies a relationship we will repeatedly apply:
\begin{equation}\label{eq:terms-order}
    \frac{h^{2\beta-1}}{K} \ll \frac{h^{2\beta}}{\gamma} \lesssim \frac{1}{nh}\,.
\end{equation}
Let $s_j(\eta,y)=\partial^j\ell(\eta,y)/\partial\eta^j$ denote derivatives w.r.t.\ the first argument of $\ell$. Recall
\[
\hat\btheta_{\GDVCM}(u_0)=\bA_l\hat\balpha_{\GDVCM},\qquad
\hat\balpha_{\GDVCM}=\argmin_{\balpha\in\bbR^{(l+1)p}}
\sum_{k=0}^K\sum_{i\in\cI_k}\ell\big(Z_{ki}^\top\balpha,Y_{ki}\big)\,W\left(\frac{U_k-u_0}{h}\right),
\]
with $\bA_l=[I_p,\,0,\ldots,0]$, $Z_{ki}=\bPhi_l(t_k)\otimes X_{ki}$ and $t_k=(u_k-u_0)/h$.
Write
\[
\bar\btheta(u_0)^\top=\big[\btheta(u_0)^\top,\,h\btheta'(u_0)^\top,\,\ldots,\,h^l\btheta^{(l)}(u_0)^\top\big],
\qquad
r_{n}\coloneqq \left(\frac{nh}{\gamma}\right)^{-1/2}.
\]
We also use short hand notation for the density of $U_k$:
\[f_\gamma(u) \coloneqq \frac{1}{\gamma}f\left(\frac{u-u^*}{\gamma}\right)\,.\]
Due to Assumption \ref{Assump:balance-sample}, we assume w.l.o.g. that
\[ p_k\coloneqq \frac{n_k}{n} \asymp \frac{1}{K}.\]
The norm $\|\cdot\|$ used in the proof denotes the $\ell_2$-norm when applied to a vector, and the spectral norm when applied to a matrix.

\noindent\underline{\textbf{Taylor Expansion.}}
Define
\[
Q_n(\alpha)\coloneqq  \sum_{k=0}^K\sum_{i\in\cI_k}\ell\big(Z_{ki}^\top\alpha,Y_{ki}\big)\,W(t_k),
\qquad
D_n(\delta)\coloneqq Q_n\big(\bar\btheta(u_0) +r_{n}\delta\big)-Q_n(\bar\btheta(u_0) ),
\]
where \(r_{n}=(nh/\gamma)^{-1/2}\). Thus the minimizer $\hat\delta$ of $D_n(\delta)$ satisfy $\hat\delta = r_{n}^{-1}(\hat\balpha_{\GDVCM} - \bar\btheta(u_0))$. 
A second-order Taylor expansion of each summand at \(\eta_{ki}^*\coloneqq Z_{ki}^\top\bar\btheta(u_0) \) yields
\begin{align}\label{eq:ell-expand}
\ell(\eta_{ki}^*+r_{n}Z_{ki}^\top\delta,Y_{ki})
&=\ell(\eta_{ki}^*,Y_{ki})
+r_{n}s_1(\eta_{ki}^*,Y_{ki})Z_{ki}^\top\delta \notag\\
&\quad+\frac12 r_{n}^{2} s_2(\eta_{ki}^*,Y_{ki})(Z_{ki}^\top\delta)^2
+\frac16 r_{n}^{3} s_3(\tilde\eta_{ki},Y_{ki})(Z_{ki}^\top\delta)^3,
\end{align}
with \(\tilde\eta_{ki}\) between \(\eta_{ki}^*\) and \(\eta_{ki}^*+r_{n}Z_{ki}^\top\delta\). Define the following quantities
\[
\bar\Delta\coloneqq -\frac{1}{nh}\sum_{k=0}^K n_k^{1/2}\Delta_k,\quad
\bar\Lambda\coloneqq \frac{1}{nh}\sum_{k=0}^K n_k\Lambda_k,\quad
R_n(\delta)=\frac16\,r_{n}^3\sum_{k=0}^K\sum_{i\in\cI_k}
s_3(\tilde\eta_{ki},Y_{ki})\,(Z_{ki}^\top\delta)^3\,W(t_k),
\]
with (as in Lemma~\ref{lem:cond-normality-GDVCM})
\[
\Delta_k \coloneqq n_k^{-1/2}\sum_{i\in\cI_k}s_1\big(Z_{ki}^\top\bar\btheta(u_0),Y_{ki}\big)\,Z_{ki}\,W(t_k),\quad
\Lambda_k\coloneqq n_k^{-1}\sum_{i\in\cI_k}s_2\big(Z_{ki}^\top\bar\btheta(u_0),Y_{ki}\big)\,Z_{ki}Z_{ki}^\top\,W(t_k).
\]
Summing over \((k,i)\) for Equation \eqref{eq:ell-expand} and collecting terms gives
\begin{equation}\label{eq:Dn-expand}
    D_n(\delta)
=-(nh)\,r_{n}\,\delta^\top\bar\Delta
+\frac12 (nh)\,r_{n}^{2}\,\delta^\top\bar\Lambda\,\delta
+R_n(\delta).
\end{equation}

\noindent
\underline{\textbf{Marginalizing $U_k$.}}
From Lemma~\ref{lem:cond-normality-GDVCM}, for each $k$ and small enough $h$,
\begin{align}
n_k^{-1/2}\,\bbE[\Delta_k\mid U_k=u_k]
&=\bPhi_l(t_k)^{\otimes 2}\otimes \Psi(u_k)\,
  \bA_l^\top \frac{\btheta^{(l)}(u_0)-\btheta^{(l)}(\tilde u_k)}{l!}\,(t_k h)^{l}\,W(t_k)\{1+o(1)\} \lesssim h^\beta,
\label{eq:E-Delta-k}\\
\Var[\Delta_k\mid U_k=u_k]
&=\nu(u_k)\big(\bPhi_l(t_k)^{\otimes 2}\otimes\Psi(u_k)\big)W(t_k)^2+O(h^{\beta}),
\label{eq:V-Delta-k}\\
\bbE[\Lambda_k\mid U_k=u_k]
&=\big(\bPhi_l(t_k)^{\otimes 2}\otimes\Psi(u_k)\big)W(t_k)+O(h^{\beta}).
\label{eq:E-Lambda-k}
\end{align}

\noindent\emph{1) Mean of $\bar\Delta$.}
Since $\bar\Delta=-(nh)^{-1}\sum_{k=0}^K n_k^{1/2}\Delta_k$,
\[
\bbE\bar\Delta=-\sum_{k=0}^K p_k\Big\{\,h^{-1}n_k^{-1/2}\,\bbE\Delta_k\,\Big\}.
\]
For $k\in[K]$, integrating \eqref{eq:E-Delta-k} w.r.t. the density $f_\gamma(u) = \gamma^{-1} f\Big(\tfrac{u-u^*}{\gamma}\Big)$ of $U_k$
and writing $t=(u-u_0)/h$,
\begin{align*}
h^{-1}n_k^{-1/2}\,\bbE\Delta_k
&=h^{-1}\int\big(\bPhi_l(t)^{\otimes 2}\otimes\Psi(u)\big)\,
\bA_l^\top\frac{\btheta^{(l)}(u_0)-\btheta^{(l)}(\tilde u_k)}{l!}\,(th)^{l}W(t)
f_\gamma(u)\,du\,(1+o(1))\\
&\overset{t=(u-u_0)/h}{=}\int \bPhi_l(t)^{\otimes 2}t^{l}W(t)\ \otimes\ \Psi(u_0)
\bA_l^\top\bigg[\frac{\btheta^{(l)}(u_0)-\btheta^{(l)}(\tilde u_k)}{l!}\ h^{\,l}\,\gamma^{-1}\bigg]f\Big(\tfrac{u_0 - u^* + ht}{\gamma}\Big)\,dt\,(1+o(1)).
\end{align*}
Notice that due to H\"older continuity and the fact that $|\tilde u_k - u_0| \le h$, it holds that
\[\|\btheta^{(l)}(u_0)-\btheta^{(l)}(\tilde u_k)\| \lesssim h^{\beta-l}\]
and thus
\begin{equation}\label{eq:barDelta-mean}
h^{-1}n_k^{-1/2}\,\bbE\Delta_k = O(\gamma^{-1}h^\beta)\quad\implies \quad \bbE\bar\Delta = O(\gamma^{-1}h^\beta).
\end{equation}

\noindent\emph{2) Variance of $\bar\Delta$.}
By the law of total variance,
\[
\Var(\bar\Delta)
=(nh)^{-2}\sum_{k=0}^K n_k\,\Var(\Delta_k)
=(nh)^{-2}\sum_{k=0}^K n_k\left\{\bbE\big[\Var(\Delta_k\mid U_k)\big]
+\Var\big(\bbE[\Delta_k\mid U_k]\big)\right\}.
\]
For the leading term, using \eqref{eq:V-Delta-k} and the change of variables
$t=(u-u_0)/h$,
\begin{align*}
h^{-1}\,\bbE\big[\Var(\Delta_k\mid U_k)\big]
&=h^{-1}\int \nu(u)\,\big(\bPhi_l(t)^{\otimes 2}\otimes\Psi(u)\big)\,W(t)^2
\,f_\gamma(u)\,du\,(1+o(1))\\
&=\gamma^{-1}\Big[\textstyle\int \bPhi_l(t)^{\otimes 2}W(t)^2\,f\Big(\tfrac{u_0 - u^* + ht}{\gamma}\Big)dt\Big]\otimes\Psi(u_0)
\nu(u_0)\,(1+o(1))\\
&=\gamma^{-1}\zeta_{0,2}\otimes\Psi(u_0)\nu(u_0)\,(1+o(1)).
\end{align*}
Hence
\[
(nh)^{-2}\sum_{k=0}^K n_k\,\bbE\big[\Var(\Delta_k\mid U_k)\big]
=(\gamma nh)^{-1}\,\nu(u_0)\,\zeta_{0,2}\otimes\Psi(u_0)\,(1+o(1)).
\]
For the second term, by the bound \(\|\Var(Z)\|\le \bbE\|Z\|^2\) and \eqref{eq:E-Delta-k},
together with the same change of variables \(t=(u-u_0)/h\) and the upper-boundedness
of \(\|\btheta^{(l)}(u_0) - \btheta^{(l)}(\tilde u_k)\|\), \(\lambda_{\max}\{\Psi(u)\}\), and \(f(u)\),
\begin{align*}
    \big\|\Var\big(\bbE[\Delta_k\mid U_k]\big)\big\|
 & \lesssim n_k\,h^{2\beta}\int \|\bPhi_l(t)^{\otimes2}\|^2\,t^{2l}W(t)^2\,f_\gamma(u_0+ht)\,du\\
 & = \gamma^{-1}n_k\,h^{2\beta+1}\int \|\bPhi_l(t)^{\otimes2}\|^2\,t^{2l}W(t)^2\,f((u_0+ht)/\gamma)\,dt
\end{align*}
Since \(\bPhi_l(t)=(1,t,\ldots,t^l/l!)^\top\) implies
\(\|\bPhi_l(t)\|_2^2\indc(|t|\le1)\lesssim 1\) and \(W\) is the uniform kernel
(Assumption~\ref{Assump:Unif-Kernel}) with finite moments, we obtain
\[
\big\|\Var\big(\bbE[\Delta_k\mid U_k]\big)\big\|\lesssim\frac{n_k\,h^{2\beta+1}}{\gamma}.
\]
Therefore,
\[
(nh)^{-2}\sum_{k=0}^K n_k\,\big\|\Var\big(\bbE[\Delta_k\mid U_k]\big)\big\|
\lesssim (nh)^{-2}\sum_{k=0}^K \frac{n_k^2\,h^{2\beta+1}}{\gamma}
=\frac{h^{2\beta-1}}{\gamma}\sum_{k=0}^K\Big(\frac{n_k}{n}\Big)^2
\lesssim\frac{h^{2\beta-1}}{\gamma K},
\]
using \(n_k/n \lesssim 1/K\).
Thus combining the two components in the law of total variance and applying Equation \eqref{eq:terms-order} yield:
\begin{equation}
\Var(\bar\Delta)
=(\gamma nh)^{-1}\,\nu(u_0)\,\zeta_{0,2}\otimes\Psi(u_0)\,(1+o(1)).
\label{eq:barDelta-var}
\end{equation}

\noindent\emph{3) Mean of $\bar\Lambda$.}
Since $\bar\Lambda=(nh)^{-1}\sum_{k=0}^K n_k\Lambda_k$,
\[
\bbE\bar\Lambda=\sum_{k=0}^K p_k\Big\{\,h^{-1}\bbE\Lambda_k\,\Big\}.
\]
Integrating \eqref{eq:E-Lambda-k} as above gives
\begin{align*}
h^{-1}\bbE\Lambda_k
&=h^{-1}\int\big(\bPhi_l(t)^{\otimes 2}\otimes\Psi(u)\big)\,W(t)
f_\gamma(u)\,du\,(1+o(1))\\
&\overset{t=(u-u_0)/h}{=}\gamma^{-1} \left[\int \bPhi_l(t)^{\otimes 2}W(t)f\Big(\tfrac{u_0 - u^* + ht}{\gamma}\Big)\,dt\right]\otimes\Psi(u_0)\,(1+o(1))\\[0.2em]
&=\gamma^{-1} \zeta_{0,1}\ \otimes\ \Psi(u_0)\,(1+o(1)).
\end{align*}
Therefore,
\begin{equation}
\bbE\bar\Lambda
=\gamma^{-1}\zeta_{0,1}\otimes\Psi(u_0)\,(1+o(1)).
\label{eq:barLambda-mean}
\end{equation}

\noindent\underline{\textbf{Rate of minimizer.}}
Fix \(M<\infty\) and consider the sphere \(\{\|\delta\|=M\}\).
Assumption~\ref{Assump:VCM-UX-Generalized} implies bounded \(\bbE|s_3(\cdot,Y)|^4 = \bbE|b^{(3)}(\cdot)|^4\) and \(\bbE\|X\|^4\), while Assumption \ref{Assump:Unif-Kernel} (compactly supported bounded \(W\)) implies \(\bbE[W(t_k)]\asymp \int \indc(|u|\leq h) f_\gamma(u) du = O(h/\gamma)\).
Using the Taylor remainder above and \(|Z_{ki}^\top\delta|\le \|Z_{ki}\|\,\|\delta\|\), we get
\begin{equation}\label{eq:Rn-op1}
    \sup_{\|\delta\|\le M}|R_n(\delta)|
= O_p(1)\cdot M^3\,r_{n}^{3}\sum_{k=0}^K\sum_{i\in\cI_k}\|Z_{ki}\|^3W(t_k)
=O_p\Big(M^3\,r_{n}^{3}\,\frac{nh}{\gamma}\Big)
=O_p\Big(\frac{M^3}{\sqrt{nh/\gamma}}\Big)=o_p(1).
\end{equation}
Equation~\eqref{eq:barLambda-mean} implies
\(\gamma\bar\Lambda\xrightarrow{p}\zeta_{0,1}\otimes\Psi(u_0)\), so for some \(c>0\),
\[
\lambda_{\min}(\bar\Lambda)\ \ge\ \frac{c}{\gamma}\qquad\text{w.p.}\to1.
\]
Moreover, from Equations \eqref{eq:barDelta-mean} and \eqref{eq:barDelta-var}, as well as condition $nh^{2\beta+1}/\gamma \lesssim 1$,
\[
\|\bar\Delta\|=O_p\big((nh\,\gamma)^{-1/2}\big).
\]
Recall the decomposition in \eqref{eq:Dn-expand}:
\[
D_n(\delta)
=-(nh)\,r_{n}\,\delta^\top\bar\Delta
+\frac12\,(nh)\,r_{n}^{2}\,\delta^\top\bar\Lambda\,\delta
+R_n(\delta).
\]
Hence, on \(\{\|\delta\|=M\}\),
\[
\begin{aligned}
\inf_{\|\delta\|=M} D_n(\delta)
&\ge -(nh)\,r_{n}\,\|\bar\Delta\|\,M
+\frac12\,(nh)\,r_{n}^{2}\,\lambda_{\min}(\bar\Lambda)\,M^2
-\sup_{\|\delta\|\le M}|R_n(\delta)| \\[0.25em]
&\ge -\,M\cdot O_p(1)+\frac12\,\gamma\cdot\frac{c}{\gamma}\,M^2-o_p(1)
=\frac12\,c\,M^2-O_p(M)-o_p(1).
\end{aligned}
\]
Choosing \(M\) as a sufficiently large constant makes the RHS positive w.p.\(\to1\).
Since \(D_n(0)=0\) and \(D_n\) is convex, the (unique) minimizer
\(\hat\delta\coloneqq \arg\min_\delta D_n(\delta)\) lies in the ball \(\{\|\delta\|\le M\}\) w.p.\(\to1\), which justifies the rate $r_{n}$ and consistency.

\noindent\underline{\textbf{Linearization.}}
On \(\{\|\delta\|\le M\}\) we have the uniform expansion, hence
\[
\nabla D_n(\delta)=-(nh)\,r_{n}\bar\Delta+(nh)\,r_{n}^{2}\,\bar\Lambda\,\delta+\nabla R_n(\delta).
\]
\(\nabla D_n(\hat\delta)=0\), so
\[
\hat\delta
= r_{n}^{-1}\bar\Lambda^{-1}\bar\Delta - \gamma^{-1}\bar\Lambda^{-1}\nabla R_n(\hat\delta)
= r_{n}^{-1}\bar\Lambda^{-1}\bar\Delta + o_p(1),
\]
because \(\sup_{\|\delta\|\le M}\|\nabla R_n(\delta)\|=o_p(1)\) and \(\|\bar\Lambda^{-1}\|=O_p(\gamma)\) by Equations \eqref{eq:Rn-op1} and \eqref{eq:barLambda-mean}.
Finally, since \(\hat\balpha_{\GDVCM}=\bar\btheta(u_0) +r_{n}\hat\delta\) and \(\hat\btheta_{\GDVCM}(u_0)=\bA_l\hat\balpha_{\GDVCM}\),
\begin{equation}\label{eq:DVCM-linearization}
    \bA_l\hat\delta = r_{n}^{-1}\big(\hat\btheta_{\GDVCM}(u_0)-\btheta(u_0)\big)
= r_{n}^{-1}\bA_l\,\bar\Lambda^{-1}\bar\Delta + o_p(1).
\end{equation}

\noindent\underline{\textbf{CLT for $\bar\Delta$.}}
Fix a unit vector $v_0\in\bbR^{p(l+1)}$ and define
\[
T_K(v_0)\coloneqq v_0^\top(\bar\Delta-\bbE\bar\Delta)
=-\frac{1}{nh}\sum_{k=0}^K n_k^{1/2}\,\eta_k(v_0),
\qquad
\eta_k(v_0)\coloneqq v_0^\top\big(\Delta_k-\bbE\Delta_k\big).
\]
By construction, $\{\eta_k(v_0)\}_{k=0}^K$ are independent, centered random variables. From
\eqref{eq:V-Delta-k} and Lemma~\ref{lem:cond-normality-GDVCM},
\[
\Var\big(\eta_k(v_0)\mid U_k=u_k\big)
= v_0^\top\Big(\nu(u_k)\,\bPhi_l(t_k)^{\otimes 2}\otimes\Psi(u_k)\Big)v_0\,W(t_k)^2 + O(h^{\beta})\,.
\]
Thus, taking expectation w.r.t. $U_k$ over density $f_\gamma(u) = \gamma^{-1}f(\gamma^{-1}(u-u^*))$ yields
\[
\Var\big(T_K(v_0)\big)
=(nh)^{-2}\sum_{k=0}^K n_k\,\Var\big(\eta_k(v_0)\big)
=(\gamma nh)^{-1}\,\nu(u_0)
v_0^\top(\zeta_{0,2}\otimes\Psi(u_0))v_0\,(1+o(1)).
\]
 Moreover, since $n_k\asymp n/K$, it follows that
\[\bbE \left[\sum_{k} n_k^{3/2}W(t_k)\right]\asymp (Kh/\gamma)\,(n/K)^{3/2}\,.\]
Assumption~\ref{Assump:VCM-UX-Generalized}  (bounded moments of $Y$) implies the term
$\bbE|\eta_k(v_0)|^3$ is upper bounded.
Hence the Lyapunov condition is verified:
\[
\frac{\sum_{k=0}^K \bbE\left[\left|\frac{n_k^{1/2}}{nh}\eta_k(v_0)\right|^3\right]}
     {\Var(T_K(v_0))^{3/2}}
\lesssim
\frac{(Kh/\gamma)\,(n/K)^{3/2}/(nh)^3}
     {\big((nh)^{-1}\cdot \gamma^{-1}\big)^{3/2}}
\asymp \sqrt{\frac{\gamma}{K h}}\ \longrightarrow\ 0\,.
\]
Therefore, by Lyapunov’s CLT,
\[
\frac{T_K(v_0)}{\sqrt{\Var(T_K(v_0))}}\ \xrightarrow[]{d}\ \cN(0,1).
\]
By the Cramér--Wold device,
\begin{equation}\label{eq:CLT-barDelta}
\sqrt{\gamma nh}\big(\bar\Delta-\bbE\bar\Delta\big)
\ \xrightarrow[]{d}\ 
\cN\left(0,\ \zeta_{0,2}\otimes\Psi(u_0)
\nu(u_0)\right).
\end{equation}

\noindent
\underline{\textbf{Collecting results.}}
Combine Equations \eqref{eq:barDelta-mean}, \eqref{eq:barLambda-mean}, and \eqref{eq:CLT-barDelta}. The bias term is
\[
\bb_{\GDVCM}(u_0)
=\bA_l(\bbE\bar\Lambda)^{-1}\bbE\bar\Delta
= O(h^\beta)
\]
and the asymptotic variance of the centered estimator is
\[
\Omega_{\GDVCM}(u_0)
= \nu(u_0)\,
\be_{1,l+1}^\top\,\zeta_{0,1}^{-1}\zeta_{0,2}\zeta_{0,1}^{-1}\,\be_{1,l+1}\ \otimes\ \Psi(u_0)^{-1}.
\]
Therefore,
\[
\sqrt{\frac{nh}{\gamma}}\Big(\hat\btheta_{\GDVCM}(u_0)-\btheta(u_0)-\bb_{\GDVCM}(u_0)\Big)
\ \xrightarrow[]{d} \cN\big(0,\ \Omega_{\GDVCM}(u_0)\big).
\]
\end{proof}

\subsection{Proof of Proposition \ref{prop:asymp-LR-VCM}}\label{proof:prop:asymp-LR-VCM}

\begin{proof}
The asymptotic distribution for the linear DVCM is a \emph{special case} of the generalized DVCM result (see Appendix~\ref{proof:prop:asymp-GLM-VCM} for the proof under a more general setup). Below, we highlight only the differences from the general proof. Recall that the model is: 
$$
Y_{ki}=X_{ki}^\top\btheta(U_k)+\varepsilon_{ki}, \quad \bbE[\varepsilon_{ki}\mid X_{ki},U_k]=0, \quad \Var(\varepsilon_{ki}\mid X_{ki},U_k)=\sigma^2(U_k) \,.
$$
For linear regression, our loss function $\ell(\eta, y)$ is the squared-error loss, i.e. $\ell(\eta, y) = (y - \eta)^2/2$, where $\eta = X^\top \theta$. As a consequence, the derivatives of the loss function (w.r.t. $\eta$) satisfy: 
\[
s_1(\eta,y)=\eta-y,\quad s_2(\eta,y)=1,\quad s_3(\eta,y)= 0 \,.
\]
Furthermore, we have $b(x) = x^2/2$, which implies $b'(x) = x, b''(x) = 1$ and $b'''(x) = 0$. 

\noindent
\underline{\textbf{Main differences between the two proofs.}}
In case of linear regression, the term with $s_3$ in the proof of Proposition \ref{prop:asymp-GLM-VCM} vanishes as $s_3 \equiv 0$. Furthermore, the definition of the conditional variance $\Psi(u)$ simplifies to: 
$$
\Psi(u) = \bbE\left[XX^\top b''(X^\top \btheta(u)) \mid U = u\right] = \bbE[XX^\top \mid U = u] \,,
$$
and the variance term $\nu(u)$ becomes $\sigma^2(u)$. 

With the same idea in the proof of Proposition \ref{prop:asymp-GLM-VCM}, we construct the function $D_n(\delta)$ in a way such that the minimizer $\hat\delta$ of $D_n(\delta)$ satisfy $\bA_l\,\hat\delta = r_{n}^{-1}\big(\hat\btheta_{\DVCM}(u_0)-\btheta(u_0)\big)$, with $r_n = (nh/\gamma)^{-1/2}$. Because $s_3\equiv 0$, the Taylor remainder $R_n(\delta)$ in the generalized DVCM proof (Equation \eqref{eq:Dn-expand}) vanishes, and consequently we have:
\[
D_n(\delta)=-(nh)\,r_{n}\,\delta^\top\bar\Delta+\frac12(nh)\,r_{n}^2\,\delta^\top\bar\Lambda\,\delta,
\]
where
$$
\bar \Lambda \coloneqq \frac{1}{nh}\sum_{k=0}^K Z_{ki}Z_{ki}^\top\,W(t_k), \quad \bar \Delta \coloneqq \frac{1}{nh}\sum_{k=0}^K (Y_{ki}-Z_{ki}^\top\bar\btheta(u_0)) Z_{ki}\,W(t_k) \,.
$$
Using the fact that the optimal $\delta$ satisfies $\nabla D_n(\hat \delta) = 0$, we have: 
\begin{equation}\label{eq:linearization-linear-DVCM}
    \hat\delta=\bar\Lambda^{-1}\bar\Delta \implies 
r_{n}^{-1}\big(\hat\btheta_{\DVCM}(u_0)-\btheta(u_0)\big)
= r_{n}^{-1}\,\bA_l\,\bar\Lambda^{-1}\bar\Delta.
\end{equation}
where the RHS follows from the definition of $\delta$. 
The asymptotic limits of $\bbE[\Delta_k \mid U_k = u_k]$, $\Var[\Delta_k \mid U_k = u_k]$, and $\mathbb{E}[\Lambda_k \mid U_k = u_k]$ has been shown in Lemma~\ref{lem:cond-normality-DVCM}. 
Integrating the above quantities w.r.t. $U_k$ with density $f_\gamma(u)=(1/\gamma)f\big((u-u^*)/\gamma\big)$ and applying Equation \eqref{eq:terms-order}, the values of $\bbE[\bar\Delta]$,  $\Var(\bar \Delta)$, and $\bbE[\bar\Lambda]$ become: 
\begin{align*}
    & \bbE[\bar\Delta]= O(\gamma^{-1}h^\beta),\\
    & \Var(\bar\Delta)=(\gamma nh)^{-1}\,\sigma^2(u_0)\,\zeta_{0,2}\otimes\Psi(u_0)\,(1+o(1)),\\
    & \bbE\bar\Lambda=\gamma^{-1}\zeta_{0,1}\otimes\Psi(u_0)\,(1+o(1))\,.
\end{align*}
In particular, the change–of–variables and integration steps are worked out in detail in Appendix~\ref{proof:prop:asymp-GLM-VCM} for the generalized DVCM case, and the same calculations apply here exactly.
Note that the quantities $\zeta_{r, s}$ does not depend on the loss function and consequently remain fixed. Since $\Delta_k$ are sums of $\varepsilon_{ki}Z_{ki}W(t_k)$ with mean zero conditional on $(X_{ki},U_k)$,
the Lyapunov (or Lindeberg–Feller) CLT applies, yielding
\[
\sqrt{\gamma nh}\big(\bar\Delta-\bbE\bar\Delta\big)\ \xrightarrow{d}\
\cN\big(0,\ \zeta_{0,2}\otimes\Psi(u_0)\,\sigma^2(u_0)\big).
\]
Now we have established the asymptotic normality of $\bar \Delta$. Going back to Equation \eqref{eq:linearization-linear-DVCM}, it can be easily verified that its linear transformation $ r_{n}^{-1}\,\bA_l\,\bar\Lambda^{-1}\bar\Delta$ is such that
\[ r_n^{-1}\left(\hat\btheta_{\DVCM}(u_0) - \btheta(u_0) - \bb_{\DVCM}(u_0)\right) 
    \xrightarrow[]{d} \cN\left( 0, \Omega_{\DVCM}(u_0)\right)\]
where
\begin{align*}
    \bb_{\DVCM}(u_0)
= O(h^\beta),\qquad
\Omega_{\DVCM}(u_0)
= \sigma^2(u_0)\,
\be_{1,l+1}^\top\,\zeta_{0,1}^{-1}\zeta_{0,2}\zeta_{0,1}^{-1}\,\be_{1,l+1}
\ \otimes\ \Psi(u_0)^{-1}\,.
\end{align*}
This completes the proof.
\end{proof}

\subsection{Proof of Lemma \ref{lem:Dconv-GLM-TF-0-infty}}\label{proof:lem:Dconv-GLM-TF-0-infty}

Recall the rates of $\hat\btheta_{\GLR}(u_0)$, $\hat\btheta_{\GDVCM}(u_0)$, and their ratio
\[
\hat\btheta_{\GLR}(u_0) - \btheta(u_0) =O_p(r_{\GLR}), \quad \hat\btheta_{\GDVCM}(u_0) - \btheta(u_0)=O_p(r_{\GDVCM}), \qquad \rho\coloneqq \frac{r_{\GLR}}{r_{\GDVCM}},
\]
and assume $Q\asymp_p\rho^2I$, i.e.\ $c\rho^2\le\lambda_{\min}(Q)\le\lambda_{\max}(Q)\le C\rho^2$
w.p. $\to1$. Set $r_{\TL}\coloneqq r_{\GLR}\wedge r_{\GDVCM}$.
Define
\[
D_n(\delta)\coloneqq L_n(\btheta(u_0)+r_{\TL}\delta)-L_n(\btheta(u_0)) \,,
\]
and 
\[
L_n(\alpha)\coloneqq \frac{1}{n_0}\sum_{i\in\cI_0^*}\ell(X_{0i}^\top\alpha,Y_{0i})
+\frac12\|\alpha-\hat\btheta_{\GDVCM}(u_0)\|_Q^2 \,.
\]

In this proof, we use $\|\cdot\|$ to denote $\ell_2$-norm of a vector and spectral norm of a matrix.

\centerline{\underline{\textbf{Part 1: Rate and consistency of TL.}}}

The key step is to show that for any \(\varepsilon>0\) there exists a large constant \(M =  M(\eps)\) such that
\begin{equation}\label{eq:goal-TL-rate}
    \Pr\left\{\ \inf_{\|\delta\|=M} D_n(\delta) > 0\ \right\} = \Pr\left\{\ \inf_{\|\delta\|=M} L_n(\btheta(u_0)+r_{\TL}\delta)>L_n(\btheta(u_0))\ \right\} \ge\ 1-\varepsilon.
\end{equation}
Since \(D_n(0)=0\) and \(D_n(\cdot)\) is convex, this implies that with probability at
least \(1-\varepsilon\) a global minimizer lies inside the ball \(\{\btheta(u_0)+r_{\TL}\delta:\ \|\delta\|\le M\}\). Consequently,
\[
\|\hat\btheta_{\TL}(u_0)-\btheta(u_0)\|=O_p(r_{\TL}). 
\]
\noindent\underline{\textbf{Expanding $D_n(\delta)$.}} A Taylor expansion of $L_n(\btheta(u_0)+r_{\TL}\delta)$ around $\btheta(u_0)$ gives
\begin{equation}
\label{eq:Dn_def}
D_n(\delta)=r_{\TL}\,\delta^\top G_n+\frac12 r_{\TL}^2\,\delta^\top H_n\delta+R_n(\delta),
\end{equation}
with
\begin{align*}
    G_n & = \frac{1}{n_0}\sum_{i\in\cI_0^*} s_1(X_{0i}^\top\btheta(u_0),Y_{0i})X_{0i} + Q(\btheta(u_0)-\hat\btheta_{\GDVCM}(u_0)) \,, \\
    H_n & = \frac{1}{n_0}\sum_{i\in\cI_0^*} s_2(X_{0i}^\top\btheta(u_0),Y_{0i})X_{0i}X_{0i}^\top+Q \,, \\
    R_n(\delta) & = \frac{r_{\TL}^3}{6n_0}\sum_{i\in\cI_0^*}
s_3(\tilde\eta_{0i},Y_{0i})\,(X_{0i}^\top\delta)^3 \,,
\end{align*}
with $\tilde\eta_{0i}$ being some intermediate point between $X_{0i}^\top\btheta(u_0)$ and $X_{0i}^\top(\btheta(u_0)+r_{\TL}\delta)$.  

\noindent\underline{\textbf{Order of $G_n$, $H_n$, and $R_n$.}} It is immediate from the definition of the remainder term $R_n$ that for any $M > 0$: 
\begin{equation}
\label{eq:Rn_UB}
\sup_{\|\delta\|\le M}|R_n(\delta)|
\le \frac{r_{\TL}^3 M^3}{6n_0}\sum_{i\in\cI_0^*}\big|s_3(\tilde\eta_{0i},Y_{0i})\big|\,\|X_{0i}\|^3
=O_p(r_{\TL}^3 M^3),
\end{equation}
as $\bbE|s_3(\cdot,Y)|^4<\infty$ and $\|X\|<\infty$ by Assumption \ref{Assump:VCM-UX-Generalized}. Our next goal is to obtain the order of $G_n$ and $H_n$. Note that $G_n$ involves the term $Q(\btheta(u_0)-\hat\btheta_{\GDVCM}(u_0))$. From our definition of $r_{\GDVCM}$, we have: 
$$
\btheta(u_0)-\hat\btheta_{\GDVCM}(u_0)=O_p(r_{\GDVCM}) \,,
$$
and by our assumption on $Q$, we have $Q \asymp_p \rho^2 I$. Furthermore, from the standard property of the (centered) score function on the target domain, we have: 
$$
\frac{1}{n_0}\sum_{i\in\cI_0^*} s_1(X_{0i}^\top\btheta(u_0),Y_{0i})X_{0i} = O_p(n_0^{-1/2}) = O_p(r_{\GLR}) \,.
$$
Therefore, combining these orders, we have: 
\begin{equation}\label{eq:Gn-rate}
G_n=O_p\big(r_{\GLR}+\rho^2 r_{\GDVCM}\big).
\end{equation}
Furthermore, we have $\lambda_{\min}(\Psi(u_0)) \geq c_0'$ by Assumption \ref{Assump:VCM-UX-Generalized}. Therefore, by the law of large numbers: 
\begin{align}
\label{eq:Hn-curv}
    \lambda_{\min}(H_n) & \ge \lambda_{\min}(\Psi(u_0) +  Q) + o_p(1) \notag \\
    & \ge \lambda_{\min}(\Psi(u_0)) + \lambda_{\min}(Q) + o_p(1) \notag \\
    & = O_p(1 + \rho^2) 
\end{align}
\noindent\underline{\textbf{Equivalent characterizations of $r_{\TL}$}.}
Observe that
\[
\frac{r_{\GLR}+\rho^2 r_{\GDVCM}}{1+\rho^2}
=\begin{cases}
r_{\GLR}\dfrac{1+\rho}{1+\rho^2}\le 2\,r_{\GLR}, & \rho\le1,\\
r_{\GDVCM}\dfrac{\rho+\rho^2}{1+\rho^2}\le 2\,r_{\GDVCM}, & \rho\ge1,
\end{cases}
\]
and thus we obtain lower bound on $r_{\TL}$
\begin{equation}\label{eq:rates-inequality-1}
    \frac{r_{\GLR}+\rho^2 r_{\GDVCM}}{1+\rho^2} \le 2( r_{\GLR}\wedge r_{\GDVCM}) \coloneqq 2r_{\TL}\ \implies\  r_{\TL} \geq \frac{r_{\GLR}+\rho^2 r_{\GDVCM}}{2(1+\rho^2)} \,.
\end{equation}
Furthermore, we also have: 
\[
r_{\TL} = \min\{r_{\GLR}, r_{\GDVCM}\}\le\frac{r_{\GLR}+\rho^2\, r_{\GDVCM}}{1+\rho^2}.
\]
Combining this with \eqref{eq:rates-inequality-1} yields the bound
\begin{equation}\label{eq:rates-inequality-2}
    \frac{r_{\GLR}+\rho^2 r_{\GDVCM}}{2(1+\rho^2)}
\ \le\ r_{\TL}\ \le\
\frac{r_{\GLR}+\rho^2 r_{\GDVCM}}{1+\rho^2} \ \implies \ r_{\TL} \asymp \frac{r_{\GLR}+\rho^2 r_{\GDVCM}}{1+\rho^2}\,.
\end{equation}

\noindent\underline{\textbf{Verifying Objective \eqref{eq:goal-TL-rate}.}} Recall the definition of $D_n(\delta)$ from Equation \eqref{eq:Dn_def}. We have on the sphere $\{\|\delta\| = M\}$: 
\begin{equation}\label{eq:Dn-lb-1}
    \begin{split}
        \inf_{\|\delta\|=M} D_n(\delta) & \ge - \underbrace{r_{\TL}\|G_n\|}_{\coloneqq A_n \geq 0}\,M+ \underbrace{\frac12 r_{\TL}^2 \lambda_{\min}(H_n)}_{\coloneqq B_n \geq 0} M^2 - \sup_{\|\delta\|\le M}|R_n(\delta)| \\
        & = -A_n M + B_n M^2 - \sup_{\|\delta\|\le M}|R_n(\delta)|\,.
    \end{split}
\end{equation}
The order of $A_n$ and $B_n$ can be derived from the order of $G_n$ and $\lambda_{\min}(H_n)$ respectively. In particular, using the order of $G_n$, as established in Equation \eqref{eq:Gn-rate}, we have: 
$$
A_n = r_{\TL}\|G_n\| = O_p\left(r_{\TL}\left(r_{\GLR}+\rho^2 r_{\GDVCM}\right)\right).
$$
Moreove, the lower bound on the minimum eigenvalue of $H_n$ (Equation \eqref{eq:Hn-curv}) and equation on $r_{\TL}$ in Equation \eqref{eq:rates-inequality-2} yield the following order of $B_n$:
\[
B_n = \frac12 r_{\TL}^2 \lambda_{\min}(H_n) = \frac12 r_{\TL}\cdot \left(\frac{r_{\GLR}+\rho^2 r_{\GDVCM}}{2(1+\rho^2)}\right)\, \Omega_p(1+\rho^2) = \Omega_p\left(r_{\TL}\left(r_{\GLR}+\rho^2 r_{\GDVCM}\right)\right) \,.
\]
Treating $M$ as a constant, the upper bound on the remainder term (Equation \eqref{eq:Rn_UB}) yields
$$
\sup_{\|\delta\|\le M}|R_n(\delta)| =  O_p\Big(r_{\TL}^3\Big)\,,
$$
and by Equation \eqref{eq:rates-inequality-2}, as well as the immediate relation $1+\rho^2 \gtrsim 1$,
\[
O_p(r_{\TL}^3) = o_p(r_{\TL}^2) = o_p\left(r_{\TL}\frac{r_{\GLR}+\rho^2 r_{\GDVCM}}{1+\rho^2}\right) = o_p\left(r_{\TL} (r_{\GLR}+\rho^2 r_{\GDVCM})\right)\,.
\]
Hence the remainder $\sup_{\|\delta\|\le M}|R_n(\delta)|$ is dominated by both $A_n$ and $B_n$, so Equation \eqref{eq:Dn-lb-1} is written as
\[\inf_{\|\delta\|=M} D_n(\delta) \geq  -A_n M + B_n M^2 + o\left(A_n \wedge B_n\right)\,.\]
Both $A_n$ and $B_n$ are of order $O_p\left(r_{\TL}\left(r_{\GLR}+\rho^2 r_{\GDVCM}\right)\right)$ so by choosing large enough $M$, $B_nM^2$ dominates $-A_nM$ uniformly in $M$. Therefore we have verified that by setting $M$ to be large enough, the condition in \eqref{eq:goal-TL-rate} is verified, and this further implies
\[
\big\|\hat\btheta_{\TL}(u_0)-\btheta(u_0)\big\|
= O_p(r_{\TL})\,.
\]

\centerline{\underline{\textbf{Part 2: Asymptotic distribution.}}}

Let $s_j(\eta,y)=\partial^j\ell(\eta,y)/\partial\eta^j$,
$\btau_{\TL}\coloneqq \hat\btheta_{\TL}(u_0)-\btheta(u_0)$ and
$\btau_{\GDVCM}\coloneqq \hat\btheta_{\GDVCM}(u_0)-\btheta(u_0)$.
The first–order condition is
\begin{equation}\label{eq:TL-FOC-again}
\frac{1}{n_0}\sum_{i\in\cI_0^*} s_1\big(X_{0i}^\top\hat\btheta_{\TL}(u_0),Y_{0i}\big)X_{0i}
+Q\big(\hat\btheta_{\TL}(u_0)-\hat\btheta_{\GDVCM}(u_0)\big)=0.
\end{equation}
A second–order Taylor expansion of $s_1$ at first argument around $\btheta(u_0)$ gives
\[
s_1\big(X_{0i}^\top\hat\btheta_{\TL}(u_0),Y_{0i}\big)
=s_1\big(X_{0i}^\top\btheta(u_0),Y_{0i}\big)
+s_2\big(X_{0i}^\top\btheta(u_0),Y_{0i}\big)\,X_{0i}^\top\btau_{\TL}
+\frac12\,s_3(\bar\eta_{0i},Y_{0i})\big(X_{0i}^\top\btau_{\TL}\big)^2,
\]
where $\bar\eta_{0i}$ lies between $X_{0i}^\top\btheta(u_0)$ and $X_{0i}^\top\hat\btheta_{\TL}(u_0)$.
Plug into \eqref{eq:TL-FOC-again} and collect terms to obtain
\begin{equation}\label{eq:FOC-linearized}
\frac{1}{n_0}\bX_0^\top S_1
+\Big(\frac{1}{n_0}\bX_0^\top S_2\bX_0+Q\Big)\btau_{\TL}
+\frac{1}{2n_0}\sum_{i\in\cI_0^*} s_3(\bar\eta_{0i},Y_{0i})\,X_{0i}\big(X_{0i}^\top\btau_{\TL}\big)^2
=Q\btau_{\GDVCM},
\end{equation}
where $\bX_0 \in \reals^{n_0 \times p}$ with $X_{0i}$ being its $i^{th}$ row, $S_1 \in \reals^{n_0}$ with entries being $s_1(X_{0i}^\top\btheta(u_0),Y_{0i})$, and $S_2 \in \reals^{n_0 \times n_0}$ is a diagonal matrix with entries $s_2(X_{0i}^\top\btheta(u_0),Y_{0i})$. By $\bbE|s_3(\cdot,Y)|^4<\infty$ and $\|X\|<\infty$
\[
\frac{1}{n_0}\sum_{i\in\cI_0^*}\big|s_3(\bar\eta_{0i},Y_{0i})\big|\,\|X_{0i}\|^3
=O_p(1),
\]
and thus
\begin{equation}\label{eq:TL-reminder-Op}
    \Big\|\frac{1}{2n_0}\sum_{i} s_3(\bar\eta_{0i},Y_{0i})\,X_{0i}\big(X_{0i}^\top\btau_{\TL}\big)^2\Big\|
\le \frac{1}{n_0}\sum_i \big|s_3(\bar\eta_{0i},Y_{0i})\big|\,\|X_{0i}\|^3\,\|\btau_{\TL}\|^2
= O_p\big(\|\btau_{\TL}\|^2\big).
\end{equation}
By the law of large numbers and Assumption~\ref{Assump:VCM-UX-Generalized},
\begin{equation}\label{eq:S2}
    \frac{1}{n_0}\bX_0^\top S_2\bX_0=\Psi(u_0)+o_p(1).
\end{equation}
Recall $\btau_{\GLR}\coloneqq \hat\btheta_{\GLR}(u_0)-\btheta(u_0)$. The score equation of generalized linear model gives
\[
0=\frac{1}{n_0}\sum_{i\in\cI_0^*} s_1(X_{0i}^\top\hat\btheta_{\GLR},Y_{0i})X_{0i}
= \frac{1}{n_0}\bX_0^\top S_1+\Big(\frac{1}{n_0}\bX_0^\top S_2\bX_0\Big)\btau_{\GLR}+R_{n_0}.
\]
Reorganizing terms yields
\[
-\frac{1}{n_0}\bX_0^\top S_1=\Big(\frac{1}{n_0}\bX_0^\top S_2\bX_0\Big)\btau_{\GLR}+R_{n_0}.
\]
By standard theory of generalized linear model, it follows that \(\frac{1}{n_0}\bX_0^\top S_2\bX_0=\Psi(u_0)+o_p(1)\) and $\|R_{n_0}\|=o_p(r_{\GLR})$. Therefore,
\begin{equation}\label{eq:S1}
    -\frac{1}{n_0}\bX_0^\top S_1=\Psi(u_0)\btau_{\GLR}+o_p(r_{\GLR}).
\end{equation}
Rearranging \eqref{eq:FOC-linearized} yields
\[
r_{\TL}^{-1}\btau_{\TL} = r_{\TL}^{-1}\Big(\frac{1}{n_0}\bX_0^\top S_2\bX_0+Q\Big)^{-1} \Big(Q\btau_{\GDVCM}-\frac{1}{n_0}\bX_0^\top S_1\Big) + \tilde R_n
\]
with 
\[\tilde R_n = -  r_{\TL}^{-1}\Big(\frac{1}{n_0}\bX_0^\top S_2\bX_0+Q\Big)^{-1} \frac{1}{2n_0}\sum_{i\in\cI_0^*} s_3(\bar\eta_{0i},Y_{0i})\,X_{0i}\big(X_{0i}^\top\btau_{\TL}\big)^2\,.\]
Substituting the Equations \eqref{eq:TL-reminder-Op} and \eqref{eq:S2}, it follows from the consistency of $\hat\btheta_{\TL}(u_0)$ that
\[
\tilde R_n =r_{\TL}^{-1}\Big(\Psi(u_0)+Q\Big)^{-1}O_p(\|\btau_{\TL}\|^2) =  O_p(\|\btau_{\TL}\|) = o_p(1)\,,
\]
and thus
\[
r_{\TL}^{-1}\btau_{\TL} = r_{\TL}^{-1}\Big(\frac{1}{n_0}\bX_0^\top S_2\bX_0+Q\Big)^{-1} \Big(Q\btau_{\GDVCM}-\frac{1}{n_0}\bX_0^\top S_1\Big)+o_p(1)\,.
\]
Now we substitute the Equations \eqref{eq:S2} and \eqref{eq:S1}, and obtain
\begin{equation}\label{eq:TL-linearization-0}
    r_{\TL}^{-1}\btau_{\TL}
=r_{\TL}^{-1}\Big(\Psi(u_0)+Q\Big)^{-1}\Big\{\,Q\,\btau_{\GDVCM}+\Psi(u_0)\btau_{\GLR}\Big\}+o_p(1).
\end{equation}
By sample splitting, $\btau_{\GLR}$ is independent of $\btau_{\GDVCM}$, based on which we will derive the regime–specific asymptotic normality.

\noindent\underline{\emph{Regime $\rho\to0$ (GLR–dominated).}}
From \(Q=O_p(\rho^2)\), we have \((\Psi(u_0)+Q)^{-1}=\Psi^{-1}(u_0)+o_p(1)\) and
\[
\|\Psi^{-1}(u_0)Q\btau_{\GDVCM}\|
=O_p(\rho^2\,\|\btau_{\GDVCM}\|)
=O_p(\rho^2 r_{\GDVCM})
=O_p(r_{\GLR}\,\rho)=o_p(r_{\GLR}).
\]
Hence \(\btau_{\TL}=\btau_{\GLR}+o_p(r_{\GLR})\). By Slutsky's theorem and part 1 of Proposition \ref{prop:asymp-GLM-VCM}
\[
r_{\GLR}^{-1}\big(\hat\btheta_{\TL}(u_0)-\btheta(u_0)\big)
\ \xrightarrow[]{d}\ \cN\big(0,\ \Omega_{\GLR}(u_0)\big).
\]

\noindent\underline{\emph{Regime $\rho\to\infty$ (DVCM–dominated).}}
Starting from
\[
\btau_{\TL}
=(\Psi(u_0)+Q)^{-1}\left\{\,Q\,\btau_{\GDVCM}+\Psi(u_0)\,\btau_{\GLR}\right\}+o_p(r_{\TL}),
\]
use the identity
\[
(\Psi(u_0)+Q)^{-1}-Q^{-1}
=-(\Psi(u_0)+Q)^{-1}\,\Psi(u_0)\,Q^{-1}.
\]
Since $\lambda_{\min}(Q)\asymp_p \rho^2$ and $\Psi(u_0)\succeq c_0'I$, 
\[
\|(\Psi(u_0)+Q)^{-1}\|\le \|Q^{-1}\|=O_p(\rho^{-2}),
\qquad
\|\Psi(u_0)\|=O(1),
\]
hence
\begin{equation}\label{eq:Qinv-diff}
    \big\|(\Psi(u_0)+Q)^{-1}-Q^{-1}\big\|
\le \|(\Psi(u_0)+Q)^{-1}\|\,\|\Psi(u_0)\|\,\|Q^{-1}\|
=O_p(\rho^{-4})
=o_p\big(\|Q^{-1}\|\big).
\end{equation}
Next, apply the identity $(\Psi(u_0)+Q)^{-1}Q=I-(\Psi(u_0)+Q)^{-1}\Psi(u_0)$ to get
\[
\btau_{\TL}
=\btau_{\GDVCM}-(\Psi(u_0)+Q)^{-1}\Psi(u_0)\,\btau_{\GDVCM}
+(\Psi(u_0)+Q)^{-1}\Psi(u_0)\,\btau_{\GLR}+o_p(r_{\GDVCM}).
\]
The second and third terms are negligible by Equation \eqref{eq:Qinv-diff}:
\[
\big\|(\Psi(u_0)+Q)^{-1}\Psi(u_0)\,\btau_{\GDVCM}\big\|
\le \|Q^{-1}\|\,\|\Psi(u_0)\|\,\|\btau_{\GDVCM}\|\{1+o_p(1)\}
=O_p(\rho^{-2} r_{\GDVCM})
=o_p(r_{\GDVCM}),
\]
and, using $r_{\GLR}=\rho\,r_{\GDVCM}$ and Equation \eqref{eq:Qinv-diff},
\[
\big\|(\Psi(u_0)+Q)^{-1}\Psi(u_0)\,\btau_{\GLR}\big\|
\le \|Q^{-1}\|\,\|\Psi(u_0)\|\,\|\btau_{\GLR}\|\{1+o_p(1)\}
=O_p(\rho^{-2} r_{\GLR})
=O_p(r_{\GDVCM}/\rho)
=o_p(r_{\GDVCM}).
\]
Therefore,
\[
\btau_{\TL}=\btau_{\GDVCM}+o_p(r_{\GDVCM}),
\]
 and  by Slutsky's theorem and part 2 of Proposition \ref{prop:asymp-GLM-VCM}
\[
r_{\GDVCM}^{-1}\Big(\hat\btheta_{\TL}(u_0)-\btheta(u_0)-\bb_{\GDVCM}(u_0)\Big)
\ \xrightarrow[]{d}\ \cN\big(0,\ \Omega_{\GDVCM}(u_0)\big).
\]

\section{Discussion of Assumption \ref{Assump:VCM-UX}}\label{sec:eigen-assum}

In this section, we examine the plausibility and technical strength of Assumption~\ref{Assump:VCM-UX}, which requires boundedness of $\|\bA_l(\bZ^\top\bW\bZ)^{-1}\bA_l^\top\|_2$. Our analysis considers two asymptotic regimes based on the number of tasks $K$:
(i) a fixed-$K$ regime with $n_0\to\infty$, and
(ii) a growing-task regime with $K\to\infty$.

The corresponding results are established in Proposition~\ref{thm:ZWZ-n0} and Proposition~\ref{thm:ZWZ-K}, respectively. In both regimes, we show that Assumption~\ref{Assump:VCM-UX} is satisfied in the sense of almost sure convergence under mild conditions, thereby providing theoretical justification for its use in our framework.

\begin{proposition}
\label{thm:ZWZ-n0}
Assume that
\[
\lambda_{\min}\!\big(\bbE[X_{0i}X_{0i}^\top]\big)\ \ge\ \kappa,
\qquad
\frac{n_0}{\sum_{k=0}^K n_k}\ \ge\ c_0
\]
for some constants $\kappa,c_0>0$. Then it holds that
\[
\Pr\left(\limsup_{n_0 \to \infty}\big\|\bA_l(\bZ^\top\bW\bZ)^{-1}\bA_l^\top\big\|_2 \le C\right) = 1
\]
for some constant $C>0$.
\end{proposition}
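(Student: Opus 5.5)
The plan is to lower bound $\lambda_{\min}$ of the matrix $\bZ^\top\bW\bZ$ using only the target-domain block. Then control $\bA_l(\bZ^\top\bW\bZ)^{-1}\bA_l^\top$ by the inverse of that eigenvalue. The key observation is that for $k=0$ we have $t_0=(U_0-u_0)/h=0$, so $\bPhi_l(0)=\be_{1,l+1}$ and $Z_{0i}=\be_{1,l+1}\otimes X_{0i}$.

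The target block therefore only excites the first $p$ coordinates. Since all summands are positive semidefinite, we have
\[
\bZ^\top\bW\bZ \succeq S_h^{-1}W(0)\sum_{i\in\cI_0}Z_{0i}Z_{0i}^\top = \frac{W(0)\,n_0}{S_h}\,\big(\be_{1,l+1}\be_{1,l+1}^\top\big)\otimes \hat\Sigma_{0},
\qquad \hat\Sigma_0=\frac1{n_0}\sum_{i\in\cI_0}X_{0i}X_{0i}^\top .
\]
This lower bound is singular in the higher-order coordinates, so it cannot be inverted directly.

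To handle this, I will use a Schur-complement argument rather than a full eigenvalue bound. Write $\bZ^\top\bW\bZ=\begin{pmatrix}M_{11}&M_{12}\\ M_{21}&M_{22}\end{pmatrix}$ with $M_{11}\in\bbR^{p\times p}$. Then
\[
\bA_l(\bZ^\top\bW\bZ)^{-1}\bA_l^\top=(M_{11}-M_{12}M_{22}^{-1}M_{21})^{-1}.
\]
The Schur complement is exactly $\min_{\mathbf b}\,(\mathbf a,\mathbf b)^\top \bZ^\top\bW\bZ\,(\mathbf a,\mathbf b)$ over the higher-order coefficients $\mathbf b$, for each fixed $\mathbf a$. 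Every summand $\big((\bPhi_l(t_k)\otimes X_{ki})^\top(\mathbf a,\mathbf b)\big)^2W(t_k)/S_h$ is nonnegative. The target summands do not depend on $\mathbf b$, because $\bPhi_l(0)$ has zeros in the higher coordinates. Dropping all source terms therefore gives, for every $\mathbf b$,
\[
(\mathbf a,\mathbf b)^\top \bZ^\top\bW\bZ\,(\mathbf a,\mathbf b)\ \ge\ \frac{W(0)n_0}{S_h}\,\mathbf a^\top\hat\Sigma_0\mathbf a .
\]
Hence $M_{11}-M_{12}M_{22}^{-1}M_{21}\succeq \frac{W(0)n_0}{S_h}\hat\Sigma_0$. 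If $M_{22}$ happens to be singular, a pseudo-inverse or a limiting argument gives the same conclusion.

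Next I bound the scalar factor. Since $S_h\le \frac12 n$ with $n=\sum_k n_k$, and $W(0)=\frac12$, we get $W(0)n_0/S_h\ge n_0/n\ge c_0$. Therefore
\[
\big\|\bA_l(\bZ^\top\bW\bZ)^{-1}\bA_l^\top\big\|_2\le \big(c_0\,\lambda_{\min}(\hat\Sigma_0)\big)^{-1}.
\]

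Finally, I apply the strong law of large numbers. Because $\|X_{0i}\|_2\le1$, the entries of $\hat\Sigma_0$ are bounded, and the target observations are i.i.d. given $U_0$. Thus $\hat\Sigma_0\to\bbE[X_{0i}X_{0i}^\top]$ almost surely as $n_0\to\infty$, entrywise and hence in spectral norm since $p$ is fixed. By Weyl's inequality, $\liminf_{n_0}\lambda_{\min}(\hat\Sigma_0)\ge\kappa$ almost surely. This yields the claim with $C=(c_0\kappa)^{-1}$.

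The main obstacle is the step justifying that the Schur complement is bounded below despite the rank-deficient target block. The variational characterization handles this, but I need to check it carefully. In particular, normalizing by $S_h$ involves all domains, so the ratio $n_0/S_h$ must stay bounded below; this is exactly where the hypothesis $n_0/\sum_k n_k\ge c_0$ enters.
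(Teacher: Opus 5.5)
Your argument is correct and shares the paper's skeleton: the target rows satisfy $Z_{0i}=\bA_l^\top X_{0i}$ because $t_0=0$, the weight satisfies $W(0)n_0/S_h\ge n_0/n\ge c_0$, and everything reduces to $\lambda_{\min}(\hat\Sigma_0)$. Both technical steps, however, are done differently.

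\emph{Linear algebra.} The paper splits $\bZ^\top\bW\bZ=\bA_l^\top B\bA_l+M$, with $B=w_0\sum_{i\in\cI_0}X_{0i}X_{0i}^\top$ and $M$ the source-only part. It then applies Woodbury with $T=\bA_lM^{-1}\bA_l^\top$ to get $\bA_l(\bZ^\top\bW\bZ)^{-1}\bA_l^\top=T-T(B^{-1}+T)^{-1}T\preceq B^{-1}$. As written, this needs $M$ to be invertible, which can fail even when $\bZ^\top\bW\bZ$ is invertible. For example, with $l=1$ and $h=d_{(1)}(u_0)$, a single source lies in the window and $\mathrm{rank}(M)\le p<2p$. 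The step is repairable by replacing $M$ with $M+\epsilon I$ and letting $\epsilon\downarrow0$. Your variational Schur-complement argument needs only invertibility of $\bZ^\top\bW\bZ$ itself. The statement presupposes this, and it already forces $M_{22}\succ0$, so your pseudo-inverse caveat is unnecessary.

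\emph{Probability.} The paper uses the matrix Chernoff bound $\Pr\{\lambda_{\min}(\hat\Sigma_0)\le\kappa/2\}\le p\exp(-cn_0\kappa)$ plus Borel--Cantelli, obtaining $C=2/(c_0\kappa)$. Your strong law plus continuity of $\lambda_{\min}$ is more elementary and gives $C=1/(c_0\kappa)$. It does presume the target sample is one i.i.d.\ sequence extended as $n_0$ grows. The Chernoff/Borel--Cantelli route also covers a triangular array (fresh target samples for each $n_0$) and yields an explicit exponential finite-sample tail bound.
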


\begin{proof}
Recall
\[
\bZ^\top\bW\bZ
=
w_0\sum_{i\in\cI_0} \bZ_{0i}\bZ_{0i}^\top
+
\sum_{k=1}^K w_k\sum_{i\in\cI_k} \bZ_{ki}\bZ_{ki}^\top,
\qquad
w_k=\frac{W\!\big((U_k-u_0)/h\big)}{S_h},\ 
S_h=\sum_{k=0}^K n_k W\!\Big(\frac{U_k-u_0}{h}\Big).
\]
Using $\bZ_{0i}=\bA_l^\top X_{0i}$, define
\[
B := w_0\sum_{i\in\cI_0} X_{0i}X_{0i}^\top,
\qquad
M := \sum_{k=1}^K w_k\sum_{i\in\cI_k} \bZ_{ki}\bZ_{ki}^\top,
\]
so that $\bZ^\top\bW\bZ=\bA_l^\top B\bA_l+M$.

Let $T:=\bA_l M^{-1}\bA_l^\top\succeq 0$. The Woodbury identity yields
\[
\bA_l(\bZ^\top\bW\bZ)^{-1}\bA_l^\top
=
T - T(B^{-1}+T)^{-1}T
\preceq B^{-1},
\]
hence
\begin{equation}\label{eq:block-by-Cinv}
\big\|\bA_l(\bZ^\top\bW\bZ)^{-1}\bA_l^\top\big\|_2
\le \|B^{-1}\|_2
= \lambda_{\min}(B)^{-1}.
\end{equation}

Since $w_0=W(0)/S_h$ and $S_h\le W(0)\sum_{k=0}^K n_k$, we have
\[
w_0=\frac{W(0)}{S_h}\ \ge\ \frac{1}{\sum_{k=0}^K n_k}.
\]
Therefore,
\[
\lambda_{\min}(B)
=
w_0\,\lambda_{\min}\!\left(\sum_{i\in\cI_0}X_{0i}X_{0i}^\top\right)
\ge
\frac{1}{\sum_{k=0}^K n_k}\,
\lambda_{\min}\!\left(\sum_{i\in\cI_0}X_{0i}X_{0i}^\top\right).
\]
Using $\sum_{k=0}^K n_k \le n_0/c_0$, this becomes
\begin{equation}\label{eq:C-lmin-reduce}
\lambda_{\min}(B)
\ge
c_0\,\lambda_{\min}\!\left(\frac{1}{n_0}\sum_{i\in\cI_0}X_{0i}X_{0i}^\top\right).
\end{equation}

Let $\Sigma_0:=\bbE[X_{0i}X_{0i}^\top]$ with $\lambda_{\min}(\Sigma_0)\ge \kappa$.
Since $X_{0i}X_{0i}^\top$ are i.i.d.\ PSD matrices with
$\lambda_{\max}(X_{0i}X_{0i}^\top)=\|X_{0i}\|_2^2\le 1$,
the matrix Chernoff bound \citep{tropp2015} gives that for any $\eta\in(0,1)$,
\[
\Pr\!\left\{
\lambda_{\min}\!\left(\sum_{i\in\cI_0}X_{0i}X_{0i}^\top\right)
\le (1-\eta)\,n_0\,\kappa
\right\}
\le
p\exp\!\left(-\frac{\eta^2\,n_0\,\kappa}{2}\right).
\]
Taking $\eta=1/2$ yields
\begin{equation}\label{eq:chernoff-cov}
\Pr\!\left\{
\lambda_{\min}\!\left(\frac{1}{n_0}\sum_{i\in\cI_0}X_{0i}X_{0i}^\top\right)
\le \frac{\kappa}{2}
\right\}
\le
p\exp\!\left(-c\,n_0\,\kappa\right)
\end{equation}
for a constant $c>0$.

On the complement of the event in \eqref{eq:chernoff-cov}, we have
$\lambda_{\min}(\frac{1}{n_0}\sum X_{0i}X_{0i}^\top)\ge \kappa/2$ and thus by
\eqref{eq:C-lmin-reduce},
\[
\lambda_{\min}(B)\ge c_0\cdot \frac{\kappa}{2}.
\]
Plugging into \eqref{eq:block-by-Cinv} gives
\[
\big\|\bA_l(\bZ^\top\bW\bZ)^{-1}\bA_l^\top\big\|_2
\le \frac{2}{c_0\kappa}=:C.
\]
Therefore, define $B_{n_0} = \Big\{\big\|\bA_l(\bZ^\top\bW\bZ)^{-1}\bA_l^\top\big\|_2 > C\Big\}$ we have
\[
\Pr(B_{n_0})
\le p\exp(-C'n_0\kappa),
\]
which implies for some $k_0 > 0$
\[
\sum_{n_0=k_0}^\infty \Pr(B_{n_0})
\le \sum_{n_0=k_0}^\infty p\exp(-C'n_0\kappa) < \infty.
\]
Applying Borel–Cantelli lemma proves the result.
\end{proof}

\begin{proposition}
\label{thm:ZWZ-K}
Assume that $\lambda_{\min}\!\big(\bbE[XX^\top\mid U=u]\big)\ge \lambda_0$ for all $u\in\cU$ and $Kh/\gamma \gtrsim \log(K)$, then it holds 
\[
\Pr\left(\limsup_{K \to \infty}\big\|\bA_l(\bZ^\top\bW\bZ)^{-1}\bA_l^\top\big\|_2 \le C\right) = 1
\]
for some constant $C>0$.
\end{proposition}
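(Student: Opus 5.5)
Following the proof of Proposition~\ref{thm:ZWZ-n0}, the plan is to reduce the claim to a lower bound on $\lambda_{\min}(\bZ^\top\bW\bZ)$. We have $\|\bA_l(\bZ^\top\bW\bZ)^{-1}\bA_l^\top\|_2\le\|(\bZ^\top\bW\bZ)^{-1}\|_2=\lambda_{\min}(\bZ^\top\bW\bZ)^{-1}$, so it suffices to show that $\lambda_{\min}(\bZ^\top\bW\bZ)\ge c>0$ eventually, almost surely. Here the target sample alone cannot supply curvature, since $n_0$ may be negligible. The curvature must therefore come from the sources that fall in the window. Write
\[
\bZ^\top\bW\bZ=\frac{1}{2S_h}\sum_{k:|U_k-u_0|\le h}n_k\,\bPhi_l(t_k)^{\otimes 2}\otimes\hat\Psi_k,\qquad \hat\Psi_k=\frac1{n_k}\sum_{i\in\cI_k}X_{ki}X_{ki}^\top .
\]
This expresses it as a weighted average, with weights $n_k/(2S_h)$ summing to one, of the matrices $\bPhi_l(t_k)^{\otimes2}\otimes\hat\Psi_k$.

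The argument has three steps. First, the number of active domains $N_K=\sum_{k=1}^K\indc\{|U_k-u_0|\le h\}$ is $\mathrm{Bin}(K,p)$ with $p\asymp h/\gamma$ by Assumption~\ref{Assump:VCM-UX}(a). A multiplicative Chernoff bound gives $\Pr(N_K<Kp/2)\le\exp(-cKh/\gamma)$. Since $Kh/\gamma\gtrsim\log K$ with a large enough constant, this is summable in $K$. Moreover, the positions $t_k$ of the active domains are i.i.d. given $N_K$, with density on $[-1,1]$ bounded between positive constants, because $f\ge a_0'$.

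Second, I would replace $\hat\Psi_k$ by its conditional mean $\Psi(U_k)\succeq\lambda_0 I$. Using $\|X\|\le1$ and matrix Bernstein/Chernoff over the pooled sample in the window (about $N_K\bar n$ terms), the matrix $\sum_k n_k\bPhi_l(t_k)^{\otimes2}\otimes(\hat\Psi_k-\Psi(U_k))$, normalized by $S_h$, is $o(1)$ in operator norm. The failure probability is exponentially small in $N_K\bar n\gtrsim Kh/\gamma$, so again it is summable. We then get
\[
\bZ^\top\bW\bZ\succeq \lambda_0\Big(\tfrac{1}{2S_h}\sum_k n_k\bPhi_l(t_k)^{\otimes2}\Big)\otimes I_p-o(1).
\]

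Third, and this is the main obstacle, I need to lower bound $\lambda_{\min}$ of the weighted design moment $G=\sum_k\omega_k\bPhi_l(t_k)^{\otimes2}$, where $\omega_k=n_k/(2S_h)\asymp1/N_K$ by Assumption~\ref{Assump:balance-sample}. Its population version $\int\bPhi_l(t)^{\otimes2}g(t)\,dt$, with $g$ bounded below on $[-1,1]$, is a positive definite Hankel-type moment matrix. Its smallest eigenvalue is bounded below by a constant depending only on $l$ and $a_0'$, because no nonzero polynomial of degree at most $l$ vanishes on a set of positive measure. The balance assumption makes $\omega_k$ comparable to uniform weights, so $G\succeq c\,N_K^{-1}\sum_k\bPhi_l(t_k)^{\otimes2}$. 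Matrix Chernoff for the bounded summands $\bPhi_l(t_k)^{\otimes2}$ (Lemma~\ref{lem:bdd-Z}) then gives $\lambda_{\min}\ge c'/2$ with failure probability $p\exp(-cN_K)\le\exp(-cKh/\gamma)$. The target term only adds a PSD contribution and can be dropped.

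Combining the three exceptional events, each with summable probability under $Kh/\gamma\ge C\log K$, Borel--Cantelli yields the bound almost surely for all large $K$. The constant is $C=2/(c\lambda_0)$.
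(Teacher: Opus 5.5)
Your proposal is correct and follows essentially the same route as the paper. Both arguments bound $\|\bA_l(\bZ^\top\bW\bZ)^{-1}\bA_l^\top\|_2$ by $\lambda_{\min}(\bZ^\top\bW\bZ)^{-1}$, control the $X$-randomness by matrix concentration conditional on $U_{1:K}$ together with $\Psi(u)\succeq\lambda_0 I$, lower-bound the polynomial design moment $\sum_k \omega_k\bPhi_l(t_k)^{\otimes 2}$ by matrix Chernoff, and finish with Borel--Cantelli using tails of order $\exp(-cKh/\gamma)$.

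The differences are only technical. You condition on the active count $N_K$ and use additive Bernstein over individual observations. The paper instead concentrates the unnormalized $\bG$ and $S_h$ separately and applies multiplicative Chernoff to domain blocks. Your per-summand scale $O(1/S_h)$ is in fact the right one. The paper's block bound $R_0=R^2b_0/K$ should be of order $1/N_K$, which still gives an $\exp(-cKh/\gamma)$ tail.
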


\begin{proof}
For a fixed $K$, define the quantities
\[
\bY_k
:=\sum_{i=1}^{n_k}\bY_{ki}
=\frac{W_k}{S_h}\Big(\sum_{i=1}^{n_k}X_{ki}X_{ki}^\top\Big)\otimes(r_kr_k^\top),
\qquad
\bS^{(K)}=\bZ^\top\bW\bZ = \sum_{k=1}^K\bY_k,
\]
where $W_k:=W((U_k-u_0)/h)$ is the uniform kernel indicator,
$S_h:= \sum_{k=1}^K n_k W_k$, and $r_k = \Phi_l((U_k-u_0)/h)$.

\medskip
\noindent\textbf{1. Spectral bound.}
Condition on $U_{1:K}$. Then $W_k,S_h,r_k$ are deterministic and $\{\bY_k\}_{k=1}^K$ are conditionally independent.
Moreover, by boundedness of $\|X_{ki}\|_2^2$ in Assumption \ref{Assump:VCM-UX} and $\|r_k\|_2^2$ on the kernel window,
there exists a constant $R>0$ such that
\[
\Big\|\sum_{i=1}^{n_k}X_{ki}X_{ki}^\top\Big\|_2\le R n_k,
\qquad
\|r_kr_k^\top\|_2=\|r_k\|_2^2\le R.
\]
Therefore, applying $n_k/\bar n \le b_0$ in  Assumption \ref{Assump:VCM-UX},
\[
\|\bY_k\|_2
\le \frac{W_k}{S_h}\Big\|\sum_{i=1}^{n_k}X_{ki}X_{ki}^\top\Big\|_2\|r_kr_k^\top\|_2
\le R^2\frac{n_k W_k}{S_h}
\le R^2\frac{n_k}{\sum_{s=1}^K n_s}
=\frac{R^2}{K}\frac{n_k}{\bar n}
\le \frac{R^2 b_0}{K}.
\]
Define a quantity (to be used later)
\begin{equation}\label{eq:Rblock}
R_0:=\frac{R^2 b_0}{K}.
\end{equation}

\medskip
\noindent\textbf{2. Conditional matrix Chernoff.}
Let $\bM(U):=\bbE[\bS^{(K)}\mid U_{1:K}]$ and $\mu(U):=\lambda_{\min}(\bM(U))$.
Applying the matrix multiplicative Chernoff bound \citep{tropp2015} conditional on $U_{1:K}$
yields, for any $\varepsilon\in(0,1)$,
\begin{equation}\label{eq:chernoff-S-cond}
\Pr\!\left(\lambda_{\min}(\bS^{(K)})\le (1-\varepsilon)\mu(U)\ \Big|\ U_{1:K}\right)
\le d_0\exp\!\left(-\frac{\varepsilon^2\,\mu(U)}{2R_0}\right).
\end{equation}
Now we want to lower bound $\mu(U)$
Using $\bbE[XX^\top\mid U=u]\succeq \lambda_0 I_p$,
\[
\bbE\!\left[\sum_{i=1}^{n_k}X_{ki}X_{ki}^\top\ \Big|\ U_k\right]
=n_k\,\bbE[XX^\top\mid U_k]\succeq n_k\lambda_0 I_p.
\]
Hence,
\begin{align*}
\bM(U)
&=\bbE[\bS^{(K)}\mid U_{1:K}]
=\sum_{k=1}^K \frac{W_k}{S_h}\,
\bbE\!\left[\sum_{i=1}^{n_k}X_{ki}X_{ki}^\top\Big|\ U_k\right]\otimes(r_kr_k^\top)\\
&\succeq
\lambda_0\, I_p\otimes
\sum_{k=1}^K \frac{n_k W_k}{S_h}\, r_kr_k^\top.
\end{align*}
Therefore,
\begin{equation}\label{eq:muU-lower}
\mu(U)\ \ge\ \lambda_0\,
\lambda_{\min}\!\left(\sum_{k=1}^K \alpha_k r_kr_k^\top\right),
\qquad
\alpha_k:=\frac{n_kW_k}{S_h}.
\end{equation}

\medskip
\noindent\textbf{3. Concentration of the moment matrix.}
Define the unnormalized $(l+1)\times(l+1)$ matrix
\[
\bG:=\sum_{k=1}^K n_k W_k\, r_kr_k^\top,
\qquad\text{so that}\qquad
\sum_{k=1}^K \alpha_k r_kr_k^\top=\frac{\bG}{S_h}\,.
\]
Hence
\begin{equation}\label{eq:ratio-min}
\lambda_{\min}\!\left(\sum_{k=1}^K \alpha_k r_kr_k^\top\right)=\frac{\lambda_{\min}(\bG)}{S_h}.
\end{equation}

We first lower bound $\lambda_{\min}(\bG)$.
Let $\bT_k:=n_k W_k r_kr_k^\top\succeq 0$, so $\bG=\sum_{k=1}^K \bT_k$.
Because $W_k$ and $r_k$ depend only on $U_k$, the matrices $\{\bT_k\}$ are independent.
Moreover, on the window $\|r_k\|_2^2\le R$ and $n_k\le b_0\bar n$, so
\[
\lambda_{\max}(\bT_k)\le n_k\|r_k\|_2^2\le b_0\bar n\,R=:R_A.
\]
Let $\bM_G:=\bbE[\bG]=\sum_{k=1}^K n_k\,\bbE[W_k r_kr_k^\top]$ and
$\mu_G:=\lambda_{\min}(\bM_G)$.
Using the density lower bound on $[u_0-h,u_0+h]$ and the change of variables $t=(u-u_0)/h$,
\[
\bbE[W_k r_kr_k^\top]
=\int_{u_0-h}^{u_0+h}\Phi_l\!\Big(\frac{u-u_0}{h}\Big)\Phi_l\!\Big(\frac{u-u_0}{h}\Big)^\top f_U(u)\,du
\succeq
\frac{a_0'h}{\gamma}\int_{-1}^1 \Phi_l(t)\Phi_l(t)^\top\,dt.
\]
There exists $\kappa_0$ such that $\int_{-1}^1\Phi_l(t)\Phi_l(t)^\top\,dt \ge \kappa_0>0$.
Then
\begin{equation}\label{eq:muG}
\mu_G\ \ge\ \frac{a_0'h}{\gamma}\,\kappa_0 \sum_{k=1}^K n_k
=\frac{a_0'h}{\gamma}\,\kappa_0\,K\bar n.
\end{equation}
Applying matrix Chernoff to $\bG=\sum_{k=1}^K \bT_k$ yields, for any $\eta\in(0,1)$,
\begin{equation}\label{eq:chernoff-G}
\Pr\!\left\{\lambda_{\min}(\bG)\le (1-\eta)\mu_G\right\}
\le (l+1)\exp\!\left(-\frac{\eta^2\mu_G}{2R_A}\right)
\le (l+1)\exp\!\left(-c_1\,\frac{Kh}{\gamma}\right),
\end{equation}
for some constant $c_1>0$.

\medskip
\noindent\textbf{4. Upper bound $S_h$}
We also control $S_h=\sum_{k=1}^K n_k W_k$ from above. Note that $0\le n_kW_k\le n_k\le b_0\bar n$
and $\{W_k\}$ are independent. Moreover, by the density upper bound in Assumption \ref{Assump:VCM-UX},
\[
\bbE[W_k]=\Pr(|U_k-u_0|\le h)=\int_{u_0-h}^{u_0+h}f_U(u)\,du\ \le\ \frac{2a_0h}{\gamma}.
\]
Thus
\[
\bbE[S_h]=\sum_{k=1}^K n_k\bbE[W_k]\ \le\ \frac{2a_0h}{\gamma}\sum_{k=1}^K n_k
=\frac{2a_0h}{\gamma}\,K\bar n.
\]
A Chernoff bound gives that for any $t\in(0,1)$,
\begin{equation}\label{eq:chernoff-Sh}
\Pr\!\left\{S_h\ge (1+t)\bbE[S_h]\right\}
\le \exp\!\left(-c_2\,\frac{Kh}{\gamma}\right)
\end{equation}
for some constant $c_2>0$.

\medskip
\noindent\textbf{5. probability on ``good event".}
Let
\[
\cE:=\Big\{\lambda_{\min}(\bG)\ge (1-\eta)\mu_G\Big\}\cap\Big\{S_h\le (1+t)\bbE[S_h]\Big\}.
\]
On $\cE$, using \eqref{eq:ratio-min}, \eqref{eq:muG}, and $\bbE[S_h]\le (2a_0h/\gamma)K\bar n$,
\[
\lambda_{\min}\!\left(\sum_{k=1}^K \alpha_k r_kr_k^\top\right)
=\frac{\lambda_{\min}(\bG)}{S_h}
\ge
\frac{(1-\eta)\mu_G}{(1+t)\bbE[S_h]}
\ge
\frac{1-\eta}{1+t}\cdot \frac{a_0'}{2a_0}\,\kappa_0
=: \tilde\kappa_0.
\]
Combining with \eqref{eq:muU-lower} gives
\begin{equation}\label{eq:muU-good}
\mu(U)\ \ge\ \lambda_0\,\tilde\kappa_0
\qquad\text{on }\cE.
\end{equation}
Moreover, by the union bound and \eqref{eq:chernoff-G}--\eqref{eq:chernoff-Sh},
\begin{equation}\label{eq:Eprob}
\Pr(\cE^c)\le\ C_0\exp\!\left(-c_0\,\frac{Kh}{\gamma}\right).
\end{equation}

\medskip
\noindent\textbf{6. Conclusion.}
Let $A:=\{\lambda_{\min}(\bS^{(K)})\le (1-\varepsilon)\lambda_0\tilde\kappa_0\}$. By the tower property,
\[
\Pr(A)=\bbE\big[\Pr(A\mid U_{1:K})\big]
\le \bbE\big[\Pr(A\mid U_{1:K})\indc(\cE)\big]+\Pr(\cE^c).
\]
On $\cE$, $\mu(U)\ge \lambda_0\tilde\kappa_0$ by \eqref{eq:muU-good}, hence by
\eqref{eq:chernoff-S-cond},
\[
\Pr(A\mid U_{1:K})\indc(\cE)
\le d_0\exp\!\left(-\frac{\varepsilon^2\,\mu(U)}{2R_0}\right)\indc(\cE)
\le d_0\exp\!\left(-\frac{\varepsilon^2\,\lambda_0\tilde\kappa_0}{2R_0}\right)\indc(\cE).
\]
Recalling $R_0=R^2 b_0/K$ from \eqref{eq:Rblock}, the exponent equals $-c_3 K$
for some $c_3>0$.
Therefore,
\[
\Pr(A)
\le d_0 e^{-c_3K}+\Pr(\cE^c)
\le d_0 e^{-c_3K}+C_0\exp\!\left(-c_0\,\frac{Kh}{\gamma}\right)
\le \exp\!\left(-C'\,\frac{Kh}{\gamma}\right),
\]
and we have shown that there exist constants $C,C'>0$ such that
\[
\Pr\!\left\{\lambda_{\min}\!\big(\bS^{(K)}\big)\le C\right\}
\ \le\
\exp\!\left(-C'\,\frac{Kh}{\gamma}\right).
\]
Let the RHS exponent $\frac{Kh}{\gamma} \ge \tfrac{M_0}{C'} \log(K)$ for some constant $M_0 > 1$ then it follows that, for some constant $k_0$,
\[\sum_{K=k_0}^\infty \Pr\!\left\{\lambda_{\min}\!\big(\bS^{(K)}\big)\le C\right\}
\ \le\
\sum_{K=k_0}^\infty \exp\!\left(-C'\,\frac{Kh}{\gamma}\right) \le \sum_{K=k_0}^\infty \exp\!\left(- M_0 \log(K)\right) = \sum_{K=k_0}^\infty K^{-M_0} < \infty\]
Using Borel–Cantelli lemma proves the almost sure result
\[\lambda_{\min}\!\big(\bZ^\top\bW\bZ\big) \ge C\]
which further implies
\[
\big\|\bA_l(\bZ^\top\bW\bZ)^{-1}\bA_l^\top\big\|_2 \le \tilde C.
\]
\end{proof}

\section{Additional Simulation Results for Section \ref{sec:sim}}\label{sec:add-sim}

This section presents supplementary simulation results corresponding to Section~\ref{sec:sim}. The MSE $\pm$ standard deviation for the analysis in Section~\ref{subsec:SLID}, computed across different random seeds, is reported in Table~\ref{tab:linear-real-data}. The cross-entropy loss $\pm$ standard deviation for the analysis in Section~\ref{sec:adult}, also computed across different random seeds, is reported in Table~\ref{tab:logistic-real-data}. 

We observe that the transfer learning (TL) estimator exhibits adaptivity, effectively selecting the better-performing estimator between the two baselines across different settings.

\begin{table}[ht]
\centering
\caption{Estimation performance across $U$ values for experiments in Section~\ref{subsec:SLID}. Entries are mean $\pm$ std.}
\label{tab:linear-real-data}
\begin{tabular}{c|ccc}
\toprule
$U$ & DVCM & GLR & TL \\
\midrule
0.05 & $0.1173 \pm 0.0146$ & $0.1084 \pm 0.0158$ & $0.1092 \pm 0.0151$ \\
0.15 & $0.1441 \pm 0.0102$ & $0.1438 \pm 0.0116$ & $0.1450 \pm 0.0103$ \\
0.25 & $0.1710 \pm 0.0085$ & $0.1599 \pm 0.0091$ & $0.1599 \pm 0.0095$ \\
0.35 & $0.1630 \pm 0.0130$ & $0.1561 \pm 0.0142$ & $0.1581 \pm 0.0132$ \\
0.45 & $0.1771 \pm 0.0176$ & $0.1765 \pm 0.0200$ & $0.1757 \pm 0.0196$ \\
0.55 & $0.1371 \pm 0.0086$ & $0.1390 \pm 0.0092$ & $0.1392 \pm 0.0093$ \\
0.65 & $0.1516 \pm 0.0224$ & $0.1538 \pm 0.0199$ & $0.1537 \pm 0.0232$ \\
0.75 & $0.1827 \pm 0.0347$ & $0.1840 \pm 0.0357$ & $0.1864 \pm 0.0351$ \\
0.85 & $0.1810 \pm 0.0234$ & $0.1835 \pm 0.0237$ & $0.1807 \pm 0.0220$ \\
0.95 & $0.0910 \pm 0.0448$ & $0.1258 \pm 0.0615$ & $0.0961 \pm 0.0535$ \\
\bottomrule
\end{tabular}
\end{table}

\begin{table}[ht]
\centering
\caption{Estimation performance across $U$ values for experiments in Section~\ref{sec:adult}. Entries are mean $\pm$ std.}
\label{tab:logistic-real-data}
\begin{tabular}{c|ccc}
\toprule
$U$ & DVCM & GLR & TL \\
\midrule
0.05 & $0.1302 \pm 0.0033$ & $0.0246 \pm 0.0046$ & $0.0258 \pm 0.0050$ \\
0.15 & $0.2811 \pm 0.0052$ & $0.2222 \pm 0.0077$ & $0.2237 \pm 0.0076$ \\
0.25 & $0.4453 \pm 0.0069$ & $0.4444 \pm 0.0089$ & $0.4439 \pm 0.0080$ \\
0.35 & $0.5184 \pm 0.0073$ & $0.5120 \pm 0.0071$ & $0.5123 \pm 0.0074$ \\
0.45 & $0.5486 \pm 0.0089$ & $0.5405 \pm 0.0093$ & $0.5404 \pm 0.0091$ \\
0.55 & $0.5549 \pm 0.0062$ & $0.5406 \pm 0.0066$ & $0.5402 \pm 0.0066$ \\
0.65 & $0.5355 \pm 0.0136$ & $0.5331 \pm 0.0114$ & $0.5333 \pm 0.0121$ \\
0.75 & $0.4708 \pm 0.0214$ & $0.4645 \pm 0.0231$ & $0.4662 \pm 0.0237$ \\
0.85 & $0.4124 \pm 0.0217$ & $0.3934 \pm 0.0246$ & $0.3967 \pm 0.0266$ \\
0.95 & $0.3462 \pm 0.0229$ & $0.3340 \pm 0.0290$ & $0.3308 \pm 0.0351$ \\
\bottomrule
\end{tabular}
\end{table}

\end{appendix}


\bibliography{ref}
\bibliographystyle{IEEEtranN}


\end{document}